\newenvironment{preliminary} 
{\pagestyle{plain}\pagenumbering{roman}}
{\pagenumbering{arabic}}
\definecolor{darkgreen}{rgb}{0,0.45,0}
\definecolor{darkred}{rgb}{0.75,0,0}
\definecolor{darkblue}{rgb}{0,0,0.6}
\DeclareMathAlphabet\mathbfcal{OMS}{cmsy}{b}{n}
\theoremstyle{plain}
\newtheorem{theorem}{Theorem}[section]
\newtheorem*{theorem*}{Theorem}
\newtheorem{proposition}[theorem]{Proposition}
\newtheorem{lemma}[theorem]{Lemma}
\newtheorem{corollary}[theorem]{Corollary}
\theoremstyle{definition}
\newtheorem{definition}[theorem]{Definition}
\newtheorem{example}[theorem]{Example}
\newtheorem{notation}[theorem]{Notation}
\newtheorem{remark}[theorem]{Remark}
\newtheorem*{notation*}{Notation}
\newcommand{\A}{\mathrm{A}} 
\newcommand{\B}{\mathrm{B}} 
\newcommand{\C}{\mathrm{C}} 
\newcommand{\cD}{\mathrm{D}} 
\newcommand{\cE}{\mathrm{E}} 
\newcommand{\J}{\mathrm{J}} 
\newcommand{\K}{\mathrm{K}} 
\newcommand{\M}{\mathscr{M}} 
\newcommand{\N}{\mathscr{N}} 
\newcommand{\D}{\mathscr{D}} 
\newcommand{\E}{\mathscr{E}} 
\newcommand{\dA}{\mathscr{A}} 
\newcommand{\dB}{\mathscr{B}} 
\newcommand{\dC}{\mathscr{C}} 
\newcommand{\eA}{\underline{\mathscr{A}}} 
\newcommand{\eB}{\underline{\mathscr{B}}} 
\newcommand{\eC}{\underline{\mathscr{C}}} 
\newcommand{\eD}{\underline{\mathscr{D}}} 
\newcommand{\eE}{\underline{\mathscr{E}}} 
\newcommand{\0}{\mathbf{[0]}} 
\newcommand{\Ab}{\mathbf{Ab}} 
\newcommand{\Cat}{\mathbf{Cat}} 
\newcommand{\CAT}{\mathbf{CAT}} 
\newcommand{\Set}{\mathbf{Set}} 
\newcommand{\sSet}{\mathbf{sSet}} 
\newcommand{\Spt}{\mathbf{Spt}} 
\newcommand{\PDer}{\mathbf{PDer}} 
\newcommand{\Der}{\mathbf{Der}} 
\newcommand{\ECat}{\mathscr{E}\text{-}\mathbf{Cat}} 
\newcommand{\EPDer}{\mathscr{E}\text{-}\mathbf{PDer}} 
\newcommand{\id}{\mathrm{id}} 
\newcommand{\p}{\mathrm{p}} 
\newcommand{\h}{\mathrm{h}} 
\newcommand{\Hom}{\underline{\operatorname{Hom}}} 
\newcommand{\map}{\operatorname{map}} 
\newcommand{\tw}{\operatorname{tw}} 
\newcommand{\op}{^{\mathrm{op}}} 
\newcommand{\Ho}{\operatorname{Ho}} 
\newcommand{\cHom}{\underline{\operatorname{Hom}}_!} 
\newcommand{\dia}{\operatorname{dia}} 
\newcommand{\hocolim}{\operatorname{hocolim}} 
\newcommand{\holim}{\operatorname{holim}} 
\newcommand{\dHom}{\mathscr{H}\mathrm{om}} 
\newcommand{\dHo}{\mathscr{H}\mathrm{o}} 
\newcommand{\cdHom}{\mathscr{H}\mathrm{om}_!} 
\newcommand{\Prof}{\mathscr{P}\mathrm{rof}} 
\newcommand{\iso}{\cong} 
\renewcommand{\equiv}{\simeq} 
\begin{document}
\begin{preliminary}
\pagenumbering{roman}

  \phantomsection
	\addcontentsline{toc}{chapter}{Abstract}
	\Large\begin{center}\textbf{\textlarger{Enriched Derivators}} \\ \textsmaller{James Richardson}\end{center}\normalsize

The theory of derivators provides a convenient abstract setting for computing with homotopy limits and colimits. In enriched homotopy theory, the analogues of homotopy (co)limits are weighted homotopy (co)limits. In this thesis, we develop a theory of derivators and, more generally, prederivators enriched over a monoidal derivator $\E$.  In parallel to the unenriched case, these $\E$-prederivators provide a framework for studying the constructions of enriched homotopy theory, in particular weighted homotopy (co)limits.

As a precursor to $\E$-(pre)derivators, we study $\E$-categories, which are categories enriched over a bicategory $\Prof(\E)$ associated to $\E$. We prove a number of fundamental results about $\E$-categories, which parallel classical results for enriched categories. In particular, we prove an $\E$-category Yoneda lemma, and study representable maps of $\E$-categories. 

In any $\E$-category, we define notions of weighted homotopy limits and colimits. We define $\E$-derivators to be $\E$-categories with a number of further properties; in particular, they admit all weighted homotopy (co)limits. We show that the closed $\E$-modules studied by Groth, Ponto and Shulman give rise to associated $\E$-derivators, so that the theory of $\E$-(pre)derivators captures these examples. However, by working in the more general context of $\E$-prederivators, we can study weighted homotopy (co)limits in other settings, in particular in settings where not all weighted homotopy (co)limits exist.

Using the $\E$-category Yoneda lemma, we prove a representability theorem for $\E$-prederivators. We show that we can use this result to deduce representability theorems for closed $\E$-modules from representability results for their underlying categories.

	\tableofcontents

\newpage

\end{preliminary}

\chapter{Introduction}
\pagenumbering{arabic}
\setcounter{page}{1}

\subsection*{Derivators}

The theory of derivators is one of several current approaches to homological and homotopical algebra. Derivators were introduced independently in~\cite{Heller88} and~\cite{Grothendieck90}, along with similar theories developed in~\cite{Keller91} and~\cite{Franke96}. To motivate their definition, we begin by recalling a well-known deficiency of another approach to homotopical algebra, the theory of triangulated categories. 

Triangulated categories are a prominent and important axiomatisation of stable homotopy theory. See~\cite{Krause08,Neeman14} for a comprehensive introduction. Given any stable model category or stable quasicategory, its homotopy category is naturally triangulated. In general, information is lost when we pass to the homotopy category; however, in many situations, enough is retained that the original homotopy theory can be studied, and computations can be made, using only the underlying triangulated homotopy category. For example, if $\mathlarger{\mathscr{T}}$ is a triangulated category, we may study homotopy cofibre sequences in $\mathlarger{\mathscr{T}}$. In particular, any morphism in $\mathlarger{\mathscr{T}}$ has a homotopy cofibre. However, this construction cannot be made into a functor of the form $\mathlarger{\mathscr{T}}^{[1]}\rightarrow\mathlarger{\mathscr{T}}$, where $\mathlarger{\mathscr{T}}^{[1]}$ is the category of arrows in $\mathlarger{\mathscr{T}}$. When $\mathlarger{\mathscr{T}}=\Ho(\M)$ is the homotopy category of a stable model category, this problem reflects the fact that the homotopy cofibre construction in a model category induces a functor $\Ho(\M^{[1]})\rightarrow\Ho(\M)$, rather than an functor $\Ho(\M)^{[1]}\rightarrow\Ho(\M)$.

This problem can be addressed by considering the derivator associated to $\M$, rather than only the homotopy category. Introductions to derivators can be found in~\cite{Groth13,Groth16,GPS14a, GR19}, and the first chapter of~\cite{CN08}. The essential idea is to consider a family of categories, one for each small category $\A$, rather than a single category. In the case of a model category $\M$, this corresponds to keeping track of the family of homotopy categories $\Ho(\M^\A)$, for every $\A\in\Cat$, rather than the single homotopy category $\Ho(\M)$, where these homotopy categories are formed with respect to pointwise weak equivalences. (Note that, in general, it is not obvious that these homotopy categories are locally small. See~\cite{Cisinski03} for a proof.) Moreover, for any functor $u:\A\rightarrow\B$, we keep track of the (derived) pullback functor $u^*:\Ho(\M^\B)\rightarrow\Ho(\M^\A)$. It is possible to show that this functor has both adjoints. Using these, we can study homotopy limits and colimits in $\M$ and, in particular, if $\M$ is pointed we can recover the homotopy cofibre functor $\Ho(\M^{[1]})\rightarrow\Ho(\M)$.

We now discuss derivators in more detail. We can break the definition into two steps. First, a \textbf{prederivator} (see \cref{Prederivator definition}) is a $2$-functor $\D:\Cat\op\rightarrow\CAT$. We denote its values as follows:
\begin{center}
\begin{tikzcd}
\A\arrow[rr, bend left=50, "u" above,""{name=U, below}]\arrow[rr, bend right=50, "v" below, ""{name=D}]
&& \B
& \longmapsto
& \D(\A)
& \D(\B)\arrow[l,bend left=50,"v^*" below,""{name=L,above}]\arrow[l,bend right=50,"u^*" above,""{name=R,below}]
\arrow[Rightarrow,from=U,to=D,shorten >=0.1cm,shorten <=0.1cm,"\kappa"]
\arrow[Rightarrow,from=R,to=L,shorten >=0.1cm,shorten <=0.1cm,"\kappa^*"]
\end{tikzcd}
\end{center}
Given a functor $u:\A\rightarrow\B$, we call the functor $u^*:\D(\B)\rightarrow\D(\A)$ the \textbf{pullback functor along $u$}. If $u^*$ has a left adjoint $u_!:\D(\A)\rightarrow\D(\B)$, we call this functor the \textbf{homotopy left Kan extension along $u$}. Dually, if $u^*$ has a right adjoint $u_*:\D(\A)\rightarrow\D(\B)$, we call this functor the \textbf{homotopy right Kan extension along $u$}. In the case of the unique map $\p:\A\rightarrow\0$ to the terminal category, we call the right and left homotopy Kan extensions \textbf{homotopy limits} and \textbf{homotopy colimits} respectively. \textbf{Derivators} are prederivators that satisfy four additional axioms, which we recall in \cref{Derivator definition}. In particular, if $\D$ is a derivator,  one axiom, \textbf{Der 3}, implies that any functor $u:\A\rightarrow\B$ admits both a left and right homotopy Kan extension in $\D$. Another axiom, \textbf{Der 4}, gives a formula for calculating homotopy Kan extensions.

We have already alluded to the fact that any model category $\M$ gives rise to a derivator $\dHo(\M)$, whose value at $\A\in\Cat$ is the homotopy category $\Ho(\M^\A)$. This is the main theorem of~\cite{Cisinski03}. Similarly, given any complete and cocomplete quasicategory $Q$, we can form a derivator $\dHo(Q)$, whose value at $\A\in\Cat$ is $\Ho(Q^{N\A})$, where $N\A$ denotes the nerve of $\A$, and $\Ho(Q^{N\A})$ denotes the homotopy category of the quasicategory $Q^{N\A}$. See~\cite{Lenz18} for a proof that this defines a derivator. Thus, the passage from either a model category or a (co)complete quasicategory to its homotopy category factors through an associated derivator. 

We have seen that the derivator associated to a stable model category retains more information than the triangulated homotopy category. However, derivators do not retain all of the homotopical information that is available in model categories or quasicategories. See~\cite[Section 2.5]{Toen03} for a discussion of what information is lost. Thus, derivators cannot be thought of as a replacement for model categories or quasicategories. However, in certain settings, there are advantages to working with derivators rather than these other models of homotopy theory, where carrying around all of the available information can result in technical difficulties. For example, working in a derivator rather than a model category does away with the need to manage fibrant and cofibrant replacements and zigzags of weak equivalences. At the derivator level, we only have access to information that is homotopically meaningful, and homotopy invariant.

Derivators retain enough information to define homotopy Kan extensions, in particular homotopy limits and colimits, using universal properties, and provide enough tools to carry out elegant formal computations. Since these universal properties characterise homotopy limits and colimits in other models of homotopy theory, results we prove in derivators must hold, in particular, in model categories and quasicategories. Thus, derivators provide a convenient abstract setting in which we can manage homotopy coherence and compute with homotopy Kan extensions. This also carries over to morphisms of derivators: to prove, for example, that a given left Quillen functor commutes with a particular homotopy limit, it suffices to show this for the associated derivator map. This perspective on derivators is developed and exemplified in~\cite{GPS14,GPS14a,GS17}.

\subsection*{Actions of derivators}

In this thesis, we study enrichment of prederivators and derivators, and develop formal methods for studying the constructions of enriched category theory (see~\cite{Kelly82}) in homotopical settings. In particular, just as derivators are a tool for studying homotopy limits and colimits, enriched derivators provide a setting for studying \textbf{weighted homotopy limits and colimits}, which are the enriched analogue. 

There are a number of approaches to enriched homotopy theory. Simplicial enrichments, in particular, are well-studied and fundamentally important. Simplicial model categories were introduced in~\cite{Quillen67}, and, for any model category, constructions of simplicial mapping spaces were defined and studied in~\cite{DK80}. See~\cite{Hirschhorn03} for a textbook treatment. Weighted homotopy limits and colimits in simplicial model categories are studied in~\cite{Gambino10}. For a survey of other approaches to enrichment in homotopy theory, and a unified treatment of weighted homotopy limits and colimits, see~\cite{Shulman06}. 

The properties of derivators that make them convenient for studying other aspects of homotopy theory also make them an elegant setting for studying enrichment. For example, managing cofibrant and fibrant replacement in a model category can vastly complicate proofs that are relatively straightforward on the derivator level. This is an important advantage when it comes to studying enrichment, since we often want to verify lists of coherence conditions, and this can become difficult or impossible if we have to keep track of cofibrant and fibrant replacements. We give the following relevant example. In~\cite{Hovey07} it was conjectured that, for any monoidal model category $\M$, its homotopy category $\Ho(\M)$ is naturally a central algebra over the homotopy category of simplicial sets $\Ho(\sSet)$. All of this, except for the centrality condition, was shown using model categorical methods in~\cite{Hovey07}. However, the final coherence condition was only successfully checked in~\cite{Cisinski08}, using the associated derivator $\dHo(\M)$.

We will now outline some relevant previous work on enrichment in derivators.  A \textbf{monoidal derivator} $\E$ is a derivator equipped with a tensor product $\otimes:\E\times\E\rightarrow\E$ that is coherently unital and associative, and which, in an appropriate sense, preserves homotopy colimits in both variables. Note that, for any category $\A$, the tensor product induces a monoidal structure on the category $\E(\A)$. Monoidal derivators are studied in~\cite{Cisinski08,GPS14}; we recall the definition in \cref{Section Monoidal derivators}. Given any monoidal model category $\M$, its associated derivator $\dHo(\M)$ is monoidal.

Given a monoidal derivator $\E$, an \textbf{action} of $\E$ on a derivator $\D$ is a coherently associative and unital map $\otimes:\E\times\D\rightarrow\D$, which preserves homotopy colimits in both variables. See \cref{Definition of E-module}. We call a derivator $\D$ an \textbf{$\E$-module} if it is equipped with an $\E$-action. A fundamentally important result, proved in~\cite{Cisinski08}, is that any derivator $\D$ has a unique $\dHo(\sSet)$-module structure.  This theorem is conceptually important, but it also has practical implications for calculating homotopy colimits. See \cite[Section 7]{PS16} for such an application; using the $\dHo(\sSet)$-action, homotopy colimits in an arbitrary derivator can be computed from homotopy colimits in $\dHo(\sSet)$. This is true in general: computations involving homotopy colimits in an $\E$-module can be reduced to computations in $\E$. This approach is used to characterise stable derivators in~\cite{GS17}. 

Derivator \textbf{two-variable adjunctions} are studied in~\cite{GPS14}. We recall the definition in \cref{Section Two-variable adjunctions}. An $\E$-module $\D$ is called a \textbf{closed $\E$-module} if the $\E$-action $\otimes:\E\times\D\rightarrow\D$ is part of a two-variable adjunction. For example, if $\M$ is a monoidal model category and $\N$ is an $\M$-enriched model category, then the derivator $\dHo(\N)$ is a closed $\dHo(\M)$-module. As a part of the structure, a closed $\E$-module is equipped with two additional maps:
\begingroup
\addtolength{\jot}{0.5em}
\begin{align*}
\map_\D(-,-)&:\D\op\times\D\rightarrow\E\\
\vartriangleleft&:\D\times\E\op\rightarrow\D
\end{align*}
\endgroup
Using the first of these maps, closed $\E$-modules have a notion of mapping objects, which take values in $\E$. For this reason, closed $\E$-modules are called $\E$-enriched derivators in~\cite{GR19,GS17}; however, we will reserve this terminology for a different, though related, concept. Given a closed $\E$-module $\D$, the map $\otimes:\E\times\D\rightarrow\D$ can be used to define weighted homotopy colimits in $\D$, and $\vartriangleleft:\D\times\E\op\rightarrow\D$ can be used to define weighted homotopy limits. Examples of weighted homotopy colimits include the pullback functors in $\D$, and ordinary left Kan extensions. In~\cite{GR19,GS17}, computations with weighted homotopy colimits are reduced to computations in $\E$ with the corresponding weights.

\subsection*{Enriched prederivators and derivators}

In this work, we establish an alternative approach to enrichment in derivators, which incorporates the closed $\E$-modules of~\cite{GR19,GS17}, as well as a number of other examples. In particular, our framework allows us to study enrichment of general prederivators, rather than being restricted to derivators. Moreover, using this approach, we can formulate local definitions of weighted homotopy limits and colimits, which agree with the global definitions in~\cite{GR19,GS17} when all weighted homotopy (co)limits exist. This allows us to study weighted homotopy (co)limits in a broader range of settings, including in situations where only certain weighted homotopy (co)limits exist. 

We develop the theory of enriched derivators in a series of steps, starting with the concept of $\E$-categories. We may then add extra structure to obtain $\E$-prederivators and, finally, $\E$-derivators. We will now outline this process.

Any monoidal derivator $\E$ gives rise to an associated bicategory, which we denote by $\Prof(\E)$ and call the \textbf{bicategory of profunctors in $\E$}. This bicategory is defined in~\cite{GPS14}, and we recall its definition in \cref{Definition of Prof(E)}. See~\cite{Leinster98} for basic bicategorical definitions. An \textbf{$\E$-category} $\eA$ is defined, in \cref{E-category definition}, to be a category enriched over the bicategory $\Prof(\E)$. In particular, this includes the following data:
\begin{itemize}
\item For each small category $\A$, a (large) set of objects $\dA_0(\A)$.
\item For any two objects $X\in\dA_0(\A)$ and $Y\in\dA_0(\B)$, an object $\widetilde{\map}_{\dA}(X,Y)\in\E(\A\op\times\B)$.
\end{itemize}
In addition, $\eA$ is equipped with notions of \textbf{composition} and \textbf{units}, subject to natural axioms that express associativity and unitality of composition. In this way, the definition of $\E$-categories is analogous to the familiar definition of enriched categories in~\cite{Kelly82}; in fact, for any category $\A$, an $\E$-category $\eA$ gives rise to an $\E(\0)$-category $\eA(\A)$, which we describe in \cref{A([0]) is E([0])-enriched} and \cref{Any A(J) is E(0)-enriched for an E-prederivator}. 

Our development of the basic theory of $\E$-categories mirrors the classical development of enriched category theory. For example, given an $\E$-category $\eA$, a category $\A$ and an object $X\in\dA_0(\A)$, the mapping objects induce a \textbf{representable} $\E$-category morphism
\[
\widetilde{\map}_{\dA}(X,-):\eA\rightarrow\eE^{\A\op}
\]
where $\eE^{\A\op}$ is an $\E$-category associated to the \textbf{shifted prederivator} $\E^{\A\op}$ of \cref{Shifted prederivator definition}. This $\E$-category is described in \cref{E-modules are E-categories}; note, in particular, that for any category $\B$, the set $\E^{\A\op}_0(\B)$ is the set of objects in the category $\E(\A\op\times\B)$. 

Representable maps play a vital role in the theory of $\E$-categories, and in our study of $\E$-prederivators and $\E$-derivators. In part, this is a consequence of \cref{Yoneda lemma for E-categories}, the $\E$-category Yoneda lemma:
\begin{theorem*}
Let $\eA$ be an $\E$-category, let $\A$ be a category, and let $X\in\dA_0(\A)$. Let $F:\eA\rightarrow\eE^{\A\op}$ be an $\E$-category map. We have a natural bijection:
\[
\ECat(\eA,\eE^{\A\op})(\widetilde{\map}_{\dA}(X,-),F)\iso\E(\A\op\times\A)(\h_\A,FX)
\]
\end{theorem*}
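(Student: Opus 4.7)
The plan is to mimic the classical enriched Yoneda lemma, with the unit $u_X:\h_\A\to\widetilde{\map}_{\dA}(X,X)$ of $\eA$ at the object $X$ playing the role of the identity element, and the composition structure of $\Prof(\E)$ providing the mechanism by which components of a natural transformation can be reconstructed from the single value at $X$.

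For the forward direction, I would define a map $\Phi$ sending a 2-cell $\alpha:\widetilde{\map}_{\dA}(X,-)\Rightarrow F$ in $\ECat(\eA,\eE^{\A\op})$ to the composite $\alpha_X\circ u_X$, where $\alpha_X:\widetilde{\map}_{\dA}(X,X)\to FX$ is the component of $\alpha$ at $X$, a morphism in $\E(\A\op\times\A)$, and $u_X$ is the unit map of $\eA$ at $X$. This composite lies in $\E(\A\op\times\A)(\h_\A,FX)$ as required. Functoriality of $\Phi$ in $F$ is immediate from the definition.

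For the backward direction $\Psi$, given $\phi:\h_\A\to FX$ I would construct a 2-cell whose component at $Y\in\dA_0(\B)$ is a morphism $\widetilde{\map}_{\dA}(X,Y)\to FY$ in $\E(\A\op\times\B)$, built from the $\E$-category data of $F$. Specifically, the functorial action of $F$ on hom-objects provides a morphism $F_{X,Y}:\widetilde{\map}_{\dA}(X,Y)\to\widetilde{\map}_{\eE^{\A\op}}(FX,FY)$. Composed with the canonical evaluation built into the $\E$-category $\eE^{\A\op}$ (whose hom-objects and composition come directly from the bicategorical structure of $\Prof(\E)$), this yields a 2-cell in $\Prof(\E)$ which, after substituting $\phi$ into the $FX$ slot, produces the desired component. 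Naturality of $\Psi(\phi)$ in $\eA$ then follows from the compatibility of $F$ with composition, together with associativity in $\Prof(\E)$.

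It then remains to check that $\Phi$ and $\Psi$ are mutually inverse. For $\Phi\circ\Psi=\id$, the key input is that $F$ preserves units, so that evaluating $\Psi(\phi)_X$ at $u_X$ reduces, via unitality in $\Prof(\E)$, to $\phi$ itself. For $\Psi\circ\Phi=\id$, the naturality of $\alpha$ with respect to the "composition with $u_X$" morphism in $\eA$ forces every component $\alpha_Y$ to factor through $\alpha_X\circ u_X$ in precisely the way prescribed by $\Psi$, so $\alpha$ is recovered. The main obstacle I anticipate is bookkeeping: unpacking the $\E$-category structure of $\eE^{\A\op}$ concretely enough to identify the morphism $F_{X,Y}$ with a genuine evaluation pairing, and tracking the relevant 2-cells as they pass through the associativity and unitality coherences of $\Prof(\E)$. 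Once this translation is set up, the core Yoneda argument is formal and parallels the classical proof for $\V$-enriched categories.
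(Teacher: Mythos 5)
Your proposal matches the paper's proof: the forward map is $\alpha\mapsto\alpha_X\circ j_X$, and the backward map applies the structure map of $F$ and then (pre)composes with $\phi$ using the $\eE^{\A\op}$-structure, which is exactly the paper's construction (and coincides with the evaluation-based rephrasing the paper records immediately afterwards). The one step you describe as "formal" that the paper must verify by hand is the $\E$-naturality of the canonical isomorphism $\varrho:\widetilde{\map}_{\E^{\A\op}}(\h_\A,-)\xrightarrow{\iso}\id$; this requires a dedicated diagram check and does not follow merely from associativity and compatibility of $F$ with composition.
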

Here $\ECat(\eA,\eE^{\A\op})$ is the hom-category in the $2$-category of $\E$-categories, which is defined in \cref{2-category of E-categories}. The object $\h_\A\in\E(\A\op\times\A)$ is called the \textbf{identity profunctor}; we recall its definition in \cref{Definition of h on objects and morphisms}. It is the unit object in a monoidal structure on $\E(\A\op\times\A)$, which we describe in \cref{Section The cancelling tensor product}. 

We define $\E$-prederivators in \cref{E-prederivator definition}. An $\E$-category $\eA$ is called an \textbf{$\E$-prederivator} if, among other conditions, it is equipped with a notion of pullback along functors. In particular, given a functor $u:\A\rightarrow\B$ and an object $X\in\dA_0(\B)$, we have an object $u^*X\in\dA_0(\A)$. We show, in \cref{E-prederivators induce prederivators}, that any $\E$-prederivator $\eA$ gives rise to a prederivator $\dA$, which we call the \textbf{prederivator induced by $\eA$}.

In \cref{Weighted colimits definition}, we define weighted homotopy limits and colimits in $\E$-categories. Given an $\E$-category $\eA$, categories $\A$ and $\B$, and objects $X\in\dA_0(\A)$ and $W\in\E(\A\op\times\B)$, the \textbf{homotopy colimit of $X$ weighted by $W$}, if it exists, is an object $W\otimes_\A X\in\dA_0(\B)$. This object must represent the $\E$-category map below: 
\begin{center}
\begin{tikzcd}[column sep=small]
\eA\arrow{rrrrr}{\widetilde{\map}_{\dA}(X,-)} &&&&& \eE^{\A\op}\arrow{rrrrr}{\widetilde{\map}_{\E^{\A\op}}(W,-)} &&&&& \eE^{\B\op}
\end{tikzcd}
\end{center} 
Thus, for any category $\C$ and any $Z\in\dA_0(\C)$, the weighted homotopy colimit is equipped with isomorphisms
\[
\widetilde{\map}_{\dA}(W\otimes_\A X,Z)\iso\widetilde{\map}_{\E^{\A\op}}(W,\widetilde{\map}_{\dA}(X,Z))
\]
in $\E(\B\op\times\C)$, which are \textbf{$\E$-natural} in $Z$ (see \cref{E-natural transformation definition}). This is a local definition for the weighted homotopy colimit; if, given an object $X\in\dA_0(\A)$, the weighted homotopy colimit of $X$ exist for all possible weights, then we can obtain a global characterisation in the manner of~\cite{GR19,GS17}. Specifically, the weighted homotopy colimits assemble into an $\E$-category map $-\otimes_\A X$, which forms part of the following $\E$-category \textbf{adjunction} (see \cref{Section The yoneda lemma and adjunctions for E-categories}): 
\begin{center}
\begin{tikzcd}
\eE^{\A\op}\arrow[rr, bend left=40, "-\;\otimes_\A X" above,""{name=U, below}]\arrow[rr,leftarrow, bend right=40, "\widetilde{\map}_{\dA}(X\text{,}-)" below, ""{name=D}]
&& \;\eA
\arrow[phantom,from=U,to=D,"\bot"]
\end{tikzcd}
\end{center}

In \cref{(Left) E-derivator definition}, we define \textbf{$\E$-derivators}. These are $\E$-prederivators that, in particular, admit all weighted homotopy limits and colimits. We show, in \cref{(Left) E-derivators induce (left) derivators}, that if $\eA$ is an $\E$-derivator, the induced prederivator $\dA$ is a derivator. 

In \cref{Closed E-modules induce E-derivators}, we show that any closed $\E$-module $\D$ gives rise to an $\E$-derivator $\eD$. In this way, the theory of closed $\E$-modules is encompassed by the theory of $\E$-prederivators and $\E$-derivators. However, in general, $\E$-prederivators need not admit all weighted homotopy limits and colimits: for instance, in \cref{Compact objects give an E-prederivator}, we discuss the prederivator of compact objects in a triangulated derivator, which we show admits certain, but not all, $\dHo(\Spt)$-weighted homotopy colimits. In this way, working with $\E$-prederivators gives us the flexibility to study natural examples of enriched homotopy theories that are not captured by closed $\E$-modules. Moreover, even if an $\E$-prederivator $\eA$ does not admit all weighted homotopy (co)limits, if it admits enough, we can still manipulate these as in~\cite{GR19,GS17}. In particular, we can carry out computations with weighted homotopy (co)limits by doing computations with the weights in $\E$. 

Working in the $2$-category of $\E$-prederivators has other advantages. For example, the $\E$-category Yoneda lemma of \cref{Yoneda lemma for E-categories} is extremely useful. We give an application of the Yoneda lemma in \cref{E-morphism is representable if underlying morphism is}, in which we prove the following representability theorem for $\E$-prederivator maps:
\begin{theorem*}
Let $\eA$ be an $\E$-prederivator. An $\E$-category morphism $F:\eA\rightarrow\eE$ is representable if and only if the $\E(\0)$-functor
\[
F:\eA(\0)\rightarrow\eE(\0)
\]
is representable as an $\E(\0)$-functor.
\end{theorem*}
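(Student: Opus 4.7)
The forward direction is essentially by restriction: if $F \cong \widetilde{\map}_{\dA}(X,-)$ for some $X \in \dA_0(\0)$, then evaluating at $\0$ gives $F|_{\0} \cong \eA(\0)(X,-)$ as $\E(\0)$-functors, so the underlying $\E(\0)$-functor is representable by $X$. The substance is the converse, which I would prove in four steps.

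First, unpacking the hypothesis: an isomorphism $\beta: \eA(\0)(X,-) \xrightarrow{\iso} F|_{\0}$ of $\E(\0)$-functors corresponds, by the classical enriched Yoneda lemma in the $\E(\0)$-category $\eA(\0)$, to an element $u: \h_{\0} \to FX$ in $\E(\0)$ (recalling that $\h_{\0}$ is the monoidal unit of $\E(\0)$ since $\0\op \times \0 \iso \0$). Second, apply the $\E$-category Yoneda lemma (Theorem cited above, with $\A = \0$) to lift $u$ to an $\E$-natural transformation
\[
\alpha: \widetilde{\map}_{\dA}(X,-) \to F
\]
of $\E$-category maps $\eA \to \eE$, noting that the bijection provided by the Yoneda lemma ensures the restriction $\alpha|_{\0}$ recovers $\beta$.

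Third, I reduce the isomorphism question to a check in $\E(\0)$. An $\E$-natural transformation $\alpha$ is an isomorphism iff each component $\alpha_Y: \widetilde{\map}_{\dA}(X,Y) \to FY$ in $\E(\A)$ is an isomorphism, for every category $\A$ and every $Y \in \dA_0(\A)$. Since $\E$ is a derivator, axiom \textbf{Der 2} says the family of functors $a^*: \E(\A) \to \E(\0)$ indexed by objects $a \in \A$ is jointly conservative; so $\alpha_Y$ is an iso iff $a^*\alpha_Y$ is an iso in $\E(\0)$ for all $a$. The compatibility of $F$ and $\widetilde{\map}_{\dA}(X,-)$ with the $\E$-prederivator pullback structure, together with the $\E$-naturality of $\alpha$, identifies $a^*\alpha_Y$ with the component $\alpha_{a^*Y}$ at the object $a^*Y \in \dA_0(\0)$. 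But this is precisely a value of $\alpha|_{\0} = \beta$, which is an iso by assumption.

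The main obstacle is the bookkeeping in step three: verifying that the square relating $a^*\alpha_Y$ and $\alpha_{a^*Y}$ actually commutes. This requires assembling the axioms for $\E$-prederivator morphisms with the definition of $\E$-natural transformation (Definitions cited earlier) and matching them against the Yoneda-produced $\alpha$. Once this pointwise reduction is set up, the conclusion is forced by Der 2 and the hypothesis on $\beta$; the rest of the proof is formal.
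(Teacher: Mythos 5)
Your proof follows essentially the same route as the paper: classical enriched Yoneda to extract an element $\mathbb{1}\to FX$ in $\E(\0)$ (your $\h_\0\to FX$, same thing since $\h_\0=\partial_\0\mathbb{1}=\mathbb{1}$), then the $\E$-category Yoneda lemma to lift it to an $\E$-natural transformation $\alpha:\widetilde{\map}_{\dA}(X,-)\Rightarrow F$, then reduce the isomorphism check to $\E(\0)$ via \textbf{Der 2}. The only presentational difference is in how the reduction step is packaged: the paper passes through two of its earlier lemmas, first observing that $\alpha$ induces a modification of prederivator maps $\dA\to\E$ (\cref{E-natural maps induce modifications}), and then invoking \cref{modifications are isomorphisms iff they are on underlying category}, which encapsulates exactly the square relating $a^*\alpha_Y$ and $\alpha_{a^*Y}$ that you flag as the bookkeeping obstacle. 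You propose to verify that square directly from the $\E$-prederivator and $\E$-naturality axioms, which is correct and is precisely what the cited lemmas amount to when unwound. You also correctly identify that this is the step where the $\E$-prederivator hypothesis (rather than a bare $\E$-category) is used, matching the remark in the paper's proof.
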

The $\E(\0)$-functor $F:\eA(\0)\rightarrow\eE(\0)$ that appears in this theorem is defined in \cref{A([0]) is E([0])-enriched}. In this way, even if we are only interested in studying concepts that can be phrased on the level of unenriched derivators, it can be beneficial to work in the $2$-category of $\dHo(\sSet)$-prederivators, particularly since derivators associated to model categories and quasicategories are naturally $\dHo(\sSet)$-enriched.

\subsection*{Organisation}

In \cref{Chapter Derivators}, we survey the elementary theory of prederivators and derivators. Most of the content of this section can be found in~\cite{Groth13,Groth16,GPS14}. We recall the basic definitions and a number of important examples in \cref{Section Prederivators and derivators}. In \cref{Section Cocontinuous maps and adjunctions}, we discuss preservation of homotopy Kan extensions and study adjunctions between derivators. In~\cref{Section Two-variable maps}, we recall some simple aspects of the theory of two-variable derivator maps. Finally, in~\cref{Section Pointed and stable derivators}, we recall the definitions of pointed and triangulated derivators. We also recall some important results from the theory of triangulated categories, which we use in \cref{Chapter Actions of monoidal derivators} and \cref{Chapter Enriched derivators} to study enriched triangulated derivators.
 
In \cref{Chapter Actions of monoidal derivators}, we discuss actions of monoidal derivators. First, in~\cref{Section Ends and coends}, we recall the definitions of ends and coends in a derivator; these play an essential role in the theory of derivator two-variable adjunctions, which we recall in~\cref{Section Two-variable adjunctions}. In~\cref{Section Monoidal derivators} and~\cref{Section Closed actions of monoidal derivators}, we recall the definitions of monoidal derivators and their actions. Much of the material in these first sections can be found in~\cite{GPS14}; beyond this point, unless otherwise specified, the results are new. In~\cref{Section The cancelling tensor product} and~\cref{Section The maps otimes u and h u}, we collect a number of coherence results for structure arising from the action of a monoidal derivator. These results provide important ingredients for the development of $\E$-categories and $\E$-prederivators in~\cref{Chapter E-categories} and~\cref{Chapter Enriched derivators}. We also use them, in~\cref{Section Cotensors in closed E-modules}, to study cotensors in closed $\E$-modules. In particular, in \cref{Left adjoint preserves tensors iff right adjoint preserves cotensors}, we prove that a left adjoint between closed $\E$-modules preserves tensors if and only if its right adjoint preserves cotensors. In \cref{Section A representability theorem for triangulated E-modules}, we use this result to prove \cref{Representability for perfectly generated triangulated derivator}, a representability theorem for triangulated closed modules over triangulated monoidal derivators. 

In \cref{Chapter E-categories}, we introduce $\E$-categories. In~\cref{Section Basic definitions of E-categories} we develop their basic theory and give a number of examples; in particular, in~\cref{E-modules are E-categories}, we prove that any closed $\E$-module gives rise to an associated $\E$-category. We prove the $\E$-category Yoneda lemma, \cref{Yoneda lemma for E-categories}, in \cref{Section The yoneda lemma and adjunctions for E-categories}, and use this to study $\E$-category adjunctions. In \cref{Section Transferring enrichments}, we study monoidal morphisms, and prove, in \cref{Transferring enrichment along a monoidal left adjoint}, that we may transfer enrichment along monoidal adjunctions. 

In \cref{Chapter Enriched derivators}, we introduce $\E$-prederivators and $\E$-derivators. We study $\E$-prederivators in \cref{Section E-prederivators}, which we introduce in \cref{E-prederivator definition}. In \cref{Section The 2-category of E-prederivators}, we define the $2$-category of $\E$-prederivators, and prove \cref{E-morphism is representable if underlying morphism is}, a representability theorem for $\E$-prederivator maps. In \cref{Section E-derivators}, we define weighted homotopy limits and colimits in an $\E$-category, and give the definition of $\E$-derivators in \cref{(Left) E-derivator definition}. In \cref{Closed E-modules induce E-derivators}, we show that the $\E$-category associated to a closed $\E$-module is an $\E$-derivator. Finally, in \cref{(Left) E-derivators induce (left) derivators}, we show that any $\E$-derivator induces a derivator.

\begin{notation*}
We will use the following notation throughout:
\begin{itemize}
\item We will write $\mathbf{[n]}=\{ 0\rightarrow 1\rightarrow\dots\rightarrow n \}$ for the ordinal number $n+1$ regarded as a category. In particular, $\0$ is the terminal category.

\item We will denote small categories with upright font. For example, $\A,\B,...\in\Cat$.

\item Given categories $\A$ and $\B$, we will denote the symmetry isomorphism for the product by $\sigma:\A\times\B\xrightarrow{\;\iso\;}\B\times\A$.

\item Given categories $\A$ and $\B$, we will write $\p_\A:\A\times\B\rightarrow\B$ for the canonical projection onto $\B$. We will also write $\p_\A:\B\times\A\rightarrow\B$. 

More generally, we will write $\p_\A$ for any map projecting away the category $\A$. For example, we also write $\p_\A:\A\rightarrow\0$ for the canonical map to the terminal category.

\item When we have multiple copies of the same category appearing in a product, for example $\A$ in the product $\A\times\A$, we will write $\A=\A_1=\A_2$ to keep track of the maps $\sigma$ and $\p_\A$. In this example, the two projections are $\p_{\A_1}:\A_1\times\A_2\rightarrow\A_2$ and $\p_{\A_2}:\A_1\times\A_2\rightarrow\A_1$.
\end{itemize}
\end{notation*}

\chapter{Derivators}\label{Chapter Derivators}

In this chapter we recall the elementary theory that we will need in later chapters. We begin, in~\cref{Section Prederivators and derivators}, with the definition of the $2$-category $\Der$ of derivators. This section includes a number of important definitions and basic results; in particular, we introduce homotopy Kan extensions and homotopy exact squares. In~\cref{Section Cocontinuous maps and adjunctions}, we discuss preservation of homotopy Kan extensions and study adjunctions in $\Der$. In~\cref{Section Two-variable maps}, we look at some aspects of the theory of two-variable maps; we will return to this in~\cref{Chapter Actions of monoidal derivators} once we have recalled the theory of ends and coends. Finally, in~\cref{Section Pointed and stable derivators}, we briefly discuss pointed and triangulated derivators; in later chapters, these will provide important examples of enriched derivators. In this section we also recall some basic results about triangulated categories, which play a significant role in the theory of triangulated derivators.  Much of the material in this chapter can be found in~\cite{Groth13}. There are also useful introductions to basic derivator theory in~\cite{Groth16}, \cite[Section 2]{GPS14} and  \cite[Chapter 1]{CN08}.

\section{Prederivators and derivators}\label{Section Prederivators and derivators}

This section contains the definitions and basic elements of the theory of derivators. Most of the material in the section can be found in~\cite[Chapter 1,2]{Groth13}. We begin with the definition of prederivators, and develop the language that we require to state the additional axioms that define derivators. These are defined towards the middle of the section, in~\cref{Derivator definition}. We also introduce a number of examples, some of which we will revisit repeatedly. 

We will denote the $2$-category of small categories by $\Cat$, and the $2$-category of large categories by $\CAT$. We take the same size conventions as~\cite{Lagkas18a}.

\begin{definition}\label{Prederivator definition}
A \textbf{prederivator} is a $2$-functor $\D:\Cat\op\rightarrow\CAT$. We denote its values as follows:
\begin{center}
\begin{tikzcd}
\A\arrow[rr, bend left=50, "u" above,""{name=U, below}]\arrow[rr, bend right=50, "v" below, ""{name=D}]
&& \B
& [-15pt] \longmapsto
& [-15pt] \D(\A)
& \D(\B)\arrow[l,bend left=50,"v^*" below,""{name=L,above}]\arrow[l,bend right=50,"u^*" above,""{name=R,below}]
\arrow[Rightarrow,from=U,to=D,shorten >=0.1cm,shorten <=0.1cm,"\kappa"]
\arrow[Rightarrow,from=R,to=L,shorten >=0.1cm,shorten <=0.1cm,"\kappa^*"]
\end{tikzcd}
\end{center}
\end{definition}

\begin{definition}\label{Pseudonatural transformation definition}
Let $\D_1$ and $\D_2$ be prederivators. We call a \textbf{pseudonatural transformation} $F:\D_1\rightarrow\D_2$ a \textbf{morphism of prederivators}. Explicitly, this consists of functors $F:\D_1(\A)\rightarrow\D_2(\A)$ for any category $\A$, and natural isomorphisms
\begin{center}
\begin{tikzcd}
 \D_1(\B) \arrow[rr,"F"]\arrow[dd,"u^*" left] && 
 \D_2(\B)\arrow[Rightarrow,ddll,"\gamma" above left,"\iso" below right,shorten >=1cm,shorten <=1cm, shift left] \arrow[dd,"u^*"] \\
 &\\
\D_1(\A)\arrow[rr,"F" below] && 
 \D_2(\A)
\end{tikzcd}
\end{center}
for any functor $u:\A\rightarrow\B$. This data must satisfy the following equalities:
\begin{enumerate}
\item For any category $\A$, we have:
\begin{center}
\begin{tikzcd}
 \D_1(\A) \arrow[rr,"F"]\arrow[dd,"\id_\A^*" left] && 
 \D_2(\A)\arrow[Rightarrow,ddll,"\gamma" above left,"\iso" below right,shorten >=1.1cm,shorten <=1.1cm, shift left] \arrow[dd,"\id_\A^*"] \\
 &&&[-25pt] = &[-25pt] \id_F\\
\D_1(\A)\arrow[rr,"F" below] && 
 \D_2(\A)
\end{tikzcd}
\end{center}
\item For any composable maps $\A\xrightarrow{\;\;u\;\;}\B\xrightarrow{\;\;v\;\;}\C$, we have:
\begin{center}
\begin{tikzcd}
 \D_1(\C) \arrow[rr,"F"]\arrow[dd,"v^*" left] && 
 \D_2(\C)\arrow[Rightarrow,ddll,"\gamma" above left,"\iso" below right,shorten >=1.1cm,shorten <=1.1cm, shift left] \arrow[dd,"v^*"] \\
&&&[-15pt] &[-15pt] \D_1(\C)\arrow[rr,"F"]\arrow[dd,"(v\circ u)^*" left] &&\D_2(\C)\arrow[dd,"(v\circ u)^*"]\arrow[Rightarrow,ddll,"\gamma" above left,"\iso" below right,shorten >=1.1cm,shorten <=1.1cm, shift left]  
\\
\D_1(\B)\arrow[rr,"F" below]\arrow[dd,"u^*" left] && 
 \D_2(\B)\arrow[Rightarrow,ddll,"\gamma" above left,"\iso" below right,shorten >=1.1cm,shorten <=1.1cm, shift left] \arrow[dd,"u^*"]
 & =
 \\
&&&& \D_1(\A)\arrow[rr,"F" below] &&\D_2(\A) 
\\
\D_1(\A)\arrow[rr,"F" below] && 
 \D_2(\A)
\end{tikzcd}
\end{center}
\item For any natural transformation $\kappa:u\Rightarrow v$, we have:
\begin{center}
\begin{tikzcd}
 \D_1(\B) \arrow[rr,"F"]\arrow[dd,"u^*" right,bend left=40,""{name=R,left}]\arrow[dd,"v^*" left,bend right=40,""{name=L,right}] && 
 \D_2(\B)\arrow[Rightarrow,ddll,"\gamma" above left,"\iso" below right,shorten >=1cm,shorten <=1cm, shift left] \arrow[dd,"u^*"] 
 &[-15pt] &[-15pt] \D_1(\B) \arrow[rr,"F"]\arrow[dd,"v^*" left] 
 && \D_2(\B)\arrow[Rightarrow,ddll,"\gamma" above left,"\iso" below right,shorten >=1cm,shorten <=1cm, shift left] \arrow[dd,"v^*" left,bend right=40,""{name=U,right}]\arrow[dd,"u^*" right,bend left=40,""{name=D,left}]
 \\
&&& = \\
\D_1(\A)\arrow[rr,"F" below] 
&& \D_2(\A)
&& \D_1(\A)\arrow[rr,"F" below] 
&& \D_2(\A)
\arrow[Rightarrow,from=R,to=L,shorten >=0.1cm,shorten <=0.1cm,"\;\;\kappa^*" above]
\arrow[Rightarrow,from=D,to=U,shorten >=0.1cm,shorten <=0.1cm,"\;\;\kappa^*" above]
\end{tikzcd}
\end{center}
\end{enumerate}

\end{definition}

\begin{definition}
Given prederivator maps $F,G:\D_1\rightarrow\D_2$, a \textbf{modification} $\theta:F\Rightarrow G$ consists of natural transformations
\begin{center}
\begin{tikzcd}
\D_1(\A)\arrow[r, bend left=50, "F" above,""{name=U, below}]\arrow[r, bend right=50, "G" below, ""{name=D}]
& \D_2(\A)
\arrow[Rightarrow,from=U,to=D,shorten >=0.1cm,shorten <=0.1cm,"\theta"]
\end{tikzcd}
\end{center}
such that, for any functor $u:\A\rightarrow\B$, we have:
\begin{center}
\begin{tikzcd}
 \D_1(\B) \arrow[rr,"F",bend left=30,""{name=R,below}]\arrow[dd,"u^*" left]\arrow[rr,"G" below,bend right=15,""{name=L,above}] && 
 \D_2(\B)\arrow[Rightarrow,ddll,"\gamma" above left,"\iso" below right,shorten >=1cm,shorten <=1cm, shift left] \arrow[dd,"u^*"] 
 &[-15pt] &[-15pt] \D_1(\B) \arrow[rr,"F"]\arrow[dd,"u^*" left] 
 && \D_2(\B)\arrow[Rightarrow,ddll,"\gamma" above left,"\iso" below right,shorten >=1cm,shorten <=1cm, shift left]\arrow[dd,"u^*" right]
 \\
&&& = \\
\D_1(\A)\arrow[rr,"G" below] 
&& \D_2(\A)
&& \D_1(\A)\arrow[rr,"F" above,bend left=15,""{name=D,below}] \arrow[rr,"G" below,bend right=30,""{name=U,above}]
&& \D_2(\A)
\arrow[Rightarrow,from=R,to=L,shorten >=0.1cm,shorten <=0.1cm,"\theta\;" left]
\arrow[Rightarrow,from=D,to=U,shorten >=0.1cm,shorten <=0.1cm,"\theta\;" left]
\end{tikzcd}
\end{center}
\end{definition}
Prederivators, morphisms and modifications form a $2$-category $\PDer$. Given prederivators $\D_1$ and $\D_2$, we write $\Hom(\D_1,\D_2)$ for the category of morphisms from $\D_1$ to $\D_2$.

\begin{example}\label{Representable and constant prederivators}
Given any (large) category $\mathcal{C}$, we can form its \textbf{represented prederivator}:  
\begin{align*}
y(\mathcal{C}):\Cat\op&\rightarrow \; \CAT\\
 \A \;\; &\mapsto \;\;\; \mathcal{C}^\A
\end{align*}
We may also form the \textbf{constant prederivator}:
\begin{align*}
c(\mathcal{C}):\Cat\op&\rightarrow \; \CAT\\
 \A \;\; &\mapsto \;\;\; \mathcal{C}
\end{align*}
\end{example}

In analogy with representable prederivators, for any prederivator $\D$, we call $\D(\0)$ the \textbf{underlying category} of $\D$, and for any $u:\A\rightarrow\B$, we call $u^*:\D(\B)\rightarrow\D(\A)$ the \textbf{restriction} or \textbf{pullback functor along $u$}. 

\begin{example}
Given any model category $\M$, we can form a prederivator
\begin{align*}
\dHo(\M):\Cat\op&\rightarrow \; \CAT\\
 \A \;\; &\mapsto \;\;\; \Ho(\M^\A)
\end{align*}
where the homotopy category $\Ho(\M^\A)$ is formed with respect to the pointwise weak equivalences in $\M^\A$. Note that $\M^\A$ may not carry a model structure in general; the fact that the localisation can still be formed without moving to a larger universe follows from~\cite{Cisinski03}. Moreover, given model categories $\M$ and $\N$, and a left Quillen functor $F:\M\rightarrow\N$, the derived functors induce a prederivator map $\mathds{L}F:\dHo(\M)\rightarrow\dHo(\N)$. Similarly, any right Quillen functor $G:\N\rightarrow\M$, induces a prederivator map $\mathds{R}G:\dHo(\N)\rightarrow\dHo(\M)$.

Similarly, given a quasicategory $Q$, we have a prederivator
\begin{align*}
\dHo(Q):\Cat\op&\rightarrow \; \CAT\\
 \A \;\; &\mapsto \;\;\; \Ho(Q^{N\A})
\end{align*}
where $N\A$ is the nerve of $\A$, and $\Ho(Q^{N\A})$ is the homotopy category of the quasicategory $Q^{N\A}$.
\end{example}

\begin{example}\label{Shifted prederivator definition}
For any prederivator $\D$ and any category $\J$, we can form the \textbf{shifted prederivator}:
\begin{align*}
\D^\J:\Cat\op&\rightarrow \; \CAT\\
 \A \; &\mapsto \;\D(\J\times\A)
\end{align*}
Moreover, any functor $u:\J\rightarrow\K$ induces a prederivator map $u^*:\D^\K\rightarrow\D^\J$, with component at $\A$ given by
\[
(u\times\A)^*:\D(\K\times\A)\rightarrow\D(\J\times\A),
\]
and any natural transformation $\kappa:u\Rightarrow v$ induces modification $\omega^*:u^*\Rightarrow v^*$. These organise into a $2$-functor: 
\begin{align*}
\Cat\op\times\PDer&\rightarrow \; \PDer\\
 (\J,\D) \;\;\;\;\;\; &\mapsto \;\;\; \D^\J
\end{align*}
\end{example}

\begin{remark}
Given a prederivator map $F:\D_1\rightarrow\D_2$, and a functor $u:\J\rightarrow\K$, the structure isomorphisms of $F$ induce a modification:
\begin{center}
\begin{tikzcd}
 \D_1^\K \arrow[rr,"F"]\arrow[dd,"u^*" left] && 
 \D_2^\K \arrow[Rightarrow,ddll,"\gamma" above left,"\iso" below right,shorten >=0.8cm,shorten <=0.8cm, shift left] \arrow[dd,"u^*"] \\
 &\\
\D_1^\J\arrow[rr,"F" below] && 
 \D_2^\J
\end{tikzcd}
\end{center}
Moreover, given $\theta:F\Rightarrow G$, the modification condition lifts to an equality:
\begin{center}
\begin{tikzcd}
 \D_1^\K \arrow[rr,"F",bend left=30,""{name=R,below}]\arrow[dd,"u^*" left]\arrow[rr,"G" below,bend right=30,""{name=L,above}] && 
 \D_2^\K\arrow[Rightarrow,ddll,"\gamma" above left,"\iso" below right,shorten >=1cm,shorten <=1cm, shift left=2] \arrow[dd,"u^*"] 
 &[-10pt] &[-10pt] \D_1^\K \arrow[rr,"F"]\arrow[dd,"u^*" left] 
 && \D_2^\K\arrow[Rightarrow,ddll,"\gamma" above left,"\iso" below right,shorten >=1cm,shorten <=1cm, shift right=2]\arrow[dd,"u^*" right]
 \\
&&& = \\
\D_1^\J\arrow[rr,"G" below] 
&& \D_2^\J
&& \D_1^\J\arrow[rr,"F" above,bend left=30,""{name=D,below}] \arrow[rr,"G" below,bend right=30,""{name=U,above}]
&& \D_2^\J
\arrow[Rightarrow,from=R,to=L,shorten >=0.1cm,shorten <=0.1cm,"\theta" left]
\arrow[Rightarrow,from=D,to=U,shorten >=0.1cm,shorten <=0.1cm,"\theta" left]
\end{tikzcd}
\end{center}
\end{remark}

Often it will be convenient to state and prove results at the level of shifted prederivators. For example, rather than prove a result about the component $F:\D_1(\A)\rightarrow\D_2(\A)$ of a prederivator map $F$, it may be more convenient to work with the shifted map $F:\D_1^\A\rightarrow\D_2^\A$ in $\PDer$.

\begin{definition}
Given any prederivator $\D$, we may define its \textbf{opposite} prederivator:
\begin{align*}
\D\op:\Cat\op&\rightarrow \; \CAT\\
 \A \;\; &\mapsto \; \D(\A\op)\op
\end{align*}
\end{definition}

\begin{example}
For any category $\mathcal{C}$, we have a canonical isomorphism $y(\mathcal{C})\op\iso y(\mathcal{C}\op)$, induced by the isomorphisms $(\mathcal{C}^{\A\op})\op\iso (\mathcal{C}\op)^\A$. Similarly, for any model category $\M$, we have $\dHo(\M)\op\iso\dHo(\M\op)$.
\end{example}

\begin{example}
For any prederivator $\D$ and any category $\J$, we have $(\D^\J)\op=(\D\op)^{\J\op}$.
\end{example}

\begin{definition}
Let $\D$ be a prederivator, and let $\A$ be a category. Suppose we have a map $f:a\rightarrow b$  in $\A$, classified by a natural transformation
\begin{center}
\begin{tikzcd}
\0\arrow[r, bend left=50, "a" above,""{name=U, below}]\arrow[r, bend right=50, "b" below, ""{name=D}]
& \A.
\arrow[Rightarrow,from=U,to=D,shorten >=0.1cm,shorten <=0.1cm,"f"]
\end{tikzcd}
\end{center}
Applying $\D$ to this diagram, we obtain
\begin{center}
\begin{tikzcd}
\D(\0)\arrow[leftarrow,r, bend left=50, "a^*" above,""{name=U, below}]\arrow[leftarrow,r, bend right=50, "b^*" below, ""{name=D}]
& \D(\A).
\arrow[Rightarrow,from=U,to=D,shorten >=0.1cm,shorten <=0.1cm,"f^*"]
\end{tikzcd}
\end{center}
For any object $X\in\D(\A)$, write $X_a$ for $a^*X$, and write $X_f:X_a\rightarrow X_b$ for the component of $f^*$ at $X$. These assignments define a functor:
\begin{align*}
\dia_\A(X):\A&\rightarrow \; \D(\0)\\
 \;\;\;\;\;\; a  &\mapsto \;\; X_a
\end{align*}
We call this the \textbf{underlying diagram of $X$}. This construction induces a functor \linebreak$\dia_\A:\D(\A)\rightarrow\D(\0)^\A$. Similarly, the underlying diagram functors of the shifted prederivators $\D^\J$ induce \textbf{partial underlying diagram functors} $\dia_\A^\J:\D(\J\times\A)\rightarrow\D(\J)^\A$ for any $\J$.
\end{definition}

\begin{definition}\label{Homotopy Kan extension definition}
Let $\D$ be a prederivator, and let $u:\A\rightarrow\B$ be a functor. If the pullback functor $u^*:\D(\B)\rightarrow\D(\A)$ has a left adjoint $u_!:\D(\A)\rightarrow\D(\B)$, we call this map the \textbf{left (homotopy) Kan extension} along $u$. If $u^*$ admits a right adjoint $u_*:\D(\A)\rightarrow\D(\B)$, we call this the \textbf{right (homotopy) Kan extension} along $u$.

In particular, in the case of the unique map $\p:\A\rightarrow\0$, we call these the \textbf{(homotopy) colimit} and \textbf{(homotopy) limit}, and denote them by:
\[
\hocolim:=\p_!:\D(\A)\rightarrow\D(\0)
\]
\[\holim:=\p_*:\D(\A)\rightarrow\D(\0)
\]
\end{definition}

\begin{example}
Given a category $\mathcal{C}$, homotopy Kan extensions in the representable prederivator $y(\mathcal{C})$ are exactly Kan extensions. If $\M$ is a model category, homotopy Kan extensions in $\dHo(\M)$ recover the familiar homotopy Kan extensions. 
\end{example}

\begin{definition}\label{D-exact squares}
Let $\D$ be a prederivator, and suppose we have a natural transformation:
\begin{center}
\begin{tikzcd}
 \A \arrow[rr,"u"]\arrow[dd,"v" left] && 
 \B \arrow[Rightarrow,ddll,"\kappa" above left,shorten >=0.7cm,shorten <=0.8cm, shift left] \arrow[dd,"w"] \\
 &\\
\J\arrow[rr,"z" below] && 
 \K
\end{tikzcd}
\end{center}
Suppose that each of the functors above admits left and right homotopy Kan extensions in $\D$. Then the square above induces canonical maps:
\begin{center}
\begin{tikzcd}
&[-25pt] &[-25pt] \D(\J)\arrow[ddrr,bend right,equal,""{name=L,above right}]\arrow[rr,leftarrow,"v_!" above] && \D(\A)\arrow[Rightarrow,to=L,"\epsilon" above left,shorten >=0.5cm,shorten <=0.7cm] \arrow[leftarrow,rr,"u^*"]\arrow[leftarrow,dd,"v^*" left] && 
 \D(\B)\arrow[ddrr,bend left,equal,""{name=R,below left}] \arrow[Rightarrow,ddll,"\kappa^*" above left,shorten >=1cm,shorten <=1cm, shift left] \arrow[leftarrow,dd,"w^*"]\\
 
 \kappa_! & := \\
 
&&&&\D(\J)\arrow[leftarrow,rr,"z^*" below] && 
 \D(\K)\arrow[Leftarrow,to=R,"\eta" above left,shorten >=0.5cm,shorten <=0.7cm] \arrow[rr,leftarrow,"w_!" below]
 && \D(\B) 
\end{tikzcd}
\end{center}

\begin{center}
\begin{tikzcd}
&[-25pt] &[-25pt] \D(\B)\arrow[ddrr,bend left,equal,""{name=U,below left}]\arrow[dd,leftarrow,"u_*" left] \\
\\
&&
 \D(\A)\arrow[Leftarrow,to=U,"\eta" above left,shorten >=0.5cm,shorten <=0.5cm] \arrow[leftarrow,rr,"u^*"]\arrow[leftarrow,dd,"v^*" left] && 
 \D(\B) \arrow[Rightarrow,ddll,"\kappa^*" above left,shorten >=1cm,shorten <=1cm, shift left] \arrow[leftarrow,dd,"w^*"] \\
\kappa_*& := \\
&&\D(\J)\arrow[ddrr,bend right,equal,""{name=D,above right}]\arrow[leftarrow,rr,"z^*" below] && 
 \D(\K)\arrow[Rightarrow,to=D,"\epsilon" above left,shorten >=0.5cm,shorten <=0.5cm] \\
 \\
&&&& \D(\J)\arrow[uu,"z_*" right]
\end{tikzcd}
\end{center}
These two transformations are \textbf{mates} of the natural transformation $\kappa^*$ in the sense of~\cite[Section 2]{KS74}. An introduction to mates can be found in~\cite[Appendix A]{GPS14a}. It follows that $\kappa_!$ and $\kappa_*$ are \textbf{conjugate}, as in~\cite[Chapter IV.7]{MacLane13}. In particular, $\kappa_!$ is an isomorphism if and only if $\kappa_*$ is. If this is the case, we say the square $\kappa$ is \textbf{$\D$-exact}.
\end{definition}

\begin{remark}\label{Canonical map to colimit}
Given a commutative square, we can apply the constructions of \cref{D-exact squares} to the identity transformation, taking the convention that the transformation goes from the top composite the bottom. Note that, even in this case, the direction of the $2$-cell is important. For example, we have a commutative square:
\begin{center}
\begin{tikzcd}
 \A \arrow[rr,"u"]\arrow[dd,"\p" left] && 
 \B \arrow[Rightarrow,ddll,"\id" above left,shorten >=0.8cm,shorten <=0.9cm]\arrow[dd,"\p"] \\
 &\\
\0\arrow[rr, equal] && 
 \0
\end{tikzcd}
\end{center}
For any prederivator $\D$ admitting the relevant homotopy colimits we get a canonical map:
\begin{center}
\begin{tikzcd}
 \D(\B)\arrow[dd,"u^*" left]\arrow[rrd,"\p_!" above right,bend left,""{name=U}] \\
 
 && \D(\0) \\
 
 \D(\A) \arrow[rur,bend right,"\p_!" below right]
 
 \arrow[Rightarrow,from=3-1,to=U,shorten >=1cm,shorten <=1cm,shift right=2]
 
\end{tikzcd}
\end{center} 
On the other hand, considering the identity as a map in the opposite direction
\begin{center}
\begin{tikzcd}
 \A \arrow[rr,"\p"]\arrow[dd,"u" left] && 
 \0 \arrow[Rightarrow,ddll,"\id" above left,shorten >=0.8cm,shorten <=0.9cm] \arrow[dd, equal] \\
 &\\
\B\arrow[rr, "\p" below] && 
 \0
\end{tikzcd}
\end{center}
and taking a prederivator $\D$ admitting homotopy limits, we get a canonical map:
\begin{center}
\begin{tikzcd}
 \D(\B)\arrow[dd,"u^*" left]\arrow[rrd,"\p_*" above right,bend left,""{name=U}] \\
 
 && \D(\0) \\
 
 \D(\A) \arrow[rur,bend right,"\p_*" below right]
 
 \arrow[Leftarrow,from=3-1,to=U,shorten >=1cm,shorten <=1cm,shift right=2]
\end{tikzcd}
\end{center} 
For this reason, even if the $2$-cell in a square is the identity, we will indicate its direction when we discuss $\D$-exact squares.
\end{remark}

We now give the definition of derivator. Following~\cite{Groth13}, we list four derivator axioms \textbf{Der 1-4}. Other sources, for example~\cite{Tabuada08}, add a fifth derivator axiom, \textbf{Der 5}, which we  recall in \cref{Strong prederivator definition}. We will call prederivators that satisfy all five axioms strong derivators.

\begin{definition}\label{Derivator definition}
A prederivator $\D$ is a \textbf{derivator} if it satisfies the following axioms:
\setlength{\leftmargini}{3.8em}
\begin{itemize}
\item[\textbf{Der 1}]$\D$ preserves coproducts. That is, the canonical map 
\[
\D(\coprod_{i}\A_i)\rightarrow\prod_{i}\D(\A_i)
\] 
is an equivalence. In particular, $\D(\emptyset)\equiv\0$.
\item[\textbf{Der 2}]A map $f:X\rightarrow Y$ in $\D(\A)$ is an isomorphism if and only if $f_a:X_a\rightarrow Y_a$ is an isomorphism in $\D(\0)$, for every $a\in\A$.
\item[\textbf{Der 3}] Any functor $u:\A\rightarrow\B$ admits both a left and right Kan extension in $\D$.
\item[\textbf{Der 4}] For any functor $u:\A\rightarrow\B$ and any $b\in\B$, the squares below are $\D$-exact:
\begin{center}
\begin{tikzcd}
 \A\downarrow b \arrow[rr,"\text{pr}"]\arrow[dd,"\p" left] && 
 \A \arrow[Rightarrow,ddll,shorten >=1cm,shorten <=1cm] \arrow[dd,"u"] &
 b\downarrow\A \arrow[rr,"\text{pr}"]\arrow[dd,"\p" left] && 
 \A \arrow[Leftarrow,ddll,shorten >=1cm,shorten <=1cm] \arrow[dd,"u"]\\
 &\\
\0\arrow[rr,"b" below] && 
 \B &
 \0\arrow[rr,"b" below] && 
 \B
\end{tikzcd}
\end{center}
Thus, for any $X\in\D(\A)$, we have canonical isomorphisms $\hocolim(\text{pr}^*X)\iso (u_!X)_b$ and $(u_*X)_b\iso\holim(\text{pr}^*X)$.
\end{itemize}
We denote the full sub-$2$-category of $\PDer$ on derivators by $\Der$. A prederivator map between derivators will often be called a \textbf{derivator map}.
\end{definition}

In \cref{Derivator definition}, the axioms \textbf{Der 1} and \textbf{Der 2} give conditions on the values of a prederivator that make them behave like the homotopy categories of diagram categories. We call a prederivator a \textbf{semiderivator} if it satisfies these two axioms. 

The axiom \textbf{Der 3} is a completeness condition. \textbf{Der 4} allows us to calculate the underlying diagram of a homotopy Kan extension entirely in terms of homotopy limits and colimits. A semiderivator $\D$ is called a \textbf{left derivator} if it satisfies the parts of \textbf{Der 3} and \textbf{Der 4} that deal with left Kan extensions. \textbf{Right derivators} are defined dually. Note that this terminology agrees with the terminology of~\cite{GS17}, but reverses the terminology of~\cite{Cisinski08}.

\begin{remark}
A square 
\begin{center}
\begin{tikzcd}
 \A \arrow[rr,"u"]\arrow[dd,"v" left] && 
 \B \arrow[Rightarrow,ddll,"\kappa" above left,shorten >=0.9cm,shorten <=0.9cm, shift left] \arrow[dd,"w"] \\
 &\\
\J\arrow[rr,"z" below] && 
 \K
\end{tikzcd}
\end{center}
is called \textbf{homotopy exact} if it is $\D$-exact for every derivator $\D$. By definition, every comma square of the form given in \textbf{Der 4} is homotopy exact. A complete characterisation of homotopy exact squares appears in~\cite[Section 3]{GPS14a}.
\end{remark}

\begin{definition}\label{Strong prederivator definition}
A prederivator $\D$ is called \textbf{strong} if it satisfies the following axiom:
\setlength{\leftmargini}{3.8em}
\begin{itemize}
\item[\textbf{Der 5}]For each category $\A$, the partial underlying diagram functor
\[
\dia_{[1]}^\A:\D(\A\times [1])\rightarrow\D(\A)^{[1]}
\]
is full and essentially surjective.
\end{itemize}
\end{definition}

This axiom gives an important connection between the maps in a category $\D(\A)$ and the objects in $\D(\A\times [1])$. It is analogous to the triangulated category axioms (\textbf{TR1} and \textbf{TR3} in~\cite{Neeman14}) that allow us to extend a map in a triangulated category to a distinguished triangle, and to extend a commutative square to a map of distinguished triangles. 

\begin{example}
A prederivator $\D$ is a (strong) derivator if and only if $\D\op$ is. 
\end{example}

\begin{example}
A representable prederivator $y(\mathcal{C})$ is a derivator if and only if $\mathcal{C}$ is complete and cocomplete. In this case, \textbf{Der 4} says that Kan extensions in $\mathcal{C}$ are computed pointwise. Any representable prederivator is strong, since all underlying diagram functors are equivalences.

The constant prederivator $c(\mathcal{C})$ is not a derivator for any $\mathcal{C}\neq\0$, since in this case \textbf{Der 1} fails.
\end{example}

\begin{example}[Cisinski]
For any model category $\M$, the prederivator $\dHo(\M)$ is a strong derivator. For a general model category, the proof is technical; this is the main result of~\cite{Cisinski03}. For a simpler proof in the case of combinatorial model categories, see~\cite[Section 1.3]{Groth13}.
\end{example}

\begin{example}
Let $Q$ be a quasicategory. The prederivator $\dHo(Q)$ is strong. If $Q$ is complete and cocomplete, then $\dHo(Q)$ is a derivator. See~\cite{GPS14a,Lenz18} for proofs of this fact.
\end{example}

\begin{example}\label{Shifted derivator is a derivator}
For any (strong) derivator $\D$ and any category $\A$, the shifted prederivator $\D^\A$ is a (strong) derivator.  See~\cite[Section 1.3]{Groth13} for a proof.
\end{example}

Shifted derivators are extremely useful when proving general theorems about categories that arise as the values of derivators: if a statement holds for the underlying category of every derivator $\D$, then for any category $\A$ the statement must also be true for $\D(\A)$, since this is the underlying category of $\D^\A$. The following remark gives a simple example:

\begin{remark}
If $\D$ is a derivator, \textbf{Der 1} implies that homotopy products and coproducts coincide with products and coproducts in $\D(\0)$. By \textbf{Der 3}, then, $\D(\0)$ admits all products and coproducts. Using~\cref{Shifted derivator is a derivator}, it follows that $\D(\A)$ has all products and coproducts, for any category $\A$.
\end{remark}

The following lemma is a simple but important consequence of \textbf{Der 2}:

\begin{lemma}\label{modifications are isomorphisms iff they are on underlying category}
Suppose we have prederivators $\D_1$ and $\D_2$, and a modification 
\begin{center}
\begin{tikzcd}
\D_1\arrow[r, bend left=50, "F" above,""{name=U, below}]\arrow[r, bend right=50, "G" below, ""{name=D}]
& \D_2.
\arrow[Rightarrow,from=U,to=D,shorten >=0.1cm,shorten <=0.1cm,"\theta"]
\end{tikzcd}
\end{center}
If $\D_2$ satisfies \textbf{Der 2}, then $\theta$ is an isomorphism if and only if the component on underlying categories is an isomorphism.
\end{lemma}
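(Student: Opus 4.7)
The plan is to use \textbf{Der 2} together with the modification condition to reduce the question to the underlying component. The forward direction is immediate: if $\theta$ is an isomorphism in $\PDer$, then every component $\theta_\A:F\Rightarrow G:\D_1(\A)\to\D_2(\A)$ is a natural isomorphism, and in particular so is $\theta_{\mathbf{[0]}}$.

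For the converse, suppose $\theta_{\mathbf{[0]}}$ is a natural isomorphism. Fix a category $\A$ and an object $X\in\D_1(\A)$; I want to show that $\theta_X:FX\to GX$ is an isomorphism in $\D_2(\A)$. By \textbf{Der 2} it suffices to check that $(\theta_X)_a = a^*\theta_X$ is an isomorphism in $\D_2(\mathbf{[0]})$ for every object $a:\mathbf{[0]}\to\A$. The modification axiom applied to the functor $a$ gives the equality of pasting diagrams that, after chasing $X$, unpacks as a commutative square
\[
\begin{tikzcd}
a^*FX \arrow[r,"a^*\theta_X"] \arrow[d,"\gamma_F"',"\iso"] & a^*GX \arrow[d,"\gamma_G","\iso"'] \\
F(a^*X) \arrow[r,"\theta_{a^*X}" below] & G(a^*X)
\end{tikzcd}
\]
in $\D_2(\mathbf{[0]})$, where $\gamma_F,\gamma_G$ are the structure isomorphisms of the pseudonatural transformations $F$ and $G$. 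The bottom arrow is $(\theta_{\mathbf{[0]}})_{a^*X}$, which is an isomorphism by hypothesis, and the vertical maps are isomorphisms by definition of pseudonaturality. Hence $a^*\theta_X$ is an isomorphism.

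Since this holds for every $a\in\A$, \textbf{Der 2} applied in $\D_2$ gives that $\theta_X$ is an isomorphism in $\D_2(\A)$. As $X$ and $\A$ were arbitrary, every component of $\theta$ is a natural isomorphism, so $\theta$ is invertible in $\PDer$. The only step requiring any care is writing down the modification square correctly so that the two structure isomorphisms appear on the correct sides; once that is set up, \textbf{Der 2} does all the work, and no obstacle beyond bookkeeping is anticipated.
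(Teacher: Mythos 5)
Your argument is essentially identical to the paper's: both use the modification axiom applied to $a\colon\mathbf{[0]}\to\A$ to produce the commuting square relating $(\theta_X)_a$ to $\theta_{X_a}$, then invoke \textbf{Der 2} in $\D_2$ to conclude. The only cosmetic difference is that you spell out the trivial forward direction, which the paper leaves implicit.
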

\begin{proof}
For any category $\A$, and any $X\in\D_1(\A)$ consider the component $\theta_X:FX \rightarrow GX$ in $\D_2(\A)$. By \textbf{Der 2}, this is an isomorphism if and only if, for every $a\in\A$, the map $(\theta_X)_a:(FX)_a \rightarrow (GX)_a$ is an isomorphism in $\D_2(\0)$. But the modification condition for $\theta$ implies that the diagram below commutes:
\begin{center}
\begin{tikzcd}
 (FX)_a \arrow[rr,"\iso" below,"\gamma"]\arrow[dd,"(\theta_X)_a" left] && 
 F(X_a) \arrow[dd,"\theta_{X_a}"] \\\\
 (GX)_a\arrow[rr,"\iso","\gamma" below] && 
 G(X_a)
\end{tikzcd}
\end{center}
Thus, $\theta$ is an isomorphism if and only if $\theta_x$ is an isomorphism for every $x\in\D_1(\0)$.
\end{proof}

\section{Cocontinuous maps and adjunctions}\label{Section Cocontinuous maps and adjunctions}

We begin this section with a discussion of the interaction between derivator maps and homotopy Kan extensions, proving some simple results that we will need in~\cref{Chapter Actions of monoidal derivators} and~\cref{Chapter E-categories}. Other results along similar lines can be found in~\cite{Groth16}. In the second half of the chapter we recall the basic theory of adjunctions from~\cite[Section 2]{Groth13}. This will form a basis for the discussion of two-variable adjunctions in~\cref{Chapter Actions of monoidal derivators}.

\begin{definition}\label{Cocontinuous derivator map}
Suppose we have derivators $\D_1$ and $\D_2$, and a morphism $F:\D_1\rightarrow\D_2$. For any functor $u:\A\rightarrow\B$ we have a canonical transformation:
\begin{center}
\begin{tikzcd}
\D_1(\A)\arrow[ddrr,bend right,equal,""{name=L,above right}]\arrow[rr,rightarrow,"u_!" above] && \D_1(\B)\arrow[Leftarrow,to=L,"\eta" above left,shorten >=0.5cm,shorten <=0.7cm] \arrow[rightarrow,rr,"F"]\arrow[rightarrow,dd,"u^*" left] && 
 \D_2(\B)\arrow[ddrr,bend left,equal,""{name=R,below left}] \arrow[Leftarrow,ddll,"\gamma^{-1}" above left, "\iso" below right, shorten >=1cm,shorten <=1cm, shift left] \arrow[rightarrow,dd,"u^*"]\\
 
\\
 
&&\D_1(\A)\arrow[rightarrow,rr,"F" below] && 
 \D_2(\A)\arrow[Rightarrow,to=R,"\epsilon" above left,shorten >=0.5cm,shorten <=0.7cm] \arrow[rr,rightarrow,"u_!" below]
 && \D_2(\B) 
\end{tikzcd}
\end{center}
We say $F$ \textbf{preserves the left homotopy Kan extension along $u$} if this map is an isomorphism. If $F$ preserves all left homotopy Kan extensions, we say $F$ is \textbf{cocontinuous}. We denote the full subcategory of $\Hom(\D_1,\D_2)$ on the cocontinuous maps by $\cHom(\D_1,\D_2)$.

Dually, we can define \textbf{continuous} maps; denote the category of these by $\Hom_*(\D_1,\D_2)$.
\end{definition}

We record the following fact, whose proof can be found in~\cite[Section 2]{Groth13}:

\begin{lemma}
A derivator map $F:\D_1\rightarrow\D_2$ is cocontinuous if and only if it preserves homotopy colimits.
\end{lemma}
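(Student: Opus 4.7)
The plan is to prove the nontrivial direction: assuming $F$ preserves all homotopy colimits $\mathrm{p}_!$, show it preserves arbitrary left homotopy Kan extensions $u_!$. The forward direction is immediate, since $\hocolim = \mathrm{p}_!$ is a special case of a left Kan extension, so a cocontinuous map preserves it by definition.

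For the converse, fix $u:\A\rightarrow\B$ and $X\in\D_1(\A)$. I need to show the canonical comparison
\[
\phi: u_! F X \longrightarrow F u_! X
\]
from \cref{Cocontinuous derivator map} is an isomorphism in $\D_2(\B)$. By \textbf{Der 2} it suffices to check this after applying $b^*$ for each $b\in\B$. The core idea is to compute both sides using \textbf{Der 4} and reduce $\phi_b$ to the comparison map expressing that $F$ preserves the homotopy colimit of $\mathrm{pr}^*X$ over $\A\downarrow b$, which is an isomorphism by hypothesis.

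The key steps are as follows. First, record the comma square from \textbf{Der 4}:
\begin{center}
\begin{tikzcd}
 \A\downarrow b \arrow[rr,"\mathrm{pr}"]\arrow[dd,"\mathrm{p}" left] &&
 \A \arrow[Rightarrow,ddll,shorten >=1cm,shorten <=1cm] \arrow[dd,"u"] \\
 \\
\0\arrow[rr,"b" below] &&
 \B
\end{tikzcd}
\end{center}
This square is $\D_1$-exact and $\D_2$-exact, giving canonical isomorphisms $b^* u_! \iso \hocolim \, \mathrm{pr}^*$ on both derivators. Second, combine these with the pseudonaturality isomorphisms $\gamma$ of $F$ (for the functors $b$, $\mathrm{pr}$, and $\mathrm{p}$) to obtain the two composite isomorphisms
\[
b^*(u_! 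F X) \;\iso\; \hocolim\,\mathrm{pr}^* F X \;\xrightarrow[\iso]{\gamma}\; \hocolim\, F\,\mathrm{pr}^* X
\]
and
\[
b^*(F u_! X) \;\xrightarrow[\iso]{\gamma}\; F(b^* u_! X) \;\xrightarrow[\iso]{F \text{ of mate}}\; F(\hocolim\,\mathrm{pr}^* X).
\]
Third, the identification to verify is that under these two chains of isomorphisms, $\phi_b$ coincides with the comparison map
\[
\hocolim\, F\,\mathrm{pr}^* X \longrightarrow F(\hocolim\,\mathrm{pr}^* X)
\]
witnessing preservation of $\hocolim$ by $F$ applied to $\mathrm{pr}^* X$, which is an isomorphism by assumption.

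The main obstacle, and the only nontrivial content, is the coherence check in the third step: the comparison $\phi$ is defined as the mate of $\gamma^{-1}$ under the adjunction $u_! \dashv u^*$, and we need to verify that mates behave well under the pasting
\[
(b,\mathrm{p}) \,\circ\, (\mathrm{pr},u),
\]
so that the $\D_2$-exactness mate for the comma square, composed with $\phi_b$, agrees with $F$ applied to the $\D_1$-exactness mate. This is precisely the compatibility of mates with pasting of $2$-cells, reviewed in \cite[Appendix A]{GPS14a}, together with the pseudonaturality axioms for $F$ from \cref{Pseudonatural transformation definition}. Once this compatibility is set up, the conclusion is immediate: $\phi_b$ is an isomorphism for every $b\in\B$, hence $\phi$ itself is an isomorphism by \textbf{Der 2}, and $F$ preserves $u_!$. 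I would write the argument at the level of shifted derivators to avoid tracking indices, and cite \cite[Section 2]{Groth13} or \cite{Groth16} for the mate-pasting lemma rather than reprove it.
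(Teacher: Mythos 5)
The paper does not prove this lemma; it just cites \cite[Section 2]{Groth13}. Your proof is correct and reproduces the standard argument found there: pass pointwise using \textbf{Der 2}, identify $b^*u_!$ with a comma-category homotopy colimit via \textbf{Der 4}, and then observe that the pointwise component of the comparison map $u_!F\Rightarrow Fu_!$ matches the comparison map for $F$ preserving $\hocolim$ over $\A\downarrow b$. One small improvement worth noting: the coherence check you defer to a mate-pasting citation in \cite[Appendix A]{GPS14a} is in fact stated and proved in this paper as \cref{Derivator maps respect homotopy exact squares}, applied directly to the comma square of \textbf{Der 4} with $v=\mathrm{p}$, $z=b$, $w=u$, $u=\mathrm{pr}$; citing that lemma would make the proof self-contained within the paper rather than relying on external references.
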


The following lemma and its dual can be found in~\cite[Section 3]{Groth16}:

\begin{lemma}\label{Modifications respect left Kan extensions}
Let $F,G:\D_1\rightarrow\D_2$ be derivator maps, and let $\theta:F\Rightarrow G$ be a modification. Then, for any functor $u:\A\rightarrow\B$ and any $X\in\D_1(\A)$, the diagram below commutes, where the vertical arrows are the canonical maps of~\cref{Cocontinuous derivator map}:
\begin{center}
\begin{tikzcd}
 u_! FX \arrow[rr,"u_!(\theta_X)"]\arrow[dd] && 
  u_! GX \arrow[dd] \\\\
Fu_! X\arrow[rr,"\theta_{u_! X}" below] && 
 Gu_! X
\end{tikzcd}
\end{center}
\end{lemma}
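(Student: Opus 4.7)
The plan is to unpack the two vertical arrows as explicit $3$-fold composites built from the adjunction $u_! \dashv u^*$ and the structure isomorphisms of $F$ and $G$, and then rewrite the two ways around the square until they are brought into the form of the modification axiom for $\theta$ along $u$.

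Recall that the canonical comparison $u_! F X \to F u_! X$ from \cref{Cocontinuous derivator map} is, by definition of the mate, the composite
\[
u_! F X \xrightarrow{u_! F \eta_X} u_! F u^* u_! X \xrightarrow{u_!(\gamma^F)^{-1}_{u_! X}} u_! u^* F u_! X \xrightarrow{\epsilon_{F u_! X}} F u_! X,
\]
where $\eta$ and $\epsilon$ are the unit and counit of $u_! \dashv u^*$; and similarly for $G$ with $\gamma^G$ in place of $\gamma^F$. The first step will be to write both composites around the square in this form.

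Then, on the left-down side, I would use naturality of $\epsilon$ at the morphism $\theta_{u_! X}:Fu_! X\to Gu_! X$ to rewrite
\[
\theta_{u_! X}\circ \epsilon_{F u_! X} \;=\; \epsilon_{G u_! X}\circ u_! u^* \theta_{u_! X}.
\]
On the down-right side, I would use $2$-naturality of $\theta$ at the morphism $\eta_X : X \to u^* u_! X$ to rewrite $G\eta_X \circ \theta_X = \theta_{u^* u_! X}\circ F\eta_X$, and then apply $u_!$. After these two substitutions, both composites begin with $u_! F\eta_X$ and end with $\epsilon_{G u_! X}$, and commutativity of the whole square is reduced to the equality
\[
u^* \theta_{u_! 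X}\circ (\gamma^F)^{-1}_{u_! X} \;=\; (\gamma^G)^{-1}_{u_! X}\circ \theta_{u^* u_! X}
\]
in $\D_2(\A)$, after applying $u_!$ (or equivalently, after first cancelling one $u_!$ from each side using functoriality).

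The last equality is exactly the pointwise form, at $u_! X \in \D_1(\B)$, of the modification axiom for $\theta$ applied to the functor $u:\A\to\B$ (as stated in the third diagram of the definition of modification, after inverting the isomorphisms $\gamma^F$ and $\gamma^G$). So no further work is needed beyond invoking the axiom. The only real obstacle is notational bookkeeping with the structure isomorphisms and the unit/counit pair; once the mate formulas are written out, everything reduces mechanically to naturality squares and the modification axiom.
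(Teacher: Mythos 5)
Your proof is correct and is essentially the same argument as the paper's, just written out componentwise rather than at the pasting-diagram level: the paper rephrases commutativity of the square as an equality of two pasting diagrams and invokes the modification axiom, while your unpacking of the mate formula together with the naturality of $\epsilon$ and of $\theta$'s component at $\eta_X$ is precisely the elementwise content of sliding $\theta$ through those pastings. No gaps.
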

\begin{proof}
The commutativity of this diagram for any $X\in\D_1(\A)$ expresses the equality of the two pasting diagrams below:
\begin{center}
\begin{tikzcd}
\D_1(\A)\arrow[ddrr,bend right,equal,""{name=L,above right}]\arrow[rr,rightarrow,"u_!" above] && \D_1(\B)\arrow[Leftarrow,to=L,"\eta" above left,shorten >=0.5cm,shorten <=0.7cm] \arrow[rightarrow,rr,"G"]\arrow[rightarrow,dd,"u^*" left] && 
 \D_2(\B)\arrow[ddrr,bend left,equal,""{name=R,below left}] \arrow[Leftarrow,ddll,"\gamma^{-1}" above left, "\iso" below right, shorten >=1cm,shorten <=1cm, shift right=5] \arrow[rightarrow,dd,"u^*"]\\
 
\\
 
&&\D_1(\A)\arrow[rightarrow,rr,"G" above, bend left,""{name=A}]\arrow[rightarrow,rr,"F" below, bend right,""{name=B}] && 
 \D_2(\A)\arrow[Rightarrow,to=R,"\epsilon" above left,shorten >=0.5cm,shorten <=0.7cm] \arrow[rr,rightarrow,"u_!" below]
 && \D_2(\B) 
 
\arrow[Rightarrow, from=B, to=A, "\theta" right,shorten >=0.3cm,shorten <=0.2cm]
\end{tikzcd}
\end{center}
\begin{center}
\begin{tikzcd}
\D_1(\A)\arrow[ddrr,bend right,equal,""{name=L,above right}]\arrow[rr,rightarrow,"u_!" above] && \D_1(\B)\arrow[Leftarrow,to=L,"\eta" above left,shorten >=0.5cm,shorten <=0.7cm] \arrow[rightarrow,rr,"F" below,bend right,""{name=B}]\arrow[rightarrow,rr,"G",bend left,""{name=A}]\arrow[rightarrow,dd,"u^*" left] && 
 \D_2(\B)\arrow[ddrr,bend left,equal,""{name=R,below left}] \arrow[Leftarrow,ddll,"\gamma^{-1}" above left, "\iso" below right, shorten >=1cm,shorten <=1cm, shift left=5] \arrow[rightarrow,dd,"u^*"]\\
 
\\
 
&&\D_1(\A)\arrow[rightarrow,rr,"F" below] && 
 \D_2(\A)\arrow[Rightarrow,to=R,"\epsilon" above left,shorten >=0.5cm,shorten <=0.7cm] \arrow[rr,rightarrow,"u_!" below]
 && \D_2(\B) 
 
 \arrow[Rightarrow, from=B, to=A, "\theta" right,shorten >=0.3cm,shorten <=0.2cm]
\end{tikzcd}
\end{center}
This follows immediately from the modification condition for $\theta$.
\end{proof}

\begin{lemma}\label{Derivator maps respect homotopy exact squares}
Suppose we have a derivator map $F:\D_1\rightarrow\D_2$ and a natural transformation:
\begin{center}
\begin{tikzcd}
 \A \arrow[rr,"u"]\arrow[dd,"v" left] && 
 \B \arrow[Rightarrow,ddll,"\kappa" above left,shorten >=0.7cm,shorten <=0.8cm, shift left] \arrow[dd,"w"] \\
 &\\
\J\arrow[rr,"z" below] && 
 \K
\end{tikzcd}
\end{center}
Consider the natural transformation $\kappa_!:v_!\circ u^*\Rightarrow z^*\circ w_!$ of~\cref{D-exact squares}. For any \linebreak$X\in\D_1(\B)$, the diagram below commutes, where the vertical maps are induced by the canonical map in~\cref{Cocontinuous derivator map}, and the structure isomorphisms of $F$:
\begin{center}
\begin{tikzcd}
 v_!u^*FX \arrow[rr,"(\kappa_!)_{FX}"]\arrow[dd] && 
 z^*w_!FX\arrow[dd] \\
 &\\
Fv_!u^*X\arrow[rr,"F(\kappa_!)_X" below] && 
Fz^*w_!X
\end{tikzcd}
\end{center}
\end{lemma}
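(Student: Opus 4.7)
The plan is to prove this by direct pasting-diagram calculation, reducing to the pseudonaturality axioms of $F$ in \cref{Pseudonatural transformation definition} together with the mate calculus. By the construction in \cref{D-exact squares}, the transformation $\kappa_!$ factors as
\[
v_! u^* \xrightarrow{v_! u^* \eta^w} v_! u^* w^* w_! \xrightarrow{v_! \kappa^* w_!} v_! v^* z^* w_! \xrightarrow{\epsilon^v z^* w_!} z^* w_!,
\]
where $\eta^w$ is the unit of $w_! \dashv w^*$ and $\epsilon^v$ is the counit of $v_! \dashv v^*$. The canonical comparison $\phi^v:v_! F \Rightarrow F v_!$ from \cref{Cocontinuous derivator map} is, by mate calculus, built from the structure isomorphism $\gamma^v:v^*F\Rightarrow Fv^*$ together with the unit and counit of $v_! \dashv v^*$ in $\D_1$ and $\D_2$, and $\phi^w$ is analogous. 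Substituting these expansions into the two composites of the square makes both sides decompose into three corresponding regions, one for each factor of $\kappa_!$.

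It then suffices to check pairwise agreement of the three regions. The middle region, involving $\kappa^*$, commutes by axiom (3) of \cref{Pseudonatural transformation definition} applied to $\kappa$, which says exactly that $F$ respects $\kappa^*$ up to the structure isomorphisms $\gamma$; since the composite functors $wu$ and $zv$ appear in the expansion, axiom (2) of \cref{Pseudonatural transformation definition} is invoked to identify the structure isomorphism for a composite with the composite of the individual structure isomorphisms. The left region, involving $\eta^w$, commutes by naturality of $\eta^w$ applied to (an instance of) $\gamma^w$, combined with the fact that $\phi^w$ is defined via the same unit in its mate formula. The right region, involving $\epsilon^v$, is handled dually with $\epsilon^v$ and $\gamma^v$.

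The main obstacle is notational density: the pasting diagrams involve several functors, adjunction units and counits, and structure isomorphisms, and become cumbersome to typeset carefully. Conceptually, however, the statement is an instance of the general principle that a pseudonatural transformation of $2$-functors respects the formation of mates, so no input is required beyond formal mate calculus and the three pseudonaturality axioms of \cref{Pseudonatural transformation definition}. The argument is essentially parallel to \cref{Modifications respect left Kan extensions}, with the modification $\theta$ there replaced by the mate $\kappa_!$ here.
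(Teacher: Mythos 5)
Your proposal matches the paper's proof in substance: both unwind the definition of $\kappa_!$ and of the canonical comparison maps, cancel units and counits via the triangle identities, and reduce the remaining verification to the pseudonaturality axioms of \cref{Pseudonatural transformation definition}. The observation that this parallels \cref{Modifications respect left Kan extensions} is apt; the argument is correct.
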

\begin{proof}
The commutative diagram above expresses the equality of certain pasting diagrams. Using the triangle equality to cancel instances of units and counits, we can reduce these to the following:
\begin{center}
\begin{tikzcd}
&&&& \D_1(\J)\arrow[ddrr,"F" above right,bend left=25,""{name=A,below left,near start}]\\\\
\D_1(\B)\arrow[ddrr,bend right,equal,""{name=L,above right}]\arrow[rr,"w_!" above] && \D_1(\K)\arrow[uurr,"z^*" above left, bend left=25]\arrow[Leftarrow,to=L,"\eta" above left,shorten >=0.5cm,shorten <=0.7cm] \arrow[rr,"F",""{name=B,above right}]\arrow[dd,"w^*" left] && 
 \D_2(\K)\arrow[dd,"w^*" left]\arrow[rr,"z^*" above]\arrow[Leftarrow,ddll,"\gamma^{-1}" above left, "\iso" below right, shorten >=1cm,shorten <=1cm, shift left]
 &&\D_2(\J)\arrow[ddrr,bend left,equal,""{name=R,below left}]\arrow[Leftarrow,ddll,"\kappa^*" above left, shorten >=1cm,shorten <=1cm, shift left]  \arrow[dd,"v^*"]\\
 
\\
 
&&\D_1(\B)\arrow[rr,"F" below] && 
 \D_2(\B)\arrow[rr,"u^*" below] &&\D_2(\A)\arrow[Rightarrow,to=R,"\epsilon" above left,shorten >=0.5cm,shorten <=0.7cm] \arrow[rr,"v_!" below]
 && \D_2(\J) 
 
\arrow[Rightarrow,from=B,to=A,"\gamma^{-1}" above left, "\iso" below right, shorten >=1.2cm,shorten <=1.2cm] 
\end{tikzcd}
\end{center}

\begin{center}
\begin{tikzcd}
\D_1(\B)\arrow[ddrr,bend right,equal,""{name=L,above right}]\arrow[rr,"w_!" above] && \D_1(\K)\arrow[Leftarrow,to=L,"\eta" above left,shorten >=0.5cm,shorten <=0.7cm] \arrow[rr,"z^*"]\arrow[dd,"w^*" left] && 
 \D_1(\J)\arrow[dd,"v^*" left]\arrow[rr,"F" above]\arrow[Leftarrow,ddll,"\kappa^*" above left, shorten >=1cm,shorten <=1cm, shift left]
 &&\D_2(\J)\arrow[Leftarrow,ddll,"\gamma^{-1}" above left, "\iso" below right, shorten >=1cm,shorten <=1cm, shift left]\arrow[ddrr,bend left,equal,""{name=R,below left}]  \arrow[dd,"v^*"]\\
 
\\
 
&&\D_1(\B)\arrow[rrdd,"F" below left,bend right=25,""{name=B,above right,near end}]\arrow[rr,"u^*" below] && 
 \D_1(\A)\arrow[rr,"F" below,""{name=A,below left}] &&\D_2(\A)\arrow[Rightarrow,to=R,"\epsilon" above left,shorten >=0.5cm,shorten <=0.7cm] \arrow[rr,"v_!" below]
 && \D_2(\J) \\\\
 &&&& \D_2(\B)\arrow[rruu,"u^*" below right,bend right=25]
 
 \arrow[Rightarrow,from=B,to=A,"\gamma^{-1}" above left, "\iso" below=2.5, shorten >=1.4cm,shorten <=1cm,shift right] 
\end{tikzcd}
\end{center}
That these two diagrams are equal follows easily from the coherence conditions for the derivator map $F$, as in~\cref{Pseudonatural transformation definition}.
\end{proof}

\begin{definition}
An \textbf{adjunction} between derivators is an adjunction in the $2$-category $\Der$, in the sense of~\cite[Section 2]{KS74}.
\end{definition}

We will make regular use of the following lemma, which characterises derivator left adjoints:

\begin{lemma}\label{Cocontinuous pointwise left adjoint is a left adjoint}
A derivator map $F:\D_1\rightarrow\D_2$ is a left adjoint if and only if it is cocontinuous and each component functor $F:\D_1(\A)\rightarrow\D_2(\A)$ has a right adjoint $G:\D_2(\A)\rightarrow\D_1(\A)$. Moreover, a derivator map is an equivalence if and only if it is a pointwise equivalence.
\end{lemma}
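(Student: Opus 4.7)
The plan is to prove both directions, the reverse being the substantive content. For the forward direction, suppose $F \dashv G$ in $\Der$. Evaluation at each category $\A$ restricts the adjunction to pointwise adjunctions $F_\A \dashv G_\A$. For cocontinuity, observe that for any $u:\A\rightarrow\B$ the composite $F_\B u_!$ admits $u^* G_\B$ as a right adjoint, while $u_! F_\A$ admits $G_\A u^*$ as a right adjoint; the structure isomorphism $u^* G \iso G u^*$ of the prederivator map $G$ identifies these right adjoints, so by uniqueness of right adjoints $F_\B u_! \iso u_! F_\A$. A direct check confirms that this isomorphism coincides with the canonical map of \cref{Cocontinuous derivator map}.

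For the reverse direction, set $G(\A) := G_\A$. Given $u:\A\rightarrow\B$, cocontinuity of $F$ supplies an isomorphism $\alpha_u : F_\B u_! \iso u_! F_\A$. As above, the two sides have respective right adjoints $u^* G_\B$ and $G_\A u^*$, so $\alpha_u$ induces a canonical isomorphism $\delta_u : u^* G_\B \iso G_\A u^*$; this is the pseudonatural structure of $G$. The composition and identity coherences for $\delta$ follow from the corresponding coherences for $F$ by taking right adjoints. The pointwise units $\eta_\A$ and counits $\epsilon_\A$ then assemble into modifications $\eta : \id_{\D_1} \Rightarrow GF$ and $\epsilon : FG \Rightarrow \id_{\D_2}$: the modification condition expresses exactly the compatibility of $\gamma_u$ and $\delta_u$ encoded in the mate construction defining $\delta_u$. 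Since the triangle identities hold pointwise, they hold globally, yielding $F \dashv G$ in $\Der$.

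The main obstacle is verifying the modification coherence for $\eta$ and $\epsilon$; this reduces to a diagram chase combining the mate description of $\delta_u$ with the triangle identities for each pointwise adjunction. For the ``moreover'' statement, a pointwise equivalence is cocontinuous (since equivalences preserve every universal construction) and has pointwise quasi-inverses serving as right adjoints, so the main part of the lemma produces a derivator right adjoint $G$. The pointwise units and counits of the adjoint equivalences are isomorphisms, and by \cref{modifications are isomorphisms iff they are on underlying category} applied to the shifted derivators $\D_1^\A$ and $\D_2^\A$, the global modifications $\eta$ and $\epsilon$ are invertible, so $F$ is an equivalence in $\Der$. The converse is immediate.
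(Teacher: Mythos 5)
Your proof is correct and follows essentially the same route as the paper's outline: construct the candidate structure isomorphism for $G$ as the mate/conjugate of the cocontinuity isomorphism for $F$, observe that this is invertible precisely when $F$ is cocontinuous, and deduce the pointwise-equivalence case from the fact that the relevant pasting is a composite of isomorphisms. One small remark: your appeal to \cref{modifications are isomorphisms iff they are on underlying category} in the ``moreover'' part is unnecessary — since you already know the unit and counit are isomorphisms at \emph{every} level $\A$ (not just $\0$), the modifications are isomorphisms by definition; the cited lemma is useful when you only have information on the underlying category. Likewise, the phrase ``equivalences preserve every universal construction'' is doing real work and is cleanest to justify exactly as the paper does, by noting the mate pasting becomes a composite of three isomorphisms.
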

\begin{proof}
A proof of this lemma can be found in~\cite[Section 2]{Groth13}. We give a brief outline. 
Suppose $F:\D_1\rightarrow\D_2$ is a derivator map such that each component functor $F:\D_1(\A)\rightarrow\D_2(\A)$ has a right adjoint $G:\D_2(\A)\rightarrow\D_1(\A)$. Given $u:\A\rightarrow\B$, consider the following pasting diagram:
\begin{center}
\begin{tikzcd}
 && \D_1(\A)\arrow[Rightarrow,ddrr,"\gamma^{-1}" above right, "\iso" below left, shorten >=1cm,shorten <=1cm, shift right] \arrow[leftarrow,rr,"u^*"]\arrow[rightarrow,dd,"F" left] && 
 \D_1(\B)\arrow[rr,leftarrow,"G" above]  \arrow[rightarrow,dd,"F"]
&& \D_2(\B)\arrow[ddll,bend left,equal,""{name=R,above left}] 
\\
\\
\D_1(\A)\arrow[uurr,bend left,equal,""{name=L,below right}]\arrow[rr,leftarrow,"G" below]&&\D_2(\A)\arrow[Leftarrow,to=L,"\eta" above right,shorten >=0.5cm,shorten <=0.7cm]\arrow[leftarrow,rr,"u^*" below] && 
 \D_2(\B)
 
\arrow[Rightarrow,from=1-5,to=R,"\epsilon" above right,shorten >=0.5cm,shorten <=0.7cm] 
  
\end{tikzcd}
\end{center}

If these maps are isomorphisms for every $u:\A\rightarrow\B$, then these form the structure isomorphisms for a derivator map $G:\D_2\rightarrow\D_1$, which is then right adjoint to $F$.

This transformation is conjugate to the canonical map in \cref{Cocontinuous derivator map}. Thus, this is an isomorphism if and only if $F$ is cocontinuous. Moreover, it is clearly an isomorphism when $F$ is a pointwise equivalence, since in that case it is the pasting of three isomorphisms.
\end{proof}

\begin{remark}\label{Left adjoint is an equivalence iff underlying map is}
A derivator left adjoint $F:\D_1\rightarrow\D_2$ is an equivalence if and only if the underlying functor $F:\D_1(\0)\rightarrow\D_2(\0)$ is an equivalence. This follows by applying \cref{modifications are isomorphisms iff they are on underlying category} to the unit and counit of the adjunction.
\end{remark}


\begin{example}
Let $\M$ and $\N$ be model categories, and $F:\M\rightarrow\N$ be a left Quillen functor, with right adjoint $G:\N\rightarrow\M$. Then the derived functors induce an adjunction:
\begin{center}
\begin{tikzcd}
\dHo(\M)\arrow[r, bend left=40, "\mathds{L} F" above,""{name=U, below}]\arrow[leftarrow,r, bend right=40, "\mathds{R} G" below, ""{name=D}]
& \dHo(\N)
\arrow[phantom,from=U,to=D,shorten >=0.1cm,shorten <=0.1cm,"\bot" above, near end]
\end{tikzcd}
\end{center} 
By \cref{Left adjoint is an equivalence iff underlying map is}, a left Quillen functor $F:\M\rightarrow\N$ induces an equivalence on derivators if and only if it is a Quillen equivalence. 
\end{example}


We record one more fact, whose proof is in~\cite[Section 2]{Groth13}:

\begin{lemma}\label{u^* is cocontinuous}
Let $\D$ be a derivator, and let $u:\A\rightarrow\B$ be a functor. Then the derivator map $u^*:\D^\B\rightarrow\D^\A$ is continuous and cocontinuous.
\end{lemma}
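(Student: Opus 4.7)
The plan is to show $u^*$ is cocontinuous; continuity follows dually, by applying the argument to $\D\op$ (for which $(\D^\B)\op=(\D\op)^{\B\op}$, so cocontinuity for $\D\op$ translates into continuity for $\D$).

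Fix $v:\J\to\K$. Unpacking the shifted derivator definition (\cref{Shifted prederivator definition}) identifies the canonical comparison map of \cref{Cocontinuous derivator map} witnessing preservation of $v_!$ by $u^*:\D^\B\to\D^\A$ with the Beck--Chevalley mate
\[
(\A\times v)_!(u\times\J)^*\longrightarrow(u\times\K)^*(\B\times v)_!
\]
of the commutative square in $\Cat$
\begin{center}
\begin{tikzcd}
\A\times\J \arrow[r,"u\times\J"] \arrow[d,"\A\times v"'] & \B\times\J \arrow[d,"\B\times v"] \\
\A\times\K \arrow[r,"u\times\K"'] & \B\times\K.
\end{tikzcd}
\end{center}
Thus it suffices to show this square is $\D$-exact.

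By \textbf{Der 2} this can be verified pointwise. Evaluating the mate at $(a,k)\in\A\times\K$ and pasting with the \textbf{Der 4} comma squares for $\A\times v$ at $(a,k)$ and for $\B\times v$ at $(u(a),k)$, both sides become homotopy colimits over the comma categories, which factor as products
\[
(\A\times\J)\downarrow_{\A\times v}(a,k)\iso(\A\downarrow a)\times(\J\downarrow_v k),
\]
\[
(\B\times\J)\downarrow_{\B\times v}(u(a),k)\iso(\B\downarrow u(a))\times(\J\downarrow_v k),
\]
since the comma square of a product functor decomposes as a product of the individual comma squares.

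The main step is a Fubini reduction: because $a$ is terminal in $\A\downarrow a$, applying \textbf{Der 4} to $\id_\A$ at $a$ gives $\hocolim_{\A\downarrow a}\text{pr}^*Y\iso Y_a$ for any $Y$, and analogously for $u(a)\in\B\downarrow u(a)$. Collapsing the $\A$- and $\B$-factors by this identification, both iterated homotopy colimits reduce to the single colimit $\hocolim_{\J\downarrow_v k}\text{pr}^*(X_{u(a),-})$ of the same diagram, and by naturality of the mate the comparison becomes the identity under these reductions.

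The main obstacle is the coherence check that the Fubini decomposition and the terminal-object reductions genuinely identify the Beck--Chevalley mate with the identity rather than some other isomorphism; while this unfolds directly from the explicit construction of the mate in \cref{D-exact squares} and the naturality of the \textbf{Der 4} isomorphism, it is notationally heavy and requires careful bookkeeping of the pasted $2$-cells.
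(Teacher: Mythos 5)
The paper supplies no proof of this lemma, deferring to Groth's notes (\cite[Section 2]{Groth13}), so there is nothing to compare against directly; here is an assessment of your argument on its own terms.

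Your reduction is the right one: the canonical transformation of \cref{Cocontinuous derivator map} for $u^*:\D^\B\to\D^\A$ and $v:\J\to\K$ is precisely the Beck--Chevalley mate of the commutative square in $\Cat$ built from $u\times\J$, $u\times\K$, $\A\times v$, $\B\times v$, and the duality $(\D^\B)\op=(\D\op)^{\B\op}$ correctly trades continuity for cocontinuity of $\D\op$. The pointwise \textbf{Der 2} + \textbf{Der 4} + Fubini strategy is also sound in outline; the decomposition $(\A\times\J)\downarrow(a,k)\iso(\A\downarrow a)\times(\J\downarrow_v k)$ is correct, and \textbf{Der 4} applied to $\id_\A$ at $a$ does give the terminal-object reduction $\hocolim_{\A\downarrow a}\mathrm{pr}^*Y\iso Y_a$.

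The genuine weakness is the one you flag yourself. Showing that the two sides of the mate are abstractly isomorphic (both collapse to $\hocolim_{\J\downarrow_v k}$ of the restriction of $X$ along $(u(a)\times\mathrm{pr}_\J)$) does not yet establish $\D$-exactness: you must show the \emph{canonical} comparison agrees with this identification. That coherence step, which you defer as ``notationally heavy,'' is not an afterthought — it is the content of the lemma — and a reader cannot verify the proof without it. It can be carried out by pasting the \textbf{Der 4} comma squares for $\A\times v$ at $(a,k)$ and for $\B\times v$ at $(u(a),k)$ onto the original square, using the pasting lemma for mates to identify the composite with the mate of the pasted square, and then observing that the residual square factors through the inclusion of terminal objects in $\A\downarrow a$ and $\B\downarrow u(a)$; but you should write this out rather than assert it.

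You might also consider a route that bypasses the pointwise bookkeeping entirely: the square in question is the external product (in $\Cat$) of the trivially $\D$-exact square with vertical identities on $u:\A\to\B$ and the trivially $\D$-exact square with horizontal identities on $v:\J\to\K$, and external products of homotopy exact squares are homotopy exact. This lemma appears in the derivator literature (cf. \cite{GPS14a}), and invoking it reduces the whole argument to two identity mates, eliminating the coherence check. If you do not want to cite that closure property, the proof of that special case is essentially what your pointwise argument amounts to, so either way the careful pasting verification is unavoidable; the abstract packaging just keeps the $2$-cells organized.
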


Combining \cref{Cocontinuous pointwise left adjoint is a left adjoint} and \cref{u^* is cocontinuous}, for any functor $u:\A\rightarrow\B$ and any derivator $\D$, the map $u^*:\D^\B\rightarrow\D^\A$ admits both a left adjoint $u_!:\D^\A\rightarrow\D^\B$ and a right adjoint $u_*:\D^\A\rightarrow\D^\B$ in $\Der$.

\begin{remark}\label{Cocontinuity as a modification}
Given a derivator $\D$ and a functor $u:\A\rightarrow\B$, we can lift the canonical transformations of \cref{Cocontinuous derivator map} to a modification:
\begin{center}
\begin{tikzcd}
\D_1^\A\arrow[ddrr,bend right,equal,""{name=L,above right}]\arrow[rr,rightarrow,"u_!" above] && \D_1^\B\arrow[Leftarrow,to=L,"\eta" above left,shorten >=0.5cm,shorten <=0.7cm] \arrow[rightarrow,rr,"F"]\arrow[rightarrow,dd,"u^*" left] && 
 \D_2^\B\arrow[ddrr,bend left,equal,""{name=R,below left}] \arrow[Leftarrow,ddll,"\gamma^{-1}" above left, "\iso" below right, shorten >=1cm,shorten <=1cm, shift left] \arrow[rightarrow,dd,"u^*"]\\
 
\\
 
&&\D_1^\A\arrow[rightarrow,rr,"F" below] && 
 \D_2^\A\arrow[Rightarrow,to=R,"\epsilon" above left,shorten >=0.5cm,shorten <=0.7cm] \arrow[rr,rightarrow,"u_!" below]
 && \D_2^\B 
\end{tikzcd}
\end{center}
The map $F$ is cocontinuous if and only if each of these modifications is an isomorphism. 
\end{remark}

\begin{definition}
Given prederivators $\D_1$ and $\D_2$, we can form a prederivator $\dHom(\D_1,\D_2)$ as follows:
\begin{align*}
\dHom(\D_1,\D_2):\Cat\op&\rightarrow \; \CAT\\
 \A \;\;\; &\mapsto \;\Hom(\D_1,\D_2^\A)\\
 (u:\A\rightarrow\B) &\mapsto \;(u^*\circ -:\Hom(\D_1,\D_2^\B)\rightarrow\;\Hom(\D_1,\D_2^\A))
\end{align*}

Similarly, if $\D_1$ and $\D_2$ are derivators, we have a prederivator:
\begin{align*}
\cdHom(\D_1,\D_2):\Cat\op&\rightarrow \; \CAT\\
 \A \;\;\; &\mapsto \;\cHom(\D_1,\D_2^\A)\\
 (u:\A\rightarrow\B) &\mapsto \;(u^*\circ -:\cHom(\D_1,\D_2^\B)\rightarrow\;\cHom(\D_1,\D_2^\A))
\end{align*}
This is well-defined by \cref{u^* is cocontinuous}.
\end{definition}

\begin{remark}
If $\D_2$ is a derivator, then so is $\dHom(\D_1,\D_2)$. Given a functor $u:\A\rightarrow\B$, the Kan extensions along $u$ in $\dHom(\D_1,\D_2)$ are given by postcomposition with $u_!:\D^\A\rightarrow\D^\B$ and $u_*:\D^\A\rightarrow\D^\B$. On the other hand, $\cdHom(\D_1,\D_2)$ is a left derivator, but not a derivator in general. See~\cite[Section 5]{Cisinski08}.
\end{remark}

We end this section with an important theorem, describing the universal property of the derivator of spaces, which appears in~\cite{Cisinski08}.

\begin{theorem}[Cisinski]\label{Universal property of spaces}
For any left derivator $\D$, the map
\[
\cHom(\dHo(\sSet),\D)\rightarrow\D(\0),
\]
given by evaluation at the point $\Delta^0\in\Ho(\sSet)$, is an equivalence.
\end{theorem}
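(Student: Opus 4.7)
The plan is to construct an explicit inverse functor. Given $X\in\D(\0)$, I will build a cocontinuous morphism $F_X:\dHo(\sSet)\rightarrow\D$ with $F_X(\Delta^0)\equiv X$, and then show that every cocontinuous map $G$ with $G(\Delta^0)\equiv X$ is naturally isomorphic to $F_X$. The construction rests on the classical fact that every simplicial set is canonically a homotopy colimit of its simplices, combined with the contractibility of each standard simplex $\Delta^n$.

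\textbf{Construction of $F_X$.} For a simplicial set $K$, let $\Delta/K$ denote its category of simplices and write $\p:\Delta/K\rightarrow\0$ for the projection. Since each $\Delta^n$ is weakly equivalent to $\Delta^0$ in $\sSet$ and the ordinary colimit of the tautological diagram $\Delta/K\rightarrow\sSet$ is $K$, one obtains a canonical equivalence $\p_!\p^*\Delta^0\equiv K$ in $\dHo(\sSet)$. I mimic this formula on an arbitrary category $\A$: given $K\in\dHo(\sSet)(\A)=\Ho(\sSet^\A)$, form the Grothendieck construction $\Delta/K$ lying over $\A$, let $\p:\Delta/K\rightarrow\A$ be its projection, and define
\[
F_X(K):=\p_!\,\p^*X\;\in\;\D(\A),
\]
where $X$ is first pulled back to $\D(\Delta/K)$ along the unique map to $\0$. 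The pseudofunctoriality of this assignment in $\A$ is forced by the base-change squares relating categories of simplices under pullback along functors $u:\A\rightarrow\B$, all of which are homotopy exact.

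\textbf{Verifications.} The identity $F_X(\Delta^0)\equiv X$ is immediate because $\Delta/\Delta^0$ has a terminal object, so $\p^*$ and $\p_!$ form an equivalence against $\D(\0)$. Cocontinuity of $F_X$ is a Fubini-type argument: given $u:\A\rightarrow\B$ and $K\in\dHo(\sSet)(\A)$, the category of simplices of $u_!K$ is the Grothendieck construction of $\Delta/K$ relative to $\B$, and the resulting square of projections is homotopy exact; \cref{Derivator maps respect homotopy exact squares} and \cref{u^* is cocontinuous} then give $u_!F_X(K)\equiv F_X(u_!K)$.

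\textbf{Uniqueness and full faithfulness.} Given any cocontinuous $G:\dHo(\sSet)\rightarrow\D$, the decomposition $K\equiv\p_!\p^*\Delta^0$, together with cocontinuity of $G$ and the fact that $G$ commutes with $\p^*$ up to its structure isomorphism, yields $G(K)\equiv\p_!\p^*G(\Delta^0)=F_{G(\Delta^0)}(K)$; \cref{Modifications respect left Kan extensions} packages these pointwise isomorphisms into a modification $G\equiv F_{G(\Delta^0)}$. The same argument applied to a modification $G\Rightarrow H$ between cocontinuous maps shows it is determined by its component at $\Delta^0$, which gives full faithfulness. The main obstacle is organising the base-change data involved in defining $F_X$ into the cocycle coherences of \cref{Pseudonatural transformation definition}; this is where the weight of Cisinski's argument lies and requires substantial bookkeeping with categories of simplices and their pullbacks.
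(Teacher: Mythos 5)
The paper does not prove this theorem; it is cited directly from \cite{Cisinski08}, so there is no in-text argument to compare against. That said, your strategy --- defining the inverse via $F_X(K)=\p_!\,\p^*X$ over the Grothendieck construction of the categories of simplices, and using the identification $\p_!\p^*\Delta^0\equiv K$ for essential surjectivity and full faithfulness --- is indeed the skeleton of Cisinski's argument.

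However, several steps you present as routine in fact carry the real weight, and as written they are gaps. First, the formula for $F_X$ makes sense for genuine diagrams $K\in\sSet^\A$, not a priori for objects of $\Ho(\sSet^\A)$; to descend to the homotopy category you must show that a pointwise weak equivalence $K\to L$ induces, via the comparison functor $\Delta/K\to\Delta/L$, an isomorphism $F_X(K)\to F_X(L)$. This amounts to a homotopy finality statement for that functor, which is not automatic --- it is one of the main technical lemmas in Cisinski's proof (a relative form of Quillen's Theorem A) --- and your sketch takes it for granted. Second, the cocontinuity argument claims that ``the category of simplices of $u_!K$ is the Grothendieck construction of $\Delta/K$ relative to $\B$''; this is not an identity of categories. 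What one actually has is a comparison functor $\Delta/K\to\Delta/(u_!K)$ over $u$, and the argument needs this functor to be homotopy final, again a nontrivial input that the proposal does not supply. Third, you rightly flag the coherence data required by \cref{Pseudonatural transformation definition}, but the homotopy finality facts above are not bookkeeping: they are the technical core of the theorem. So the route you sketch is the correct one and matches Cisinski's, but the proposal leaves genuine gaps at exactly the points where the original argument does the hard work.
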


\section{Two-variable maps}\label{Section Two-variable maps}

In this section, we review some aspects of two-variable derivator maps, primarily following~\cite{GPS14}. These have a more complicated theory than single-variable maps, resulting in part from their external variants, which we discuss at the beginning of this section, and in part from the cancelling variants, which we will discuss in~\cref{Chapter Actions of monoidal derivators}.

Given derivators $\D_1$ and $\D_2$, the product $\D_1\times\D_2$ is  formed pointwise: for any $\A\in\Cat$, we have
\[
(\D_1\times\D_2)(\A)=\D_1(\A)\times\D_2(\A).
\]
Suppose we have a derivator map $\otimes:\D_1\times\D_2\rightarrow\D_3$. For each category $\A$, this map has a component functor of the form
\[
\otimes:\D_1(\A)\times\D_2(\A)\rightarrow\D_3(\A).
\]
From this, we can construct an \textbf{external} version of $\otimes$ as follows:
\begin{align*}
\widetilde{\otimes}:\D_1(\A)\times\D_2(\B)&\rightarrow \; \D_3(\A\times\B)\\
 (X,Y) \;\;\;\;\;\;\; &\mapsto \; \p^*_{\B}X\otimes\p^*_{\A}Y
\end{align*}
For a fixed object $X\in\D_1(\A)$ or $Y\in\D_2(\B)$, the external product induces derivator maps: 
\[
X\;\widetilde{\otimes}\;- :\D_2\rightarrow\D_3^{\A}
\]
\[
-\;\widetilde{\otimes}\;Y:\D_1\rightarrow\D_3^{\B}
\]
Note that our notation for the external two-variable map differs from that in~\cite{GPS14}.

\begin{remark}\label{Lifting the external product to a derivator map}
The external product lifts to a derivator map:
\[
\widetilde{\otimes}:\D_1^\A\times\D_2^\B\xrightarrow{\;\;\p_\B^*\times\p_\A^*\;\;}\D_1^{\A\times\B}\times\D_2^{\A\times\B}\xrightarrow{\;\;\otimes\;\;}\D_3^{\A\times\B}
\]

Given functors $u:\A\rightarrow\C$ and $v:\B\rightarrow\cD$, the structure isomorphism of $\otimes$ induces a natural isomorphism:
\begin{center}
\begin{tikzcd}
 \D_1^{\C}\times\D_2^{\cD} \arrow[rr,"\widetilde{\otimes}"]\arrow[dd,"u^*\times v^*" left] && 
 \D_3^{\C\times\cD}\arrow[Rightarrow,ddll,"\gamma" above left,"\iso" below right,shorten >=1cm,shorten <=1cm, shift left] \arrow[dd,"(u\times v)^*"] \\
 &\\
\D_1^{\A}\times\D_2^{\B}\arrow[rr,"\widetilde{\otimes}" below] && 
 \D_3^{\A\times\B}
\end{tikzcd}
\end{center}
\end{remark}

\begin{definition}\label{Cocontinuous two-variable map definition}
A two-variable map $\otimes$ is called \textbf{cocontinuous} if, for every $X\in\D_1(\A)$ and $Y\in\D_2(\B)$, the maps
\[
X\;\widetilde{\otimes}\;- :\D_2\rightarrow\D_3^{\A}
\]
\[
-\;\widetilde{\otimes}\;Y:\D_1\rightarrow\D_3^{\B}
\]
are cocontinuous. Let $\cHom(\D_1,\D_2;\D_3)$ denote the full subcategory of $\Hom(\D_1\times\D_2,\D_3)$ on the cocontinuous maps.
\end{definition}

\begin{remark}\label{Opposite of external product}
Given a two-variable map $\otimes:\D_1\times\D_2\rightarrow\D_3$, its opposite is a map
\[
\otimes\op:\D_1\op\times\D_2\op\rightarrow\D_3\op.
\]
For any $X\in\D_1(\A)$ we have an isomorphism
\[
X\;\widetilde{\otimes\op}\;-\iso (X\;\widetilde{\otimes}\;-)\op:\D_2\op\rightarrow(\D_3\op)^{\A\op}.
\]
So $\otimes$ is cocontinuous if and only if $\otimes\op$ is \textbf{continuous} in the obvious sense.
\end{remark}

\begin{remark}\label{Cocontinuity of tensor as a modification}
As in \cref{Cocontinuity as a modification}, we may lift the isomorphisms associated to a cocontinuous two-variable map $\otimes:\D_1\times\D_2\rightarrow\D_3$ to modifications. Given any functor $u:\A\rightarrow\B$, and any category $\J$, we have a canonical map
\begin{center}
\begin{tikzcd}
\D_1^\J\times\D_2^\A\arrow[ddrr,bend right,equal,""{name=L,above right}]\arrow[rr,rightarrow,"\D_1^\J\times u_!" above] && \D_1^\J\times\D_2^\B\arrow[Leftarrow,to=L,"\eta" above left,shorten >=0.7cm,shorten <=0.8cm] \arrow[rightarrow,rrr,"\widetilde{\otimes}"]\arrow[rightarrow,dd,"\D_1^\J\times u^*" right] &&&
 \D_3^{\J\times\B}\arrow[ddrr,bend left,equal,""{name=R,below left}] \arrow[Leftarrow,ddlll,"\gamma^{-1}" above left, "\iso" below right, shorten >=1.2cm,shorten <=1.2cm, shift left] \arrow[rightarrow,dd,"(\J\times u)^*" left]\\
 
\\
 
&&\D_1^\J\times\D_2^\A\arrow[rightarrow,rrr,"\widetilde{\otimes}" below] &&& 
 \D_3^{\J\times\A}\arrow[Rightarrow,to=R,"\epsilon" above left,shorten >=0.5cm,shorten <=0.7cm] \arrow[rr,rightarrow,"(\J\times u)_!" below]
 && \D_3^{\J\times\B}
\end{tikzcd}
\end{center}
and a similar map where the Kan extension is in the first variable. The two-variable map $\otimes$ is cocontinuous if and only if each of these modifications is an isomorphism.
\end{remark}

\begin{lemma}\label{Cartesian closure of PDer}
There is an equivalence of categories 
\[
\Hom(\D_1\times\D_2,\D_3)\simeq\Hom(\D_1,\dHom(\D_2,\D_3))
\]
which restricts to an equivalence:
\[
\cHom(\D_1,\D_2;\D_3)\simeq\cHom(\D_1,\cdHom(\D_2,\D_3))
\]
\end{lemma}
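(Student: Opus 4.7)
The plan is to construct functors in both directions that implement the familiar ``currying'' equivalence, and then to verify, using the characterization of cocontinuity as a property of external maps, that they restrict to the stated equivalence on cocontinuous maps.

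In one direction, given a two-variable map $F:\D_1\times\D_2\rightarrow\D_3$, I would define $\widehat{F}:\D_1\rightarrow\dHom(\D_2,\D_3)$ by specifying, for each category $\A$, a functor $\widehat{F}(\A):\D_1(\A)\rightarrow\Hom(\D_2,\D_3^\A)$ that sends $X\in\D_1(\A)$ to the derivator map $X\;\widetilde{F}\;-:\D_2\rightarrow\D_3^\A$ of \cref{Lifting the external product to a derivator map}. On morphisms $\phi:X\rightarrow X'$ in $\D_1(\A)$, the action is by whiskering: apply $\phi\;\widetilde{F}\;-$ componentwise. The pseudonaturality data for $\widehat{F}$ at $u:\A\rightarrow\B$ comes from the natural isomorphism of \cref{Lifting the external product to a derivator map}: for $X\in\D_1(\B)$, the isomorphism $u^*(X\;\widetilde{F}\;-)\iso (u^*X)\;\widetilde{F}\;-$ provides the required natural transformation. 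Compatibility with identities, composition and $2$-cells reduces to the analogous coherence for the structure isomorphisms of $F$.

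In the other direction, given $G:\D_1\rightarrow\dHom(\D_2,\D_3)$, I would construct $\check{G}:\D_1\times\D_2\rightarrow\D_3$ whose $\A$-component sends $(X,Y)\in\D_1(\A)\times\D_2(\A)$ to $\Delta_\A^*\bigl(G(\A)(X)(\A)(Y)\bigr)$, where $\Delta_\A:\A\rightarrow\A\times\A$ is the diagonal and we recall that $G(\A)(X)$ is a prederivator map $\D_2\rightarrow\D_3^\A$, so $G(\A)(X)(\A)(Y)$ lies in $\D_3(\A\times\A)$. The structure isomorphisms of $\check{G}$ for $u:\A\rightarrow\B$ combine the structure isomorphism for $G(\B)(X):\D_2\rightarrow\D_3^\B$ at $u$ with the pseudonaturality of $G$ at $u$ and the compatibility of $\Delta$ with $u$, namely $(u\times u)\circ\Delta_\A=\Delta_\B\circ u$. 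I would then verify that $F\mapsto\widehat{F}$ and $G\mapsto\check{G}$ are pseudoinverse: the composite $\widecheck{\widehat{F}}\iso F$ is witnessed by the isomorphism $\Delta_\A^*(\p_\A^*X\otimes\p_\A^*Y)\iso X\otimes Y$ arising from $\p_\A\circ\Delta_\A=\id_\A$; the composite $\widehat{\check{G}}\iso G$ follows by unwinding the definition and applying the pseudonaturality of $G$ along projections.

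For the restriction to cocontinuous maps, I would use \cref{Cocontinuity of tensor as a modification} together with the characterization in \cref{Cocontinuous derivator map}. Cocontinuity of $F$ decomposes into two families of modifications being invertible: one for Kan extensions in the $\D_2$-variable and one for the $\D_1$-variable. The first family translates directly into the statement that each $\widehat{F}(\A)(X)=X\;\widetilde{F}\;-$ is cocontinuous, i.e.\ lands in $\cdHom(\D_2,\D_3)(\A)$, so that $\widehat{F}$ factors through the subprederivator $\cdHom(\D_2,\D_3)\hookrightarrow\dHom(\D_2,\D_3)$. The second family translates into cocontinuity of the induced map $\widehat{F}:\D_1\rightarrow\cdHom(\D_2,\D_3)$, since Kan extensions in $\cdHom(\D_2,\D_3)$ are computed pointwise by postcomposition with $u_!$ on $\D_3^\A\rightarrow\D_3^\B$, matching the external Kan extension of $F$ in the first variable.

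The main obstacle I anticipate is not a conceptual one but a bookkeeping one: verifying that the structure isomorphisms constructed from $F$ genuinely satisfy the three pseudonaturality axioms of \cref{Pseudonatural transformation definition} at two nested levels (for $\widehat{F}(\A)(X)\in\Hom(\D_2,\D_3^\A)$ and for $\widehat{F}:\D_1\rightarrow\dHom(\D_2,\D_3)$) requires carefully pasting the coherence diagrams for $F$ together with the functoriality of the shift $2$-functor $\Cat\op\times\PDer\rightarrow\PDer$. Once this pasting calculus is set up and the two assignments are shown to be well-defined $2$-functorial constructions, the equivalence and its restriction follow routinely.
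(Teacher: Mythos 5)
Your proposal is correct and takes essentially the same approach as the paper: currying via $X\mapsto X\;\widetilde{\otimes}\;-$, uncurrying via restriction along the diagonal $\delta^*$, and matching the two halves of cocontinuity to the two levels of the currying. The paper merely records these two assignments and defers the verification to~\cite[Section 5]{Cisinski08}, whereas you sketch the coherence bookkeeping yourself, but there is no substantive difference in the argument.
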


\begin{proof}
See~\cite[Section 5] {Cisinski08} for a complete proof. For convenience, we record the maps in both directions.

Given a map $\otimes\in\Hom(\D_1\times\D_2,\D_3)$, the corresponding map in $\Hom(\D_1,\dHom(\D_2,\D_3))$ has components:
\begin{center}
\begin{tikzcd}[row sep=small]
\D_1(\A)\arrow[r] & \Hom(\D_2,\D_3^\A)\\
X \arrow[mapsto,r,shorten >=0.7cm,shorten <=0.5cm] & X\;\widetilde{\otimes}\;-
\end{tikzcd}
\end{center}
For the inverse, suppose we have a map $\varphi\in\Hom(\D_1,\dHom(\D_2,\D_3))$. For any $X\in\D_1(\A)$, $\varphi$ gives us a map $\varphi(X):\D_2\rightarrow\D_3^\A$. The morphism corresponding to $\varphi$ in $\Hom(\D_1\times\D_2,\D_3)$ has components given by
\begin{center}
\begin{tikzcd}[row sep=small]
\D_1(\A)\times\D_2(\A)\arrow[r] & \D_3(\A\times\A)\arrow[r,"\delta^*"] &\D_3(\A)\\
(X,Y) \arrow[mapsto,r,shorten >=0.3cm,shorten <=0.8cm] 
& \varphi(X)(Y) \arrow[mapsto,r,shorten >=0.2cm,shorten <=0.3cm] 
& \delta^*(\varphi(X)(Y))
\end{tikzcd}
\end{center}
where $\delta:\A\rightarrow\A\times\A$ is the diagonal map.
\end{proof}

\cref{Cartesian closure of PDer} carries over to the following statement about two-variable maps. For a proof, see~\cite[Theorem 3.11]{GPS14}.

\begin{remark}\label{Bimorphisms}
Let $\D_1$, $\D_2$ and $\D_3$ be prederivators. Consider the $2$-functor below:
\begin{center}
\begin{tikzcd}[row sep=small]
\D_1\;\overline{\times}\;\D_2:\Cat\op\times\Cat\op\arrow[r] & \CAT\\
\;\;\;\;\;\;\;\;\;\;\;\;\;\;\;\;\;\;(\A,\B) \arrow[mapsto,r,shorten >=0.3cm,shorten <=0.8cm] 
& \D_1(\A)\times\D_2(\B)
\end{tikzcd}
\end{center}
We may also form the following composite, where the first map takes a pair of categories $\A$ and $\B$ to their product $\A\times\B$:
\begin{center}
\begin{tikzcd}[row sep=small]
\D_3\circ\times:\Cat\op\times\Cat\op\arrow[r,"\times"] & \Cat\op\arrow[r,"\D_3"] & \CAT
\end{tikzcd}
\end{center}
The maps of \cref{Cartesian closure of PDer} induce an equivalence of categories
\[
\Hom(\D_1\times\D_2,\D_3)\simeq \text{Psnat}(\D_1\;\overline{\times}\;\D_2,\D_3\circ\times),
\]
where the second category has objects given by the pseudonatural transformations from $\D_1\;\overline{\times}\;\D_2$ to $\D_3\circ\times$, and maps given by modifications. 

Explicitly, a prederivator map $\otimes:\D_1\times\D_2\rightarrow\D_3$ corresponds to a pseudonatural transformation with component at $(\A,\B)$ given as follows:
\begin{center}
\begin{tikzcd}[row sep=tiny]
\D_1(\A)\times\D_2(\B)\arrow[r] & \D_3(\A\times\B)\\
(X,Y) \arrow[mapsto,r,shorten >=0.5cm,shorten <=0.8cm] 
& X\;\widetilde{\otimes}\;Y
\end{tikzcd}
\end{center}

\end{remark}

For any derivator $\D$, we can apply \cref{Universal property of spaces} and \cref{Cartesian closure of PDer} to the left derivator $\cdHom(\D,\D)$, to get an equivalence:
\[
\cHom(\D,\D)\equiv\cHom(\dHo(\sSet),\cdHom(\D,\D))\equiv\cHom(\dHo(\sSet),\D;\D)
\]
Under this equivalence, $\id\in\cHom(\D,\D)$ gives us a canonical cocontinuous map:
\[
\otimes:\dHo(\sSet)\times\D\rightarrow\D
\]
This is the essentially unique cocontinuous map such that
\[
\Delta^0\;\widetilde{\otimes}\;-\iso\id:\D\rightarrow\D.
\] 
For the derivator $\dHo(\M)$ associated to a model category $\M$, this map agrees with the familiar action of $\sSet$ on $\M$, constructed, for example, in~\cite[Chapter 5]{Hovey07}. We will discuss actions of derivators in detail in~\cref{Chapter Actions of monoidal derivators}.

\section{Pointed and triangulated derivators}\label{Section Pointed and stable derivators}

In this section we include a brief overview of pointed, stable and triangulated derivators, most of which can be found in~\cite[Section 3,4]{Groth13}. These are the derivator analogues of pointed and stable model categories, and a number of the concepts that are important in that setting can also be developed in derivators, for example a theory of homotopy fibres and cofibres. Note that, in contrast to~\cite{Groth13}, we do not assume stable derivators are strong. Thus, the definition of stable derivator in~\cite{Groth13} is what we call a triangulated derivator in~\cref{triangulated derivator definition}. To study triangulated derivators, we will use a number of results from the theory of triangulated categories, which we recall at the end of this section. Using these, we will be able to prove representability theorems for enriched triangulated derivators in \cref{Chapter Actions of monoidal derivators} and \cref{Chapter Enriched derivators}.

\begin{definition}
We say a derivator $\D$ is \textbf{pointed} if the underlying category has a zero object $0\in\D(\0)$. 
\end{definition}

If $\D$ is a pointed derivator, note that $\p_\A^*0\in\D(\A)$ is a zero object, for any category $\A$.

\begin{example}
Let $\M$ be a model category. Then the derivator $\dHo(\M)$ is pointed if and only if the unique map from the initial object in $\M$ to the terminal object is a weak equivalence. In particular, if $\M$ is a pointed model category then $\dHo(\M)$ is pointed.
\end{example}

We have an analogue of \cref{Universal property of spaces} for pointed derivators:

\begin{theorem}[Cisinski]\label{Universal property of pointed spaces}
For any pointed derivator $\D$, the map
\[
\cHom(\dHo(\sSet_*),\D)\rightarrow\D(\0)
\]
given by evaluation at $\mathrm{S}^0\in\Ho(\sSet_*)$ is an equivalence.
\end{theorem}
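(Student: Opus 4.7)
The plan is to reduce the pointed statement to the unpointed universal property, \cref{Universal property of spaces}, by exploiting the Quillen adjunction between $\sSet$ and $\sSet_*$. Writing $(-)_+\colon\sSet\rightarrow\sSet_*$ for the left Quillen functor adjoining a disjoint basepoint, its total left derived functor gives a cocontinuous derivator morphism $\mathds{L}(-)_+\colon\dHo(\sSet)\rightarrow\dHo(\sSet_*)$. Since $(\Delta^0)_+\equiv\mathrm{S}^0$, evaluation at $\mathrm{S}^0$ factors as
\[
\cHom(\dHo(\sSet_*),\D)\xrightarrow{\;-\circ\mathds{L}(-)_+\;}\cHom(\dHo(\sSet),\D)\xrightarrow{\;\mathrm{ev}_{\Delta^0}\;}\D(\0),
\]
and the rightmost arrow is an equivalence by \cref{Universal property of spaces}. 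It therefore suffices to show that the left arrow is an equivalence whenever $\D$ is pointed.

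For this, I would construct an inverse explicitly. Every pointed simplicial set $X$ is the homotopy pushout in $\sSet$ of $\Delta^0\leftarrow\Delta^0\rightarrow X_0$, where $X_0$ is the underlying unpointed simplicial set and the right-hand arrow picks out the basepoint. Given cocontinuous $G\colon\dHo(\sSet)\rightarrow\D$ with $\D$ pointed, define $\tilde G\colon\dHo(\sSet_*)\rightarrow\D$ pointwise by
\[
\tilde G(X)\;:=\;\mathrm{hocofib}\bigl(G(\Delta^0)\rightarrow G(X_0)\bigr),
\]
which is well-defined since $\D$ is pointed. This inverts precomposition with $\mathds{L}(-)_+$: for an unpointed $K$ we have $(K_+)_0\equiv K\sqcup\Delta^0$, so cocontinuity of $G$ identifies $G(\Delta^0)\rightarrow G(K_+)$ with a summand inclusion into $G(K)\sqcup G(\Delta^0)$, whose cofibre in the pointed derivator $\D$ is $G(K)$. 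In the other direction, for any cocontinuous $F\colon\dHo(\sSet_*)\rightarrow\D$, applying $F$ to the defining pushout square of $X$ and taking the cofibre recovers $F(X)$, since $F$ itself preserves homotopy pushouts and sends the basepoint of $\mathrm{S}^0\equiv(\Delta^0)_+$ to zero.

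The main obstacle is coherence: producing $\tilde G$ as a genuine pseudonatural cocontinuous derivator morphism rather than merely a pointwise assignment requires promoting the homotopy pushout description of pointed simplicial sets to a functorial construction across all shape categories $\A$, with all the structure isomorphisms cohering with those of $G$. The conceptually cleanest way to discharge this coherence is to invoke Cisinski's general theory of derivator localizations developed in~\cite{Cisinski08}: one shows that $\mathds{L}(-)_+$ exhibits $\dHo(\sSet_*)$ as the universal pointed derivator receiving a cocontinuous map from $\dHo(\sSet)$, so that precomposition with $\mathds{L}(-)_+$ automatically gives an equivalence $\cHom(\dHo(\sSet_*),\D)\equiv\cHom(\dHo(\sSet),\D)$ for every pointed target $\D$. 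Combined with \cref{Universal property of spaces}, this yields the theorem.
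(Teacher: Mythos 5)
The paper itself does not prove this theorem: it is stated as a cited result of Cisinski, with the reader referred to~\cite{Cisinski08} for the complete proof. So there is no in-paper argument to measure your proposal against. That said, your reduction is the right one and essentially follows the shape of Cisinski's own argument. The factorization of $\mathrm{ev}_{\mathrm{S}^0}$ through precomposition with $\mathds{L}(-)_+$ and then $\mathrm{ev}_{\Delta^0}$ is correct, and so is the level-wise formula $\tilde G(X)=\mathrm{hocofib}(G(\Delta^0)\rightarrow G(X_0))$: one direction uses that in a pointed derivator the cofibre of a coproduct inclusion $G(\Delta^0)\rightarrow G(K)\sqcup G(\Delta^0)$ is $G(K)$ (the pushout splits along the coproduct because the initial object is also terminal), and the other uses the cofibre sequence $\mathrm{S}^0\rightarrow (X_0)_+\rightarrow X$ in $\sSet_*$ together with cocontinuity of $F$. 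One small notational slip: in the first direction you write $G(K_+)$ where you mean $G((K_+)_0)$; $G$ is defined on $\dHo(\sSet)$, so it is the underlying unpointed object that gets fed in. The coherence gap you identify is the genuine difficulty — turning the pointwise cofibre assignment into a pseudonatural cocontinuous morphism requires more than a pointwise formula (one needs, e.g., to produce the arrow $G(\Delta^0)\rightarrow G(X_0)$ coherently as an object of $\D(\A\times[1])$ for every shape $\A$). Deferring that to Cisinski's localization theory in~\cite{Cisinski08} is the standard resolution and is the same source the paper cites, so your proposal neither circularly assumes the statement nor fully closes the argument — it sits at the same level of detail as the paper itself, but with a useful unpacking of why the reduction to \cref{Universal property of spaces} works.
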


For any pointed derivator $\D$, this universal property induces a canonical cocontinuous map
\[
\wedge:\dHo(\sSet_*)\times\D\rightarrow\D.
\]
This is the essentially unique cocontinuous map such that
\[
\mathrm{S}^0\;\widetilde{\wedge}\;-\iso\id:\D\rightarrow\D.
\] 
See~\cite{Cisinski08} for a complete proof of this fact.

\begin{definition}
For any pointed derivator $\D$, the action of $\dHo(\sSet_*)$ on $\D$ induces the \textbf{suspension} map:
\[
\Sigma:=\mathrm{S}^1\;\widetilde{\wedge}\;-:\D\rightarrow\D
\]
We call a pointed derivator $\D$ \textbf{stable} if this map is an equivalence. 
\end{definition}

Given a pointed derivator, it is possible to define suspension in elementary terms, without appealing to the action of $\dHo(\sSet_*)$. See~\cite{Groth13} for this approach. See~\cite{GS17} for a number of equivalent characterisations of stability.

\begin{example}
Given a pointed model category $\M$, the derivator $\dHo(\M)$ is stable if and only if $\M$ is a stable model category.
\end{example}

\begin{definition}\label{triangulated derivator definition}
Let $\D$ be a stable derivator. If in addition $\D$ is strong, we call $\D$ a \textbf{triangulated} derivator.
\end{definition}

The following theorem is the motivation for the term triangulated derivator. \textbf{Der 5} is essential in its proof.  

\begin{theorem}[Groth, Maltsiniotis]\label{Triangulated derivators induce triangulated categories}
Let $\D$ be a triangulated derivator. Then, for any category $\A$, the category $\D(\A)$ has a canonical triangulated structure, and for any $u:\A\rightarrow\B$, the functor $u^*:\D(\B)\rightarrow\D(\A)$ is canonically exact. 

Moreover, suppose $\D_1$ and $\D_2$ are triangulated derivators, and let $F:\D_1\rightarrow\D_2$ be a derivator map that preserves initial objects and homotopy pushouts. Then, for each category $\A$, the functor $F:\D_1(\A)\rightarrow\D_2(\A)$ has a canonical exact structure. The same is true if $F$ preserves terminal objects and homotopy pullbacks.
\end{theorem}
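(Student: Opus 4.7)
The plan is to follow the strategy of Groth--Maltsiniotis. By \cref{Shifted derivator is a derivator}, the shifted prederivator $\D^\A$ is again strong; pointedness and stability are evidently inherited by shifts, so $\D^\A$ is a triangulated derivator whenever $\D$ is. Hence it suffices to equip the underlying category $\D(\0)$ of an arbitrary triangulated derivator with a canonical triangulated structure: applying this to $\D^\A$ recovers $\D(\A)=(\D^\A)(\0)$ with its triangulated structure, and naturality will show that $u^*$, identified with the functor induced by the prederivator map $u^*:\D^\B\rightarrow\D^\A$, is compatible with these structures.

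Over $\D(\0)$, I would take the shift functor to be the suspension $\Sigma:\D(\0)\rightarrow\D(\0)$, defined as the homotopy pushout of $0\leftarrow X\rightarrow 0$; this is an equivalence by stability. To produce distinguished triangles I would use \textbf{Der 5}: given a morphism $f:X\rightarrow Y$ in $\D(\0)$, lift it to an object $\bar f\in\D([1])$, and then apply a homotopy left Kan extension along a suitable inclusion $[1]\hookrightarrow\mathcal{I}$ into a finite poset $\mathcal{I}$ encoding two adjoined homotopy cocartesian squares with zero objects placed so as to produce the Puppe sequence. Applying the underlying diagram functor to the result extracts a sequence $X\rightarrow Y\rightarrow Z\rightarrow\Sigma X$, and I would declare a triangle distinguished when it is isomorphic to such a sequence. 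Independence of the choice of Der 5 lift follows from \textbf{Der 4} and the essential uniqueness of homotopy left Kan extensions.

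With this data in place, TR1 and TR3 follow quickly from the construction together with \textbf{Der 5}, which lets us lift a commutative square of arrows to a coherent object of $\D([1]\times[1])$ and hence produce the required morphism of triangles. TR2 (rotation) uses the crucial input of stability: in a stable derivator a square is homotopy cocartesian if and only if it is homotopy cartesian, so iterating the cofiber construction identifies the next term with $\Sigma X$ up to a canonical sign via a base-change computation through the relevant comma squares in \textbf{Der 4}. Exactness of $u^*:\D(\B)\rightarrow\D(\A)$ now follows from \cref{u^* is cocontinuous}: $u^*$ preserves homotopy colimits, so it preserves $\Sigma$ up to a canonical natural isomorphism, and it preserves the coherent diagrams defining distinguished triangles because it is cocontinuous and preserves zero objects.

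The main obstacle is TR4, the octahedral axiom. The plan is to encode the three interlocking cofiber sequences of $f$, $g$, and $g\circ f$ as a single coherent diagram over a finite poset (a Verdier $3\times 3$ arrangement of homotopy bicartesian squares), obtained by iterated homotopy left Kan extension starting from a \textbf{Der 5} lift of the composable pair to $\D([2])$; the octahedron is then read off from the underlying diagram by another appeal to \textbf{Der 4}. For the second part, if $F:\D_1\rightarrow\D_2$ preserves initial objects (hence zero objects, these being the initial objects of a pointed derivator) and homotopy pushouts, then the defining homotopy pushout square for $\Sigma$ yields a canonical natural isomorphism $F\Sigma\cong\Sigma F$, and the coherent diagrams witnessing distinguished triangles are preserved by $F$; thus $F$ is canonically exact. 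The dual statement is proved identically, using the equivalent characterisation of distinguished triangles via homotopy pullbacks that is available in the stable setting.
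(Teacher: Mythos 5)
The paper does not prove this theorem; immediately after the statement it refers the reader to~\cite{Groth13}, noting only that cocontinuous maps preserve initial objects and homotopy pushouts. Your sketch is a faithful reconstruction of Groth's argument: reduce to the underlying category via shifts (pointedness and stability are indeed inherited, since $\Sigma:\D^\A\rightarrow\D^\A$ is a derivator equivalence whenever $\Sigma:\D\rightarrow\D$ is), define $\Sigma$ by a homotopy pushout, use \textbf{Der~5} to lift a morphism to $\D([1])$ and then Kan extend over a finite poset encoding the cofiber sequence, derive TR2 from the cocartesian-iff-cartesian equivalence available in a stable derivator (with the expected sign subtlety), handle TR4 via a larger grid of bicartesian squares, and obtain exactness of $u^*$, and of pushout- or pullback-preserving maps, from the coherent characterisation of the distinguished triangles.

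One small correction of attribution: well-definedness of the class of distinguished triangles, independent of the chosen \textbf{Der~5} lift, comes from the \emph{fullness} clause of \textbf{Der~5} together with \textbf{Der~2} --- two lifts of $f$ are connected by a morphism in $\D([1])$ whose underlying diagram is the identity, and such a morphism is then automatically an isomorphism --- rather than from \textbf{Der~4}, which is used to compute the underlying diagrams of the Kan extensions, not to compare lifts. Apart from this, your proposal matches the cited proof.
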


See~\cite{Groth13} for a proof of this theorem, and for the definition of \textbf{homotopy pushouts}, which are called \textbf{cocartesian squares} in~\cite{Groth13}. We will not recall the definition here: for our purposes, it suffices to note that all cocontinuous derivator maps preserve initial objects and homotopy pushouts.

We now recall some concepts from the theory of triangulated categories, which we will use to study triangulated derivators. See~\cite{Krause08,Neeman14} for more details. There is also a useful survey in the first section of~\cite{Stovicek09}.

\begin{definition}
Let $\mathlarger{\mathscr{T}}$ be a triangulated category that admits all coproducts. An object $x\in\mathlarger{\mathscr{T}}$ is called \textbf{compact} if the functor it represents
\[
\mathscr{T}(x,-):\mathscr{T}\rightarrow\Ab
\]
preserves coproducts.

A triangulated category $\mathlarger{\mathscr{T}}$ is called \textbf{compactly generated} if it admits all coproducts and there is a set $\mathcal{C}$ of objects in $\mathlarger{\mathscr{T}}$ with the following properties:
\begin{enumerate}
\item Every object $x\in\mathcal{C}$ is compact.
\item If $y\in\mathlarger{\mathscr{T}}$ has the property that $\mathscr{T}(x,y)=0$ for every $x\in\mathcal{C}$, then $y=0$. 
\end{enumerate}
We call $\mathcal{C}$ a \textbf{set of compact generators for $\mathlarger{\mathscr{T}}$}.
\end{definition}

Compactly generated triangulated categories are historically important and well-studied. However, the following generalisation, introduced in~\cite{Krause02}, retains several important properties of compactly generated triangulated categories, and admits many more examples:

\begin{definition}
A triangulated category $\mathlarger{\mathscr{T}}$ is called \textbf{perfectly generated} if it admits all coproducts and there is a set $\mathcal{P}$ of objects in $\mathlarger{\mathscr{T}}$ with the following properties:
\begin{enumerate}
\item Let $f_i:y_i\rightarrow z_i$ be a family of maps in $\mathlarger{\mathscr{T}}$. Suppose that, for every $x\in\mathcal{P}$ and every $f_i$, the map
\[
\mathscr{T}(x,f_i):\mathscr{T}(x,y_i)\rightarrow\mathscr{T}(x,z_i)
\]
is surjective. Then, for every object $x\in\mathcal{P}$, the map
\[
\mathscr{T}(x,\coprod f_i):\mathscr{T}(x,\coprod y_i)\rightarrow\mathscr{T}(x,\coprod z_i)
\]
is surjective.
\item If $y\in\mathlarger{\mathscr{T}}$ has the property that $\mathscr{T}(x,y)=0$ for every $x\in\mathcal{P}$, then $y=0$. 
\end{enumerate}
We call $\mathcal{P}$ a \textbf{set of perfect generators for $\mathlarger{\mathscr{T}}$}.
\end{definition}

We will now record some important properties of perfectly generated triangulated categories. First, we recall two more definitions:

\begin{definition}
Let $\mathlarger{\mathscr{T}}$ be a triangulated category. A functor $F:\mathlarger{\mathscr{T}}\op\rightarrow\Ab$ is called a \textbf{cohomological functor} if it takes any distinguished triangle in $\mathlarger{\mathscr{T}}$ to an exact sequence in $\Ab$. 
\end{definition}

\begin{definition}
Let $\mathlarger{\mathscr{T}}$ be a triangulated category that admits all coproducts. We say that $\mathlarger{\mathscr{T}}$ \textbf{satisfies Brown representability} if any cohomological functor $F:\mathlarger{\mathscr{T}}\op\rightarrow\Ab$ that preserves products (that is, takes coproducts in $\mathlarger{\mathscr{T}}$ to products in $\Ab$) is representable.
\end{definition}

Versions of the following theorem have been proven in a number of settings. The version we give below is proved in~\cite{Krause02}:

\begin{theorem}\label{Perfectly generated triangulated categories satisfy representability}
Perfectly generated triangulated categories satisfy Brown representability.
\end{theorem}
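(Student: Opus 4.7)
The plan is to adapt the classical Brown representability argument of Neeman, using the perfect generation condition as the replacement for compactness. The idea is to build a representing object as a homotopy colimit (mapping telescope) of a transfinite approximation sequence. First, for each $x\in\mathcal{P}$, each shift $x[n]$, and each element $\alpha\in F(x[n])$, include a copy of $x[n]$ in a large coproduct $y_0=\coprod_{(x,\alpha)} x[n]$. Because $F$ sends coproducts to products, the family $(\alpha)$ assembles into a distinguished element $\eta_0\in F(y_0)$, and by Yoneda this corresponds to a natural transformation $\phi_0:\mathscr{T}(-,y_0)\rightarrow F$ defined on objects. By construction $\phi_0$ is surjective when evaluated on any shift of a perfect generator.

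Next I would inductively build $y_{n+1}$ from $y_n$ so as to kill the kernel of $\phi_n$ on shifts of perfect generators. Concretely, for each $x\in\mathcal{P}$, each integer $k$, and each morphism $f:x[k]\rightarrow y_n$ with $F(f)(\eta_n)=0$, take the coproduct of all such $f$'s to obtain a single morphism $\coprod f:\coprod_{f} x[k]\rightarrow y_n$, and define $y_{n+1}$ by completing to a distinguished triangle $\coprod_f x[k]\rightarrow y_n\rightarrow y_{n+1}\rightarrow(\coprod_f x[k])[1]$. Applying the cohomological functor $F$ and using $F(f)(\eta_n)=0$ to lift $\eta_n$ along the connecting map, one obtains $\eta_{n+1}\in F(y_{n+1})$ restricting to $\eta_n$. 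Then take $y$ to be the homotopy colimit of the sequence $y_0\rightarrow y_1\rightarrow y_2\rightarrow\cdots$, which exists because $\mathscr{T}$ admits coproducts and is triangulated (so mapping telescopes exist). The elements $\eta_n$ are compatible, so $F$-continuity along the telescope distinguished triangle yields a global element $\eta\in F(y)$, hence a natural transformation $\phi:\mathscr{T}(-,y)\rightarrow F$.

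It then remains to show that $\phi$ is an isomorphism on every object of $\mathscr{T}$. For a shift of a perfect generator $x\in\mathcal{P}$, the Milnor short exact sequence
\[
0\rightarrow{\lim}^{1}\mathscr{T}(x,y_n[-1])\rightarrow\mathscr{T}(x,y)\rightarrow\lim\mathscr{T}(x,y_n)\rightarrow 0
\]
combined with the analogous sequence obtained by applying $F$ to the telescope, together with the construction's kernel-killing and image-filling features, shows that $\phi$ is a bijection on shifts of elements of $\mathcal{P}$. To promote this from $\mathcal{P}$ to all of $\mathscr{T}$, complete the canonical map $y\rightarrow F$ to a distinguished triangle in the derived sense by using condition~(2) of perfect generation: any object $c$ with $\mathscr{T}(x,c)=0$ for all shifts of $x\in\mathcal{P}$ is zero, so the cone of the comparison vanishes once we know the comparison is an isomorphism on generators.

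The main obstacle is the step where one must propagate the isomorphism from $\mathcal{P}$ to all of $\mathscr{T}$: the coproduct $\coprod_f x[k]$ appearing in the construction can be large, and one must know that the induced map $\mathscr{T}(x,\coprod_f x[k])\rightarrow\mathscr{T}(x,y_n)$ hits all the null-morphisms. This is precisely condition~(1) of perfect generation: surjectivity of families $\mathscr{T}(x,f_i)\rightarrow\mathscr{T}(x,z_i)$ is preserved by coproducts. Without this, the kernel-killing step at stage $n+1$ would not genuinely annihilate every $f\in\ker\phi_n$ after taking the coproduct, and the telescope argument would collapse. This is the single place where the full strength of perfect (as opposed to compact) generation is used, and explains why the theorem is stronger than Neeman's original compactly generated version.
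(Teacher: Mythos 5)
Your proposal follows Neeman's mapping-telescope construction for the compactly generated case and hopes that perfect generation slots in where compactness used to be, but this does not work, and the specific places where it fails are glossed over.

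The central problem is your analysis of $\mathscr{T}(x,y)$ for $y=\hocolim y_n$ and $x$ a (shift of a) perfect generator. The sequence
\[
0\rightarrow{\lim}^{1}\mathscr{T}(x,y_n[-1])\rightarrow\mathscr{T}(x,y)\rightarrow\lim\mathscr{T}(x,y_n)\rightarrow 0
\]
you write down is the Milnor sequence one gets by applying a \emph{contravariant} product-preserving cohomological functor to a telescope, or by applying a \emph{covariant} functor to a homotopy \emph{limit} of a tower. Here $\mathscr{T}(x,-)$ is covariant and $y$ is a homotopy \emph{colimit}, so this sequence does not exist. When $x$ is compact one instead gets the isomorphism $\mathscr{T}(x,y)\iso\operatorname{colim}\mathscr{T}(x,y_n)$ directly (because $\mathscr{T}(x,\coprod y_n)\iso\bigoplus\mathscr{T}(x,y_n)$), and it is exactly this identification that Neeman's argument uses to show $\phi$ is injective on generators: any $g:x\rightarrow y$ with $\phi(g)=0$ factors through some $y_n$, where it is killed at the next stage. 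Perfect generators need not be compact, a map $g:x[k]\rightarrow y$ need not factor through any finite stage of the telescope, and there is no clean description of $\mathscr{T}(x,y)$ in terms of the $\mathscr{T}(x,y_n)$. This is precisely the gap that separates compact from perfect generation, and your argument silently assumes it away.

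Two further issues. First, your invocation of condition~(1) of perfect generation does not match its actual form. Condition~(1) concerns a family of morphisms $f_i:y_i\rightarrow z_i$ and surjectivity of $\mathscr{T}(x,\coprod f_i)$ given surjectivity of each $\mathscr{T}(x,f_i)$; you are instead trying to apply it to a coproduct of morphisms \emph{into} a fixed $y_n$, which is a different shape of statement. Second, the final ``propagation'' step — taking ``a distinguished triangle in the derived sense'' on the map $\phi:\mathscr{T}(-,y)\rightarrow F$ and killing its cone using weak generation — is not a well-defined operation: $F$ is not known to be representable yet, so there is no object to take a cone against. The standard replacement (showing the full subcategory of $z$ with $\phi_z$ an isomorphism is localizing and contains $\mathcal{P}$, hence is everything) requires knowing that the localizing subcategory generated by $\mathcal{P}$ is all of $\mathscr{T}$, which does not follow from condition~(2) alone and is itself part of what needs proof.

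The proof in the paper's reference (Krause, \emph{A Brown representability theorem via coherent functors}) avoids the telescope entirely. Krause writes the cohomological functor $F$ as a filtered colimit of representable functors in the functor category, uses condition~(1) to establish an exactness property of this filtered system (this is where the ``surjectivity preserved by coproducts'' axiom is genuinely used), and then realizes the colimit in $\mathscr{T}$. Your argument would need to be substantially reorganized along these lines, or along the lines of Neeman's $\alpha$-well-generated argument, to close the gaps.
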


The following is a useful property of triangulated categories that satisfy Brown representability. A simple proof can be found in~\cite[Chapter 8]{Neeman14}.

\begin{lemma}\label{Left adjoints from perfectly generated triangulated categories} 
Let $\mathlarger{\mathscr{T}}$ and $\mathlarger{\mathscr{S}}$ be triangulated categories, and suppose $\mathlarger{\mathscr{T}}$ satisfies Brown representability. An exact functor $F:\mathlarger{\mathscr{T}}\rightarrow\mathlarger{\mathscr{S}}$ has a right adjoint if and only if it preserves coproducts.
\end{lemma}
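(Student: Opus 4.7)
The plan is to prove both directions separately, with the forward direction being essentially formal and the content lying in the reverse direction, which relies on Brown representability.

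For the forward direction, suppose $F$ has a right adjoint $G$. Then $F$ preserves all colimits that exist in $\mathlarger{\mathscr{T}}$, hence in particular coproducts; this is standard and makes no use of the triangulated structure.

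For the reverse direction, suppose $F$ is exact and preserves coproducts. I would construct the right adjoint $G$ object-by-object using Brown representability. Fix $y\in\mathlarger{\mathscr{S}}$ and consider the functor
\[
F_y := \mathlarger{\mathscr{S}}(F(-),y) : \mathlarger{\mathscr{T}}\op \to \Ab.
\]
The key observation is that $F_y$ is cohomological and sends coproducts to products. It is cohomological because $F$ is exact (so sends distinguished triangles in $\mathlarger{\mathscr{T}}$ to distinguished triangles in $\mathlarger{\mathscr{S}}$) and $\mathlarger{\mathscr{S}}(-,y)$ is cohomological on $\mathlarger{\mathscr{S}}$. It sends coproducts in $\mathlarger{\mathscr{T}}$ to products in $\Ab$ because $F$ preserves coproducts and $\mathlarger{\mathscr{S}}(-,y)$ turns coproducts into products. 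Since $\mathlarger{\mathscr{T}}$ satisfies Brown representability, $F_y$ is representable: there exists an object $G(y)\in\mathlarger{\mathscr{T}}$ and a natural isomorphism
\[
\varphi_y : \mathlarger{\mathscr{T}}(-,G(y)) \xrightarrow{\;\iso\;} \mathlarger{\mathscr{S}}(F(-),y).
\]

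Next I would upgrade the assignment $y\mapsto G(y)$ to a functor and promote the family $\varphi_y$ to the unit (or the hom-isomorphism) of an adjunction. Given $f:y\to y'$ in $\mathlarger{\mathscr{S}}$, postcomposition with $f$ defines a natural transformation $F_y\to F_{y'}$, which transports via the $\varphi$'s to a natural transformation $\mathlarger{\mathscr{T}}(-,G(y))\to\mathlarger{\mathscr{T}}(-,G(y'))$; the Yoneda lemma then yields a unique morphism $G(f):G(y)\to G(y')$. Functoriality of $G$ and naturality of $\varphi$ in $y$ follow from Yoneda in the standard way. The isomorphism $\varphi$ is then precisely the hom-set bijection of an adjunction $F\dashv G$.

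I do not expect any step to be a serious obstacle: the construction is the classical Brown representability argument. The only point that requires care is checking that $F_y$ is cohomological and coproduct-preserving, which is where the hypotheses that $F$ is exact and preserves coproducts are used exactly once each; everything else is abstract nonsense via Yoneda. Naturality in $y$ and the extension of $G$ to morphisms can either be done by hand as above, or one can invoke the general fact that a functor with a pointwise right adjoint has a right adjoint.
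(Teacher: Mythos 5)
Your proof is correct and is exactly the standard Brown representability argument; the paper does not give its own proof but cites \cite[Chapter 8]{Neeman14}, which proceeds in the same way, so your approach matches the intended one.
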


We will now apply these theorems to triangulated derivators. A proof of the following lemma can be found in~\cite[Section 3]{Hormann17}: 

\begin{lemma}\label{Perfectly generated triangulated derivator}
Let $\D$ be a triangulated derivator. Suppose $\mathcal{G}$ is a set of compact (resp. perfect) generators for $\D(\0)$. Then, for any category $\A$, the set
\[
\mathcal{G}_\A = \{a_!x \;|\; a\in\A, x\in\mathcal{G}\}
\]
is a compact (resp. perfect) generating set for $\D(\A)$.
\end{lemma}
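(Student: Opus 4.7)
The plan is to reduce both statements to the analogous properties of $\mathcal{G}$ (respectively $\mathcal{P}$) in $\D(\0)$, by exploiting the adjunction $a_! \dashv a^*$ together with the fact that each pullback $a^* : \D(\A) \to \D(\0)$ preserves coproducts. Preservation of coproducts (and indeed of all homotopy colimits) is a consequence of \cref{u^* is cocontinuous}: the derivator map $a^* : \D^\A \to \D$ is cocontinuous, and its component at $\0$ is precisely the pullback functor $a^* : \D(\A) \to \D(\0)$.

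First I would handle the generation condition, which is common to both the compact and the perfect case. Suppose $y \in \D(\A)$ satisfies $\D(\A)(a_! x, y) = 0$ for all $a \in \A$ and all $x \in \mathcal{G}$. By the adjunction this says $\D(\0)(x, a^* y) = 0$ for all such $a$ and $x$, and since $\mathcal{G}$ generates $\D(\0)$ we conclude $a^* y \iso 0$ for every $a \in \A$. The unique map $y \to \p_\A^* 0$ is then pointwise an isomorphism, hence an isomorphism by \textbf{Der 2}, so $y \iso 0$ as required.

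For the compact case, fix $a \in \A$ and a compact $x \in \mathcal{G}$. For any family $\{y_i\}_{i \in I}$ in $\D(\A)$, the chain of isomorphisms
\[
\D(\A)(a_! x, \textstyle\coprod_i y_i) \iso \D(\0)(x, a^* \coprod_i y_i) \iso \D(\0)(x, \coprod_i a^* y_i) \iso \coprod_i \D(\0)(x, a^* y_i) \iso \coprod_i \D(\A)(a_! x, y_i)
\]
follows, in order, from the adjunction, preservation of coproducts by $a^*$, compactness of $x$ in $\D(\0)$, and the adjunction again. Hence $a_! x$ is compact in $\D(\A)$.

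For the perfect case, let $\{f_i : y_i \to z_i\}$ be a family in $\D(\A)$ for which $\D(\A)(a_! x, f_i)$ is surjective for every $a \in \A$, $x \in \mathcal{P}$ and every $i$. By the adjunction this translates to surjectivity of $\D(\0)(x, a^* f_i)$ for all such $a$, $x$ and $i$. Fixing an arbitrary $a \in \A$, the family $\{a^* f_i\}$ then satisfies the hypothesis of the perfect generator property of $\mathcal{P}$, so $\D(\0)(x, \coprod_i a^* f_i)$ is surjective for every $x \in \mathcal{P}$; using preservation of coproducts by $a^*$, this is the same as surjectivity of $\D(\0)(x, a^* \coprod_i f_i)$, which by adjunction gives surjectivity of $\D(\A)(a_! x, \coprod_i f_i)$. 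Since $a \in \A$ was arbitrary, this is exactly the condition required of $\mathcal{G}_\A$. The only substantive ingredient beyond the adjunction manipulation is the preservation of coproducts by each $a^*$, so I do not anticipate any significant obstacle.
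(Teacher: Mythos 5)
Your argument is correct, and it is the natural reduction one would expect. The paper itself does not give a proof of this lemma; it simply cites H\"ormann for it, so there is no in-paper argument to compare against. Your proof is self-contained and uses only derivator axioms already recalled in the paper, which is arguably preferable for exposition.

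Two small points worth making explicit if you wrote this out in full. First, in the generation step you invoke \textbf{Der 2} to conclude $y\iso 0$ from $y_a\iso 0$ for all $a$; it is worth spelling out that $(\p_\A^*0)_a = (\p_\A\circ a)^*0 = 0$, so the unique map $y\to\p_\A^*0$ really is pointwise an isomorphism. Second, the step where you replace $\coprod_i a^* y_i$ by $a^*\coprod_i y_i$ uses not just that $a^*$ preserves homotopy colimits (\cref{u^* is cocontinuous}) but also the standing fact, recorded after \textbf{Der 3} in the paper, that homotopy coproducts in the underlying category of a derivator agree with ordinary categorical coproducts; combined with \textbf{Der 1}, this is what lets you identify the coproduct in $\D(\A)$ with $\p_!$ for $\p:I\to\0$ applied in $\D^\A$, which is the thing $a^*$ is known to preserve. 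In the perfect case you implicitly also need this identification to be natural in the maps $f_i$, but that holds since the cocontinuity comparison map is a modification, hence natural. None of these are gaps, just places where the argument rests on facts that deserve a pointer.
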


By~\cref{Perfectly generated triangulated categories satisfy representability} and~\cref{Perfectly generated triangulated derivator}, the following lemma applies to any triangulated derivator $\D_1$ whose underlying category is perfectly generated:

\begin{proposition}\label{Adjoint functor theorem for triangulated derivators}
Let $\D_1$ and $\D_2$ be triangulated derivators, and suppose $\D_1(\A)$ satisfies Brown representability, for any category $\A$. Then a derivator map $F:\D_1\rightarrow\D_2$ has a right adjoint if and only if it is cocontinuous.
\end{proposition}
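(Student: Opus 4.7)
The plan is to combine \cref{Cocontinuous pointwise left adjoint is a left adjoint} with \cref{Left adjoints from perfectly generated triangulated categories} componentwise. The ``only if'' direction is immediate: if $F$ has a right adjoint in $\Der$, it preserves all homotopy colimits, and in particular is cocontinuous. The substantive direction is the converse.

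So assume $F:\D_1\rightarrow\D_2$ is cocontinuous. By \cref{Cocontinuous pointwise left adjoint is a left adjoint}, it suffices to show that for every category $\A$, the component functor $F:\D_1(\A)\rightarrow\D_2(\A)$ has an ordinary right adjoint. First I would verify that this component is exact in the triangulated sense. Since $F$ is cocontinuous, it preserves homotopy colimits of every shape; in particular, it preserves initial objects and homotopy pushouts. Applying the second half of \cref{Triangulated derivators induce triangulated categories} to the derivator map $F:\D_1^\A\rightarrow\D_2^\A$ (which is also cocontinuous, as pulling back along the projection $\p_\A$ commutes with homotopy Kan extensions by \cref{u^* is cocontinuous}, and cocontinuity is preserved under shifts), we see that the component at $\0$, namely $F:\D_1(\A)\rightarrow\D_2(\A)$, carries a canonical exact structure.

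Next I would check that this component preserves arbitrary coproducts. Since $\D_1$ and $\D_2$ are derivators, their underlying categories are cocomplete, and \textbf{Der 1} together with \textbf{Der 3} ensures that ordinary coproducts in $\D_i(\A)$ agree with homotopy coproducts, computed as $\p_!$ along the projection from a discrete category. Cocontinuity of $F$ then forces $F:\D_1(\A)\rightarrow\D_2(\A)$ to preserve these coproducts.

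With exactness and coproduct-preservation in hand, and given the hypothesis that $\D_1(\A)$ satisfies Brown representability, \cref{Left adjoints from perfectly generated triangulated categories} produces an ordinary right adjoint to $F:\D_1(\A)\rightarrow\D_2(\A)$ for every $\A$. Combined with the cocontinuity assumption, \cref{Cocontinuous pointwise left adjoint is a left adjoint} then assembles these pointwise right adjoints into a derivator right adjoint, completing the proof. No single step is a real obstacle; the mild care needed is to make sure exactness is invoked at every shifted level $\A$ rather than only at $\0$, which is why I would shift $F$ to $\D_1^\A\rightarrow\D_2^\A$ before applying \cref{Triangulated derivators induce triangulated categories}.
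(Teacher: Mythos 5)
Your proof is correct and follows essentially the same route as the paper: reduce to pointwise right adjoints via \cref{Cocontinuous pointwise left adjoint is a left adjoint}, obtain exactness from \cref{Triangulated derivators induce triangulated categories} and coproduct-preservation from cocontinuity, and then apply \cref{Left adjoints from perfectly generated triangulated categories} at each level. The only superfluous step is the shift to $\D_1^\A\rightarrow\D_2^\A$: \cref{Triangulated derivators induce triangulated categories} already asserts exactness of the component $F:\D_1(\A)\rightarrow\D_2(\A)$ for every $\A$, so no shifting is needed to get exactness at all levels.
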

\begin{proof}
Let $\A$ be a category. If $F:\D_1\rightarrow\D_2$ is cocontinuous then the component functor $F:\D_1(\A)\rightarrow\D_2(\A)$ preserves coproducts, and, by~\cref{Triangulated derivators induce triangulated categories}, is exact. Thus, the result follows by~\cref{Cocontinuous pointwise left adjoint is a left adjoint} and~\cref{Left adjoints from perfectly generated triangulated categories}.
\end{proof}

We conclude this section with an analogue of \cref{Universal property of spaces} for stable derivators:

\begin{theorem}[Heller]\label{Universal property of spectra}
For any stable derivator $\D$, the map
\[
\cHom(\dHo(\Spt),\D)\rightarrow\D(\0)
\]
given by evaluation at the sphere spectrum $\mathds{S}\in\Ho(\Spt)$ is an equivalence.
\end{theorem}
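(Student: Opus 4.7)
The plan is to reduce the statement to the universal property of pointed spaces, \cref{Universal property of pointed spaces}, using the cocontinuous suspension spectrum functor $\Sigma^\infty:\dHo(\sSet_*)\rightarrow\dHo(\Spt)$, which satisfies $\Sigma^\infty(\mathrm{S}^0)\equiv\mathds{S}$. Precomposition with $\Sigma^\infty$ gives a functor
\[
(\Sigma^\infty)^*:\cHom(\dHo(\Spt),\D)\rightarrow\cHom(\dHo(\sSet_*),\D),
\]
and the identification of the sphere spectrum as $\Sigma^\infty(\mathrm{S}^0)$ yields a natural isomorphism between the evaluation map in the statement and the composite of $(\Sigma^\infty)^*$ with evaluation at $\mathrm{S}^0$. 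Since every stable derivator is pointed, \cref{Universal property of pointed spaces} tells us that the latter evaluation is an equivalence, so the theorem reduces to showing that $(\Sigma^\infty)^*$ itself is an equivalence whenever $\D$ is stable.

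To construct an inverse, I would start with a cocontinuous map $G:\dHo(\sSet_*)\rightarrow\D$ and extend it to a cocontinuous $\widetilde{G}:\dHo(\Spt)\rightarrow\D$. The key input is that since $\D$ is stable, the suspension $\Sigma:\D\rightarrow\D$ is an equivalence, so its inverse is available to us when defining $\widetilde{G}$. This allows one to interpret the familiar presentation of a spectrum as a filtered homotopy colimit
\[
X\equiv\hocolim_n\Sigma^{-n}\Sigma^\infty X_n
\]
inside $\D$, forcing the definition of $\widetilde{G}$ up to essentially unique isomorphism. Fully faithfulness of $(\Sigma^\infty)^*$ follows by a similar argument applied to modifications, using cocontinuity to reduce their behaviour to that on suspension spectra, which is controlled by the restriction along $\Sigma^\infty$.

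The main obstacle is making this extension procedure precise in the $2$-categorical setting of derivators. The colimit decomposition above is a statement about the underlying category, and it must be upgraded to a coherent pseudonatural construction, together with verification of the cocontinuity of $\widetilde{G}$ along every functor $u:\A\rightarrow\B$. The cleanest way to proceed is to invoke Heller's stabilisation construction, which exhibits $\dHo(\Spt)$ as the universal stable derivator under $\dHo(\sSet_*)$ equipped with a cocontinuous map from $\dHo(\sSet_*)$; from this universal property, the equivalence $(\Sigma^\infty)^*$ is immediate. Verifying that $\dHo(\Spt)$ does indeed satisfy this stabilisation universal property is the bulk of the technical work, and is the portion of the argument that genuinely lies beyond \cref{Universal property of pointed spaces}.
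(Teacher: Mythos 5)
The paper does not prove this theorem itself: immediately after the statement it defers to the literature, citing \cite{Heller97} for the original result, \cite[Section 9]{Tabuada08} for the comparison of Heller's stabilised derivator with $\dHo(\Spt)$, and \cite{Coley18} for a variant that avoids a strengthened form of \textbf{Der 5}. Your outline — factoring the evaluation map through $(\Sigma^\infty)^*$, appealing to \cref{Universal property of pointed spaces}, and identifying the remaining content as the universal property of Heller's stabilisation — is the correct skeleton of that argument, and you are right that the coherent, derivator-level construction of the inverse to $(\Sigma^\infty)^*$ is where all the technical weight sits. Two points worth flagging that your sketch elides: first, Heller's stabilisation produces an \emph{a priori} different derivator of ``spectra'' from $\dHo(\Spt)$, and one genuinely needs the comparison result of \cite{Tabuada08} to conclude the statement as phrased here; second, Heller's original argument requires a strengthening of \textbf{Der 5}, which is why \cite{Coley18} is cited as an alternative. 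Since the paper treats this as a black box, your proposal is at the right level of detail, but it should not present the telescope formula $X\equiv\hocolim_n\Sigma^{-n}\Sigma^\infty X_n$ as if it straightforwardly ``forces'' the definition of $\widetilde{G}$ — promoting that pointwise colimit presentation to a pseudonatural, cocontinuous map of derivators is precisely the nontrivial content you are deferring.
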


For derivators satisfying a stronger analogue of \textbf{Der 5}, this theorem is essentially proved in~\cite{Heller97}; see~\cite[Section 9]{Tabuada08}, however, for a proof that the derivator constructed in~\cite{Heller97} is equivalent to $\dHo(\Spt)$. See~\cite{Coley18} for a similar proof that does not use \textbf{Der 5}. 

Using this universal property, for any stable derivator $\D$ we obtain a canonical cocontinuous map
\[
\wedge:\dHo(\Spt)\times\D\rightarrow\D,
\]
essentially unique such that
\[
\mathds{S}\;\widetilde{\wedge}\;-\iso\id:\D\rightarrow\D.
\] 

For the derivator $\dHo(\M)$ associated to a stable model category $\M$, this map recovers the action constructed in~\cite{SS02}.

\chapter{Actions of Monoidal Derivators}\label{Chapter Actions of monoidal derivators}

In this chapter we discuss the theory of modules over monoidal derivators. Closed modules play a particularly important role in~\cref{Chapter E-categories} and~\cref{Chapter Enriched derivators}, and in this chapter we give a number of examples. Moreover, we show that, given a closed module $\D$ over a symmetric monoidal derivator, the shifted derivators $\D^\A$ and the opposite derivator $\D\op$ are also closed modules. In this way, we build a collection of examples, which, as we will show in~\cref{Chapter Enriched derivators}, induce important examples of enriched derivators. In addition, we prove a number of results in this chapter that will contribute to our development of the theory in~\cref{Chapter E-categories} and~\cref{Chapter Enriched derivators}. 

In order to study monoidal derivators and their actions, we recall the theory of ends and coends in~\cref{Section Ends and coends}, and use this to discuss derivator two-variable adjunctions in~\cref{Section Two-variable adjunctions}. In~\cref{Section Monoidal derivators} and~\cref{Section Closed actions of monoidal derivators}, we recall the definition of monoidal derivators and their actions. Up to this point, much of this material can be found in~\cite{GPS14}, although our presentation differs in a number of ways.~\cref{Section The cancelling tensor product} and~\cref{Section The maps otimes u and h u} contain an in-depth discussion of the structure arising from the action of a monoidal derivator. The results we prove in these sections form the groundwork for a number of the results in~\cref{Chapter E-categories} and~\cref{Chapter Enriched derivators}. We also use the results of these sections in~\cref{Section Cotensors in closed E-modules}, which, given a closed $\E$-module $\D$, discusses the action on the opposite derivator $\D\op$. Finally, in \cref{Section A representability theorem for triangulated E-modules}, we prove a representability theorem for triangulated closed modules over triangulated monoidal derivators.
 
\section{Ends and coends}\label{Section Ends and coends}

In this section, we recall from~\cite[Section 5]{GPS14} the definition of (homotopy) ends and coends in a derivator, and prove a number of simple results that we will need in the later sections of~\cref{Chapter Actions of monoidal derivators} and in~\cref{Chapter E-categories}. The definition of ends and coends that we give here goes via the twisted arrow category. See~\cite[Appendix A]{GPS14} for a discussion of equivalent definitions.

Given a category $\A$, recall that the \textbf{twisted arrow category} $\tw(\A)$ is the category of elements of the hom-functor $\hom:\A\op\times\A\rightarrow\Set$. Explicitly, objects of $\tw(\A)$ are arrows $f:a\rightarrow b$ in $\A$, and an arrow from $f:a\rightarrow b$ to $g:c\rightarrow d$ is a commutative square:
\begin{center}
\begin{tikzcd}
 a \arrow[dd,"f" left] && 
 c \arrow[ll,"h" above] \arrow[dd,"g" right] \\\\
 b \arrow[rr,"k" below] && 
 d
\end{tikzcd}
\end{center}
We have a canonical map $\text{(s,t)}:\tw(\A)\rightarrow\A\op\times\A$. Taking the opposite of this map, we get:
\[
(\mathrm{t}\op,\;\mathrm{s}\op):\tw(\A)\op\xrightarrow{\;\;\text{(s,t)}\op\;\;}\A\times\A\op\iso\A\op\times\A
\]
 
\begin{definition}\label{Ends and coends definition}
For any derivator $\D$ and any category $\A$, the \textbf{(homotopy) coend} over $\A$ is the composite
\[
\int^\A:\D^{\A\op\times\A}\xrightarrow{\;\;(\mathrm{t}\op,\;\mathrm{s}\op)^*\;\;} \D^{\tw(\A)\op}\xrightarrow{\;\;\p_!\;\;} \D.
\]
We will denote the right adjoint of this map by
\[
\partial^\A:\D\xrightarrow{\;\;\p^*\;\;} \D^{\tw(\A)\op}\xrightarrow{\;\;(\mathrm{t}\op,\;\mathrm{s}\op)_*\;\;} \D^{\A\op\times\A}.
\]
Dually, the \textbf{(homotopy) end} over $\A$ is the composite
\[
\int_\A:\D^{\A\op\times\A}\xrightarrow{\;\;\text{(s,t)}^*\;\;} \D^{\tw(\A)}\xrightarrow{\;\;\p_*\;\;} \D
\]
and its left adjoint will be denoted
\[
\partial_\A:\D\xrightarrow{\;\;\p^*\;\;} \D^{\tw(\A)}\xrightarrow{\;\;\text{(s,t)}_!\;\;} \D^{\A\op\times\A}.
\]
\end{definition}

If $\D=y(\mathcal{C})$ is a represented derivator, these constructions recover the usual end and coend.

\begin{definition}\label{Maps that preserve delta}
Let $F:\D_1\rightarrow\D_2$ be a derivator map, and let $\A$ be a category. We say $F$ \textbf{preserves $\partial_\A$} if the canonical pasting 
\begin{center}
\begin{tikzcd}
 \D_1 \arrow[rr,"\p^*"]\arrow[dd,"F" left] && 
 \D_1^{\tw(\A)} \arrow[Leftarrow,ddll,shorten >=1cm,shorten <=1cm, "\gamma" above left, "\iso" below right] \arrow[dd,"F"]
\arrow[rr,"\text{(s,t)}_!"] && 
 \D_1^{\A\op\times\A} \arrow[Leftarrow,ddll,shorten >=1cm,shorten <=1cm] \arrow[dd,"F"]\\\\
\D_2\arrow[rr,"\p^*" below] &&
\D_2^{\tw(\A)}\arrow[rr,"\text{(s,t)}_!" below] && 
\D_2^{\A\op\times\A}
\end{tikzcd}
\end{center}
is an isomorphism, where the second square is the modification of~\cref{Cocontinuity as a modification}. Note that this is the case if and only if $F$ preserves the left Kan extension along $\text{(s,t)}$. Similarly, we can define functors that preserve ends, coends and $\partial^\A$.
\end{definition}

\begin{remark}\label{Modifications respect coends}
Let $F,G:\D_1\rightarrow\D_2$ be derivator maps, and let $\theta:F\Rightarrow G$ be a modification. Using~\cref{Modifications respect left Kan extensions}, it follows immediately that $\theta$ respects the constructions of~\cref{Ends and coends definition}. For example, given any category $\A$ and $X\in\D_1(\A\op\times\A)$, the diagram below commutes, where the vertical arrows are the canonical maps, as in~\cref{Maps that preserve delta}:
\begin{center}
\begin{tikzcd}
 \int^\A FX \arrow[rr,"\int^\A\theta_X"]\arrow[dd] && 
 \int^\A GX \arrow[dd] \\\\
F\int^\A X\arrow[rr,"\theta_{\int^\A X}" below] && 
 G\int^\A X
\end{tikzcd}
\end{center}
\end{remark}

\begin{definition}\label{tw(u) definition}
Suppose we have a functor $u:\A\rightarrow\B$. This induces a functor on twisted arrow categories $\tw(u):\tw(\A)\rightarrow\tw(\B)$ that makes the diagram below commute:
\begin{center}
\begin{tikzcd}
 \tw(\A) \arrow{rr}[above]{\tw(u)}\arrow{dd}[left]{(\mathrm{s},\mathrm{t})} && 
 \tw(\B)  \arrow{dd}[right]{(\mathrm{s},\mathrm{t})} \\\\
 \A\op\times\A \arrow{rr}[below]{u\op\times u} && 
 \B\op\times\B
\end{tikzcd}
\end{center}
\end{definition}

Using this commutative diagram, each of the constructions of \cref{Ends and coends definition} can be extended to act on the functor $u$. Since we use them repeatedly, we will describe $\partial_u$ and $\int^u$ explicitly:

\begin{definition}\label{Delta and end on morphisms}
For any functor $u:\A\rightarrow\B$, and any derivator $\D$, we have the following modifications:
\begin{center}
\begin{tikzcd}
 &[-25pt] &[-25pt] && 
 \D^{\tw(\B)} \arrow[dd,"\tw(u)^*"]
\arrow[rr,"\text{(s,t)}_!"] && 
 \D^{\B\op\times\B} \arrow[Leftarrow,ddll,shorten >=1.2cm,shorten <=1.2cm] \arrow[dd,"(u\op\times u)^*"]\\
 \partial_u &:= & \D\arrow[urr,"\p^*" above left,bend left=20]\arrow[drr,"\p^*" below left,bend right=20] \\
&&&&
\D^{\tw(\A)}\arrow[rr,"\text{(s,t)}_!" below] && 
\D^{\A\op\times\A}
\end{tikzcd}
\end{center}
\begin{center}
\begin{tikzcd}
 &[-25pt] & \D^{\B\op\times\B}\arrow[rr,"(\text{t}\op\text{, s}\op)^*"]\arrow[dd,"(u\op\times u)^*" left] && 
 \D^{\tw(\B)\op} \arrow[dd,"(\tw(u)\op)^*" left] \arrow[drr,"\p_!"above right,bend left=20,""{name=U}] \\
 
 \int^u &:= &&&&& \D \\
 
&& \D^{\A\times\A\op} \arrow[rr,"(\text{t}\op\text{, s}\op)^*" below] && 
 \D^{\tw(\A)\op} \arrow[urr,"\p_!" below right,bend right=20]
 
 \arrow[Leftarrow,from=U,to=3-5, shorten >=1cm,shorten <=1cm,shift left=2]
\end{tikzcd}
\end{center} 
The non-identity modifications in these pastings are obtained as in \cref{D-exact squares} and \cref{Canonical map to colimit}. The transformations $\partial^u$ and $\int_u$ are dual; that is, they can be obtained from these by moving to the opposite derivator $\D\op$.

Given any category $\C$ and $X\in\D(\C)$, rather than use a subscript as for most modifications, we will denote the component of $\partial_u$ at $X$ by $\partial_u X$. We will denote the others similarly.
\end{definition}

Note that $\int_u$ is dual to $\int^u$, and is a mate of $\partial_u$ under the adjunctions $\partial_\A\dashv\int_\A$ and $\partial_\B\dashv\int_\B$. Using this, the following lemmas have analogues for each of the constructions of \cref{Ends and coends definition}. Rather than record each version explicitly, we state each in the case that we will use most frequently in subsequent sections.

\begin{lemma}\label{Functoriality of delta}
Let $\D$ be a derivator, and let $\D\Downarrow\Der$ denote the category whose objects are arrows $F:\D\rightarrow\D'$ in $\Der$ and whose morphisms from $F$ to $F'$ are given by modifications:
\begin{center}
\begin{tikzcd}
 && 
 \D' \arrow[dd,"G" right]
\\
\D\arrow[urr,"F" above left,bend left=20]\arrow[drr,"F'" below left,bend right=20,""{name=U} above right] \\
&&
\D''
\arrow[Rightarrow,from=U,to=1-3,shorten >=0.7cm,shorten <=0.5cm]
\end{tikzcd}
\end{center}
Then we have a functor $\partial_-:\Cat\op\rightarrow\D\Downarrow\Der$, taking categories $\A$ to $\partial_\A$ and functors $u$ to $\partial_u$.
\end{lemma}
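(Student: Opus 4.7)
The plan is to verify the two functoriality axioms: preservation of identities and of composition. Recall from \cref{Delta and end on morphisms} that for $u:\A\rightarrow\B$, the modification $\partial_u:\partial_\A\Rightarrow (u\op\times u)^*\circ\partial_\B$ is obtained as a pasting of the cocontinuity modification of \cref{Cocontinuity as a modification} with the mate 2-cell associated to the commutative square of \cref{tw(u) definition}. Interpreted as a morphism $\partial_\B\rightarrow\partial_\A$ in $\D\Downarrow\Der$, its data consists of the derivator map $(u\op\times u)^*$ together with this modification.

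For unitality, I would observe that $\tw(\id_\A)=\id_{\tw(\A)}$ and $(\id_\A\op\times\id_\A)^*=\id$, so the square of \cref{tw(u) definition} for $\id_\A$ becomes the identity square. The mate of an identity square is an identity, and the cocontinuity modification of \cref{Cocontinuity as a modification} applied to the identity functor is likewise an identity; hence $\partial_{\id_\A}$ reduces to the identity morphism on $\partial_\A$ in $\D\Downarrow\Der$.

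For composition, given $u:\A\rightarrow\B$ and $v:\B\rightarrow\C$, I would show the equality
\[
\partial_{v\circ u} = \bigl((u\op\times u)^*\partial_v\bigr)\cdot\partial_u,
\]
where the right-hand side is the composition in $\D\Downarrow\Der$, namely the whiskering of $\partial_v$ by $(u\op\times u)^*$ composed with $\partial_u$. Using $\tw(v\circ u)=\tw(v)\circ\tw(u)$, the commutative square of \cref{tw(u) definition} for $v\circ u$ decomposes as the vertical pasting of the corresponding squares for $u$ and $v$. The key general fact is then that mates of vertical pastings are vertical pastings of mates (see~\cite[Appendix A]{GPS14a}); combined with the pseudofunctoriality of the shifting $\D^{(-)}$, this yields the desired equality.

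The principal obstacle is the bookkeeping required to reconcile the various structure isomorphisms---those coming from pseudofunctoriality of $\D^{(-)}$, the canonical 2-cells $((v\circ u)\op\times(v\circ u))^*\iso(u\op\times u)^*\circ(v\op\times v)^*$, and the cocontinuity modifications of \cref{Cocontinuity as a modification}---with the mate 2-cells. Each such identification is forced by the coherence axioms of \cref{Pseudonatural transformation definition} together with the standard mate calculus, so the verification is essentially a formal diagram chase.
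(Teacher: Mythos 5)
Your proof is correct and takes essentially the same approach as the paper: reduce functoriality to the functoriality of the twisted arrow category construction together with the functoriality of mates (citing Kelly--Street). The paper states this in two sentences; you have simply unpacked it.
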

\begin{proof}
It is easy to check that forming the twisted arrow category preserves identities and composition. The result then follows by the functoriality of mates, as in~\cite[Section 2]{KS74}.
\end{proof}

\begin{lemma}\label{Interaction of delta with natural transformations}
Suppose we have a natural transformation:
\begin{center}
\begin{tikzcd}
\A\arrow[rr, bend left=45, "u" above,""{name=U, below}]\arrow[rr, bend right=45, "v" below, ""{name=D}]
&& \B
\arrow[Rightarrow,from=U,to=D,shorten >=0.1cm,shorten <=0.1cm,"\kappa", shift right]
\end{tikzcd}
\end{center}
For any derivator $\D$, any category $\C$, and any $X\in\D(\C)$ we have a commutative square:
\begin{center}
\begin{tikzcd}
\partial_\A X \arrow{rrr}[above]{\partial_u X}\arrow{dd}[left]{\partial_v X} &&& 
 (u\op\times u)^*\partial_\B X \arrow{dd}[right]{(u\op\times \kappa)^*_{\partial_\B X}} \\\\
 (v\op\times v)^*\partial_\B X \arrow{rrr}[below]{(\kappa\op\times v)^*_{\partial_\B X} } &&& 
 (u\op\times v)^*\partial_\B X 
\end{tikzcd}
\end{center}
\end{lemma}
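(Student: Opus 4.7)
The plan is to transpose the desired square across the adjunction $(s,t)_! \dashv (s,t)^*$ in $\D$ to obtain an equivalent identity in $\D^{\tw(\A)}$. By the definition in \cref{Delta and end on morphisms}, $\partial_u X$ is the mate of the identity $2$-cell on the commutative square of \cref{tw(u) definition}, and hence it is adjoint to the pullback $\tw(u)^*\eta_{\p^*X} : \p^*X \to (s,t)^*(u\op\times u)^*\partial_\B X$ of the unit $\eta_{\p^*X}: \p^*X \to (s,t)^*\partial_\B X$ of $(s,t)_! \dashv (s,t)^*$ in $\D^{\tw(\B)}$, where we use the identification $\tw(u)^*(s,t)^* = (s,t)^*(u\op\times u)^*$. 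Transposing the desired square thus reduces the statement to the equality
\[
\bigl((u\op\times\kappa)\cdot(s,t)\bigr)^*_{\partial_\B X}\circ\tw(u)^*\eta_{\p^*X} \;=\; \bigl((\kappa\op\times v)\cdot(s,t)\bigr)^*_{\partial_\B X}\circ\tw(v)^*\eta_{\p^*X}
\]
of morphisms $\p^*X \to \bigl((u\op\times v)\circ(s,t)\bigr)^*\partial_\B X$ in $\D^{\tw(\A)}$.

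I would then verify this transposed identity by exhibiting both composites as the image under the $2$-functor $\D$ of a single pasting in $\Cat$ whose commutativity expresses the strict naturality of $\kappa$. In representable terms, at an object $f: a\to a'$ of $\tw(\A)$, the left whiskering $(u\op\times\kappa)\cdot(s,t)$ acts as $(\id_{u(a)},\kappa_{a'})$ and the right whiskering $(\kappa\op\times v)\cdot(s,t)$ acts as $(\kappa_a,\id_{v(a')})$; after composing with the respective unit components (which select $u(f)$ and $v(f)$) these pick out $\kappa_{a'}\circ u(f)$ and $v(f)\circ\kappa_a$ in $\hom_\B(u(a),v(a'))$, which agree by the naturality square of $\kappa$ at $f$. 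This pointwise equality lifts to the required identity in $\D^{\tw(\A)}$ because both sides arise, through the $2$-functoriality of $\D$, from one and the same $2$-cell equality in $\Cat$, combined with the unit $\eta_{\p^*X}$ and the identifications $(s,t)\circ\tw(u) = (u\op\times u)\circ(s,t)$ and $(s,t)\circ\tw(v) = (v\op\times v)\circ(s,t)$ from \cref{tw(u) definition}.

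The main obstacle is the bookkeeping of variances throughout: the opposites on categories and natural transformations, the convention that $\kappa:u\Rightarrow v$ induces $\kappa^*:u^*\Rightarrow v^*$, and the interplay of whiskering with the mate construction for $(s,t)_! \dashv (s,t)^*$. Once these are accounted for, the entire proof reduces cleanly to the strict naturality of $\kappa$ in $\Cat$, together with the coherences of the mate construction encoded in \cref{D-exact squares}.
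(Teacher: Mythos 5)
Your strategy — transpose across the adjunction $(\mathrm{s},\mathrm{t})_!\dashv(\mathrm{s},\mathrm{t})^*$ and reduce the claim to an identity in $\D^{\tw(\A)}$ — is a legitimate alternative to the paper's route, which pastes modifications directly without transposing. You also correctly compute the transposed identity and correctly pin down, in the representable case, why it ought to hold: both sides track $\kappa_{a'}\circ u(f) = v(f)\circ\kappa_a$. But the argument you sketch does not in fact close, and the missing piece is precisely the technical heart of the paper's proof.

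The trouble is that you never exhibit the ``single $2$-cell equality in $\Cat$'' you invoke, and there is a structural reason you cannot get it from the raw data at hand: the twisted arrow construction is functorial only on $1$-cells, so there is no comparison $2$-cell relating $\tw(u)$ and $\tw(v)$ in $\Cat$. Consequently the two maps $\tw(u)^*\eta_{\p^*X}$ and $\tw(v)^*\eta_{\p^*X}$ are a priori unrelated, and the unit $\eta_{\p^*X}$ is a derivator-level morphism, not the image of anything in $\Cat$, so ``$2$-functoriality of $\D$'' alone cannot relate them. What is needed is the interpolating functor $\tw(u,v):\tw(\A)\to\tw(\B)$ sending $f:a\to b$ to $\kappa_b\circ u(f)$, together with the two natural transformations $\tw(u,\kappa):\tw(u)\Rightarrow\tw(u,v)$ and $\tw(\kappa,v):\tw(v)\Rightarrow\tw(u,v)$, and the verification in $\Cat$ that
\[
(\mathrm{s},\mathrm{t})\cdot\tw(u,\kappa) = (u\op\times\kappa)\cdot(\mathrm{s},\mathrm{t}), \qquad (\mathrm{s},\mathrm{t})\cdot\tw(\kappa,v) = (\kappa\op\times v)\cdot(\mathrm{s},\mathrm{t}).
\]
With these equalities, the naturality of $\tw(u,\kappa)^*$ and $\tw(\kappa,v)^*$ applied to $\eta_{\p^*X}$, together with the observation that $\p\cdot\tw(u,\kappa)$ and $\p\cdot\tw(\kappa,v)$ are identity $2$-cells (everything lands in $\0$), shows that both sides of your transposed identity equal $\tw(u,v)^*\eta_{\p^*X}$. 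That is exactly what your pointwise computation is secretly detecting — the components $(\id_{u(a)},\kappa_{a'})$ and $(\kappa_a,\id_{v(a')})$ you write down are the components of $\tw(u,\kappa)$ and $\tw(\kappa,v)$ — but the formal proof requires naming $\tw(u,v)$ and the two $2$-cells, which your proposal omits. Until they are introduced, the representable reasoning is motivation rather than proof, since an arbitrary derivator is not a diagram category and pointwise verification does not transfer.
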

\begin{proof}
Given the natural transformation $\kappa$, we can form a functor $\tw(u,v):\tw(\A)\rightarrow\tw(\B)$ given by:
\begin{center}
\begin{tikzcd}[row sep=small,column sep=small]
\tw(\A)\arrow[rr] && \tw(\B)\\
a \arrow[dd,"f" left]
&& u(a)\arrow[d,"u(f)" right] \\
\;\arrow[mapsto,rr,shorten >=0.4cm,shorten <=0.6cm] 
&& u(b)\arrow[d,"\kappa_b" right] \\
b && v(b)
\end{tikzcd}
\end{center}
This makes the diagram below commute:
\begin{center}
\begin{tikzcd}
 \tw(\A) \arrow{rr}[above]{\tw(u,v)}\arrow{dd}[left]{(\mathrm{s},\mathrm{t})} && 
 \tw(\B)  \arrow{dd}[right]{(\mathrm{s},\mathrm{t})} \\\\
 \A\op\times\A \arrow{rr}[below]{u\op\times v} && 
 \B\op\times\B
\end{tikzcd}
\end{center}
Moreover, we have natural transformations $\tw(u,\kappa):\tw(u)\Rightarrow\tw(u,v)$ and $\tw(\kappa,v):\tw(v)\Rightarrow\tw(u,v)$, which satisfy the following equalities:
\begin{center}
\begin{tikzcd}
 \tw(\A) \arrow[rr,"\tw(u)",bend left=30,""{name=R,below}]\arrow[dd,"\text{(s,t)}" left]\arrow[rr,"\tw(u\text{,}v)" below,bend right=30,""{name=L,above}] && 
 \tw(\B)\arrow[dd,"\text{(s,t)}"] 
 &[-10pt] &[-10pt] \tw(\A) \arrow[rr,"\tw(u)"]\arrow[dd,"\text{(s,t)}" left] 
 && \tw(\B) \arrow[dd,"\text{(s,t)}" right]
 \\
&&& = \\
\A\op\times\A\arrow[rr,"u\op\times v" below] 
&& \B\op\times\B
&& \A\op\times\A\arrow[rr,"u\op\times u" above,bend left=30,""{name=D,below}] \arrow[rr,"u\op\times v" below,bend right=30,""{name=U,above}]
&& \B\op\times\B

\arrow[Rightarrow,from=R,to=L,shorten >=0.1cm,shorten <=0.1cm,"\tw(u\text{,}\kappa)" right,shift right]
\arrow[Rightarrow,from=D,to=U,shorten >=0.1cm,shorten <=0.1cm,"u\op\times\kappa" right,shift right]
\end{tikzcd}
\end{center}
\begin{center}
\begin{tikzcd}
 \tw(\A) \arrow[rr,"\tw(v)",bend left=30,""{name=R,below}]\arrow[dd,"\text{(s,t)}" left]\arrow[rr,"\tw(u\text{,}v)" below,bend right=30,""{name=L,above}] && 
 \tw(\B)\arrow[dd,"\text{(s,t)}"] 
 &[-10pt] &[-10pt] \tw(\A) \arrow[rr,"\tw(v)"]\arrow[dd,"\text{(s,t)}" left] 
 && \tw(\B) \arrow[dd,"\text{(s,t)}" right]
 \\
&&& = \\
\A\op\times\A\arrow[rr,"u\op\times v" below] 
&& \B\op\times\B
&& \A\op\times\A\arrow[rr,"v\op\times v" above,bend left=30,""{name=D,below}] \arrow[rr,"u\op\times v" below,bend right=30,""{name=U,above}]
&& \B\op\times\B

\arrow[Rightarrow,from=R,to=L,shorten >=0.1cm,shorten <=0.1cm,"\tw(\kappa\text{,}v)" right,shift right]
\arrow[Rightarrow,from=D,to=U,shorten >=0.1cm,shorten <=0.1cm,"\kappa\op\times v" right,shift right]
\end{tikzcd}
\end{center}
These squares give rise to modifications, as in \cref{D-exact squares}. Pasting these to the modification
\begin{center}
\begin{tikzcd}
&& \D^{\tw(\B)} \arrow[dd,"\tw(u\text{,} v)^*" right,bend left=55,""{name=R,left}]\arrow[dd,"\tw(u)^*" left,bend right=55,""{name=L,right}] 
 &&[-10pt] & [-20pt]&& \D^{\tw(\B)}\arrow[dd,bend right=55,"\tw(v)^*" left,""{name=U,right}]\arrow[dd,"\tw(u\text{,} v)^*" right,bend left=55,""{name=D,left}]
 \\
\D\arrow[rru,bend left=25,"\p^*" above left]\arrow[rrd,bend right=25,"\p^*" below left,""{name=K,above right}] 
&&&& = & \D\arrow[rru,bend left=25,"\p^*" above left]\arrow[rrd,bend right=25,"\p^*" below left,""{name=J,above right}] \\
&&\D^{\tw(\A)}
&&&& &\D^{\tw(\A)}
\arrow[Leftarrow,from=R,to=L,shorten >=0.1cm,shorten <=0.1cm,"\tw(u\text{,}\kappa)^*" above,shift left]
\arrow[Leftarrow,from=D,to=U,shorten >=0.1cm,shorten <=0.1cm,"\tw(\kappa\text{,} v)^*" above,shift left]

\end{tikzcd}
\end{center}
gives the following equality:
\begin{center}
\begin{tikzcd}
&&& \D^{\B\op\times\B} \arrow[dd,"\mathsmaller{(u\op\times v)^*}" right,bend left=55,""{name=R,left}]\arrow[dd,"\mathsmaller{(u\op\times u)^*}" left,bend right=55,""{name=L,right}] 
 && [-10pt] & [-20pt]&&& \D^{\B\op\times\B}\arrow[dd,bend right=55,"\mathsmaller{(v\op\times v)^*}" left,""{name=U,right}]\arrow[dd,"\mathsmaller{(u\op\times v)^*}" right,bend left=55,""{name=D,left}]
 \\
\D\arrow[rrru,bend left=25,"\partial_\B" above left]\arrow[rrrd,bend right=25,"\partial_\A" below left,""{name=K,above right}] 
&&&&& = &\D\arrow[rrru,bend left=25,"\partial_\B" above left]\arrow[rrrd,bend right=25,"\partial_\A" below left,""{name=J,above right}] \\
&&&\D^{\A\op\times\A}
&&&&& &\D^{\A\op\times\A}
\arrow[Leftarrow,from=R,to=L,shorten >=0.1cm,shorten <=0.1cm,"\mathsmaller{(u\op\times\kappa)^*}" above,shift left]
\arrow[Leftarrow,from=D,to=U,shorten >=0.1cm,shorten <=0.1cm,"\mathsmaller{(\kappa\op\times v)^*}" above,shift left]
\arrow[Rightarrow,from=K,to=1-4,shorten >=1.2cm,shorten <=0.3cm,"\partial_u" above left,near start,shift left=10]
\arrow[Rightarrow,from=J,to=1-10,shorten >=1.2cm,shorten <=0.3cm,"\partial_v" above left,near start,shift left=10]
\end{tikzcd}
\end{center}
This is precisely the equality we require.
\end{proof}

\begin{lemma}\label{Coends over A and Aop}
For any category $\A$ and any derivator $\D$, there is a canonical isomorphism:
\begin{center}
\begin{tikzcd}
 \D^{\A\op\times\A}\arrow[dd,"\iso" right,"\sigma^*" left,""{name=L}] \arrow[rrd,"\int^\A" above right,""{name=R},bend left] & \\
 
 && \D\\
 
 \D^{\A\times\A\op} \arrow[Rightarrow,to=R,"\iso", shorten >=0.95cm,shorten <=0.95cm,shift right=2]\arrow[rur,"\int^{\A\op}" below right,bend right]
  
\end{tikzcd}
\end{center} 
\end{lemma}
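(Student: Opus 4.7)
The plan is to construct a canonical isomorphism of categories $\tau:\tw(\A)\xrightarrow{\iso}\tw(\A\op)$ and deduce the result by pulling back along it, using that isomorphisms of categories induce equivalences on derivator values.

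First, I would construct $\tau$: on objects it sends an arrow $f:a\rightarrow b$ in $\A$ to the same arrow viewed in $\A\op$ (namely $f:b\rightarrow a$); on morphisms, the square $(h:c\rightarrow a,\;k:b\rightarrow d)$ representing a map in $\tw(\A)$ is sent to the corresponding square $(k,h)$ viewed in $\A\op$. A direct check shows $\tau$ is an isomorphism of categories, with inverse defined analogously starting from $\tw(\A\op)$.

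Second, I would verify that $\tau$ fits into a commutative square
\begin{center}
\begin{tikzcd}
\tw(\A)\arrow[r,"\tau"]\arrow[d,"(\mathrm{s}\text{,}\mathrm{t})"'] & \tw(\A\op)\arrow[d,"(\mathrm{s}\text{,}\mathrm{t})"] \\
\A\op\times\A\arrow[r,"\sigma"] & \A\times\A\op
\end{tikzcd}
\end{center}
by unwinding: both routes send $f:a\rightarrow b$ to $(b,a)$. Taking opposites and incorporating the symmetry isomorphisms built into the definition of $(\mathrm{t}\op,\mathrm{s}\op)$, this gives the commutative square
\[
\sigma\circ(\mathrm{t}\op,\mathrm{s}\op)_{\A\op}\circ\tau\op=(\mathrm{t}\op,\mathrm{s}\op)_{\A}
\]
of functors $\tw(\A)\op\rightarrow\A\op\times\A$. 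Applying $\D^{-}$ yields the equality of pullback functors
\[
(\mathrm{t}\op,\mathrm{s}\op)_{\A}^*=(\tau\op)^*\circ(\mathrm{t}\op,\mathrm{s}\op)_{\A\op}^*\circ\sigma^*.
\]

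Third, I would postcompose with $\p_!$. Since $\tau\op:\tw(\A)\op\xrightarrow{\iso}\tw(\A\op)\op$ is an isomorphism commuting with the unique projections to $\0$, the equality $\p^*=(\tau\op)^*\circ\p^*$ of functors $\D\rightarrow\D^{\tw(\A)\op}$ together with uniqueness of left adjoints produces a canonical isomorphism $\p_!\circ(\tau\op)^*\iso\p_!$ of functors $\D^{\tw(\A\op)\op}\rightarrow\D$. Combining the above we obtain
\[
\int^{\A}=\p_!\circ(\mathrm{t}\op,\mathrm{s}\op)_{\A}^*=\p_!\circ(\tau\op)^*\circ(\mathrm{t}\op,\mathrm{s}\op)_{\A\op}^*\circ\sigma^*\iso\p_!\circ(\mathrm{t}\op,\mathrm{s}\op)_{\A\op}^*\circ\sigma^*=\int^{\A\op}\circ\sigma^*,
\]
as required.

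The main obstacle is not conceptual but bookkeeping: one must carefully track which opposite, which copy of $\sigma$, and which projection is in play at each stage. Once the commutative square on twisted arrow categories is correctly set up, the remainder is a formal manipulation using that pullback along an isomorphism of categories is itself an equivalence.
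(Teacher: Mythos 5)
Your proof is correct and takes essentially the same approach as the paper: both construct an isomorphism between $\tw(\A)$ and $\tw(\A\op)$ fitting into a commutative square with $(\mathrm{s},\mathrm{t})$ and $\sigma$, then pull back. You spell out the last steps (passing to opposites, invoking uniqueness of left adjoints along the isomorphism $\tau\op$) that the paper leaves implicit in the phrase ``this induces the required isomorphism.''
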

\begin{proof}
We have an isomorphism $\tw(\A\op)\xrightarrow{\iso}\tw(\A)$ that makes the diagram below commute:
\begin{center}
\begin{tikzcd}
 \tw(\A\op) \arrow{rr}[above]{\iso}\arrow{dd}[left]{\text{(s,t)}} && 
 \tw(\A)  \arrow{dd}[right]{\text{(s,t)}} \\\\
 \A\times\A\op \arrow{rr}[below]{\sigma}[above]{\iso} && 
 \A\op\times\A
\end{tikzcd}
\end{center}
This induces the required isomorphism:
\begin{center}
\begin{tikzcd}
 \D^{\A\op\times\A}\arrow[rr,"\text{(t}\op\text{, s}\op)^*"]\arrow[dd,"\iso" right,"\sigma^*" left] && 
 \D^{\tw(\A)\op} \arrow[dd,"\iso" left] \arrow[drr,"\p_!"above right,bend left=20,""{name=U}] \\
 
 &&&& \D \\
 
 \D^{\A\times\A\op} \arrow[rr,"\text{(t}\op\text{, s}\op)^*" below] && 
 \D^{\tw(\A\op)\op} \arrow[urr,"\p_!" below right,bend right=20]
 
 \arrow[Rightarrow,from=3-3,to=U,"\iso" above left, shorten >=0.8cm,shorten <=0.8cm,shift right=2]
\end{tikzcd}
\end{center} 
\end{proof}

\begin{lemma}[Fubini Theorem for Derivators]\label{Fubini theorem}
For any categories $\A$ and $\B$, and any derivator $\D$, there are canonical  isomorphisms: 
\begin{center}
\begin{tikzcd}
 \D^{\A\op\times\A\times\B\op\times\B} \arrow[rrdd,"\int^\A\int^\B" above right,bend left,""{name=U}]\arrow[dd,"\iso" left] \\
 \\
 \D^{(\A\times\B)\op\times (\A\times\B)} \arrow[rr,"\int^{\A\times\B}"] && \D \\
 \\
 \D^{\B\op\times\B\times\A\op\times\A}\arrow[uu,"\iso" left]\arrow[rruu,"\int^\B\int^\A" below right,bend right,""{name=D}]
 \arrow[Rightarrow,from=3-1,to=U,"\iso", shorten >=0.8cm,shorten <=0.8cm,shift left]
 \arrow[Rightarrow,from=3-1,to=D,"\iso" below left, shorten >=0.8cm,shorten <=0.8cm,shift right]
\end{tikzcd}
\end{center} 
\end{lemma}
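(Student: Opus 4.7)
The proof plan rests on two key observations. First, the twisted arrow construction preserves products: there is a canonical isomorphism of categories $\tw(\A\times\B)\iso\tw(\A)\times\tw(\B)$, since an arrow $(a_1,b_1)\to(a_2,b_2)$ in $\A\times\B$ is precisely a pair of arrows $a_1\to a_2$ in $\A$ and $b_1\to b_2$ in $\B$. Under this identification, combined with the shuffle $\sigma:\A\op\times\B\op\times\A\times\B\iso\A\op\times\A\times\B\op\times\B$, the source-target map $(\mathrm{t}\op,\mathrm{s}\op)$ for $\A\times\B$ corresponds to the product $(\mathrm{t}\op,\mathrm{s}\op)\times(\mathrm{t}\op,\mathrm{s}\op)$ of the source-target maps for $\A$ and $\B$. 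Second, left Kan extensions along composable projections compose: for $\p:\tw(\A)\op\times\tw(\B)\op\to\0$ factored through $\p_{\tw(\B)\op}:\tw(\A)\op\times\tw(\B)\op\to\tw(\A)\op$ followed by $\p:\tw(\A)\op\to\0$, the $2$-functoriality of $\D$ and uniqueness of adjoints yield a canonical isomorphism $\p_!\iso\p_!\circ(\p_{\tw(\B)\op})_!$.

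Using these ingredients, I would rewrite the composite $\int^{\A\times\B}$ applied to $\sigma^*$ as
\[
\D^{\A\op\times\A\times\B\op\times\B}\xrightarrow{\sigma^*}\D^{\A\op\times\B\op\times\A\times\B}\xrightarrow{((\mathrm{t}\op,\mathrm{s}\op)\times(\mathrm{t}\op,\mathrm{s}\op))^*}\D^{\tw(\A)\op\times\tw(\B)\op}\xrightarrow{(\p_{\tw(\B)\op})_!}\D^{\tw(\A)\op}\xrightarrow{\p_!}\D.
\]
To match this composite with $\int^\A\int^\B$, I would commute $(\p_{\tw(\B)\op})_!$ past the pullback along the source-target map in the $\A$-variables. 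The relevant square is a pullback square of projections, hence homotopy exact by the characterisation of homotopy exact squares in~\cite[Section 3]{GPS14a}. The resulting base-change isomorphism identifies the bottom portion of the composite with $\int^\B$ computed in the shifted derivator $\D^{\A\op\times\A}$, after which the outer pair $\p_!\circ(\mathrm{t}\op,\mathrm{s}\op)^*$ is precisely $\int^\A$, giving the first desired isomorphism.

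The second isomorphism $\int^{\A\times\B}\iso\int^\B\int^\A$ follows by the symmetric argument, factoring $\p$ instead through the projection onto $\tw(\B)\op$; alternatively, one can deduce it from the first isomorphism with the roles of $\A$ and $\B$ reversed, together with the canonical reordering $\D^{(\B\times\A)\op\times(\B\times\A)}\iso\D^{(\A\times\B)\op\times(\A\times\B)}$ induced by $\sigma$. The main obstacle will be the bookkeeping: verifying that all the shuffle isomorphisms and base-change modifications paste together coherently to give the claimed $2$-cells in $\Der$, rather than any substantive new derivator-theoretic input. Once the twisted-arrow decomposition and factorisation of $\p_!$ are in place, the result is essentially formal.
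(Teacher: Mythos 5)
Your proof is correct and follows essentially the same approach as the paper: both hinge on the canonical isomorphism $\tw(\A\times\B)\iso\tw(\A)\times\tw(\B)$ together with its compatibility with the source-target maps. The paper's proof is extremely terse (it simply points back to the construction in the $\tw(\A\op)\iso\tw(\A)$ lemma), whereas you spell out the remaining steps — factoring $\p_!$ through the intermediate projection and commuting $(\p_{\tw(\B)\op})_!$ past the pullback along $(\mathrm{t}\op,\mathrm{s}\op)\times\mathrm{id}$ via a homotopy exact square of projections (which is the cocontinuity of $u^*$ on shifted derivators recorded in the paper as \cref{u^* is cocontinuous}) — that the paper leaves implicit; this is a useful elaboration rather than a different method.
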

\begin{proof}
We have an isomorphism $\tw(\A\times\B)\xrightarrow{\iso}\tw(\A)\times\tw(\B)$, which makes the diagram below commute:
\begin{center}
\begin{tikzcd}
 \tw(\A\times\B) \arrow{rr}[above]{\iso}\arrow{dd}[left]{(\mathrm{s},\mathrm{t})} && 
 \tw(\A)\times\tw(\B)  \arrow{dd}[right]{(\mathrm{s},\mathrm{t})\times (\mathrm{s},\mathrm{t})} \\\\
 (\A\times\B)\op\times (\A\times\B) \arrow{rr}[below]{\iso} && 
 \A\op\times\A\times\B\op\times\B
\end{tikzcd}
\end{center}
As in the proof of \cref{Coends over A and Aop}, this induces the desired isomorphisms.
\end{proof}

\begin{notation}\label{A=A_1=A_2 notation}
In light of \cref{Coends over A and Aop} and \cref{Fubini theorem}, we will suppress instances of the symmetry isomorphism $\sigma^*:\D(\A\times\B)\xrightarrow{\iso}\D(\B\times\A)$ from our notation as much as possible. This will be convenient when there are multiple parameters indexing a shifted derivator: for example, we may write $\int^\A:\D^{\A\times\B\times\A\op}\rightarrow\D^\B$ without ambiguity.

On the other hand, if multiple copies of the same category appear in the index of a shifted derivator, we will often use different subscripts to denote the same category. For example, if we denote $\A=\A_1=\A_2=\A_3$ then the notation $\int^{\A_{1,3}}:\D^{\A_1\op\times\A_2\times\A_3}\rightarrow\D^{\A_2}$ is unambiguous. 
\end{notation}

\begin{remark}\label{Prederivator maps respect Fubini isomorphism}
Using the pasting properties of mates, it is easy to show that any derivator map $F:\D_1\rightarrow\D_2$ respects the isomorphism in~\cref{Fubini theorem}. That is, given any object $X\in\D_1(\A\op\times\A\times\B\op\times\B)$, the diagram below commutes:
\begin{center}
\begin{tikzcd}
 \int^\A\int^\B FX \arrow{rr}[above]{\iso}\arrow{dd} && 
 \int^{\A\times\B} FX  \arrow{dddd} \\\\
 \int^\A F\int^\B X \arrow{dd} \\\\
 F\int^\A\int^\B X \arrow{rr}[below]{\iso}
 && F\int^{\A\times\B}X
\end{tikzcd}
\end{center}
The vertical maps are the canonical morphisms, as in~\cref{Maps that preserve delta}.
\end{remark}

The following lemma is technical, but the isomorphism it provides is extremely important in the theory of monoidal derivators. In particular, it induces the unit isomorphisms in the bicategory of~\cref{Definition of Prof(E)} associated to any monoidal derivator.

\begin{proposition}\label{Interaction of delta and integral}
For any derivator $\D$, and any category $\A=\A_i$, we have an isomorphism:

\begin{center}
\begin{tikzcd}[row sep=tiny]
 && 
\D^{\A_1\times\A_2\op\times\A_3} \arrow[rrd,"\int^{\A_{1,2}}" above right, bend left=10]
\\
\D^{\A_1}\arrow[rrrr,equal,bend right=20,""{name=D}]\arrow[urr,"\partial_{\A_{2,3}}" above left,bend left=10]&&&&\D^{\A_3} 
\arrow[Rightarrow,from=1-3,to=D,"\iso",shorten >=0.2cm,shorten <=0.2cm]
\end{tikzcd}
\end{center}
\end{proposition}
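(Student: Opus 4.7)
The plan is to unpack the composite $\int^{\A_{1,2}} \partial_{\A_{2,3}}$ via the definitions in \cref{Ends and coends definition}, commute the resulting pullback–Kan-extension pair via Beck–Chevalley, and then identify the outcome with the identity via cofinality. Unfolding the definitions, the composite is the four-step composition
\[
\D^{\A_1} \xrightarrow{\p^*} \D^{\A_1 \times \tw(\A_{2,3})} \xrightarrow{(\id \times (s,t))_!} \D^{\A_1 \times \A_2\op \times \A_3} \xrightarrow{((t\op,s\op) \times \id)^*} \D^{\tw(\A)\op_{1,2} \times \A_3} \xrightarrow{\p_!} \D^{\A_3}.
\]

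The main step is a Beck–Chevalley isomorphism commuting the middle left Kan extension and pullback. The relevant pullback square in $\Cat$ of the functors $\id \times (s,t)$ and $(t\op,s\op) \times \id$ into $\A_1 \times \A_2\op \times \A_3$ has its vertex $Q$ identified explicitly, by inspection of the pullback conditions on objects: an element is a composable pair $(g : u \to v, f : v \to d)$ in $\A$, so $Q \cong \A^{[2]}$, and the two natural projections to $\A_1, \A_3$ become the endpoint evaluations $\mathrm{ev}_0, \mathrm{ev}_2$ respectively. Establishing the homotopy exactness of this pullback square is the key technical step; it follows because $(s,t) : \tw(\A) \to \A\op \times \A$ is a two-sided discrete fibration, so pullbacks against it yield homotopy exact squares. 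Alternatively, this can be verified directly from \textbf{Der 4} by comparing the comma categories associated to the square.

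After Beck–Chevalley, the composite reduces to $(\mathrm{ev}_2)_! (\mathrm{ev}_0)^* : \D^\A \to \D^\A$. To identify this with the identity, the plan is to apply the pointwise formula from \textbf{Der 4}: at $d \in \A$, the value is $\hocolim_{\A^{[2]}/d} (\mathrm{ev}_0)^* X$. Two successive cofinality reductions simplify this colimit. First, the fiber inclusion $\A^{[2]}_d = \mathrm{ev}_2^{-1}(d) \hookrightarrow \A^{[2]}/d$ has as its fibers the factorization categories of morphisms $v \to d$ through an intermediate object, and each such factorization category is contractible because it has both an initial and a terminal object. Second, the further inclusion $\A/d \hookrightarrow \A^{[2]}_d$ sending $(v, v \to d)$ to the degenerate chain $(v \xrightarrow{\id_v} v \to d)$ admits a right adjoint given by ``forget the middle object'', so it is a homotopy equivalence. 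Under these reductions, $(\mathrm{ev}_0)^* X$ becomes the pullback of $X$ along the forgetful functor $\A/d \to \A$, and \textbf{Der 4} applied to the canonical comma square associated to $\id : \A \to \A$ identifies its homotopy colimit with $X(d)$.

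The principal obstacle is the homotopy exactness of the Beck–Chevalley pullback square in $\Cat$; once this is established, the subsequent cofinality arguments are routine, and the coherence needed to lift the pointwise isomorphism to an isomorphism of derivator maps follows from \cref{modifications are isomorphisms iff they are on underlying category} together with the naturality of all the constructions involved.
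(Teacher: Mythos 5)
Your decomposition of $\int^{\A_{1,2}}\partial_{\A_{2,3}}$ into a four-step composite and your plan to commute the middle pullback/Kan-extension pair by Beck–Chevalley match the paper's strategy. However, your identification of the pullback vertex is wrong, and this breaks the rest of the argument. The pullback of $\A \times (\mathrm{s},\mathrm{t})$ and $(\mathrm{t}\op,\mathrm{s}\op)\times\A$ over $\A\times\A\op\times\A$ does have composable pairs $a \to b \to c$ as \emph{objects}, but its \emph{morphisms} are not those of $\A^{[2]}$: because one leg factors through a twisted arrow category, the middle component of a morphism from $(a_1 \to b_1 \to c_1)$ to $(a_2 \to b_2 \to c_2)$ is a map $\beta : b_2 \to b_1$ running \emph{backwards}, while $\alpha : a_1 \to a_2$ and $\gamma : c_1 \to c_2$ run forwards. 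The paper calls this category $\mathrm{P}_\A$; it is genuinely different from $\A^{[2]}$. Your two subsequent cofinality reductions are stated in terms of $\A^{[2]}$ and inherit this error: the factorization categories you describe do have initial and terminal objects even with the middle arrow reversed (they swap roles), but the claimed right adjoint to $\A/d \hookrightarrow \mathrm{ev}_2^{-1}(d)$ given by ``forget the middle object'' does not exist for the correct morphism direction of $\mathrm{P}_\A$, so that step fails as written.

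The paper avoids this pointwise cofinality analysis entirely: after setting up the pullback $\mathrm{P}_\A$ and invoking \cite[Lemma B.1]{GPS14} for the homotopy exactness of the Beck--Chevalley square, it builds a natural transformation $\vartheta_\A : \p \circ \mathrm{m}_\A \Rightarrow \p \circ \mathrm{n}_\A$ whose component at $a \xrightarrow{g} b \xrightarrow{f} c$ is the composite $f \circ g : a \to c$, and again cites \cite[Lemma B.1]{GPS14} for the homotopy exactness of the resulting square with identity legs. That square packages exactly the cofinality you are trying to verify by hand, and keeping it as a single homotopy-exact square also handles the coherence you wave at in your final sentence for free. If you want a self-contained cofinality proof, you should redo it for $\mathrm{P}_\A$ (not $\A^{[2]}$), and you will find it easier to check directly that the comma categories relevant to the $\vartheta_\A$ square are contractible than to chain two separate cofinal inclusions.
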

\begin{proof}
We must show that the composite
\[
\D^{\A}\xrightarrow{(\A\times\p)^*}\D^{\A\times\tw(\A)}\xrightarrow{(\A\times(\mathrm{s},\;\mathrm{t}))_!}\D^{\A\times\A\op\times\A}\xrightarrow{((\mathrm{t}\op,\;\mathrm{s}\op)\times\A)^*}\D^{\tw(\A\op)\op\times\A}\xrightarrow{(\p\times\A)_!}\D^\A
\]
is isomorphic to the identity. 

This isomorphism is constructed in the proof of~\cite[Lemma B.1]{GPS14}. We will give an outline of the proof. 

Define the category $\mathrm{P}_\A$ to be the following pullback. In~\cite[Lemma B.1]{GPS14}, this square is shown to be homotopy exact:

\begin{center}
\begin{tikzcd}
 \mathrm{P}_\A\arrow[rrdd,phantom,"\lrcorner" very near start] \arrow{rr}[above]{\mathrm{m}_\A}\arrow{dd}[left]{\mathrm{n}_\A} && 
 \A\times\tw(\A)\arrow[Rightarrow,ddll,"\id" above=2, shorten >=1.3cm,shorten <=1.3cm]\arrow{dd}[right]{\A\times (\mathrm{s},\mathrm{t})} \\\\
 \tw(\A\op)\op\times\A \arrow{rr}[below]{(\mathrm{t}\op,\mathrm{s}\op)\times\A} && 
 \A\times\A\op\times\A

\end{tikzcd}
\end{center}
The category $\mathrm{P}_\A$ has objects given by pairs of composable maps $a\xrightarrow{\;\;g\;\;} b\xrightarrow{\;\;f\;\;} c$ in $\A$, with a morphism from $a_1\xrightarrow{\;\;g_1\;\;} b_1\xrightarrow{\;\;f_1\;\;} c_1$ to $a_2\xrightarrow{\;\;g_2\;\;} b_2\xrightarrow{\;\;f_2\;\;} c_2$ consisting of a commutative diagram:
\begin{center}
\begin{tikzcd}
 a_1\arrow{r}[above]{\alpha}\arrow{d}[left]{g_1} & 
 a_2 \arrow{d}[right]{g_2} \\
 b_1 \arrow{d}[left]{f_1} & 
 b_2 \arrow{d}[right]{f_2}\arrow{l}[below]{\beta}\\
 c_1 \arrow{r}[below]{\gamma} & c_2
\end{tikzcd}
\end{center}
The functor $\mathrm{m}_\A$ takes $a\xrightarrow{\;\;g\;\;} b\xrightarrow{\;\;f\;\;} c$ to the pair $(a,b\xrightarrow{\;\;f\;\;} c)$, and $\mathrm{n}_\A$ takes $a\xrightarrow{\;\;g\;\;} b\xrightarrow{\;\;f\;\;} c$ to $(a\xrightarrow{\;\;g\;\;} b,c)$. There is a natural transformation
\begin{center}
\begin{tikzcd}
 \mathrm{P}_\A\arrow{r}[above]{\mathrm{m}_\A}\arrow{d}[left]{\mathrm{n}_\A} & 
 \A\times\tw(\A)\arrow{r}[above]{\A\times \p} & \A\arrow[Rightarrow,ddll,"\vartheta_\A\;\;\;" above=2, shorten >=2.5cm,shorten <=2.5cm]\arrow[dd,equal] \\
 \tw(\A\op)\op\times\A\arrow{d}[left]{\p\times\A}
 \\
 \A\arrow[rr,equal] && 
 \A

\end{tikzcd}
\end{center}
with component at the object $a\xrightarrow{\;\;g\;\;} b\xrightarrow{\;\;f\;\;} c$ given by $a\xrightarrow{\;\;f\circ g\;\;} c$. In~\cite[Lemma B.1]{GPS14}, this square is also shown to be homotopy exact, so we obtain a canonical isomorphism:
\begin{center}
\begin{tikzcd}
\D^{\A}\arrow[rr,"(\A\times\p)^*"]\arrow[rrrrrrrrr,equal,bend right=50,""{name=D}] 
&& \D^{\A\times\tw(\A)}\arrow[rrdd,"\mathrm{m}^*" below left,bend right=15]\arrow[rr,"(\A\times(\mathrm{s}\text{,}\;\mathrm{t}))_!"]
&& \D^{\A\times\A\op\times\A}\arrow[Leftarrow,dd,"\iso",shorten >=0.4cm,shorten <=0.3cm]\arrow[rrr,"((\mathrm{t}\op\text{,}\;\mathrm{s}\op)\times\A)^*"]
&&& \D^{\tw(\A\op)\op\times\A}\arrow[rr, "(\p\times\A)_!"]
&& \D^{\A} \\\\

&&&& \D^{\mathrm{P}_\A}\arrow[Rightarrow,dd,"\iso",shorten >=0.4cm,shorten <=0.3cm]\arrow[rrruu,"\mathrm{n}_!" below right,bend right=10] \\\\
&&&& 

\end{tikzcd}
\end{center}
\end{proof}

\begin{remark}\label{Cancelling As and Fubini}
Given any category $\A=\A_i$, the pasting diagrams below are equal, for any derivator $\D$:

\begin{center}
\begin{tikzcd}[row sep=tiny]
 && 
\D^{\A_1\op\times\A_2\times\A_3\op\times\A_4} \arrow[rrddd,"\int^{\A_{1,4}}" above right, bend left=10]
\\\\\\
\D^{\A_1\op\times\A_2}\arrow[rrrr,equal,bend right=20,""{name=D}]\arrow[uuurr,"\partial_{\A_{3,4}}" above left,bend left=10]&&&&\D^{\A_3\op\times\A_2} \arrow[rr,"\int^{\A_{3,2}}" above] && \D 

\arrow[Rightarrow,from=1-3,to=D,"\iso",shorten >=0.4cm,shorten <=0.4cm]
\end{tikzcd}
\end{center}

\begin{center}
\begin{tikzcd}[row sep=tiny]
 &&&& \D^{\A_3\op\times\A_2}\arrow[rrdddd,"\int^{\A_{3,2}}" above right, bend left=10]\arrow[Rightarrow, dddd, "\iso" right,shorten >=0.3cm,shorten <=0.3cm]  \\
 && 
\D^{\A_1\op\times\A_2\times\A_3\op\times\A_4} \arrow[rrddd,"\int^{\A_{3,2}}" below left, bend left=10]\arrow[rru,"\int^{\A_{1,4}}" above left, bend left=10]
\\\\\\
\D^{\A_1\op\times\A_2}\arrow[rrrr,equal,bend right=20,""{name=D}]\arrow[uuurr,"\partial_{\A_{3,4}}" above left,bend left=10]&&&&\D^{\A_1\op\times\A_4} \arrow[rr,"\int^{\A_{1,4}}" below] && \D 

\arrow[Rightarrow,from=2-3,to=D,"\iso",shorten >=0.4cm,shorten <=0.4cm]
\end{tikzcd}
\end{center}

This is shown in the proof of~\cite[Lemma B.5]{GPS14}. Dually, the pasting diagrams below are also equal:
\begin{center}
\begin{tikzcd}[row sep=tiny]
 &&&& 
\D^{\A_1\op\times\A_2\times\A_3\op\times\A_4} \arrow[rrddd,"\int^{\A_{1,4}}" above right, bend left=10]
\\\\\\
\D\arrow[rr,"\partial_{\A_{1,2}}" above]  &&
\D^{\A_1\op\times\A_2}\arrow[rrrr,equal,bend right=20,""{name=D}]\arrow[uuurr,"\partial_{\A_{3,4}}" above left,bend left=10]&&&&\D^{\A_3\op\times\A_2}

\arrow[Rightarrow,from=1-5,to=D,"\iso",shorten >=0.4cm,shorten <=0.4cm]
\end{tikzcd}
\end{center}

\begin{center}
\begin{tikzcd}[row sep=tiny]
 && \D^{\A_1\op\times\A_2}\arrow[rrd,"\partial_{\A_{3,4}}" above right, bend left=15]\arrow[Rightarrow, dddd, "\iso" right,shorten >=0.3cm,shorten <=0.3cm]
\\ &&&& 
\D^{\A_1\op\times\A_2\times\A_3\op\times\A_4} \arrow[rrddd,"\int^{\A_{1,4}}" above right, bend left=10]
\\\\\\
\D\arrow[uuuurr,"\partial_{\A_{1,2}}" above left,bend left=15]\arrow[rr,"\partial_{\A_{3,4}}" below]  &&
\D^{\A_3\op\times\A_4}\arrow[rrrr,equal,bend right=20,""{name=D}]\arrow[uuurr,"\partial_{\A_{1,2}}" below right,bend left=10]&&&&\D^{\A_3\op\times\A_2}

\arrow[Rightarrow,from=2-5,to=D,"\iso",shorten >=0.4cm,shorten <=0.4cm]
\end{tikzcd}
\end{center}
\end{remark}

\begin{remark}\label{Derivator maps respect delta coend cancellation}
Let $F:\D_1\rightarrow\D_2$ be a derivator map. Using~\cref{Derivator maps respect homotopy exact squares}, it follows that $F$ respects the isomorphism of~\cref{Interaction of delta and integral}. That is, for any category $\A=\A_i$, and any $X\in\D_1(\A_1)$, the diagram below commutes, where the horizontal maps are the canonical morphisms of~\cref{Maps that preserve delta}:
\begin{center}
\begin{tikzcd}
 \int^{\A_{1,2}} \partial_{\A_{2,3}} FX\arrow[r]\arrow[rd,"\iso" below left,bend right=18] & \int^{\A_{1,2}} F\partial_{\A_{2,3}} X\arrow[r] & F\int^{\A_{1,2}} \partial_{\A_{2,3}} X\arrow[ld,"\iso" below right,bend left=18]  \\
 & FX
\end{tikzcd}
\end{center}
\end{remark}

\section{Two-variable adjunctions}\label{Section Two-variable adjunctions}

In this section, we recall the definition and some basic properties of two-variable adjunctions between derivators, which were introduced and studied in~\cite{GPS14}. Our presentation differs slightly from~\cite{GPS14}. In particular, we begin with~\cref{2-variable adjunction characterisation}, which collects all of the structure present in a derivator two-variable adjunction, and then point out that this structure is uniquely determined by far less. This inverts the approach of~\cite{GPS14}, but has the advantage of fixing notation from the outset.

Before we give the derivator analogue, we recall the definition of a two-variable left adjoint between categories. Given categories $\mathcal{C}_1$, $\mathcal{C}_2$ and $\mathcal{C}_3$, a functor $\otimes:\mathcal{C}_1\times\mathcal{C}_2\rightarrow\mathcal{C}_3$ is called a \textbf{two-variable left adjoint} if, for each $x\in\mathcal{C}_1$ and each $y\in\mathcal{C}_2$, the functors $x\otimes -:\mathcal{C}_2\rightarrow\mathcal{C}_3$ and $-\otimes\;y:\mathcal{C}_1\rightarrow\mathcal{C}_3$ have right adjoints. Equivalently, there are functors
\begin{align*}
\vartriangleright\;:\; & \mathcal{C}_2\op\times\mathcal{C}_3\rightarrow\mathcal{C}_1\\
\vartriangleleft\;:\; & \mathcal{C}_3\times\mathcal{C}_1\op\rightarrow\mathcal{C}_2
\end{align*}
and natural isomorphisms:
\[
\mathcal{C}_1(x,y\vartriangleright z)\iso\mathcal{C}_3(x\otimes y,z)\iso\mathcal{C}_2(y,z\vartriangleleft x)
\]

We now give the derivator analogue. Note that the following is not the definition given in~\cite{GPS14}, but is equivalent to it by~\cite[Lemma 8.8]{GPS14}.

\begin{definition}
We call a derivator map $\otimes:\D_1\times\D_2\rightarrow\D_3$ a \textbf{two-variable left adjoint} if, for any categories $\A$ and $\B$, and any $X\in\D_1(\A)$ and $Y\in\D_2(\B)$, the derivator maps 
\[
X\;\widetilde{\otimes}\;- :\D_2\rightarrow\D_3^{\A}
\]
\[
-\;\widetilde{\otimes}\;Y:\D_1\rightarrow\D_3^{\B}
\]
have right adjoints.
\end{definition}

The following theorem highlights the importance of~\cref{Interaction of delta and integral}. We will use this theorem repeatedly, especially in subsequent sections of~\cref{Chapter Actions of monoidal derivators}.

\begin{theorem}\label{2-variable adjunction characterisation}
Suppose we have a cocontinuous two-variable map
\[
\otimes:\D_1\times\D_2\rightarrow\D_3
\]
and two continuous derivator maps
\begin{align*}
\vartriangleright:\; & \D_2\op\times\D_3\rightarrow\D_1\\
\vartriangleleft:\; & \D_3\times\D_1\op\rightarrow\D_2.
\end{align*}
The following are equivalent:
\begin{enumerate}
\item For any $X\in\D_1(\A)$, the map $X\;\widetilde{\otimes}\;- :\D_2\rightarrow\D_3^{\A}$ has a right adjoint given by
\[
\D_3^{\A}\xrightarrow{\;\;-\;\widetilde{\vartriangleleft}\;X\;\;}\D_2^{\A\times\A\op}\xrightarrow{\;\;\int_{\A\op}\;\;}\D_2
\]
and for any $Y\in\D_2(\B)$, the map $-\;\widetilde{\otimes}\;Y:\D_1\rightarrow\D_3^{\B}$ has a right adjoint given by
\[
\D_3^{\B}\xrightarrow{\;\;Y\;\widetilde{\vartriangleright}\;-\;\;}\D_1^{\B\op\times\B}\xrightarrow{\;\;\int_{\B}\;\;}\D_1.
\]
\item For any $X\in\D_1(\A)$, the map $-\;\widetilde{\vartriangleleft}\;X:\D_3\rightarrow\D_2^{\A\op}$ has a left adjoint given by
\[
\D_2^{\A\op}\xrightarrow{\;\;X\;\widetilde{\otimes}\;-\;\;}\D_3^{\A\times\A\op}\xrightarrow{\;\;\int^{\A\op}\;\;}\D_3
\]
and for any $Z\in\D_3(\C)$, the map $Z\;\widetilde{\vartriangleleft\op}\;-:\D_1\rightarrow(\D_2\op)^{\C\op}$ has a right adjoint given by
\[
(\D_2\op)^{\C\op}\xrightarrow{\;\;-\;\widetilde{\vartriangleright}\;Z\;\;}\D_1^{\C\op\times\C}\xrightarrow{\;\;\int_\C\;\;}\D_1.
\]
\item For any $Y\in\D_2(\B)$, the map $Y\;\widetilde{\vartriangleright}\;-:\D_3\rightarrow\D_1^{\B\op}$ has a left adjoint given by
\[
\D_1^{\B\op}\xrightarrow{\;\;-\;\widetilde{\otimes}\;Y\;\;}\D_2^{\B\op\times\B}\xrightarrow{\;\;\int^{\B}\;\;}\D_3
\]
and for any $Z\in\D_3(\C)$, the map $-\;\widetilde{\vartriangleright\op}\;Z:\D_2\rightarrow(\D_1\op)^{\C\op}$ has a right adjoint given by
\[
(\D_1\op)^{\C\op}\xrightarrow{\;\;Z\;\widetilde{\vartriangleleft}\;-\;\;}\D_2^{\C\times\C\op}\xrightarrow{\;\;\int_{\C\op}\;\;}\D_2.
\]
\end{enumerate}
\end{theorem}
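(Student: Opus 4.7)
The plan is to show that each of the three conditions encodes the same two-variable adjunction data, with the formulas in each case related via the adjunctions $\partial_\A \dashv \int_\A$, $\int^\A \dashv \partial^\A$, and the cancellation isomorphism of \cref{Interaction of delta and integral}. Continuity of $\vartriangleleft$, $\vartriangleright$ and cocontinuity of $\otimes$ ensure these maps commute with $\partial$ and $\int$. I will prove the cycle $(1)\Rightarrow(2)\Rightarrow(3)\Rightarrow(1)$; the reverse direction of each step uses \cref{Interaction of delta and integral} to insert an $\int^\A\circ\partial_\A$ rather than cancel an $\int_\A\circ\partial^\A$.

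The core calculation is the first half of $(1)\Rightarrow(2)$: assuming $X\,\widetilde{\otimes}\,-$ has right adjoint $\int_{\A\op}(-\,\widetilde{\vartriangleleft}\,X)$, I show $-\,\widetilde{\vartriangleleft}\,X$ has left adjoint $\int^{\A\op}(X\,\widetilde{\otimes}\,-)$. For $X\in\D_1(\A)$, $W\in\D_2^{\A\op}(\B)$, and $Z\in\D_3(\B)$, there is a natural chain of bijections
\begin{align*}
\D_3(\B)\bigl({\textstyle\int^{\A\op}(X\,\widetilde{\otimes}\,W)},\, Z\bigr)
&\;\iso\; \D_3(\A\times\A\op\times\B)\bigl(X\,\widetilde{\otimes}\,W,\, \partial^{\A\op}Z\bigr) \\
&\;\iso\; \D_2(\A\op\times\B)\bigl(W,\, {\textstyle\int_{\A\op}(\partial^{\A\op}Z\,\widetilde{\vartriangleleft}\,X)}\bigr) \\
&\;\iso\; \D_2(\A\op\times\B)\bigl(W,\, {\textstyle\int_{\A\op}\partial^{\A\op}(Z\,\widetilde{\vartriangleleft}\,X)}\bigr) \\
&\;\iso\; \D_2(\A\op\times\B)\bigl(W,\, Z\,\widetilde{\vartriangleleft}\,X\bigr),
\end{align*}
using in turn the adjunction $\int^{\A\op}\dashv\partial^{\A\op}$; the hypothesised first half of (1) shifted over $\A\op\times\B$; continuity of $-\,\widetilde{\vartriangleleft}\,X$ (which commutes past $\partial^{\A\op}$); and the opposite-derivator dual of \cref{Interaction of delta and integral}. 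Naturality of the composite yields the derivator adjunction claimed in the first half of (2). Swapping the roles of $\D_1$ and $\D_2$ and replacing $\vartriangleleft$ by $\vartriangleright$, a parallel calculation derives the first half of (3) from the second half of (1).

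The second-half statements in (2) and (3) express the $\vartriangleleft$--$\vartriangleright$ Galois-type adjunction, which is the derivator lift of the classical bijection $\D_1(X,Y\vartriangleright Z)\iso\D_2(Y,Z\vartriangleleft X)$. These follow by combining both halves of (1): the required hom-set bijection is obtained by chaining the two adjunctions of (1), with an intervening $\int_\C\circ\partial^\C$ cancellation over the category $\C$ where $Z$ lives, again using continuity of $\vartriangleleft$ and $\vartriangleright$ to commute $\partial^\C$ past the relevant external maps. The implications $(2)\Rightarrow(3)$ and $(3)\Rightarrow(1)$ proceed by the same mechanism; uniqueness of adjoints then identifies the right adjoints of the maps appearing in (1) with the prescribed end-formulas.

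The main obstacle is bookkeeping. Each individual step is formal, but the argument requires careful tracking of category indices, opposite derivators, shifts, and the placement of $\A$ versus $\A\op$ in each factor of the external two-variable maps. Verifying that the hom-set bijections are natural in the two-variable sense of \cref{Bimorphisms}---so that they assemble into bona fide derivator adjunctions rather than pointwise bijections---requires unwinding the compatibility of the external product with pullbacks, as encoded by the modifications of \cref{Cocontinuity of tensor as a modification}.
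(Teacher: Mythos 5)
Your core calculation (the first half of $(1)\Rightarrow(2)$) is exactly the paper's, and you have identified the right ingredients: the adjunction $\int^{\A\op}\dashv\partial^{\A\op}$, continuity of $\vartriangleleft$ to commute $-\;\widetilde{\vartriangleleft}\;X$ past $\partial^{\A\op}$, Fubini, and the cancellation lemma of \cref{Interaction of delta and integral}. Two things to flag, though.

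First, the paper makes your ``same mechanism'' remark precise and uses it to avoid redoing the work. Cycling the triple $(\otimes,\vartriangleright,\vartriangleleft)\rightsquigarrow(\vartriangleleft\op,\otimes\op,\vartriangleright)$ (acting on derivators as $(\D_1,\D_2,\D_3)\rightsquigarrow(\D_3\op,\D_1,\D_2\op)$) preserves the hypotheses by \cref{Opposite of external product}, and sends condition (2) to condition (1), (3) to (2), and (1) to (3). So proving $(1)\Rightarrow(2)$ once yields the entire cycle. Your plan to prove $(1)\Rightarrow(2)\Rightarrow(3)\Rightarrow(1)$ by ``a parallel calculation'' arrives at the same place, but you miss that the parallel calculation is \emph{literally} the original one applied to a cycled triple; your ``reverse direction'' comment suggests some confusion --- proving a cycle of implications never requires reversing any step.

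Second, a concrete indexing error in the second half of $(1)\Rightarrow(2)$. You write that the intervening cancellation is ``$\int_\C\circ\partial^\C$ \dots{} over the category $\C$ where $Z$ lives.'' It is not: the cancellation happens over $\A\op$, and the $\int_\C$ survives as part of the answer. Tracing the chain for $X\in\D_1(\A)$, $Y\in\D_2(\C\times\A\op)$, $Z\in\D_3(\C)$,
\begin{align*}
\D_2(\C\times\A\op)(Y,\;Z\;\widetilde{\vartriangleleft}\;X) & \iso \D_3(\C)\bigl(\textstyle\int_{\A\op}X\;\widetilde{\otimes}\;Y,\;Z\bigr) \\
& \iso \D_3(\C\times\A\times\A\op)\bigl(X\;\widetilde{\otimes}\;Y,\;\partial^{\A\op}Z\bigr) \\
& \iso \D_1(\A)\bigl(X,\;\textstyle\int_{\C\times\A\op}Y\;\widetilde{\vartriangleright}\;\partial^{\A\op}Z\bigr),
\end{align*}
one then commutes $Y\;\widetilde{\vartriangleright}\;-$ past $\partial^{\A\op}$ (continuity of $\vartriangleright$, not $\partial^\C$), splits $\int_{\C\times\A\op}\iso\int_\C\int_{\A\op}$ by Fubini, and cancels $\int_{\A\op}\circ\partial^{\A\op}$ by the dual of \cref{Interaction of delta and integral}, leaving the target right adjoint $\int_\C(Y\;\widetilde{\vartriangleright}\;Z)$. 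A cancellation over $\C$ would not produce this formula; had you run your version the types would fail to match, so be careful here when you fill in the details.
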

\begin{proof}
Given maps $\otimes$, $\vartriangleright$ and $\vartriangleleft$ as in the statement of the theorem, consider the maps below:
\begin{align*}
\vartriangleleft\op:\; & \D_3\op\times\D_1\rightarrow\D_2\op\\
\otimes\op:\; & \D_1\op\times\D_2\op\rightarrow\D_3\op\\
\vartriangleright:\; & \D_2\op\times\D_3\rightarrow\D_1
\end{align*}
By~\cref{Opposite of external product}, $\vartriangleleft\op$ is cocontinuous and $\otimes\op$ is continuous, so this triple satisfies the hypotheses of the theorem. Condition $(2)$ for the triple $(\otimes,\vartriangleright,\vartriangleleft)$ is exactly condition $(1)$ for the new triple $(\vartriangleleft\op,\otimes\op,\vartriangleright)$, $(3)$ for $(\otimes,\vartriangleright,\vartriangleleft)$ is $(2)$, and $(1)$ for $(\otimes,\vartriangleright,\vartriangleleft)$ is $(3)$. Thus, to prove the theorem, it suffices to prove that $(1)$ implies $(2)$.

So, suppose we have maps $(\otimes,\vartriangleright,\vartriangleleft)$ as in the statement of the theorem, that satisfy condition $(1)$. Given $X\in\D(\A)$, consider the map
\[
\D_2^{\A\op}\xrightarrow{\;\;X\;\widetilde{\otimes}\;-\;\;}\D_3^{\A\times\A\op}\xrightarrow{\;\;\int^{\A\op}\;\;}\D_3.
\]
By $(1)$, this has a right adjoint given by the composite
\[
\D_3\xrightarrow{\;\;\partial^{\A_{2,3}\op}\;\;}\D_3^{\A_2\times\A_3\op}\xrightarrow{\;\;-\;\widetilde{\vartriangleleft}\;X\;\;}\D_2^{\A_2\times\A_3\op\times\A_1\op}\xrightarrow{\;\;\int_{\A_{1,2}\op}\;\;}\D_2^{\A_3\op},
\]
where $\A=\A_1=\A_2=\A_3$ as in~\cref{A=A_1=A_2 notation}. Since $\vartriangleleft$ is continuous, $-\;\widetilde{\vartriangleleft}\;X$ commutes with $\partial^{\A\op}$, so the composite above is isomorphic to:
\[
\D_3\xrightarrow{\;\;-\;\widetilde{\vartriangleleft}\;X\;\;}\D_2^{\A_1\op}\xrightarrow{\;\;\partial^{\A_{2,3}\op}\;\;}\D_2^{\A_1\op\times\A_2\times\A_3\op}\xrightarrow{\;\;\int_{\A_{1,2}\op}\;\;}\D_2^{\A_3\op}
\]
By~\cref{Interaction of delta and integral}, this composite is isomorphic to $-\;\widetilde{\vartriangleleft}\;X$.

Let $Z\in\D_3(\C)$, and consider the map $Z\;\widetilde{\vartriangleleft\op}\;-:\D_1\rightarrow(\D_2\op)^{\C\op}$. The component of this map at $\A$ is the opposite of the component of $Z\;\widetilde{\vartriangleleft}\;-:\D_1\op\rightarrow\D_2^{\C}$ at $\A\op$:
\[
Z\;\widetilde{\vartriangleleft}\;-:\D_1(\A)\op\rightarrow\D_2(\C\times\A\op)
\]
Given $X\in\D_1(\A)$ and $Y\in\D_2(\C\times\A\op)$, we have a string of natural isomorphisms:
\begin{align*}
\D_2(\C\times\A\op)(Y,Z\;\widetilde{\vartriangleleft}\;X) & \iso\D_3(\C)(\mathsmaller{\int_{\A\op}}X\;\widetilde{\otimes}\;Y,Z)\\
\; & \iso\D_3(\C\times\A\times\A\op)(X\;\widetilde{\otimes}\;Y,\partial^{\A\op}Z)\\
\; & \iso\D_1(\A)(X,\mathsmaller{\int_{\C\times\A\op}}Y\;\widetilde{\vartriangleright}\;\partial^{\A\op}Z) 
\end{align*}
The first follows from the description of the left adjoint to $-\;\widetilde{\vartriangleleft}\;X$ which we have just proved; the second is by definition of $\partial^{\A\op}$; the third follows from the description of the right adjoint to $-\;\widetilde{\otimes}\;Y$.

Since $\vartriangleright$ is continuous, $Y\;\widetilde{\vartriangleright}\;-$ commutes with $\partial^{\A\op}$. Applying the Fubini theorem of~\cref{Fubini theorem} to the end $\int_{\C\times\A\op}$, we can use~\cref{Interaction of delta and integral} to cancel $\int_{\A\op}$ with $\partial^{\A\op}$. This leaves us with the desired description of the right adjoint to $Z\;\widetilde{\vartriangleleft\op}\;-$:
\[
(\D_2\op)^{\C\op}\xrightarrow{\;\;-\;\widetilde{\vartriangleright}\;Z\;\;}\D_1^{\C\op\times\C}\xrightarrow{\;\;\int_\C\;\;}\D_1 \qedhere
\]
\end{proof}

The theorem below is a consequence of the results in~\cite[Section 9]{GPS14}:

\begin{theorem}\label{Characterising two-variable left adjoints}
A map $\otimes:\D_1\times\D_2\rightarrow\D_3$ is a two-variable left adjoint if and only if we can construct derivator maps 
\begin{align*}
\vartriangleright: & \D_2\op\times\D_3\rightarrow\D_1\\
\vartriangleleft: & \D_3\times\D_1\op\rightarrow\D_2
\end{align*}
such that $\otimes$, $\vartriangleleft$ and $\vartriangleright$ satisfy the equivalent conditions of Theorem~\ref{2-variable adjunction characterisation}. 
\end{theorem}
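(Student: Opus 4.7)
The $(\Leftarrow)$ direction is immediate: given $\vartriangleleft$ and $\vartriangleright$ satisfying condition (1) of \cref{2-variable adjunction characterisation}, the displayed composites explicitly exhibit derivator right adjoints to $X\,\widetilde{\otimes}\,-$ and $-\,\widetilde{\otimes}\,Y$, so $\otimes$ is a two-variable left adjoint by definition.

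For the $(\Rightarrow)$ direction, assume $\otimes$ is a two-variable left adjoint. First I would observe that $\otimes$ is cocontinuous: each $X\,\widetilde{\otimes}\,-$ is a derivator left adjoint by hypothesis and so cocontinuous by \cref{Cocontinuous pointwise left adjoint is a left adjoint}, and the symmetric statement in the other variable yields cocontinuity of $\otimes$ as a two-variable map in the sense of \cref{Cocontinuous two-variable map definition}. The same lemma also exhibits, for each $X \in \D_1(\A)$, a derivator right adjoint $R_X \colon \D_3^{\A} \to \D_2$ to $X\,\widetilde{\otimes}\,-$, and dually a right adjoint $L_Y$ for each $Y \in \D_2(\B)$.

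The main task is to assemble these pointwise adjoints into honest two-variable derivator maps $\vartriangleleft \colon \D_3 \times \D_1\op \to \D_2$ and $\vartriangleright \colon \D_2\op \times \D_3 \to \D_1$. Working in the bimorphism presentation of \cref{Bimorphisms}, I would define the components of $\vartriangleleft$ at $(\C,\A)$ by adjoint-transposing $R_X$ through the $\partial$- and $\int$-constructions of \cref{Section Ends and coends}, arranging the definition so that the factorisation
\[
R_X \;\cong\; \int_{\A\op}\circ\,(-\,\widetilde{\vartriangleleft}\,X) \colon \D_3^{\A}\to\D_2
\]
holds by construction. The cancellation identity of \cref{Interaction of delta and integral} (and its dual obtained via \cref{Coends over A and Aop}) together with the Fubini theorem of \cref{Fubini theorem} make this well-defined. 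The functoriality of mates recorded in \cref{Functoriality of delta} and the naturality statement \cref{Interaction of delta with natural transformations} then imply that the resulting assignment is pseudonatural in the category variables and contravariantly functorial in $X$. Continuity of $\vartriangleleft$ as a derivator map is inherited from the fact that each $R_X$ is a derivator right adjoint and that the $\partial$- and $\int$-constructions entering the definition are continuous; the construction of $\vartriangleright$ is entirely dual. Once both are in hand, condition (1) of \cref{2-variable adjunction characterisation} holds immediately.

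The principal obstacle is this assembly step. Producing $\vartriangleleft$ as a fully coherent pseudonatural bimorphism, rather than a mere family of pointwise right adjoints $R_X$ indexed by $X$, requires pseudonaturality in the $\D_3$-category variable, contravariant pseudofunctoriality in $X$, and pseudonaturality in the $\D_1\op$-category variable to be verified simultaneously with compatibility against the homotopy ends and coends entering the factorisation. This is a careful mate-calculus argument that leans on nearly all of the naturality and cancellation results collected in \cref{Section Ends and coends}, and it is the content of the results cited from~\cite[Section 9]{GPS14}.
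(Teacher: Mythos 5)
The paper itself gives no proof of this theorem — it states the result and immediately refers to~\cite[Section 9]{GPS14} for the content. Your proposal defers to the same reference for the hard $(\Rightarrow)$ direction while supplying a plausible sketch of what that assembly step involves, so the approach is essentially the same as the paper's. One small caution in your sketch: the displayed factorisation $R_X\cong\int_{\A\op}\circ(-\,\widetilde{\vartriangleleft}\,X)$ cannot be taken to hold ``by construction'' in the sense of determining $-\,\widetilde{\vartriangleleft}\,X$, since $\int_{\A\op}$ is not invertible; in~\cite[Section 9]{GPS14} one first constructs $\vartriangleleft$ directly from the pointwise adjoints and then proves this identity, rather than using it as a definition. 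You correctly flag that the coherence and mate calculus needed to assemble the pointwise adjoints into honest derivator maps is the real content, and you correctly place it in the cited reference.
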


In light of~\cref{Characterising two-variable left adjoints}, we will call a triple of derivator maps as in~\cref{2-variable adjunction characterisation} a \textbf{two-variable adjunction}. We denote them by $(\otimes,\vartriangleright,\vartriangleleft):\D_1\times\D_2\rightarrow\D_3$. 

It follows that~\cref{2-variable adjunction characterisation} gives a complete description of the various adjunctions that arise from a $2$-variable adjunction. We will use these descriptions repeatedly, especially in~\cref{Section Closed actions of monoidal derivators} and~\cref{Section Cotensors in closed E-modules}.

\begin{remark}
Given a two-variable adjunction $(\otimes,\vartriangleright,\vartriangleleft):\D_1\times\D_2\rightarrow\D_3$, the proof of~\cref{2-variable adjunction characterisation} implies that $(\vartriangleleft\op,\otimes\op,\vartriangleright):\D_3\op\times\D_1\rightarrow\D_2\op$ and $(\vartriangleright\op,\vartriangleleft,\otimes\op):\D_2\times\D_3\op\rightarrow\D_1\op$ are also two-variable adjunctions. These are the \textbf{cycled} two-variable adjunctions of~\cite[Section 9]{GPS14}.
\end{remark}

We finish this section with the definition of the cancelling variant of a two-variable derivator map:

\begin{definition}\label{Definition of cancelling product on morphisms}
Given a two-variable map $\otimes:\D_1\times\D_2\rightarrow\D_3$, categories $\A$ and $\B$, and objects $X\in\D_1(\A\op)$ and $Y\in\D_2(\B)$, we will use the following notation for the \textbf{cancelling tensor product} that appears in~\cref{2-variable adjunction characterisation}:
\[
X\otimes_{\A} -:\D_2^{\A}\xrightarrow{\;\;X\;\widetilde{\otimes}\;-\;\;}\D_3^{\A\op\times\A}\xrightarrow{\;\;\int^{\A}\;\;}\D_3
\]
\[
-\otimes_\B Y:\D_1^{\B\op}\xrightarrow{\;\;-\;\widetilde{\otimes}\;Y\;\;}\D_2^{\B\op\times\B}\xrightarrow{\;\;\int^{\B}\;\;}\D_3
\]
Together these maps induce a derivator map: 
\[
\otimes_\A:\D_1^{\A\op}\times\D_2^\A\rightarrow\D_3
\]
Suppose we have a functor $u:\A\rightarrow\B$. Pasting the isomorphism from~\cref{Lifting the external product to a derivator map} to the map $\int^u$ of~\cref{Delta and end on morphisms} gives the following canonical modification:

\begin{center}
\begin{tikzcd}
\D_1^{\B\op}\times\D_2^\B\arrow{dd}[left]{(u\op)^*\times u^*} \arrow[bend left=25,rrd,"\otimes_\B" above right,""{name=U}] & & & [-25pt] & [-15pt] \D_1^{\B\op}\times\D_2^\B \arrow{rr}[above]{\widetilde{\otimes}}\arrow{dd}[left]{(u\op)^*\times u^*} && 
 \D_3^{\B\op\times\B}\arrow[bend left=25,rrd,"\int^\B" above right,""{name=R}]  \arrow{dd}[left]{(u\op\times u)^*} \\
&& \D_3 & := &&&&& \D_3 \\
\D_1^{\A\op}\times\D_2^\A \arrow[bend right=25,rur,"\otimes_\A" below right]
 \arrow[Rightarrow,from=3-1,to=U,"\otimes_u" above left, shorten >=1.1cm,shorten <=1.1cm,shift right=2] &&&& \D_1^{\A\op}\times\D_2^\A \arrow{rr}[below]{\widetilde{\otimes}} &\;& 
 \D_3^{\A\op\times\A}\arrow[bend right=25,rur,"\int^\A" below right]
 
\arrow[Rightarrow,from=3-7,to=R,"\int^u" above left, shorten >=1cm,shorten <=1cm,shift right=2]

\arrow[Rightarrow,from=1-7,to=3-6,"\iso" above left,near end,shorten >=0.3cm,shorten <=1.8cm, shift right=11]
\end{tikzcd}
\end{center}
\end{definition}

\section{Monoidal derivators}\label{Section Monoidal derivators}

In this brief section we recall the definition of monoidal derivators, which can be found in~\cite{Groth11,GPS14}. The definition is a direct analogue of the familiar definition of monoidal category.

\begin{definition}\label{Definition of (symmetric) monoidal prederivator}
Let $\E$ be a prederivator. We say $\E$ is a \textbf{monoidal prederivator} if it is equipped with a product $\otimes:\E\times\E\rightarrow\E$, and a unit object $\mathbb{1}\in\E(\0)$, together with isomorphisms: 

\begin{center}
\begin{tikzcd}
\E\arrow[rrr, bend left=70, "-\widetilde{\otimes}\mathbb{1}" above,""{name=U, below}]\arrow[rrr, bend right=70, "\mathbb{1}\widetilde{\otimes}-" below, ""{name=D}]\arrow[rrr, equal, ""{name=R}]
&&& \E
\arrow[Rightarrow,from=U,to=R,shorten >=0.1cm,shorten <=0.1cm,"\rho" right=1.5, "\iso" left, shift right]
\arrow[Rightarrow,from=D,to=R,shorten >=0.2cm,shorten <=0.2cm,"\lambda" right=1.5, "\iso" left, shift left]
\end{tikzcd}
\hspace{3cm}
\begin{tikzcd}
 \E\times\E\times\E \arrow[rr,"\E\times\otimes"]\arrow[dd,"\otimes\times\E" left] && 
 \E\times\E \arrow[dd,"\otimes" right] \\
 \\
 \E\times\E \arrow[rr,"\otimes" below]\arrow[rruu,Rightarrow,"\iso" below right,"\alpha" above left,shorten >=1cm,shorten <=1cm] && 
 \E
\end{tikzcd}
\end{center} 

These must satisfy the familiar coherence conditions, as in~\cite{Groth11} and~\cite[Chapter 1.1]{Kelly82}. We say that $\E$ is a \textbf{symmetric monoidal prederivator} if, in addition, we have a natural isomorphism
\begin{center}
\begin{tikzcd}
 \E\times\E \arrow[rr,"\sigma" above,"\iso" below, bend left=10]\arrow[dddr, bend right=20,"\otimes" below left,""{name=L}] &&
 \E\times\E \arrow[dddl,"\otimes" below right, bend left=20]\\
\\\\
&\E
\arrow[Rightarrow,from=1-3,to=L, shorten >=0.8cm,shorten <=0.8cm,"\iso" above left,"\tau" below right=1.5,shift left=2.5]
\end{tikzcd}
\end{center}
where $\sigma$ is the canonical twist. This map must also satisfy coherence conditions, as in~\cite{Groth11} and~\cite[Chapter 1.4]{Kelly82}.
\end{definition} 

\begin{remark}
To give a monoidal structure on a prederivator $\E$ is equivalent to giving a factorisation of the $2$-functor $\E:\Cat\op\rightarrow\CAT$ through the $2$-category of monoidal categories. Similarly, a symmetric monoidal structure on $\E$ corresponds to a factorisation through the $2$-category of symmetric monoidal categories. See~\cite{Groth11} for details.
\end{remark}

\begin{definition}
Let $\E$ be a derivator, equipped with the structure of a monoidal prederivator. We say that $\E$ is a \textbf{monoidal derivator} if the product $\otimes:\E\times\E\rightarrow\E$ is cocontinuous. If, in addition, the product is a two-variable left adjoint, we call $\E$ a \textbf{closed monoidal derivator}.

If the underlying monoidal structure is symmetric, then we call $\E$ a \textbf{symmetric monoidal derivator}.
\end{definition}

\begin{example}
If $\E$ is a monoidal derivator, then so is the shifted derivator $\E^{\A}$, for any category $\A$. The product $\otimes:\E^{\A}\times\E^{\A}\rightarrow\E^{\A}$ is given by the shifted product on $\E$. See~\cite[Example 3.24]{GPS14}.
\end{example}

\begin{example}
For any monoidal model category $\M$, the associated derivator $\dHo(\M)$ is a closed monoidal derivator. See~\cite[Example 3.23]{GPS14} and~\cite[Proposition 6.1]{Cisinski08}. In particular, the derivator of spaces $\dHo(\sSet)$ is monoidal, as is the derivator of spectra $\dHo(\Spt)$. Note that the monoidal structure on $\dHo(\Spt)$ can be constructed directly from the monoidal structure on $\dHo(\sSet)$, without going via a monoidal model category of spectra. See~\cite[Section 10]{Heller97} for this approach.
\end{example}

\section{Closed actions of monoidal derivators}\label{Section Closed actions of monoidal derivators}

In this section we recall the definition of modules over a monoidal derivator $\E$, and give several examples. These are studied in~\cite{Groth11,GPS14,GS17}. Closed $\E$-modules --- that is, those for which the $\E$-action is a two-variable left adjoint --- are of particular importance to us. These provide archetypal examples of $\E$-prederivators, and we will revisit them repeatedly in~\cref{Chapter E-categories} and~\cref{Chapter Enriched derivators}. We recall the definition in \cref{Closed E-module definition}.

\begin{definition}\label{Definition of E-module}
Let $\E$ be a monoidal derivator. A \textbf{(left) action} of $\E$ on a derivator $\D$ is given by a cocontinuous map $\otimes:\E\times\D\rightarrow\D$, together with isomorphisms: 

\begin{center}
\begin{tikzcd}
\D\arrow[rrr, bend left=50, "\mathbb{1}\widetilde{\otimes}-" above,""{name=U, below}]\arrow[rrr, equal, ""{name=R},bend right=45]
&&& \D
\arrow[Rightarrow,from=U,to=R,shorten >=0.3cm,shorten <=0.3cm,"\lambda" right=1.5, "\iso" left, shift right]

\end{tikzcd}
\hspace{3cm}
\begin{tikzcd}
 \E\times\E\times\D \arrow[rr,"\E\times\otimes"]\arrow[dd,"\otimes\times\D" left] && 
 \E\times\D \arrow[dd,"\otimes" right] \\
 \\
 \E\times\D \arrow[rr,"\otimes" below]\arrow[rruu,Rightarrow,"\iso" below right,"\alpha" above left,shorten >=1cm,shorten <=1cm] && 
 \D
\end{tikzcd}
\end{center} 

These must satisfy coherence conditions, as in~\cite{Groth11} and~\cite[Section 1]{JK01}, which are analogous to those satisfied by a monoidal product. If $\D$ is equipped with an $\E$-action, we call $\D$ an \textbf{$\E$-module}.
\end{definition}

\begin{example}
Any symmetric monoidal derivator $\E$ is a left (and right) module over itself.
\end{example}

\begin{definition}\label{E-module map definition}
Let $\D_1$ and $\D_2$ be $\E$-modules. A derivator map $F:\D_1\rightarrow\D_2$ is called an \textbf{$\E$-module morphism}, or is said to \textbf{preserve tensors}, if it is equipped with an isomorphism:
\begin{center}
\begin{tikzcd}
 \E\times\D_1 \arrow{rr}[above]{\E\times F}\arrow{dd}[left]{\otimes} && 
 \E\times\D_2 \arrow{dd}[right]{\otimes} \\
 
 \\
 
 \D_1 \arrow{rr}[below]{F}\arrow[uurr,Leftarrow,"\varphi" above left,"\iso" below right,shorten >=1.1cm,shorten <=1.1cm] && 
 \D_2

\end{tikzcd}
\end{center} 
This isomorphism must satisfy the following coherence conditions, as in~\cite{Groth11}:
\begin{enumerate}
\item 
For any category $\A$, and any $X\in\D_1(\A)$ and $Y,Z\in\E(\A)$, the diagram below must commute:
\begin{center}
\begin{tikzcd}
(Z\otimes Y)\otimes FX\arrow{dd}[left]{\varphi} \arrow{rr}[above]{\alpha} && Z\otimes (Y\otimes FX)\arrow{d}[right]{Z\otimes \varphi} \\
 
&& Z\otimes F(Y\otimes X)\arrow{d}[right]{\varphi}\\
 
F((Z\otimes Y)\otimes X)\arrow{rr}[below]{F(\alpha)} && F(Z\otimes (Y\otimes X))
\end{tikzcd}
\end{center} 
\item
For any $X\in\D_1(\A)$, the diagram below must commute:
\begin{center}
\begin{tikzcd}
 \mathbb{1} \widetilde{\otimes} F(X) \arrow[bend right,ddr,"\lambda" below left]
 \arrow{rr}[above]{\varphi} && 
 F(\mathbb{1} \widetilde{\otimes} X) \arrow[bend left,ddl,"F(\lambda)" below right]
 \\\\
& FX
\end{tikzcd}
\end{center} 
\end{enumerate}
\end{definition}

\begin{definition}\label{E-module modification definition}
Let $F,G:\D_1\rightarrow\D_2$ be $\E$-module morphisms. A modification $\beta:F\Rightarrow G$ is called an \textbf{$\E$-module modification}, or is said to \textbf{respect tensors}, if the diagram below commutes, for any category $\A$, and any $X\in\D_1(\A)$ and $Y\in\E(\A)$:
\begin{center}
\begin{tikzcd}
 Y\otimes F(X) \arrow{rr}[above]{Y\otimes\beta_{X}}\arrow{dd}[left]{\varphi} && 
 Y\otimes G(X) \arrow{dd}[right]{\varphi} \\\\
 F(Y\otimes X) \arrow{rr}[below]{\beta_{Y\otimes X}} && 
 G(Y\otimes X)
\end{tikzcd}
\end{center} 
\end{definition}

\begin{definition}
Given two $\E$-modules $\D_1$ and $\D_2$, let $\Hom^\E(\D_1,\D_2)$ denote the category whose objects are $\E$-module maps from $\D_1$ to $\D_2$, and whose morphisms are $\E$-module modifications. Similarly, let $\Hom^\E_!(\D_1,\D_2)$ denote the full subcategory of $\Hom^\E(\D_1,\D_2)$ on the cocontinuous $\E$-module maps.
\end{definition}

\begin{example}\label{Derivators are sSet-modules stable derivators are Spt-modules}
Using the universal property of $\dHo(\sSet)$ described in \cref{Universal property of spaces}, we can see that any derivator $\D$ has a unique module structure over $\dHo(\sSet)$. The action 
\[
\otimes:\dHo(\sSet)\times\D\rightarrow\D
\]
is the map described at the end of \cref{Section Two-variable maps}; that is, it is the essentially unique cocontinuous map such that
\[
\Delta^0\;\widetilde{\otimes}\;-\iso\id:\D\rightarrow\D.
\] 
The coherent structure isomorphisms can be constructed using the universal property. By similar arguments, any cocontinuous derivator map $F:\D_1\rightarrow\D_2$ is a $\dHo(\sSet)$-module morphism.

Similarly, for any stable derivator $\D$, the map 
\[
\wedge:\dHo(\Spt)\times\D\rightarrow\D
\]
described at the end of \cref{Section Pointed and stable derivators} is a unique $\dHo(\Spt)$-action on $\D$, and any cocontinuous map between stable derivators is a $\dHo(\Spt)$-module map.
\end{example}

\begin{definition}\label{Closed E-module definition}
An $\E$-module $\D$ is called a \textbf{closed $\E$-module} if the action $\otimes:\E\times\D\rightarrow\D$ is a two-variable left adjoint. In this case, we will denote the associated maps given by Theorem~\ref{Characterising two-variable left adjoints} as follows:
\begingroup
\addtolength{\jot}{0.5em}
\begin{align*}
\map_\D(-,-)&:\D\op\times\D\rightarrow\E\\
\vartriangleleft&:\D\times\E\op\rightarrow\D
\end{align*}
\endgroup
We refer to $\otimes$ as the \textbf{tensor product} and $\vartriangleleft$ as the \textbf{cotensor product} on $\D$. We will call $\map_\D(-,-)$ the \textbf{mapping space} or \textbf{mapping object} morphism for $\D$.
\end{definition}

\begin{example}\label{E is an E-module}
Any closed symmetric monoidal derivator $\E$ is a closed module over itself. In this case, the cotensor product can be given in terms of the mapping space:
\begin{center}
\begin{tikzcd}[column sep=small]
\vartriangleleft:\E\times\E\op \arrow{rr}[above]{\sigma}[below]{\iso} && 
\E\op\times\E \arrow{rrrrr}{\map_{\E}(-,-)} &&&&& \E
\end{tikzcd}
\end{center} 

To verify this description of $\vartriangleleft$, we can apply Theorem~\ref{2-variable adjunction characterisation} and check that the composite above has the desired properties. This is immediate once we observe that, for any category $\A$ and any $X\in\E(\A)$, the symmetry isomorphism $\tau$ and the isomorphism of~\cref{Coends over A and Aop} induce an isomorphism between the cancelling products
\begin{align*}
X\otimes_{\A\op} -&:\E^{\A\op}\rightarrow\E
\\
-\otimes_\A X&:\E^{\A\op}\rightarrow\E
\end{align*}
of~\cref{Definition of cancelling product on morphisms}. This isomorphism is discussed in greater detail in~\cref{Symmetry for the cancelling tensor}. 
\end{example}

\begin{example}\label{Any triangulated derivator is a closed Spt module}
Let $\D$ be a triangulated derivator. Since $\D$ is stable, it has a unique $\dHo(\Spt)$-module structure, by \cref{Derivators are sSet-modules stable derivators are Spt-modules}. Using the explicit construction of the action in \cite{Cisinski08,Coley18,Heller97}, we can check that, for any category $\A$ and any $X\in\Ho(\Spt^\A)$, the map
\[
X\;\widetilde{\wedge}\;-:\D\rightarrow\D^\A
\]
has a right adjoint. On the other hand, for any category $\B$ and any $Y\in\D(\B)$, since $\Ho(\Spt)$ is compactly generated, \cref{Adjoint functor theorem for triangulated derivators} implies that the cocontinuous map
\[
-\;\widetilde{\wedge}\;Y:\dHo(\Spt)\rightarrow\D^\B
\]
has a right adjoint. Thus, any triangulated derivator $\D$ has a unique closed $\dHo(\Spt)$-module structure. See~\cite[Appendix A.3]{CT11} and~\cite[Section 4.4]{GR19} for more details.
\end{example}

\begin{example}
Let $\M$ be a monoidal model category and let $\N$ be an $\M$-enriched model category. See~\cite{GM11} for a discussion of enriched model categories. By~\cite[Example 3.23]{GPS14}, the derivator $\dHo(\N)$ is a closed $\dHo(\M)$-module. See also~\cite[Examples 4.75]{GR19}.
\end{example}

\begin{example}
For any model category $\M$, by \cref{Derivators are sSet-modules stable derivators are Spt-modules}, the derivator $\dHo(\M)$ is a $\dHo(\sSet)$-module. It follows from \cite[Section 6]{Cisinski08} that this action is closed. Mapping spaces can be constructed using cosimplicial frames, as in \cite[Chapter 5]{Hovey07}.
\end{example}

\begin{example}\label{Shift of an E-module is an E-module}
Given any $\E$-module $\D$, and any category $\A$, the shifted derivator $\D^\A$ has an $\E$-module structure given by
\[
\otimes:\E\times\D^{\A}\xrightarrow{\;\;\p^*\times\D^\A\;\;}\E^{\A}\times\D^{\A}\xrightarrow{\;\;\otimes\;\;}\D^{\A},
\]
where $\otimes:\E^{\A}\times\D^{\A}\rightarrow\D^{\A}$ is the action on $\D$ shifted by $\A$. Note that, in terms of the original action on $\D$, this map is simply the external tensor product
\[
\widetilde{\otimes}:\E\times\D^{\A}\rightarrow\D^{\A}.
\]
The maps $\alpha$ and $\lambda$ are inherited from $\D$. 

Moreover, if $\D$ is a closed $\E$-module, then so is $\D^\A$, by~\cite[Example 8.15]{GPS14}. The associated maps from Theorem~\ref{Characterising two-variable left adjoints} are given by
\begin{align*}
\map_{\D^\A}(-,-)&:(\D\op)^{\A\op}\times\D^\A\xrightarrow{\;\;\widetilde{\map}_{\D}(-,-)\;\;}\E^{\A\op\times\A}\xrightarrow{\;\;\int_{\A}\;\;}\E
\\\\
\vartriangleleft&:\D^\A\times\E\op\xrightarrow{\;\;\D^\A\times\p^*\;\;}\D^\A\times (\E\op)^\A\xrightarrow{\;\;\vartriangleleft\;\;}\D^\A,
\end{align*}
where $\vartriangleleft:\D^\A\times (\E\op)^\A\rightarrow\D^\A$ is the shifted map associated to $\D$. To see this directly, apply Theorem~\ref{2-variable adjunction characterisation} to these three maps.
\end{example}

\begin{lemma}\label{u* and alpha* respect the E-action}
Let $\D$ be an $\E$-module and let $u:\A\rightarrow\B$ be a functor. Then the derivator map 
\[
u^*:\D^\B\rightarrow\D^\A
\]
has a canonical $\E$-module morphism structure. Moreover, for any natural transformation $\kappa:u\Rightarrow v$, the modification $\kappa^*:u^*\Rightarrow v^*$ is an $\E$-module modification.
\end{lemma}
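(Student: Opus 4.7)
The plan is to exhibit the $\E$-module structure isomorphism on $u^*$ by combining the structure isomorphism of the derivator map $\otimes:\E\times\D\rightarrow\D$ with the pseudofunctoriality of $\E$, and then to check the two coherence axioms of \cref{E-module map definition} diagrammatically. Recall from \cref{Shift of an E-module is an E-module} that the action on $\D^\A$ sends $(Y,X)\in\E(\C)\times\D(\A\times\C)$ to $\p_\A^*Y\otimes X$, and the component of $u^*:\D^\B\rightarrow\D^\A$ at $\C$ is $(u\times\C)^*$. I would therefore define
\[
\varphi_{Y,X}:(u\times\C)^*(\p_\B^*Y\otimes X)\xrightarrow{\;\iso\;}\p_\A^*Y\otimes(u\times\C)^*X
\]
as the composite of the structure isomorphism of $\otimes$ along the functor $u\times\C:\A\times\C\rightarrow\B\times\C$, giving
\[
(u\times\C)^*(\p_\B^*Y\otimes X)\iso (u\times\C)^*\p_\B^*Y\otimes (u\times\C)^*X,
\]
together with the pseudofunctoriality isomorphism $(u\times\C)^*\p_\B^*Y\iso\p_\A^*Y$ arising from the identity $\p_\B\circ(u\times\C)=\p_\A$.

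First I would verify that the $\varphi_{Y,X}$ assemble into a modification of derivator maps $\E\times\D^\B\rightarrow\D^\A$; this is the content of the pseudonaturality axioms for $\otimes$ and for pullbacks in $\E$, essentially the cocycle condition for composition of functors as in \cref{Pseudonatural transformation definition}(2). Then I would check the unit axiom of \cref{E-module map definition}: after expanding $\varphi_{\mathbb{1},X}$, the required triangle reduces to the fact that $\lambda:\mathbb{1}\widetilde{\otimes}-\iso\id_\D$ is itself a modification of derivator maps, hence commutes with the pullback $(u\times\C)^*$ up to the structure isomorphism of that map. The associativity axiom similarly reduces to the statement that $\alpha$ is a modification, pasted with the pseudofunctoriality coherence of $\E$ applied twice.

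For the second assertion, given $\kappa:u\Rightarrow v$, I would verify the square of \cref{E-module modification definition} for $\kappa^*:u^*\Rightarrow v^*$ by unpacking $\varphi$: the square decomposes into two sub-squares, one expressing that the structure isomorphism of $\otimes$ is compatible with the $2$-cell $\kappa\times\C$, and the other expressing that the pseudofunctoriality of $\E$ is compatible with $\kappa\times\C$. Both are instances of axiom (3) in \cref{Pseudonatural transformation definition} applied respectively to $\otimes$ and to $\E$.

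The main obstacle will be the associativity coherence, where unpacking $\varphi$ produces a large pasting diagram involving three instances of the structure isomorphism of $\otimes$, two instances of pseudofunctoriality of $\E$, and the associator $\alpha$ of the $\E$-action; however, nothing beyond formal coherence is needed, and the verification is a routine (if tedious) diagram chase using only the axioms assumed of $\otimes$, $\alpha$, $\lambda$, and of the pseudofunctor $\E$.
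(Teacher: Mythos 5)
Your proposal is correct and follows essentially the same route as the paper: the structure isomorphism $\varphi$ is the one packaged in \cref{Lifting the external product to a derivator map}, the coherence axioms of \cref{E-module map definition} reduce to the fact that $\alpha$ and $\lambda$ are modifications, and the $\E$-module modification assertion for $\kappa^*$ is exactly Axiom 3 of \cref{Pseudonatural transformation definition} applied to $\otimes$. One simplification you could make: since prederivators are \emph{strict} $2$-functors, the ``pseudofunctoriality isomorphism'' $(u\times\C)^*\p_\B^*\iso\p_\A^*$ in your decomposition of $\varphi$ is an equality, so the second sub-square in your check of \cref{E-module modification definition} (and the $\E$-pseudofunctoriality ingredient in your associativity chase) is vacuous; all the content sits in the pseudonaturality of $\otimes$.
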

\begin{proof}
Using the description from~\cref{Shift of an E-module is an E-module} of the $\E$-action on the shifted derivator as an external tensor product, we require an isomorphism:
\begin{center}
\begin{tikzcd}
 \E\times\D^\B \arrow{rr}[above]{\E\times u^*}\arrow{dd}[left]{\widetilde{\otimes}} && 
 \E\times\D^\A \arrow{dd}[right]{\widetilde{\otimes}} \\
 
 \\
 
 \D^\B \arrow{rr}[below]{u^*}\arrow[uurr,Leftarrow,"\varphi" above left,"\iso" below right,shorten >=1.1cm,shorten <=1.1cm] && 
 \D^\A

\end{tikzcd}
\end{center} 
We take the isomorphism of~\cref{Lifting the external product to a derivator map} to be this structure isomorphism. The coherence conditions of~\cref{E-module map definition} follow from the fact that the structure isomorphisms $\alpha$ and $\lambda$ for the $\E$-module structure on $\D$ are modifications. 

Given a natural transformation $\kappa:u\Rightarrow v$, we need to check that $\kappa^*:u^*\Rightarrow v^*$ satisfies~\cref{E-module modification definition}. This follows from Axiom $3$ of~\cref{Pseudonatural transformation definition}, for the prederivator map $\otimes:\E\times\D\rightarrow\D$.
\end{proof}

\section{The cancelling tensor product}\label{Section The cancelling tensor product}

In this section, building on work in~\cite{GPS14}, we revisit the cancelling tensor product of~\cref{Definition of cancelling product on morphisms}, associated to any $\E$-module. In~\cref{Coherence for the cancelling version of the closed E-action}, we recall that this product is associative and unital, with coherence induced by the original action. In particular, for any monoidal derivator $\E$, we obtain a bicategory $\Prof(\E)$, which plays a vital role as the enriching bicategory in~\cref{Chapter E-categories} and~\cref{Chapter Enriched derivators}. For basic bicategory definitions see~\cite{Leinster98}. We conclude this section by showing that cocontinuous $\E$-module maps and $\E$-module modifications respect the cancelling tensor. The results in this section, in particular the list of commutative diagrams that we collect here, will be used as we develop the theory of $\E$-categories in~\cref{Chapter E-categories}.

\begin{definition}\label{Definition of h on objects and morphisms}
Let $\E$ be a monoidal derivator. For any category $\A$, consider the map
\[
\partial_\A:\E(\0)\rightarrow\E(\A\op\times\A)
\]
of~\cref{Ends and coends definition}. Denote the image of $\mathbb{1}\in\E(\0)$ under this map by $\h_\A\in\E(\A\op\times\A)$. In~\cite{GR19}, this object is called the \textbf{Yoneda bimodule} or the \textbf{identity profunctor}. Similarly, given a functor $u:\A\rightarrow\B$, consider the modification $\partial_u$ of~\cref{Delta and end on morphisms}. Denote the component of this map at $\mathbb{1}$ by
\[
\h_u:\h_\A\rightarrow (u\op\times u)^*\h_B
\]
in $\E(\A\op\times\A)$.
\end{definition}


\begin{proposition}\label{Coherence for the cancelling version of the closed E-action}
Let $\E$ be a monoidal derivator. For any categories $\A$ and $\B$ and any object $X\in\E(\A\op\times\B)$, the unit isomorphisms $\lambda$ and $\rho$ of~\cref{Definition of (symmetric) monoidal prederivator} induce isomorphisms:
\begin{align*}
\lambda&:\h_\B\otimes_\B X\xrightarrow{\;\;\;\iso\;\;\;} X\\
\rho&: X\otimes_\A \h_\A\xrightarrow{\;\;\;\iso\;\;\;} X
\end{align*}
Similarly, given additional categories $\C$ and $\cD$, and $Y\in\E(\B\op\times\C)$ and $Z\in\E(\C\op\times\cD)$, the associativity isomorphism $\alpha$ induces an isomorphism
\[
\alpha:(Z\otimes_\C Y)\otimes_\B X \xrightarrow{\;\;\;\iso\;\;\;} Z\otimes_\C (Y\otimes_\B X)
\]
in $\E(\A\op\times\cD)$. More generally, there are analogous isomorphisms $\alpha$ and $\lambda$ for any given $\E$-module $\D$.

These satisfy the following coherence conditions:
\begin{enumerate}
\item Given categories $\A$, $\B$, $\C$, $\cD$ and $\cE$, and objects $X\in\D(\A\op\times\B)$, $Y\in\E(\B\op\times\C)$, $Z\in\E(\C\op\times\cD)$ and $W\in\E(\cD\op\times\cE)$, the diagram below commutes: 
\begin{center}
\begin{tikzcd}[column sep=small]
((W\otimes_{\cD}Z)\otimes_\C Y)\otimes_\B X\arrow[dd,"\alpha" left]\arrow[rr,"\alpha\;\otimes_\B X" above]
&& (W\otimes_{\cD}(Z\otimes_\C Y))\otimes_\B X\arrow[dd,"\alpha" right]\\\\
(W\otimes_{\cD}Z)\otimes_\C (Y\otimes_\B X)\arrow[rd,"\alpha" below left, bend right=15]&& W\otimes_{\cD}((Z\otimes_\C Y)\otimes_\B X)\arrow[ld,"W \otimes_{\cD}\;\alpha" below right, bend left=15]
\\
& W\otimes_{\cD}(Z\otimes_\C (Y\otimes_\B X))
\end{tikzcd}
\end{center}

\item Given $X\in\D(\A\op\times\B)$ and $Y\in\E(\B\op\times\C)$, the diagram below commutes:
\begin{center}
\begin{tikzcd}[column sep=small]
 (Y\otimes_\B \h_\B)\otimes_\B X\arrow[bend right,ddr,"\rho\;\otimes_\B X" below left]
 \arrow{rr}[above]{\alpha} && 
 Y\otimes_\B (\h_\B\otimes_\B X) \arrow[bend left,ddl,"Y\otimes_\B\;\lambda" below right]
 \\\\
& Y\otimes_\B X
\end{tikzcd}
\end{center} 
\end{enumerate}
\end{proposition}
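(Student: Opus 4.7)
The plan is to derive the isomorphisms $\lambda$, $\rho$, and $\alpha$ for the cancelling tensor product from the analogous structure on the underlying monoidal product of $\E$, using three ingredients developed in the earlier sections: cocontinuity of $\otimes$ in each variable (\cref{Cocontinuity of tensor as a modification}), the cancellation isomorphism $\int^{\A_{1,2}}\partial_{\A_{2,3}} \iso \id$ from \cref{Interaction of delta and integral}, and the Fubini theorem (\cref{Fubini theorem}) for interchanging iterated coends.

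For the unit isomorphism $\lambda \colon \h_\B \otimes_\B X \iso X$, I would unfold the definition as
\[
\h_\B \otimes_\B X \;=\; \int^\B \bigl(\partial_\B \mathbb{1} \;\widetilde{\otimes}\; X\bigr),
\]
using $\h_\B = \partial_\B \mathbb{1}$. Since $-\;\widetilde{\otimes}\;X$ is cocontinuous in its first variable, it commutes canonically with the left Kan extension that builds $\partial_\B$. Combined with the monoidal unit isomorphism $\mathbb{1}\;\widetilde{\otimes}\;X \iso X$ of $\E$, this identifies $\partial_\B \mathbb{1}\;\widetilde{\otimes}\;X$ with $\partial_\B X$ (interpreted with the appropriate tracking of cancellation variables, in the style of \cref{A=A_1=A_2 notation}); then $\int^\B$ of this collapses to $X$ by \cref{Interaction of delta and integral}. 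The isomorphism $\rho$ is constructed dually from the right unit of $\E$. In the case of a general $\E$-module $\D$, the construction of $\lambda$ goes through verbatim using cocontinuity and unitality of the action $\otimes\colon\E\times\D\to\D$.

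For the associator $\alpha$, I would expand both sides as iterated coends, pull the inner coends past the outer tensor products using the cocontinuity modifications (\cref{Cocontinuity of tensor as a modification}), apply the underlying associator of $\E$ pointwise in the integrand, and finally use Fubini (\cref{Fubini theorem}) to commute the two coends. Symbolically,
\begin{align*}
(Z \otimes_\C Y) \otimes_\B X
  &\;\iso\; \int^\B \int^\C \bigl((Z\;\widetilde{\otimes}\;Y)\;\widetilde{\otimes}\;X\bigr) \\
  &\;\iso\; \int^\C \int^\B \bigl(Z\;\widetilde{\otimes}\;(Y\;\widetilde{\otimes}\;X)\bigr) \\
  &\;\iso\; Z \otimes_\C (Y \otimes_\B X),
\end{align*}
where the middle isomorphism combines the pointwise $\alpha$ with Fubini. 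Again this adapts to an $\E$-module using the associativity isomorphism of the action.

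The pentagon and triangle coherence conditions then reduce to the corresponding pentagon and triangle for $\E$'s monoidal structure (applied pointwise in the integrand), combined with the naturality and compatibility of all canonical isomorphisms used -- in particular \cref{Prederivator maps respect Fubini isomorphism} and \cref{Derivator maps respect delta coend cancellation}, which guarantee that cocontinuous maps and modifications interact coherently with Fubini and with the cancellation $\int \partial \iso \id$. The main obstacle is bookkeeping: one has to track the many parallel coends and their cancellation variables, and verify that the pastings of cocontinuity isomorphisms, Fubini squares, and $\int\partial$-cancellations assemble exactly as the pentagon and triangle diagrams require. Each individual compatibility reduces to a standard pasting argument with mates, so the verification is routine but tedious; alternatively, we may appeal directly to the coherence checks performed as part of the construction of $\Prof(\E)$ in \cite{GPS14}.
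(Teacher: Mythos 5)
Your construction of $\lambda$, $\rho$, and $\alpha$ — pulling the cocontinuity modifications past $\partial_\B$ and $\int^\B$, applying the pointwise unitor/associator, and cancelling $\int\partial$ via \cref{Interaction of delta and integral} (with Fubini for $\alpha$) — is precisely the paper's construction, and your final fallback to the coherence checks in \cite{GPS14} is also exactly what the paper does, noting only that the proof for $\D = \E$ carries over unchanged to a general $\E$-module.
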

\begin{proof}
This is proved in~\cite[Theorem 5.9]{GPS14} in the case of $\D=\E$, and the proof in the general case carries over unchanged. We recall the construction of the maps $\alpha$, $\lambda$ and $\rho$.

Given $X\in\D(\A\op\times\B)$, the map $\lambda:\h_\B\otimes_\B X\xrightarrow{\;\;\iso\;\;} X$ is the component at $X$ of the modification below:

\begin{center}
\begin{tikzcd}
&& \D^{\B_1\times\B_2\op\times\B_3}\arrow[rdd, bend left,"\int^{\B_{12}}" above right] \\\\
 \D^{\B_1}\arrow[bend right,ddrr,equal,""{name=A}]\arrow[bend left,ddrr,"\mathbb{1}\widetilde{\otimes}-" above=3,""{name=B}]
 \arrow[bend left,rruu,"\partial_{\B_{23}}\mathbb{1}\widetilde{\otimes}-" above left,""{name=C}] &&& 
 \D^{\B_3} \arrow[bend left,ddl,equal,""{name=R}]
 \\\\
&& \D^{\B_1}\arrow[uuuu,"\partial_{\B_{23}}" left]

\arrow[Rightarrow,from=B,to=A,shorten >=0.2cm,shorten <=0.3cm,"\lambda" below right, "\iso" above left, shift right]
\arrow[Rightarrow,from=C,to=5-3,shorten >=2.6cm,shorten <=0.8cm,"\iso" above right, near start, shift left=4]
\arrow[Rightarrow,from=1-3,to=R,shorten >=1.2cm,shorten <=1.3cm,"\iso" above right,shift left]
\end{tikzcd}
\end{center} 
Here we have $\B=\B_1=\B_2=\B_3$, as in~\cref{A=A_1=A_2 notation}, and the unlabelled isomorphisms come from the cocontinuity of $\otimes$ and~\cref{Interaction of delta and integral}. For $X\in\E(\A\op\times\B)$, the isomorphism $\rho$ is obtained similarly.

For $X\in\D(\A\op\times\B)$, $Y\in\E(\B\op\times\C)$ and $Z\in\E(\C\op\times\cD)$, the associativity isomorphism from $(Z\otimes_\C Y)\otimes_\B X = \int^\B(\int^\C (Z\widetilde{\otimes} Y)\widetilde{\otimes} X)$ to $(Z\otimes_\C Y)\otimes_\B X = \int^\C Z\widetilde{\otimes} \int^\B(Y\widetilde{\otimes} X)$ is given by the following composite:
\begin{center}
\begin{tikzcd}
\int^\B(\int^\C (Z\widetilde{\otimes} Y)\widetilde{\otimes} X) \arrow[r,"\iso"]
& \int^\B\int^\C (Z\widetilde{\otimes} Y)\widetilde{\otimes} X \arrow[r,"\iso"] & \int^\C\int^\B (Z\widetilde{\otimes} Y)\widetilde{\otimes} X\arrow[d,"\int^\C\int^\B \alpha" right=2]\\
&& \int^\C\int^\B Z\widetilde{\otimes} (Y\widetilde{\otimes} X)\arrow[d,"\iso"]\\
&& \int^\C Z\widetilde{\otimes} \int^\B(Y\widetilde{\otimes} X)
\end{tikzcd}
\end{center}
The unlabelled isomorphisms follow by the cocontinuity of $\otimes$ and~\cref{Fubini theorem}. Note that, just as $\lambda$ and $\rho$ are modifications, the map $\alpha$ above is the component of a modification between certain derivator maps. 
\end{proof}

\begin{remark}\label{Definition of Prof(E)}
\cref{Coherence for the cancelling version of the closed E-action} can be rephrased as follows, which is how it appears in~\cite[Theorem 5.9]{GPS14}. For any monoidal derivator $\E$, the following data forms a bicategory $\Prof(\E)$:
\begin{itemize}
\item The objects of $\Prof(\E)$ are small categories.
\item Given categories $\A$ and $\B$, the hom-category from $\A$ to $\B$ is $\E(\B\op\times\A)$.
\item Composition is given by the cancelling product:
\[
\otimes_{\B} :\E(\B\op\times\A)\times\E(\C\op\times\B)\rightarrow\E(\C\op\times\A)
\]
\item For any category $\A$, the identity on $\A$ is given by
\[
\h_{\A}\in\E(\A\op\times\A).
\]
\end{itemize}
We call $\Prof(\E)$ the \textbf{bicategory of profunctors} associated to $\E$. This is the terminology of~\cite{GPS14}; it is called the \textbf{bicategory of bimodules} in~\cite{GR19}.
\end{remark}


\begin{notation}\label{Cancelling tensor cancels on the outside}
From this point onwards, we will take the convention that the cancelling tensor cancels the outside variables, as in~\cref{Definition of Prof(E)}:
\[
\otimes_{\B} :\E(\B\op\times\C)\times\E(\A\op\times\B)\rightarrow\E(\A\op\times\C)
\]
\end{notation}

\begin{example}\label{E shifted by Aop x A is monoidal}
Given any monoidal derivator $\E$ and any category $\A=\A_i$,~\cref{Coherence for the cancelling version of the closed E-action} implies that the shifted derivator $\E^{\A\op\times\A}$ is monoidal, with unit given by $\h_\A\in\E(\A\op\times\A)$, and tensor given by the cancelling tensor product:
\[
\otimes_{\A_{1,4}} :\E^{\A_1\op\times\A_2}\times\E^{\A_3\op\times\A_4}\rightarrow\E^{\A_3\op\times\A_2}
\]
If $\E$ is closed monoidal then so is $\E^{\A\op\times\A}$. In this case,~\cref{2-variable adjunction characterisation} provides a description of the required adjoints. Note, however, that if $\E$ is symmetric monoidal it does not follow that $\E^{\A\op\times\A}$ is symmetric; see \cref{Symmetry for the cancelling tensor} for a description of the structure induced on shifted derivators by the symmetry isomorphism from $\E$.
\end{example}

\begin{remark}
In addition to the commutative diagrams of~\cref{Coherence for the cancelling version of the closed E-action}, for any $\E$-module $\D$, any categories $\A$, $\B$ and $\C$, and any objects $X\in\D(\A\op\times\B)$ and $Y\in\E(\B\op\times\C)$, the diagram below commutes:
\begin{center}
\begin{tikzcd}
 (\h_\C\otimes_\C Y)\otimes_\B X\arrow[bend right,ddr,"\lambda\;\otimes_\B X" below left]
 \arrow{rr}[above]{\alpha} && 
 \h_\C\otimes_\C (Y\otimes_\B X) \arrow[bend left,ddl,"\lambda" below right]
 \\\\
& Y\otimes_\B X
\end{tikzcd}
\end{center} 
In the case of $\D=\E$, in light of~\cref{Definition of Prof(E)}, this is a special case of the coherence theorem for bicategories, as in~\cite{Leinster98}. In general, the commutativity of this diagram follows from the commutative diagrams in~\cref{Coherence for the cancelling version of the closed E-action}. The proof of this is essentially the same as the proof for monoidal categories in~\cite{Kelly64}.
\end{remark}

\begin{lemma}\label{Symmetry for the cancelling tensor}
Let $\E$ be a symmetric monoidal derivator. For any categories $\A$, $\B$ and $\C$, and any $X\in\E(\B\times\A\op)$ and $Y\in\E(\C\times\B\op)$, the symmetry isomorphism of~\cref{Definition of (symmetric) monoidal prederivator}  induces a map 
\[
\tau:\sigma^*Y\otimes_\B\sigma^*X\xrightarrow{\;\;\;\iso\;\;\;}\sigma^*(X\otimes_{\B\op}Y)
\]
that satisfies the following coherence conditions:
\begin{enumerate}
\item Given categories $\A$, $\B$, $\C$ and $\cD$, and objects $X\in\E(\B\times\A\op)$, $Y\in\E(\C\times\B\op)$ and $Z\in\E(\cD\times\C\op)$, the diagram below commutes: 
\begin{center}
\begin{tikzcd}
(\sigma^*Z\otimes_\C\sigma^*Y)\otimes_\B \sigma^*X\arrow{dd}[left]{\tau\;\mathsmaller{\otimes_\B} \sigma^*X} \arrow{rr}[above]{\alpha} && \sigma^*Z\otimes_\C(\sigma^*Y\otimes_\B \sigma^*X)\arrow{dd}[right]{\sigma^*Z\mathsmaller{\otimes_\C}\;\tau} \\\\
 
\sigma^*(Y\otimes_{\C\op}Z)\otimes_\B \sigma^*X\arrow{dd}[left]{\tau} && \sigma^*Z\otimes_\C\sigma^*(X\otimes_{\B\op} Y)\arrow{dd}[right]{\tau}\\\\
 
\sigma^*(X\otimes_{\B\op} (Y\otimes_{\C\op} Z)) && \sigma^*((X\otimes_{\B\op} Y)\otimes_{\C\op} Z)\arrow{ll}[below]{\sigma^*(\alpha)}
\end{tikzcd}
\end{center} 
\item Given $X\in\E(\B\times\A\op)$, the diagram below commutes
\begin{center}
\begin{tikzcd}
 \sigma^*X\otimes_\A \h_\A \arrow{rr}[above]{\iso}\arrow{dd}[left]{\rho} && 
 \sigma^*X\otimes_\A \sigma^*\h_{\A\op} \arrow{dd}[right]{\tau} \\\\
 \sigma^*X  && 
 \sigma^*(\h_{\A\op}\otimes_{\A\op} X)\arrow{ll}[below]{\sigma^*(\lambda)}
\end{tikzcd}
\end{center} 
where the isomorphism along the top is induced by the canonical isomorphism
\[
\h_\A\xrightarrow{\;\;\iso\;\;}\sigma^*\h_{\A\op},
\]
an instance of~\cref{Coends over A and Aop} for $\partial_\A$.
\item Given $X\in\E(\B\times\A\op)$, the diagram below commutes:
\begin{center}
\begin{tikzcd}
 \h_\B\otimes_\B \sigma^*X \arrow{rr}[above]{\iso}\arrow{dd}[left]{\lambda} && 
 \sigma^*\h_{\B\op}\otimes_\B \sigma^*X \arrow{dd}[right]{\tau} \\\\
 \sigma^*X  && 
 \sigma^*(X\otimes_{\B\op} \h_{\B\op})\arrow{ll}[below]{\sigma^*(\rho)}
\end{tikzcd}
\end{center} 
\end{enumerate}
\end{lemma}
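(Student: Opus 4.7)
The plan is to construct $\tau$ for the cancelling tensor by transporting the symmetry $\tau$ of $\E$ through the coend in the definition of the cancelling tensor, and then to reduce the three coherence conditions to the corresponding coherence conditions in $\E$. The key ingredients will be: the symmetry $\tau$ in $\E$ (which, as a modification, commutes with all pullbacks); the Fubini isomorphism of \cref{Fubini theorem}; and the isomorphism $\int^\A\iso\int^{\A\op}$ under $\sigma^*$ from \cref{Coends over A and Aop}, which will convert the coend $\int^\B$ appearing in $\sigma^*Y\otimes_\B\sigma^*X$ into the coend $\int^{\B\op}$ appearing in $X\otimes_{\B\op}Y$.

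To construct $\tau$, I would unwind $\sigma^*Y\otimes_\B\sigma^*X = \int^\B(\sigma^*Y\;\widetilde{\otimes}\;\sigma^*X)$. The external product lives in $\E(\B\op\times\C\times\A\op\times\B)$ as a tensor of two pullbacks of $Y$ and $X$. Applying the symmetry $\tau$ in $\E$ pointwise, together with the fact that $\tau$ is compatible with all pullbacks, yields an isomorphism to a reshuffled pullback of $X\otimes Y$. Identifying $\int^\B\sigma^*(-)\iso\sigma^*\int^{\B\op}(-)$ via \cref{Coends over A and Aop}, and using \cref{Fubini theorem} to commute integration with the outer $\sigma^*$, then yields the desired $\tau: \sigma^*Y\otimes_\B\sigma^*X \iso \sigma^*(X\otimes_{\B\op}Y)$ in $\E(\A\op\times\C)$.

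For coherence condition (1), I would recall from \cref{Coherence for the cancelling version of the closed E-action} that $\alpha$ for the cancelling tensor is built from $\alpha$ in $\E$ together with Fubini isomorphisms; since the $\tau$ above is likewise built pointwise from $\tau$ in $\E$, the hexagon reduces, after expanding all cancelling tensors as iterated coends and interchanging them via Fubini, to the hexagon axiom for $\alpha$ and $\tau$ in the symmetric monoidal derivator $\E$ (\cref{Definition of (symmetric) monoidal prederivator}). For conditions (2) and (3), the unit isomorphisms $\lambda,\rho$ for the cancelling tensor are assembled in \cref{Coherence for the cancelling version of the closed E-action} from the unit isomorphisms of $\E$ together with the cancellation $\int^\A\partial_\A\iso\id$ of \cref{Interaction of delta and integral}; since the canonical isomorphism $\h_\A\iso\sigma^*\h_{\A\op}$ is precisely the $\partial$-version of \cref{Coends over A and Aop} applied to $\mathbb{1}$, the unit diagrams then reduce to the symmetric monoidal identity $\rho = \lambda\circ\tau$ in $\E$, combined with naturality of the cancellation under $\sigma^*$.

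The main obstacle will be the bookkeeping of coordinate permutations and Fubini isomorphisms. All the essential coherence input comes from $\E$, so no new structural arguments are needed beyond the symmetric monoidal axioms for $\E$, but verifying that the multiple $\sigma^*$-isomorphisms of \cref{Coends over A and Aop} and the Fubini isomorphisms paste together into the claimed commutative diagrams will require careful and extensive diagram chasing.
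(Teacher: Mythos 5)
Your construction of $\tau$ and reduction of the coherence conditions to the symmetric monoidal axioms for $\E$ matches the paper's proof: the paper likewise builds the external symmetry isomorphism between $\widetilde{\otimes}\circ\sigma$ and $\widetilde{\otimes}$, pastes it with the isomorphism $\int^{\J\op}\circ\sigma^*\iso\int^{\J}$ from \cref{Coends over A and Aop}, and then declares that the coherence diagrams follow from those of $\tau$ in $\E$. Your account supplies the bookkeeping the paper elides (the Fubini interchanges, the reduction of the unit diagrams via \cref{Interaction of delta and integral}), but the structure of the argument is the same.
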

\begin{proof}
For any categories $\J$ and $\K$, the symmetry isomorphism $\tau$ induces an isomorphism:
\begin{center}
\begin{tikzcd}
 \E^\J\times\E^\K \arrow{rr}[above]{\widetilde{\otimes}}\arrow{dd}[left]{\sigma} && 
 \E^{\J\times\K} \arrow{dd}[right]{\sigma^*} \\
 
 \\
 
  \E^\K\times\E^\J \arrow{rr}[below]{\widetilde{\otimes}}\arrow[uurr,Rightarrow,"\tau" above left,"\iso" below right,shorten >=1.1cm,shorten <=1.1cm] && 
 \E^{\K\times\J}

\end{tikzcd}
\end{center} 
Taking $\K=\J\op$, we get an induced map on the cancelling tensor products, using the isomorphism of \cref{Coends over A and Aop}:
\begin{center}
\begin{tikzcd}
\E^\J\times\E^{\J\op}\arrow{dd}[left]{\sigma} \arrow[bend left=25,rrd,"\otimes_{\J\op}" above right,""{name=U}] & & & [-25pt] & [-15pt] \E^\J\times\E^{\J\op} \arrow{rr}[above]{\widetilde{\otimes}}\arrow{dd}[left]{\sigma} && 
 \E^{\J\times\J\op}\arrow[bend left=25,rrd,"\int^{\J\op}" above right,""{name=R}]  \arrow{dd}[left]{\sigma^*} \\
&& \E & := &&&&& \E \\
\E^{\J\op}\times\E^\J \arrow[bend right=25,rur,"\otimes_\J" below right]
 \arrow[Rightarrow,from=3-1,to=U,"\iso" above left,"\tau" below right, shorten >=1.1cm,shorten <=1.1cm,shift right=2] &&&& \E^{\J\op}\times\E^\J \arrow{rr}[below]{\widetilde{\otimes}} &\;& 
 \E^{\J\op\times\J}\arrow[bend right=25,rur,"\int^\J" below right]
 
\arrow[Rightarrow,from=3-7,to=R,"\iso" above left, shorten >=1cm,shorten <=1cm,shift right=2]

\arrow[Leftarrow,from=1-7,to=3-6,"\iso" above left,near end,"\tau" right=4,shorten >=0.5cm,shorten <=1.6cm, shift right=8]
\end{tikzcd}
\end{center}
The map we require can be obtained as a shifted version of this modification. The coherence conditions follow from the coherence conditions on the original symmetry isomorphism \linebreak$\tau:\otimes\circ\sigma\Rightarrow\otimes$.
\end{proof}

\begin{remark}
If $\E$ is a symmetric monoidal derivator, then~\cref{Symmetry for the cancelling tensor} endows the maps $\sigma^*$ with the structure of an isomorphism between the bicategory $\Prof(\E)$ and its opposite $\Prof(\E)\op$; that is, the bicategory with the same objects, and with hom-category from $\A$ to $\B$ given by $\E(\A\op\times\B)$. See~\cite{Leinster98} for basic bicategorical definitions.

Explicitly, this map takes each small category $\A\in\Prof(\E)$ to its opposite $\A\op\in\Prof(\E)\op$, and the action on hom-categories is given by:
\[
\sigma^*:\E(\B\op\times\A)\xrightarrow{\;\;\iso\;\;}\E(\A\times\B\op)
\]
\end{remark}

We will now turn our attention to the interaction of $\E$-module maps and $\E$-module modifications with the cancelling tensors.

\begin{lemma}\label{Cocontinuous E-module maps preserve the cancelling product}
Let $\D_1$ and $\D_2$ be $\E$-modules, and let $F:\D_1\rightarrow\D_2$ be a cocontinuous $\E$-module morphism. Then for any categories $\A$, $\B$ and $\C$, and any $X\in\D_1(\A\op\times\B)$ and $Y\in\E(\B\op\times\C)$, the isomorphism $\varphi$ of~\cref{E-module map definition} induces an isomorphism
\[
\varphi:Y\otimes_\B FX\xrightarrow{\;\;\;\iso\;\;\;} F(Y\otimes_\B X)
\]
in $\D_2(\A\op\times\C)$. These satisfy the following coherence conditions, which are exact analogues of those in~\cref{E-module map definition}:
\begin{enumerate}
\item 
For any categories $\A$, $\B$, $\C$ and $\cD$, and any $X\in\D_1(\A\op\times\B)$, $Y\in\E(\B\op\times\C)$ and $Z\in\E(\C\op\times\cD)$, the diagram below commutes:
\begin{center}
\begin{tikzcd}
(Z\otimes_\C Y)\otimes_\B FX\arrow{dddd}[left]{\varphi} \arrow{rr}[above]{\alpha} && Z\otimes_\C (Y\otimes_\B FX)\arrow{dd}[right]{Z\mathsmaller{\otimes_\C}\;\varphi} \\\\
 
&& Z\otimes_\C F(Y\otimes_\B X)\arrow{dd}[right]{\varphi}\\\\
 
F((Z\otimes_\C Y)\otimes_\B X)\arrow{rr}[below]{F(\alpha)} && F(Z\otimes_\C (Y\otimes_\B X))
\end{tikzcd}
\end{center} 
\item
For any $X\in\D_1(\A\op\times\B)$, the diagram below commutes:
\begin{center}
\begin{tikzcd}
 \h_\B \otimes_\B F(X) \arrow[bend right,ddr,"\lambda" below left]
 \arrow{rr}[above]{\varphi} && 
 F(\h_\B \otimes_\B X) \arrow[bend left,ddl,"F(\lambda)" below right]
 \\\\
& FX
\end{tikzcd}
\end{center} 
\end{enumerate}
\end{lemma}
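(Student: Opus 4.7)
My plan is to build $\varphi$ for the cancelling tensor out of the structure isomorphism $\varphi$ of the ordinary action, and then to reduce each coherence diagram to the corresponding coherence for the ordinary action combined with one of the previously established compatibility results for coends and Fubini. To construct the isomorphism, I would first lift the module structure isomorphism of \cref{E-module map definition} to the external tensor: since the external product is the ordinary product precomposed with projection pullbacks (\cref{Lifting the external product to a derivator map}), and $F$ commutes with pullbacks via its prederivator structure isomorphisms, an iterated application yields an invertible modification $\widetilde{\varphi}: Y\;\widetilde{\otimes}\;FX \xrightarrow{\iso} F(Y\;\widetilde{\otimes}\;X)$ in $\D_2(\B\op\times\C\times\A\op\times\B)$. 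Applying $\int^\B$ and then using that $F$ preserves the coend (by cocontinuity, via the comparison of \cref{Maps that preserve delta} applied to the left Kan extension defining the coend) gives the composite
\[
Y\otimes_\B FX = \int^\B(Y\;\widetilde{\otimes}\;FX) \xrightarrow{\;\;\int^\B\widetilde{\varphi}\;\;} \int^\B F(Y\;\widetilde{\otimes}\;X) \xrightarrow{\;\;\iso\;\;} F\int^\B(Y\;\widetilde{\otimes}\;X) = F(Y\otimes_\B X),
\]
which is the required isomorphism.

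For coherence condition (1), I would unpack the associativity isomorphism $\alpha$ for the cancelling tensor as constructed in the proof of \cref{Coherence for the cancelling version of the closed E-action}: it is a composite of the ordinary $\alpha$ of the $\E$-action, the Fubini isomorphism of \cref{Fubini theorem} used to interchange $\int^\B$ and $\int^\C$, and instances of the cocontinuity of $\otimes$ commuting with coends. The desired associativity hexagon then decomposes as a pasting of smaller squares, each of which is either condition (1) of \cref{E-module map definition} applied to the pointwise $\alpha$, the content of \cref{Prederivator maps respect Fubini isomorphism} that $F$ respects Fubini, or naturality squares built from \cref{Modifications respect left Kan extensions} and \cref{Modifications respect coends} expressing that the coend operations commute with modifications of derivator maps.

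For coherence condition (2), the construction of the unit isomorphism $\lambda$ for the cancelling tensor in the proof of \cref{Coherence for the cancelling version of the closed E-action} is a composite built from the ordinary $\lambda$, the cocontinuity of $\otimes$, and the delta-integral cancellation isomorphism of \cref{Interaction of delta and integral}. Applying $F$, the unit triangle decomposes into a piece handled by condition (2) of \cref{E-module map definition}, a piece handled by \cref{Derivator maps respect delta coend cancellation} which says that $F$ respects the cancellation isomorphism, and routine naturality of the coend comparison map for modifications (\cref{Modifications respect coends}).

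The main obstacle is organisational rather than conceptual: each of $\alpha$ and $\lambda$ for the cancelling tensor is a non-trivial composite, and the challenge is to cut the required coherence diagrams into sub-squares each of which is one of the cited compatibilities. Condition (1) is the more delicate case, since the Fubini isomorphism appears in the middle of the composite and must be correctly paired on both sides; I expect the cleanest presentation will be as a large pasting diagram of derivator maps in which each sub-square is labelled by the lemma that makes it commute.
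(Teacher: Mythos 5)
Your proposal is correct and takes essentially the same approach as the paper: you build $\varphi$ as $\int^\B$ applied to the external structure isomorphism followed by the cocontinuity isomorphism of $F$, and reduce coherence (1) to the first axiom of \cref{E-module map definition} via \cref{Modifications respect coends} and \cref{Prederivator maps respect Fubini isomorphism}, and coherence (2) to the second axiom via \cref{Derivator maps respect delta coend cancellation}, which is exactly the paper's argument.
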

\begin{proof}
Given $X\in\D_1(\A\op\times\B)$ and $Y\in\E(\B\op\times\C)$, the map $\varphi:Y\otimes_\B FX\xrightarrow{\;\iso\;} F(Y\otimes_\B X)$ is given by the composite
\[
\int^\B(Y\widetilde{\otimes}FX)\xrightarrow{\;\;\;\;\;\int^\B\varphi\;\;\;\;\;}\int^\B F(Y\widetilde{\otimes}X)\xrightarrow{\;\;\;\;\;\iso\;\;\;\;\;}F\int^\B (Y\widetilde{\otimes}X),
\]
where the first map is induced by the structure isomorphism of~\cref{E-module map definition}, and the second follows by the cocontinuity of $F$. 

To see that the diagrams commute, we can rewrite $\varphi$ as above, and $\alpha$ and $\lambda$ as in the proof of~\cref{Coherence for the cancelling version of the closed E-action}.  Using~\cref{Modifications respect coends} and~\cref{Prederivator maps respect Fubini isomorphism}, the first diagram reduces to $\int^\B\int^\C$ applied to the diagram below, in $\D_2(\B\op\times\B\times\C\op\times\C\times\A\op\times\cD)$:
\begin{center}
\begin{tikzcd}
(Z\widetilde{\otimes} Y)\widetilde{\otimes} FX\arrow{dd}[left]{\varphi} \arrow{rr}[above]{\alpha} && Z\widetilde{\otimes} (Y\widetilde{\otimes} FX)\arrow{d}[right]{Z\widetilde{\otimes} \varphi} \\
 
&& Z\widetilde{\otimes} F(Y\widetilde{\otimes} X)\arrow{d}[right]{\varphi}\\
 
F((Z\widetilde{\otimes} Y)\widetilde{\otimes} X)\arrow{rr}[below]{F(\alpha)} && F(Z\widetilde{\otimes} (Y\widetilde{\otimes} X))
\end{tikzcd}
\end{center} 
This commutes by the first axiom of~\cref{E-module map definition}. Similarly, using~\cref{Derivator maps respect delta coend cancellation}, the second diagram reduces to the second axiom of~\cref{E-module map definition}.
\end{proof}

\begin{lemma}\label{E-module modifications respect the cancelling product}
Let $F,G:\D_1\rightarrow\D_2$ be cocontinuous $\E$-module maps, and let  $\beta:F\Rightarrow G$ be an $\E$-module modification. Then for any categories $\A$, $\B$ and $\C$, and any $X\in\D_1(\A\op\times\B)$ and $Y\in\E(\B\op\times\C)$, the diagram below commutes:
\begin{center}
\begin{tikzcd}
 Y\otimes_\B F(X) \arrow{rr}[above]{Y\mathsmaller{\otimes_\B}\;\beta_{X}}\arrow{dd}[left]{\varphi} && 
 Y\otimes_\B G(X) \arrow{dd}[right]{\varphi} \\\\
 F(Y\otimes_\B X) \arrow{rr}[below]{\beta_{Y\mathsmaller{\otimes_\B} X}} && 
 G(Y\otimes_\B X)
\end{tikzcd}
\end{center} 
\end{lemma}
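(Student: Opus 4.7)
The plan is to reduce the target square to two previously established ingredients: the internal $\E$-module modification axiom \cref{E-module modification definition}, and \cref{Modifications respect coends}, which ensures that modifications commute with the canonical cocontinuity isomorphisms arising from coends.

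First, I would recall from the proof of \cref{Cocontinuous E-module maps preserve the cancelling product} that the cancelling version of $\varphi$ factors as
\[
\int^\B(Y\;\widetilde{\otimes}\;FX)\xrightarrow{\;\;\int^\B\varphi\;\;}\int^\B F(Y\;\widetilde{\otimes}\;X)\xrightarrow{\;\;\iso\;\;}F\int^\B(Y\;\widetilde{\otimes}\;X),
\]
where the first arrow is induced by the internal $\E$-module structure isomorphism of $F$ applied at the external product, and the second is the canonical cocontinuity isomorphism of $F$ at the object $Y\;\widetilde{\otimes}\;X$. The same factorisation applies with $G$ in place of $F$. Splitting the target square horizontally along these factorisations produces an upper subsquare (the image under $\int^\B$ of a square of external-product maps) and a lower subsquare (involving the cocontinuity isomorphisms of $F$ and $G$ together with $\int^\B\beta_{Y\widetilde{\otimes}X}$ along the top and $\beta_{\int^\B(Y\widetilde{\otimes}X)}$ along the bottom). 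The lower subsquare commutes immediately by \cref{Modifications respect coends}.

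For the upper subsquare, I would unfold the external product as $Y\;\widetilde{\otimes}\;X=\p^*Y\otimes\p^*X$ in $\D_1(\B\op\times\C\times\A\op\times\B)$ using \cref{Lifting the external product to a derivator map}. By that same remark, the external structure isomorphism of $F$ (respectively $G$) is obtained from the internal $\E$-module structure isomorphism of $F$ applied to $\p^*Y$ and $\p^*X$, composed with the projection pseudonaturality isomorphism $\gamma$ of $F$; and by axiom (3) of \cref{Pseudonatural transformation definition}, the modification $\beta$ commutes with these $\gamma$'s, so that $\beta_{\p^*(-)}$ is identified with $\p^*\beta_{-}$. With these identifications, the upper subsquare reduces to the internal modification axiom \cref{E-module modification definition}, applied at the category $\B\op\times\C\times\A\op\times\B$ to the objects $\p^*Y\in\E(\B\op\times\C\times\A\op\times\B)$ and $\p^*X\in\D_1(\B\op\times\C\times\A\op\times\B)$. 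Pasting the two subsquares then yields the required commutative diagram. The main technical point is purely the bookkeeping around the projections and pseudonaturality in the upper square; no new ingredient beyond unfolding the external product is required.
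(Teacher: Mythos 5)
Your proof follows the same route as the paper's: factor $\varphi$ through $\int^\B\varphi$ and the cocontinuity isomorphism, split the square into two horizontal subsquares, and cite \cref{Modifications respect coends} for the lower one and the external form of the $\E$-module modification condition for the upper one. The paper's own proof is terser at the upper square --- it simply says ``the external version of the $\E$-module modification condition'' --- whereas you unfold the external structure isomorphism through the projection pullbacks explicitly; that extra detail is correct and in the spirit of the paper.

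One small but real citation slip: to identify $\beta_{\p^*(-)}$ with $\p^*\beta_{-}$ (i.e.\ to pass $\beta$ through the pseudonaturality isomorphisms $\gamma$ of $F$ and $G$ over the projection functors), you need the defining equation of a \emph{modification} (the coherence in the paper's Definition of modifications, relating $\theta$ to $\gamma_F$ and $\gamma_G$ over a functor $u$), not axiom~(3) of \cref{Pseudonatural transformation definition}. Axiom~(3) there concerns how a pseudonatural transformation's $\gamma$'s interact with natural transformations $\kappa:u\Rightarrow v$ in $\Cat$, and plays no role here. With that reference corrected the argument goes through unchanged.
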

\begin{proof}
Let $X\in\D_1(\A\op\times\B)$ and $Y\in\E(\B\op\times\C)$. The diagram we want to commute may be rewritten as follows, using the definition of $\varphi$ from the proof of~\cref{Cocontinuous E-module maps preserve the cancelling product}:
\begin{center}
\begin{tikzcd}
\int^\B(Y\widetilde{\otimes}FX) \arrow{rr}[above]{\int^\B(Y\widetilde{\otimes}\beta_{X})}\arrow{dd}[left]{\int^\B\varphi} && 
 \int^\B(Y\widetilde{\otimes}GX) \arrow{dd}[right]{\int^\B\varphi} \\\\
 \int^\B F(Y\widetilde{\otimes} X) \arrow{dd}[left]{\iso}\arrow{rr}[below]{\int^\B\beta_{Y\mathsmaller{\widetilde{\otimes}} X}} && 
 \int^\B G(Y\widetilde{\otimes} X)\arrow{dd}[right]{\iso}\\\\
  F\int^\B(Y\widetilde{\otimes} X) \arrow{rr}[below]{\beta_{\int^\B (Y\mathsmaller{\widetilde{\otimes}} X)}} && 
 G\int^\B(Y\widetilde{\otimes} X)
\end{tikzcd}
\end{center} 
The square at the top commutes by (the external version) of the $\E$-module modification condition of~\cref{E-module modification definition}. The second square commutes by~\cref{Modifications respect coends}.
\end{proof}

\begin{remark}\label{Coherence for the cancelling product gives coherence for the internal product}
In this section, we have proved a number of coherence results for the cancelling tensor product, resulting from the analogous coherence for the internal or external tensor product. On the other hand, suppose we have an $\E$-module $\D$, categories $\A$ and $\B$, and $X\in\D(\A)$ and $Y\in\E(\B)$. Using the isomorphism $Y\otimes_\B \partial_\B X\iso Y\;\widetilde{\otimes}\;X$, coherence results for the cancelling tensor product also imply the analogous results for the external product, and, using \cref{Bimorphisms}, the internal product. 
\end{remark}

\section{The maps $\otimes_u$ and $\h_u$}\label{Section The maps otimes u and h u}

In this section, we investigate the action of $\partial$ and $\int$ on morphisms. Specifically, given a functor $u:\A\rightarrow\B$, we revisit the maps $\partial_u$ and $\int^u$ of~\cref{Delta and end on morphisms}, and the associated maps $\h_u$ of~\cref{Definition of h on objects and morphisms} and $\otimes_u$ of~\cref{Definition of cancelling product on morphisms}. We conclude this section with a lemma describing the coherence  between the maps $\h_u$ and $\otimes_u$ in any $\E$-module. We will need this result, and others in this section, frequently in~\cref{Chapter Enriched derivators}. In particular, we use the results in this section in the proof of~\cref{E-modules induce E-prederivators}, showing that any closed $\E$-module induces an $\E$-prederivator.

Consider the commutative diagram of~\cref{tw(u) definition}:
\vspace{-3em}
\begin{center}
\begin{equation}\label{Diagram tw(u)}
\begin{tikzcd}[baseline=(current  bounding  box.center)]
\tw(\A)\arrow{r}[above]{(\mathrm{s},\mathrm{t})}\arrow{dd}[left]{\tw(u)} & \A\op\times\A\arrow{rr}[above]{\A\op\times u} && \A\op\times\B \arrow{dd}[right]{u\op\times \B} \\\\
 \tw(\B) \arrow{rrr}[below]{(\mathrm{s},\mathrm{t})} &&& 
\B\op\times\B
\end{tikzcd}
\end{equation}
\end{center} 
By~\cite[Lemma 5.4]{GPS14}, for any derivator $\D$, the commutative diagram (\ref{Diagram tw(u)}) induces an isomorphism:
\begin{center}
\begin{tikzcd}
\D^{\A\op\times\B}\arrow[ddrr,bend right,equal,""{name=L,above right}]\arrow[rr,"(u\op\times\B)_!" above] && \D^{\B\op\times\B}\arrow[Leftarrow,to=L,"\eta" above left,shorten >=0.6cm,shorten <=0.8cm] \arrow[rrrr,"(\mathrm{t}\op\text{,}\;\mathrm{s}\op)^*"]\arrow[dd,"(u\op\times\B)^*" right] && 
 &&\D^{\tw(\B)\op}\arrow[ddrr,"\p_!" above right, bend left,""{name=R,below left}] \arrow[dd,"(\tw(u)\op)^*" left]\\
\\
&&\D^{\A\op\times\B}\arrow[rr,"(\A\op\times u)^*" below] && 
 \D^{\A\op\times\A}\arrow[rr,"(\mathrm{t}\op\text{,}\;\mathrm{s}\op)^*" below] &&\D^{\tw(\A)\op}\arrow[Rightarrow,to=R,shorten >=0.5cm,shorten <=0.7cm] \arrow[rr,"\p_!" below]
 && \D 
\end{tikzcd}
\end{center}
Here the unlabelled $2$-cell is the canonical modification of~\cref{Canonical map to colimit}. Thus, for any $X\in\D(\A\op\times\B)$, we have a canonical isomorphism:
\vspace{-2.5em}
\begin{center}
\begin{equation}\label{Diagram coend and left Kan extension}
\begin{tikzcd}[baseline=(current  bounding  box.center)]
\int^\A(\A\op\times u)^*X\arrow{r}[above]{\iso} & \int^\B(u\op\times\B)_! X
\end{tikzcd}
\end{equation}
\end{center} 
Note that, if we paste the counit 
\begin{center}
\begin{tikzcd}
\D^{\B\op\times\B}\arrow[ddrr,equal, bend left,""{name=R,below left}] \arrow[dd,"(u\op\times\B)^*" left]\\
 
\\
\D^{\A\op\times\B}\arrow[Rightarrow,to=R,"\epsilon" above left, shorten >=0.5cm,shorten <=0.7cm] \arrow[rr,"(u\op\times\B)_!" below]
 && \D^{\B\op\times\B} 
\end{tikzcd}
\end{center}
onto the diagram above, then we obtain the modification $\int^u$ of~\cref{Delta and end on morphisms}. Thus, for any $Y\in\D(\B\op\times\B)$, we may factor $\int^u Y$ as follows:
\vspace{-3em}
\begin{center}
\begin{equation}\label{Diagram coend to the u}
\begin{tikzcd}[baseline=(current  bounding  box.center)]
\int^u Y:\int^\A(\A\op\times u)^*(u\op\times\B)^*Y\arrow[r,"\iso"]
& \int^\B(u\op\times \B)_!(u\op\times\B)^*Y\arrow[rr,"\int^\B \epsilon" above]  &[-12pt] &[-12pt] \int^\B Y
\end{tikzcd}
\end{equation}
\end{center}

Similarly, the commutative diagram (\ref{Diagram tw(u)}) induces an isomorphism
\begin{center}
\begin{tikzcd}
 && 
 \D^{\tw(\B)} \arrow[dd,"\tw(u)^*"]
\arrow[rrrr,"\text{(s,t)}_!"] &&&& 
 \D^{\B\op\times\B} \arrow[Leftarrow,ddllll,shorten >=2.6cm,shorten <=2.7cm] \arrow[dd,"(u\op\times \B)^*"]\\
\D\arrow[urr,"\p^*" above left,bend left=20]\arrow[drr,"\p^*" below left,bend right=20] \\
&&
\D^{\tw(\A)}\arrow[rr,"\text{(s,t)}_!" below] && 
\D^{\A\op\times\A}\arrow[rr,"(\A\op\times u)_!" below] && \D^{\A\op\times\B}
\end{tikzcd}
\end{center}
where the nonidentity $2$-cell is the canonical modification of~\cref{D-exact squares}. The component of this map at $Z\in\D(\0)$ gives a canonical isomorphism: 
\begin{equation}\label{Diagram Left Kan extensions and delta}
(\A\op\times u)_!\partial_\A Z\xrightarrow{\;\;\;\iso\;\;\;}(u\op\times\B)^*\partial_\B Z
\end{equation}
As above, if we paste the unit 
\begin{center}
\begin{tikzcd}
\D^{\A\op\times\A}\arrow[ddrr,bend right,equal,""{name=L,above right}]\arrow[rr,"(\A\op\times u)_!" above] && \D^{\A\op\times\B}\arrow[Leftarrow,to=L,"\eta" above left,shorten >=0.6cm,shorten <=0.8cm] \arrow[dd,"(\A\op\times u)^*" right] 
\\\\
&&\D^{\A\op\times\A}
\end{tikzcd}
\end{center}
onto this diagram, then we obtain $\partial_u$ of~\cref{Delta and end on morphisms}. Thus, for any $Z\in\D(\0)$, we may factor $\partial_u Z$ as follows:
\begin{center}
\begin{tikzcd}
\partial_u Z:\partial_\A Z\arrow[r,"\eta"]
& (\A\op\times u)^*(\A\op\times u)_!\partial_\A Z\arrow[r,"\iso" above] & (\A\op\times u)^*(u\op\times \B)^*\partial_\B Z
\end{tikzcd}
\end{center}
Finally, suppose $\D$ is an $\E$-module, and consider the map 
\begin{center}
\begin{tikzcd}
 \E^{\B\op}\times\D^\B\arrow{dd}[left]{(u\op)^*\times u^*} \arrow[bend left,rrd,"\otimes_\B" above right,""{name=U}] & \\
 && \D \\
 \E^{\A\op}\times\D^\A \arrow[bend right,rur,"\otimes_\A" below right]
 \arrow[Rightarrow,from=3-1,to=U,"\otimes_u" above left, shorten >=0.9cm,shorten <=0.9cm,shift right=2]
\end{tikzcd}
\end{center} 
of~\cref{Definition of cancelling product on morphisms}. For any objects $X\in\D(\B)$ and $Y\in\E(\A\op)$, the isomorphism (\ref{Diagram coend and left Kan extension}) above induces a canonical isomorphism $Y\otimes_\A u^*X\xrightarrow{\;\iso\;\;} (u\op)_!Y\otimes_\B X$ as follows:
\vspace{-2em}
\begin{center}
\begin{equation}\label{Diagram cancelling tensor and left Kan extension}
\begin{tikzcd}[baseline=(current  bounding  box.center)]
\int^\A(Y\;\widetilde{\otimes}\;u^*X)\arrow[r,"\iso"]
& \int^\A(\A\op\times u)^*(Y\;\widetilde{\otimes}\;X)\arrow[r,"\iso" above] & \int^\B(u\op\times \B)_!(Y\;\widetilde{\otimes}\;X)\arrow[d,"\iso" right] \\
&&  \int^\B((u\op)_!Y\;\widetilde{\otimes}\;X)
\end{tikzcd}
\end{equation}
\end{center}
The first isomorphism in this composite is the structure isomorphism for $Y\;\widetilde{\otimes}\;-$, the second is an instance of (\ref{Diagram coend and left Kan extension}), and the final isomorphism follows by the cocontinuity of $\otimes$. Using this isomorphism (\ref{Diagram cancelling tensor and left Kan extension}) and our description of $\int^u$ in (\ref{Diagram coend to the u}), for any $X\in\D(\B)$ and $Z\in\E(\B\op)$, we can factor $Z\otimes_u X$ as follows: 
\vspace{-3em}
\begin{center}
\begin{equation}\label{Diagram tensor cancelling over u}
\begin{tikzcd}[baseline=(current  bounding  box.center)]
Z\otimes_u X:(u\op)^*Z\otimes_\A u^*X\arrow[r,"\iso"]
& (u\op)_!(u\op)^*Z\otimes_\B X\arrow[rr,"\epsilon\;\otimes_\B X" above] && Z\otimes_\B X
\end{tikzcd}
\end{equation}
\end{center}

\begin{remark}\label{Cancelling tensor and epsilon}
Let $u:\A\rightarrow\B$ be a functor. For any closed $\E$-module $\D$, and any $X\in\D(\B)$, the canonical isomorphism 
\begin{center}
\begin{tikzcd}
\E^{\A\op}\arrow[ddrr,bend right,"-\otimes_\A u^*X" below left,""{name=L,above right}]\arrow[rr,"(u\op)_!" above] && \E^{\B\op}\arrow[Leftarrow,to=L,"\iso" left=5,shorten >=0.3cm,shorten <=0.8cm] \arrow[dd,"-\otimes_\B X" right] 
\\\\
&&\D
\end{tikzcd}
\end{center}
of (\ref{Diagram cancelling tensor and left Kan extension}) is conjugate to the isomorphism 
\begin{center}
\begin{tikzcd}
\E^{\A\op}\arrow[leftarrow,ddrr,bend right,"\widetilde{\map}_{\D}(u^*X\text{,}-)" below left,""{name=L,above right}]\arrow[leftarrow,rr,"(u\op)^*" above] && \E^{\B\op}\arrow[Rightarrow,to=L,"\iso" left=5,"\gamma" below=4,shorten >=0.3cm,shorten <=0.8cm] \arrow[leftarrow,dd,"\widetilde{\map}_{\D}(X\text{,}-)" right] 
\\\\
&&\D
\end{tikzcd}
\end{center}
induced by the structure isomorphism for $\map_{\D}(-,-)$. This appears in the proof of \cite[Theorem 9.1]{GPS14}, but it can also be checked directly. Using this fact and the description of $\otimes_u$ in (\ref{Diagram tensor cancelling over u}), for any category $\C$, and any $Y\in\D(\C)$, the diagram below commutes:
\begin{center}
\begin{tikzcd}
(u\op\times\C)^*\widetilde{\map}_\D(X,Y)\otimes_{\A} u^*X\arrow{rr}[above]{\gamma\;\mathsmaller{\otimes_{\A}} u^*X}\arrow{dd}[left]{\widetilde{\map}_\D(X,Y)\otimes_u X} && 
 \widetilde{\map}_\D(u^*X,Y)\otimes_{\A} u^*X\arrow{dd}[right]{\epsilon} \\\\
 \widetilde{\map}_\D(X,Y)\otimes_{\B} X\arrow{rr}[below]{\epsilon} && 
Y
\end{tikzcd}
\end{center} 
\end{remark}

\begin{remark}\label{Maps respects delta u}
Let $u:\A\rightarrow\B$ be a functor, and let $F:\D_1\rightarrow\D_2$ be a derivator map. Using~\cref{Derivator maps respect homotopy exact squares}, it follows that $F$ respects $\int^u$ and $\partial_u$. For example, in the case of $\partial_u$, this means that the diagram below commutes, for any $X\in\D_1(\0)$:
\begin{center}
\begin{tikzcd}
\partial_\A FX\arrow[dd,"\partial_u FX" left]\arrow{rr} && F\partial_\A X\arrow[dd,"F\partial_u X" right]\\\\
(u\op\times u)^*\partial_\B FX\arrow[r] & (u\op\times u)^* F\partial_\B X\arrow[r,"\iso" below] & F(u\op\times u)^*\partial_\B X
\end{tikzcd}
\end{center}
Similarly, suppose we have $\E$-modules $\D_1$ and $\D_2$, and a cocontinuous $\E$-module morphism $F:\D_1\rightarrow\D_2$. For any categories $\A$, $\B$, $\C$ and $\cD$, any functor $u:\A\rightarrow\B$, and objects $X\in\D_1(\C\op\times\B)$ and $Y\in\E(\B\op\times\cD)$, the diagram below commutes:
\begin{center}
\begin{tikzcd}[column sep=small]
(u\op\times\cD)^*Y\otimes_\A (\C\op\times u)^*FX\arrow{dddd}[left]{Y\otimes_u FX} \arrow{rr}[above]{\iso} && (u\op\times\cD)^*Y\otimes_\A F(\C\op\times u)^*X\arrow{dd}[right]{\varphi} \\\\
 
&& F((u\op\times\cD)^*Y\otimes_\A (\C\op\times u)^*X)\arrow{dd}[right]{F(Y\otimes_u X)}\\\\
 
Y\otimes_\B FX\arrow{rr}[below]{\varphi} && F(Y\otimes_\B X)
\end{tikzcd}
\end{center} 
The map $\varphi$ is the canonical isomorphism of~\cref{Cocontinuous E-module maps preserve the cancelling product}.
\end{remark}

\begin{example}\label{Cancelling tensor and associativity on morphism level}
Let $\D$ be a closed $\E$-module and let $X\in\D(\A)$. Using~\cref{Coherence for the cancelling version of the closed E-action}, the derivator map $-\otimes_\A X:\E^{\A\op}\rightarrow\D$ is an $\E$-module map. Applying~\cref{Maps respects delta u}, for any functor $v:\B\rightarrow\C$, and any objects $Y\in\E(\A\op\times\C)$ and $Z\in\E(\C\op\times\cD)$, we obtain the following commutative diagram:
\begin{center}
\begin{tikzcd}[column sep=small]
((v\op\times \cD)^*Z\otimes_\B (\A\op\times v)^*Y)\otimes_\A X\arrow{dddd}[left]{(Z\otimes_v Y)\otimes_\A X} \arrow{rr}[above]{\alpha} && (v\op\times \cD)^*Z\otimes_\B( (\A\op\times v)^*Y\otimes_\A X)\arrow{dd}[right]{\iso} \\\\
 
&& (v\op\times \cD)^*Z\otimes_\B v^*(Y\otimes_\A X)\arrow{dd}[right]{Z\otimes_v (Y\otimes_\A X)}\\\\
 
(Z\otimes_\C Y)\otimes_\A X\arrow[rr,"\alpha" below]
&& Z\otimes_\C (Y\otimes_\A X)
\end{tikzcd}
\end{center} 
\end{example}

Suppose we have a functor $u:\A\rightarrow\B$. The following lemma shows that the isomorphisms of~\cref{Interaction of delta and integral} respect the canonical morphism $\int^{\A_{1,2}}\partial_{\A_{2,3}}\Rightarrow\int^{\B_{1,2}}\partial_{\B_{2,3}}$ induced by $u$. We use this to prove~\cref{Interaction of cancelling tensor with h on the morphism level}, which expresses the coherence between $\h_u$ and $\otimes_u$ in an $\E$-module.

\begin{proposition}\label{Interaction of delta and integral on the morphism level}
For any derivator $\D$, and any category $\A=\A_i$, consider the isomorphism

\begin{center}
\begin{tikzcd}[row sep=tiny]
 && 
\D^{\A_1\times\A_2\op\times\A_3} \arrow[rrd,"\int^{\A_{1,2}}" above right, bend left=10]
\\
\D^{\A_1}\arrow[rrrr,equal,bend right=20,""{name=D}]\arrow[urr,"\partial_{\A_{2,3}}" above left,bend left=10]&&&&\D^{\A_3} 
\arrow[Rightarrow,from=1-3,to=D,"\iso",shorten >=0.2cm,shorten <=0.2cm]
\end{tikzcd}
\end{center}
of~\cref{Interaction of delta and integral}. Given a functor $u:\A\rightarrow\B$, the pasting diagram

\begin{center}
\begin{tikzcd}

\D^{\B_1}\arrow[dddrr,"\partial_{\A}" below left,bend right=10,""{name=L,above right}]\arrow[dddd,"u^*" left]\arrow[rrrrrr,equal,bend left=30,""{name=U}]\arrow[rrr,"\partial_{\B_{2,3}}" above]&&&
\D^{\B_1\times\B_2\op\times\B_3}\arrow[dddl,"\mathsmaller{(\B\times u\op\times u)^*}" below right,bend right=10]
\arrow[dr,"\mathsmaller{(\B\times\B\op\times u)^*}" below left]\arrow[Rightarrow,to=U,"\iso",shorten >=0.4cm,shorten <=0.4cm]\arrow[rrr,"\int^{\B_{1,2}}" above] 
&&&\D^{\B_3}\arrow[dddd,"u^*" right] 
\\
&&&& \D^{\B\times\B\op\times\A}\arrow[Rightarrow,rru,"\iso" below right,"\gamma^{-1}" above left,shorten >=0.7cm,shorten <=0.4cm,shift right=3]
\arrow[dddl,"\mathsmaller{(u\times u\op\times\A)^*}" above left,bend left=10]
\arrow[dddrr,"\int^{\B}" above right,bend left=10,""{name=R,below left}]
\\
\\
&& \D^{\B\times\A\op\times\A}\arrow[dr,"\mathsmaller{(u\times\A\op\times\A)^*}" above right]
\\
\D^{\A_1}\arrow[Rightarrow,rru,"\iso" below right,"\gamma^{-1}" above left,shorten >=0.7cm,shorten <=0.7cm,shift left=3]
\arrow[rrrrrr,equal,bend right=30,""{name=D}]\arrow[rrr,"\partial_{\A_{2,3}}" below]&&&
\D^{\A_1\times\A_2\op\times\A_3}\arrow[Leftarrow,to=D,"\iso" left,shorten >=0.4cm,shorten <=0.4cm]\arrow[rrr,"\int^{\A_{1,2}}" below] 
&&&\D^{\A_3} 
\arrow[Rightarrow,from=L,to=1-4,"\partial_{u}" above left,shorten >=2cm,shorten <=1.7cm]
\arrow[Rightarrow,from=5-4,to=R,"\int^{u}" above left,shorten >=1.9cm,shorten <=1.4cm,shift right]

\end{tikzcd}
\end{center}
reduces to $\id_{u^*}$.

\end{proposition}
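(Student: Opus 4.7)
The plan is to unfold the isomorphism $\int^{\A_{1,2}}\partial_{\A_{2,3}}\iso\id_{\D^{\A_1}}$ using the explicit construction from the proof of \cref{Interaction of delta and integral}, which factors it through the pullback category $\mathrm{P}_\A$ of composable pairs of arrows. There, the isomorphism arises as the pasting of two homotopy exact squares --- the pullback square defining $\mathrm{P}_\A$, together with the square witnessing the natural transformation $\vartheta_\A$ whose component at $a\xrightarrow{g}b\xrightarrow{f}c$ is $f\circ g$ --- and the associated mates compose to the identity on $\D^{\A}$. The same factorisation applies for $\B$.

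A functor $u:\A\rightarrow\B$ induces a functor $\mathrm{P}_u:\mathrm{P}_\A\rightarrow\mathrm{P}_\B$ sending $a\xrightarrow{g}b\xrightarrow{f}c$ to $u(a)\xrightarrow{u(g)}u(b)\xrightarrow{u(f)}u(c)$. This functor commutes with both projections $\mathrm{m}$ and $\mathrm{n}$ via $\tw(u):\tw(\A)\rightarrow\tw(\B)$ and $u$ itself, and $\vartheta$ is natural in $u$ in the sense that the whiskerings of $\vartheta_\A$ and $\vartheta_\B$ along these functors agree. Together these identifications assemble into a three-dimensional diagram of categories and natural transformations, connecting the two homotopy exact squares for $\A$ and those for $\B$ through $\mathrm{P}_u$, $\tw(u)$, and $u$.

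I would then rewrite both $\partial_u$ and $\int^u$ (from \cref{Delta and end on morphisms}) as pastings built from the commutative square of \cref{tw(u) definition} and its induced modifications; this turns the pasting in the statement into a single pasting of canonical modifications over the three-dimensional diagram just described. By the functoriality of mates under composition of homotopy exact squares, as used in the proof of \cref{Functoriality of delta} and following~\cite[Section 2]{KS74}, this composite collapses to the canonical modification associated to the trivial $2$-cell $\id:u=u$, which is manifestly $\id_{u^*}$. This yields the desired equality.

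The principal obstacle is bookkeeping: the statement mixes modifications in $\D(\B\op\times\B)$ and $\D(\A\op\times\A)$ with the pseudonaturality isomorphisms $\gamma$ of $\D$ applied to projections, to the twisted arrow functors $\tw(u)$, and to $\mathrm{P}_u$. Carrying out the argument directly from the definition of mates and repeatedly invoking the pseudofunctoriality coherences of $\D$ is tedious but mechanical, and no new ingredients are required beyond the homotopy exactness of the two squares involving $\mathrm{P}_\A$, which is already established in the proof of \cref{Interaction of delta and integral} via~\cite[Lemma B.1]{GPS14}.
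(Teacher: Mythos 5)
Your proposal follows essentially the same route as the paper's proof: unfolding $\partial_u$ and $\int^u$ via \cref{Delta and end on morphisms}, factoring the isomorphism of \cref{Interaction of delta and integral} through the pullback categories $\mathrm{P}_\A$ and $\mathrm{P}_\B$, introducing $\mathrm{P}_u$, and reducing to the compatibility of $\vartheta_\A$ and $\vartheta_\B$ under $u$ (which the paper checks pointwise as $u(f\circ g)=u(f)\circ u(g)$) together with the functoriality of mates. Your argument is correct and matches the paper's, differing only in presentational detail.
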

\begin{proof}
Expanding the maps $\partial_u$ and $\int^u$ using~\cref{Delta and end on morphisms}, the large rectangle in the pasting diagram above reduces to the following:
\begin{center}
\begin{tikzcd}

\D^{\B}\arrow[ddddd,"u^*" left]\arrow[rr,"(\B\times\p)^*" above]&&
\D^{\B\times\tw(\B)}\arrow[dd,equal,""{name=L,right}]\arrow[rr,"(\B\times\text{(s,t)})_!" above] &&
\D^{\B\times\B\op\times\B}\arrow[ddll,bend left=20,"(\B\times\text{(s,t)})^*" below right,near end]
\arrow[ddd,"(u\times u\op\times u)^*" right]\arrow[rr,"\text{((t}\op\text{, s}\op)\times\B)^*" above] 
&&
\D^{\tw(\B\op)\op\times\B}\arrow[rr,"(\p\times\B)_!" above]
\arrow[dd,equal,""{name=U,right}]
&&\D^{\B}\arrow[ddd,"u^*" right]
\arrow[ddll,bend left=20,"(\p\times\B)^*" below right,near end] 
\\
\\
&&\D^{\B\times\tw(\B)}\arrow[ddd,"(u\times\tw(u))^*" left] &&&& \D^{\tw(\B\op)\op\times\B}\arrow[ddd,"(\tw(u\op)\op\times u)^*" left]
\\
&&&&\D^{\A\times\A\op\times\A}\arrow[ddll,bend right=20,"(\A\times\text{(s,t)})^*" above left,near start]
\arrow[dd,equal,""{name=R,left}] &&&& \D^{\A}\arrow[ddll,bend right=20,"(\p\times\A)^*" above left,near start]\arrow[dd,equal,""{name=D,left}]
\\
\\
\D^{\A}\arrow[rr,"(\A\times\p)^*" below]&&
\D^{\A\times\tw(\A)}\arrow[rr,"(\A\times\text{(s,t)})_!" below]&&
\D^{\A\times\A\op\times\A}\arrow[rr,"\text{((t}\op\text{, s}\op)\times\A)^*" below] 
&&
\D^{\tw(\A\op)\op\times\A}\arrow[rr,"(\p\times\A)_!" below]
&&\D^{\A} 

\arrow[Rightarrow,from=L,to=1-5,shift right=3,"\eta" above left,shorten >=0.9cm,shorten <=0.8cm]
\arrow[Rightarrow,from=U,to=1-9,shift right=3,"\eta" above left,shorten >=1.1cm,shorten <=1cm]
\arrow[Rightarrow,from=6-3,to=R,shift left,"\epsilon" above left,shorten >=0.9cm,shorten <=0.8cm]
\arrow[Rightarrow,from=6-7,to=D,shift left,"\epsilon" above left,shorten >=0.7cm,shorten <=0.6cm]
\end{tikzcd}
\end{center}

Recall the construction of the isomorphisms $\int^{\A_{1,2}}\partial_{\A_{2,3}}\iso\id$ and $\int^{\B_{1,2}}\partial_{\B_{2,3}}\iso\id$ described in~\cref{Interaction of delta and integral}, in particular the pullbacks $\mathrm{P}_\A$ and $\mathrm{P}_\B$. The functor $u:\A\rightarrow\B$ induces a map $\mathrm{P}_u:\mathrm{P}_\A\rightarrow\mathrm{P}_\B$ making the diagram below commute:
\begin{center}
\begin{equation}\label{Diagram Definition of P u}
\begin{tikzcd}[baseline=(current  bounding  box.center),column sep=tiny]
\mathrm{P}_\A\arrow[dd,"\mathrm{n}_\A" left]\arrow[rr,"\mathrm{m}_\A" above]\arrow[dr,"\mathrm{P}_u" above right]
&& \A\times\tw(\A)\arrow[dr,bend left=20,"u\times\tw(u)" above right] \\
& \mathrm{P}_\B\arrow[rrdd,phantom,"\lrcorner" very near start] \arrow{rr}[above]{\mathrm{m}_\B}\arrow{dd}[left]{\mathrm{n}_\B} && 
 \B\times\tw(\B)\arrow{dd}[right]{\B\times (\mathrm{s},\mathrm{t})} \\
\tw(\A\op)\op\times\A\arrow[dr,bend right=22,"\tw(u\op)\op\times u" below left]  
 \\
& \tw(\B\op)\op\times\B \arrow{rr}[below]{(\mathrm{t}\op,\mathrm{s}\op)\times\B} && 
 \B\times\B\op\times\B
\end{tikzcd}
\end{equation}
\end{center}
Explicitly, this functor takes an object $a\xrightarrow{\;\;g\;\;} b\xrightarrow{\;\;f\;\;} c$ in $\mathrm{P}_\A$ to $u(a)\xrightarrow{\;\;u(g)\;\;} u(b)\xrightarrow{\;\;u(f)\;\;} u(c)$ in $\mathrm{P}_\B$. If we paste the isomorphisms
\[
(\mathrm{n}_\A)_!\circ (\mathrm{m}_\A)^*\iso ((\mathrm{t}\op,\mathrm{s}\op)\times\A)^*\circ (\A\times (\mathrm{s},\mathrm{t}))_!
\]
\[
(\mathrm{n}_\B)_!\circ (\mathrm{m}_\B)^*\iso ((\mathrm{t}\op,\mathrm{s}\op)\times\B)^*\circ (\B\times (\mathrm{s},\mathrm{t}))_!
\]
onto the top and bottom of the pasting diagram above, then the commutative diagram (\ref{Diagram Definition of P u}) allows us to simplify and obtain the following:

\begin{center}
\begin{tikzcd}

\D^{\B}\arrow[dddddd,"u^*" left]\arrow[rr,"(\B\times\p)^*" above]&&
\D^{\B\times\tw(\B)}\arrow[dddddd,"(u\times\tw(u))^*" right]\arrow[rr,"(\mathrm{m}_\B)^*" above] &&
\D^{\mathrm{P}_\B}\arrow[dd,equal,""{name=A}]\arrow[rr,"(\mathrm{n}_\B)_!" above] 
&&
\D^{\tw(\B\op)\op\times\B}\arrow[rr,"(\p\times\B)_!" above]
\arrow[dd,equal,""{name=U,right}]
&&\D^{\B}\arrow[dddd,"u^*" right]
\arrow[ddll,bend left=20,"(\p\times\B)^*" below right,near end] 
\\
\\
&&&&\D^{\mathrm{P}_\B}\arrow[dddd,"(\mathrm{P}_u)^*" left] && \D^{\tw(\B\op)\op\times\B}\arrow[dd,"(\tw(u\op)\op\times u)^*" left]\arrow[ll,"(\mathrm{n}_\B)^*" above]
\\\\
&&&&&&\D^{\tw(\A\op)\op\times\A}\arrow[ddll,bend right=20,"(\mathrm{n}_\A)^*" above left,near start]
\arrow[dd,equal,""{name=R,left}] && \D^{\A}\arrow[ll,"(\p\times\A)^*" above]\arrow[dd,equal,""{name=D,left}]
\\
\\
\D^{\A}\arrow[rr,"(\A\times\p)^*" below]&&
\D^{\A\times\tw(\A)}\arrow[rr,"(\mathrm{m}_\A)^*" below]&&
\D^{\mathrm{P}_\A}\arrow[rr,"(\mathrm{n}_\A)_!" below] 
&&
\D^{\tw(\A\op)\op\times\A}\arrow[rr,"(\p\times\A)_!" below]
&&\D^{\A}

\arrow[Rightarrow,from=A,to=U,"\eta" above,shorten >=1.4cm,shorten <=1.5cm]
\arrow[Rightarrow,from=U,to=1-9,shift right=3,"\eta" above left,shorten >=1.1cm,shorten <=1cm]
\arrow[Rightarrow,from=7-5,to=R,shift left,"\epsilon" above left,shorten >=1cm,shorten <=1cm]
\arrow[Rightarrow,from=R,to=D,left,"\epsilon" above,shorten >=1.3cm,shorten <=1.4cm]
\end{tikzcd}
\end{center}
We may now paste the remaining isomorphisms onto the top and bottom of this rectangle. The resultant pasting diagram reduces easily to the identity, using the fact that the two pasting diagrams below are equal:
\begin{center}
\begin{tikzcd}
 \mathrm{P}_\A\arrow{r}[above]{\mathrm{m}_\A}\arrow{d}[left]{\mathrm{n}_\A} & 
 \A\times\tw(\A)\arrow{r}[above]{\A\times \p} & \A\arrow[Rightarrow,ddll,"\vartheta_\A\;\;\;" above=2, shorten >=2.5cm,shorten <=2.5cm]\arrow[dd,equal] & \mathrm{P}_\A\arrow[d,"\mathrm{P}_u" right] \\
 \tw(\A\op)\op\times\A\arrow{d}[left]{\p\times\A} &&& \mathrm{P}_\B \arrow{r}[above]{\mathrm{m}_\B}\arrow{d}[left]{\mathrm{n}_\B} & 
 \B\times\tw(\B)\arrow{r}[above]{\B\times \p} & \B\arrow[dd,equal]\arrow[Rightarrow,ddll,"\vartheta_\B\;\;\;" above=2, shorten >=2.5cm,shorten <=2.5cm]
 \\
 \A\arrow[rr,equal] && 
 \A\arrow[d,"u" right] & \tw(\B\op)\op\times\B\arrow{d}[left]{\p\times\B}  \\
 && \B & \B\arrow[rr,equal] && \B
\end{tikzcd}
\end{center}
To see this, note that the component of the first at the object $a\xrightarrow{\;\;g\;\;} b\xrightarrow{\;\;f\;\;} c$ in $\mathrm{P}_\A$ is $u(f\circ g)$, and the component of the second is $u(f)\circ u(g)$.
\end{proof}

\begin{lemma}\label{Interaction of cancelling tensor with h on the morphism level}
Let $\D$ be an $\E$-module, let $u:\A\rightarrow\B$ be a functor, and let $X\in\D(\B)$. The diagram below commutes:
\begin{center}
\begin{tikzcd}
\h_\A\otimes_\A u^*X\arrow{dddd}[left]{\lambda} \arrow{rr}[above]{\h_u\otimes_\A u^*X} && (u\op\times u)^*\h_\B\otimes_\A u^*X\arrow{dd}[right]{(\B\op\times u)^*\h_\B\otimes_u X} \\\\
 
&& (\B\op\times u)^*\h_\B\otimes_\B X\arrow{dd}[right]{\iso}\\\\
 
u^*X 
&& u^*(\h_\B\otimes_\B X)\arrow{ll}[below]{u^*(\lambda)}
\end{tikzcd}
\end{center} 
\end{lemma}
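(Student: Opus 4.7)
The plan is to unpack both composites in terms of the underlying derivator constructions ($\partial$, $\int^{}$, and the structure maps of the $\E$-action) and then invoke \cref{Interaction of delta and integral on the morphism level} to identify them. Recall from the proof of \cref{Coherence for the cancelling version of the closed E-action} that for any $Y \in \D(\A)$, the map $\lambda : \h_\A \otimes_\A Y \to Y$ factors as
\[
\mathsmaller{\int^{\A_{1,2}}}\bigl(\partial_{\A_{2,3}}\mathbb{1}\;\widetilde{\otimes}\;Y\bigr) \;\xrightarrow{\;\iso\;}\; \mathsmaller{\int^{\A_{1,2}}}\partial_{\A_{2,3}}(\mathbb{1}\;\widetilde{\otimes}\;Y) \;\xrightarrow{\;\iso\;}\; \mathbb{1}\;\widetilde{\otimes}\;Y \;\xrightarrow{\;\lambda_\E\;}\; Y,
\]
where the first isomorphism comes from cocontinuity of $\otimes$, the second is \cref{Interaction of delta and integral} applied to $\mathbb{1}\;\widetilde{\otimes}\;Y$, and the third is the unit of the $\E$-action. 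Taking $Y = u^*X$ gives the left-hand column of the square; taking $Y = X$ and then applying $u^*$ gives $u^*(\lambda)$.

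Next, I would unpack the remaining two maps on the right. By \eqref{Diagram Left Kan extensions and delta}, the map $\h_u = \partial_u\mathbb{1}$ factors as
\[
\partial_\A\mathbb{1} \;\xrightarrow{\;\eta\;}\; (\A\op\times u)^*(\A\op\times u)_!\partial_\A\mathbb{1} \;\xrightarrow{\;\iso\;}\; (u\op\times u)^*\partial_\B\mathbb{1},
\]
and by \eqref{Diagram tensor cancelling over u} the map $(\B\op\times u)^*\h_\B\otimes_u X$ factors as
\[
(u\op\times u)^*\h_\B\otimes_\A u^*X \;\xrightarrow{\;\iso\;}\; (u\op)_!(u\op\times u)^*\h_\B\otimes_\B X \;\xrightarrow{\;\epsilon\otimes_\B X\;}\; (\B\op\times u)^*\h_\B\otimes_\B X.
\]
The final isomorphism $(\B\op\times u)^*\h_\B\otimes_\B X \iso u^*(\h_\B\otimes_\B X)$ is the instance of \eqref{Diagram coend and left Kan extension} associated to $\h_\B\;\widetilde{\otimes}\;X$. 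Using cocontinuity of the external tensor together with \cref{Modifications respect coends} and \cref{Prederivator maps respect Fubini isomorphism} to align the coend isomorphisms, and cancelling the $(\eta,\epsilon)$-pair arising from the adjunction $(\A\op\times u)_! \dashv (\A\op\times u)^*$ via a triangle identity, the right column reduces to the pasting of $\partial_u\mathbb{1}\;\widetilde{\otimes}\;X$ with $\int^u(\mathbb{1}\;\widetilde{\otimes}\;X)$, followed by the $\E$-unit isomorphism $\mathbb{1}\;\widetilde{\otimes}\;u^*X \iso u^*X$ transported past $u^*$ using the $\E$-module structure of $u^*$ from \cref{u* and alpha* respect the E-action}.

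Finally, applying \cref{Interaction of delta and integral on the morphism level} to the object $\mathbb{1}\;\widetilde{\otimes}\;X \in \D(\B)$, the combined pasting of $\partial_u$ and $\int^u$ on $u^*(\mathbb{1}\;\widetilde{\otimes}\;X)$ is the identity, leaving precisely the pasting that defines $\lambda : \h_\A \otimes_\A u^*X \to u^*X$. The main obstacle is the careful bookkeeping of shifted-derivator indices and of the various unit and counit transformations, so that the combined unpackings of $\h_u$ and $\otimes_u$ align exactly with the pasting appearing in \cref{Interaction of delta and integral on the morphism level}. Commuting the $\E$-unit past $u^*$ via \cref{u* and alpha* respect the E-action} is the step that converts the $u^*(\lambda)$ on the right-hand column into the $\lambda$ on the left-hand column, completing the identification.
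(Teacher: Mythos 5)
Your proposal takes essentially the same route as the paper: unpack $\lambda$, $\h_u$ and $\otimes_u$ into their underlying $\partial_u$, $\int^u$ and $\E$-action components, pull these through the tensor using cocontinuity and pseudonaturality of $\otimes$ (including the $\E$-module structure of $u^*$ from \cref{u* and alpha* respect the E-action}), and then invoke \cref{Interaction of delta and integral on the morphism level}. That last invocation is the load-bearing step and you identify it correctly, so the overall argument goes through.

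One detail is wrong, though. You say the $(\eta,\epsilon)$-pair cancels \emph{via a triangle identity for the adjunction $(\A\op\times u)_!\dashv(\A\op\times u)^*$}. In fact, the unit appearing in the factorisation of $\h_u = \partial_u\mathbb{1}$ belongs to $(\A\op\times u)_!\dashv(\A\op\times u)^*$ (pullback in the \emph{second} factor), while the counit appearing in the factorisation of $\otimes_u$ in (\ref{Diagram tensor cancelling over u}) belongs to $(u\op\times\C)_!\dashv(u\op\times\C)^*$ (pullback in the \emph{first} factor). These are different adjunctions, so there is no single triangle identity to apply. The collapse of the $\partial_u$–$\int^u$–$u^*$ pasting to $\id_{u^*}$ is genuinely the content of \cref{Interaction of delta and integral on the morphism level}, whose proof goes through the pullback category $\mathrm{P}_\A$ and homotopy exactness — it is not a formal consequence of unit/counit cancellation. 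Relatedly, the detour through the Kan-extension factorisations (\ref{Diagram Left Kan extensions and delta}) and (\ref{Diagram tensor cancelling over u}) is unnecessary: the paper's proof just uses the direct definitions $\h_u = \partial_u\mathbb{1}$ and $\otimes_u$ as the pasting of a structure isomorphism with $\int^u$, which lands immediately in the shape of \cref{Interaction of delta and integral on the morphism level} without introducing units and counits you then have to reabsorb.
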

\begin{proof}
To see that this diagram commutes, use the definition of $\lambda$ in~\cref{Coherence for the cancelling version of the closed E-action}, and the definitions of $\h_u$ and $\otimes_u$, to rewrite the diagram in terms of $\partial_u$ and $\int^u$. Using cocontinuity and pseudonaturality of $\otimes$, we can then pull the instances of $\partial_u$, $\int^u$ and $u^*$ out of the tensor product. From here, the commutativity follows immediately from~\cref{Interaction of delta and integral on the morphism level}.
\end{proof}

\section{Cotensors in closed $\E$-modules}\label{Section Cotensors in closed E-modules}

In this section, we consider cotensors in a closed $\E$-module $\D$. If $\E$ is symmetric monoidal, we show that these are part of a closed $\E$-action on the opposite derivator $\D\op$. The main result of this section is~\cref{Left adjoint preserves tensors iff right adjoint preserves cotensors}, which proves a derivator analogue of the well-known fact that a left adjoint preserves tensors if and only if its right adjoint preserves cotensors. In particular, this result implies that, for any $X\in\D(\A)$, the morphism $\widetilde{\map}_{\D}(X,-):\D\rightarrow\E^{\A\op}$ preserves cotensors. This is the content of~\cref{map(X -) preserves cotensors}. Throughout this section, let $\E$ be a symmetric monoidal derivator.

\begin{example}\label{Opposite of a closed E-module is an E-module}
If $\D$ is a closed $\E$-module, then $\D\op$ is a (right) $\E$-module via the map 
\[
\vartriangleleft\op:\D\op\times\E\rightarrow\D\op.
\]
See~\cite[Section 4]{GS17}. The structure isomorphisms are induced, using the adjunctions of~\cref{2-variable adjunction characterisation}, by the coherent isomorphisms for the cancelling tensor of~\cref{Coherence for the cancelling version of the closed E-action}. Using the symmetry isomorphism for $\E$, this right $\E$-module structure induces a left $\E$-module structure. The $\E$-module structure is closed, with the required adjoints given by~\cref{2-variable adjunction characterisation}.
\end{example}

\begin{remark}\label{mapping space in the opposite closed E-module}
Let $\D$ be a closed $\E$-module, let $\A$ be a category, and let $X\in\D(\A)$. Using the descriptions of the various adjoints in~\cref{2-variable adjunction characterisation}, we have an isomorphism
\[
\widetilde{\map}_{\D\op}(X,-)\iso\widetilde{\map}_{\D}(-,X):\D\op\rightarrow\E^{\A}.
\]
\end{remark}

\begin{definition}
Let $\D_1$ and $\D_2$ be closed $\E$-modules. We say that a derivator map $G:\D_1\rightarrow\D_2$ \textbf{preserves cotensors} if its opposite $G\op:\D_1\op\rightarrow\D_2\op$ is an $\E$-module map for the $\E$-module structures of~\cref{Opposite of a closed E-module is an E-module}. Similarly, a modification between cotensor-preserving maps is said to \textbf{respect cotensors} if its opposite respects tensors.
\end{definition}

\begin{example}\label{u* preserves cotensors}
Let $\D$ be a closed $\E$-module and let $u:\A\rightarrow\B$ be a functor. Consider the derivator map $u^*:\D^\B\rightarrow\D^\A$. The opposite of this map is equal to
\[
(u\op)^*:(\D\op)^{\B\op}\rightarrow(\D\op)^{\A\op}.
\]
By~\cref{u* and alpha* respect the E-action}, this is an $\E$-module map. Thus, $u^*:\D^\B\rightarrow\D^\A$ preserves cotensors as well as tensors.
\end{example}

\begin{proposition}\label{Left adjoint preserves tensors iff right adjoint preserves cotensors}
Let $\D_1$ and $\D_2$ be closed $\E$-modules, and suppose we have an adjunction:
\begin{center}
\begin{tikzcd}
\D_1\arrow[rr, bend left=40, "F" above,""{name=U, below}]\arrow[rr,leftarrow, bend right=40, "R" below, ""{name=D}]
&& \D_2
\arrow[phantom,from=U,to=D,"\bot"]
\end{tikzcd}
\end{center}
Then $F$ preserves tensors if and only if $R$ preserves cotensors.
\end{proposition}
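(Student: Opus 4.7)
The plan is to exhibit the cotensor-preservation structure on $R$ as the mate of the tensor-preservation structure on $F$ and vice versa. Concretely, by \cref{Opposite of a closed E-module is an E-module} and \cref{mapping space in the opposite closed E-module}, $R$ preserving cotensors amounts to giving, for each category $\A$ and each $Y\in\E(\A)$, $W\in\D_2(\B)$, a natural (in all variables) isomorphism
\[
\psi: Y\;\widetilde{\vartriangleleft}\;RW \xrightarrow{\;\;\iso\;\;} R(Y\;\widetilde{\vartriangleleft}\;W),
\]
subject to the cotensor analogues of the axioms in \cref{E-module map definition}. My first step is to construct $\psi$ as the mate of $\varphi$ under the composite adjunctions
\[
(Y\;\widetilde{\otimes}\;-)\dashv (Y\;\widetilde{\vartriangleleft}\;-)\qquad\text{and}\qquad F\dashv R,
\]
where the first adjunction is a shifted instance of the two-variable adjunction of \cref{2-variable adjunction characterisation}. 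Explicitly, $\psi$ is obtained by pasting the unit of $F\dashv R$, the map $\varphi$, and the counits of $F\dashv R$ and of $(Y\;\widetilde{\otimes}\;-)\dashv(Y\;\widetilde{\vartriangleleft}\;-)$.

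Next, I would invoke the fact that mates of natural isomorphisms under adjunctions are themselves isomorphisms iff the original is (they are conjugate in the sense of \cite[Chapter IV.7]{MacLane13}). This immediately gives that $\psi$ is an isomorphism precisely when $\varphi$ is. The symmetric construction, starting from $\psi$ and producing $\varphi$ as its mate, handles the reverse direction, so the two data are equivalent as \emph{transformations}.

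The main obstacle, and the bulk of the work, is verifying that the coherence conditions transfer under this mate correspondence. That is, I must show that the associativity pentagon and unit triangle of \cref{E-module map definition} for $\varphi$ are equivalent to the analogous coherence diagrams for $\psi$. The plan here is a standard but careful diagram chase: each coherence diagram, viewed as an equality of pasting composites, can be transported through the adjunction $F\dashv R$ and the two-variable adjunction $(\otimes,\vartriangleright,\vartriangleleft)$ by the functoriality of mates (\cite[Section 2]{KS74}). For the associativity axiom, I would paste the coherence square for $\varphi$ with units and counits and use the associativity coherence of the cancelling tensor from \cref{Coherence for the cancelling version of the closed E-action} (together with \cref{Coherence for the cancelling product gives coherence for the internal product}) to rearrange into the cotensor form. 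The unit axiom transfers by a shorter diagram chase using the analogous unit coherence. Once both coherence diagrams are verified to correspond, we conclude that a tensor-preservation structure on $F$ is the same data as a cotensor-preservation structure on $R$, and in particular each exists as an isomorphism iff the other does.
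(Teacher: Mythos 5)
Your overall strategy---define the cotensor-preservation structure on $R$ as the conjugate of the tensor-preservation structure on $F$, observe that conjugacy interchanges isomorphisms with isomorphisms, and transfer coherence by functoriality of mates---is precisely the route the paper takes, including the observation that one direction suffices by passing to opposite derivators. But there are two points that need correction or more care.

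First, the adjunction you cite, $(Y\;\widetilde{\otimes}\;-)\dashv(Y\;\widetilde{\vartriangleleft}\;-)$, is not an adjunction for general $Y\in\E(\A)$. By \cref{2-variable adjunction characterisation}, the right adjoint of the external product $Y\;\widetilde{\otimes}\;-:\D\rightarrow\D^{\A}$ is $\int_{\A\op}(-\;\widetilde{\vartriangleleft}\;Y)$, with an end glued on; the two external products are \emph{not} directly adjoint. What genuinely has $-\;\widetilde{\vartriangleleft}\;X$ as a right adjoint is the \emph{cancelling} product $X\otimes_{\A\op}-:\D^{\A\op}\rightarrow\D$. The paper exploits this by first invoking \cref{Cartesian closure of PDer} to reduce the desired prederivator isomorphism to a family of isomorphisms $R(-\;\widetilde{\vartriangleleft}\;X)\iso R(-)\;\widetilde{\vartriangleleft}\;X:\D_2\rightarrow\D_1^{\B\op}$ indexed by $X\in\E(\B)$, and takes these to be conjugate to the isomorphism $\varphi^{-1}:X\otimes_{\B\op}F(-)\Rightarrow F(X\otimes_{\B\op}-)$ between the left adjoints (the internal/cancelling form of the tensor-preservation structure, from \cref{Cocontinuous E-module maps preserve the cancelling product}). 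You should replace the external product by the cancelling product throughout your construction.

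Second, the emphasis in your outline is slightly misplaced. Transferring the associativity pentagon and unit triangle of \cref{E-module map definition} from $\varphi$ to $\psi$ is indeed dispatched by conjugacy, and the paper handles it in a single sentence. The genuinely laborious step is the one you fold into "natural in all variables": verifying that the conjugate isomorphisms assemble into a \emph{modification} in $X$, which is required for the family of pointwise isomorphisms to constitute a prederivator morphism structure at all. That verification is not a formality: the paper checks it via an explicit equality of pasting diagrams, and the key input is the commutative diagram of \cref{Maps respects delta u}, which relates the structure isomorphisms $\gamma$ of the derivator morphism $F$ to the cancelling-tensor data $\otimes_u$ and $\varphi$. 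You would need to make this step explicit for the argument to be complete.
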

\begin{proof}
If the left adjoint $F$ preserves tensors, we will show that the right adjoint $R$ preserves cotensors. Once we prove this, applying the result to the opposite adjunction 
\begin{center}
\begin{tikzcd}
(\D_2)\op\arrow[rr, bend left=40, "R\op" above,""{name=U, below}]\arrow[rr,leftarrow, bend right=40, "F\op" below, ""{name=D}]
&& (\D_1)\op
\arrow[phantom,from=U,to=D,"\bot"]
\end{tikzcd}
\end{center}
proves the reverse implication as well.

Suppose $F$ preserves tensors. Taking the opposite of the structure isomorphism in~\cref{E-module map definition}, we require a coherent isomorphism:
\begin{center}
\begin{tikzcd}
 \D_2\times\E\op \arrow{rr}[above]{R\times \E\op}\arrow{dd}[left]{\vartriangleleft} && 
 \D_1\times\E\op \arrow{dd}[right]{\vartriangleleft} \\
 
 \\
 
 \D_2 \arrow{rr}[below]{R}\arrow[uurr,Rightarrow,"\iso" above left,shorten >=1.3cm,shorten <=1.3cm] && 
 \D_1

\end{tikzcd}
\end{center} 
Using~\cref{Cartesian closure of PDer}, to give the desired isomorphism, we may equivalently provide isomorphisms
\begin{center}
\begin{tikzcd}
 \D_2 \arrow{rr}[above]{R}\arrow{dd}[left]{-\;\widetilde{\vartriangleleft}\;X} && 
 \D_1\arrow{dd}[right]{-\;\widetilde{\vartriangleleft}\;X} \\
 
 \\
 
 \D_2^{\B\op} \arrow{rr}[below]{R}\arrow[uurr,Rightarrow,"\iso" above left,shorten >=1cm,shorten <=1cm] && 
 \D_1^{\B\op}

\end{tikzcd}
\end{center} 
for each category $\B$, and each $X\in\E(\B)$, such that these isomorphisms organise into a modification in $X$. Take these to be conjugate to the isomorphism between left adjoints:
\begin{center}
\begin{tikzcd}
 \D_2  && 
 \D_1\arrow[ll,"F" above] \\
 
 \\
 
 \D_2^{\B\op} \arrow[uu,"X\otimes_{\B\op}-" left]\arrow[uurr,Leftarrow,"\iso" above left,"\varphi^{-1}" below right,shorten >=1cm,shorten <=1cm] && 
 \D_1^{\B\op}\arrow[ll,"F" below]\arrow[uu,"X\otimes_{\B\op}-" right]

\end{tikzcd}
\end{center} 
This isomorphism is defined in~\cref{Cocontinuous E-module maps preserve the cancelling product}. The description of the left adjoints follows from~\cref{2-variable adjunction characterisation}.

Let $u:\A\rightarrow\B$ be a functor. To see that the isomorphisms above form a modification in $X$, we need the pasting diagrams below to be equal:
\begin{center}
\begin{tikzcd}
&& \D_2^{\A\op}\arrow[rr,"R" above] && \D_1^{\A\op} \\\\
 \D_2\arrow[bend right,ddrr,"R" below left]
 \arrow[bend left,rruu,"-\;\widetilde{\vartriangleleft}\;u^*X" above left,""{name=C}]
 \arrow[rr,"-\;\widetilde{\vartriangleleft}\;X" below] && 
 \D_2^{\B\op}\arrow[uu,"(u\op)^*" right]\arrow[rr,"R" below] 
 && \D_1^{\B\op}\arrow[uu,"(u\op)^*" right] \arrow[Rightarrow,lluu,"\iso" above right,shorten >=0.8cm,shorten <=0.9cm,shift right]
 \\\\
&& \D_1 \arrow[rruu,bend right,"-\;\widetilde{\vartriangleleft}\;X" below right]

\arrow[Leftarrow,from=C,to=5-3,shorten >=2.8cm,shorten <=0.5cm,"\iso" above right, near start, shift left=4]
\arrow[Leftarrow,from=C,to=5-3,shorten >=0.3cm,shorten <=3cm,"\iso" above right, very near end, shift left=4]
\end{tikzcd}
\hspace{0.5cm}
\begin{tikzcd}
&& \D_2^{\A\op}\arrow[rr,"R" above] && \D_1^{\A\op} \\\\
 \D_2\arrow[bend right,ddrr,"R" below left]
 \arrow[bend left,rruu,"-\;\widetilde{\vartriangleleft}\;u^*X" above left,""{name=T}]
 &&&& \D_1^{\B\op}\arrow[uu,"(u\op)^*" right] 
 \\\\
&& \D_1 \arrow[rruuuu,bend left,"-\;\widetilde{\vartriangleleft}\;u^*X" above left]\arrow[rruu,bend right,"-\;\widetilde{\vartriangleleft}\;X" below right,""{name=C}]
\arrow[Rightarrow,from=C,to=1-3,shorten >=2.3cm,shorten <=0.8cm,"\iso" above right, near start,shift right]
\arrow[Rightarrow,from=5-3,to=T,shorten >=1.4cm,shorten <=1.7cm,"\iso" above right, shift left]
\end{tikzcd}

\end{center} 
We can check this equality by taking mates under the adjunctions $F(X\otimes_{\B\op}-)\dashv R(-)\;\widetilde{\vartriangleleft}\;X$ and $u^*X\otimes_{\A\op}F(-)\dashv R(-\;\widetilde{\vartriangleleft}\;u^*X)$. 

Using the definition of the isomorphism $R(-\;\widetilde{\vartriangleleft}\;X)\iso R(-)\;\widetilde{\vartriangleleft}\;X$ above, and the fact that
\begin{center}
\begin{tikzcd}
&& \D_2^{\A\op}\\\\
 \D_2
 \arrow[bend left,rruu,"-\;\widetilde{\vartriangleleft}\;u^*X" above left,""{name=C}]
 \arrow[rr,"-\;\widetilde{\vartriangleleft}\;X" below] && 
 \D_2^{\B\op}\arrow[uu,"(u\op)^*" right]
\arrow[Leftarrow,from=C,to=3-3,shorten >=0.6cm,shorten <=0.5cm,"\iso" above right]
\end{tikzcd}
\end{center} 
is conjugate to the canonical isomorphism
\begin{center}
\begin{tikzcd}
&& \D_2^{\A\op}\\\\
 \D_2
 \arrow[leftarrow,bend left,rruu,"u^*X\mathsmaller{\otimes_{\A\op}}-" above left,""{name=C}]
 \arrow[leftarrow,rr,"X\mathsmaller{\otimes_{\B\op}}-" below] && 
 \D_2^{\B\op}\arrow[leftarrow,uu,"(u\op)_!" right]
\arrow[Rightarrow,from=C,to=3-3,shorten >=0.6cm,shorten <=0.5cm,"\iso" above right]
\end{tikzcd}
\end{center} 
the equality follows by the commutative diagram in~\cref{Maps respects delta u} for $F$.

The coherence conditions for $R$ follow from the coherence conditions for $F$, once again using conjugacy.
\end{proof}

\begin{lemma}\label{Modification respects tensors iff conjugate respects cotensors}
Suppose we have closed $\E$-modules $\D_1$ and $\D_2$, and a pair of adjunctions
\begin{center}
\begin{tikzcd}
\D_1\arrow[rr, bend left=40, "F_1" above,""{name=U, below}]\arrow[rr,leftarrow, bend right=40, "R_1" below, ""{name=D}]
&& \D_2
\arrow[phantom,from=U,to=D,"\bot"]
\end{tikzcd}
\hspace{1.5cm}
\begin{tikzcd}
\D_1\arrow[rr, bend left=40, "F_2" above,""{name=U, below}]\arrow[rr,leftarrow, bend right=40, "R_2" below, ""{name=D}]
&& \D_2
\arrow[phantom,from=U,to=D,"\bot"]
\end{tikzcd}
\end{center}
such that the left adjoints $F_1$ and $F_2$ preserve tensors. Then a modification $\theta:F_1\Rightarrow F_2$ respects tensors if and only if its conjugate $\vartheta:R_2\Rightarrow R_1$ respects cotensors.
\end{lemma}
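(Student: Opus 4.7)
The plan is to realise both conditions as equalities of pasting composites in $\PDer$ and to transfer between them using the functoriality of the mate correspondence, building on the proof of \cref{Left adjoint preserves tensors iff right adjoint preserves cotensors}. For $i=1,2$, write $\varphi_i$ for the tensor structure isomorphism making $F_i$ an $\E$-module map, and write $\psi_i$ for the cotensor structure isomorphism making $R_i$ preserve cotensors, as constructed in the proof of \cref{Left adjoint preserves tensors iff right adjoint preserves cotensors}. The condition that $\theta$ respects tensors, as in \cref{E-module modification definition}, amounts to the equality of two composite modifications $\otimes\circ(\E\times F_1)\Rightarrow F_2\circ\otimes$: one obtained by pasting $\varphi_1$ with $\theta\circ\otimes$, the other by pasting $\otimes\circ(\E\times\theta)$ with $\varphi_2$. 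Dually, $\vartheta$ respects cotensors precisely when the analogous equality holds with $\otimes$ replaced by $\vartriangleleft$, each $F_i$ by $R_i$, each $\varphi_i$ by $\psi_i$, and $\theta$ by $\vartheta$.

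Next, I would invoke the explicit description from the proof of \cref{Left adjoint preserves tensors iff right adjoint preserves cotensors}: for each $i$, the cotensor structure $\psi_i$ is defined as the conjugate of $\varphi_i^{-1}$ under the adjunctions $F_i\dashv R_i$ together with the two-variable adjunctions $-\otimes Y\dashv -\vartriangleleft Y$ coming from the closed $\E$-action on both $\D_1$ and $\D_2$. By hypothesis, $\vartheta$ is the conjugate of $\theta$ under $F_1\dashv R_1$ and $F_2\dashv R_2$. Because mates are compatible with compositions, whiskerings and identities, as recorded in \cite[Section 2]{KS74}, the mate correspondence sends equalities of pasting composites to equalities of their mates in a bijective fashion.

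Applying this formal machinery to the two pasting composites expressing the tensor condition on $\theta$, and taking mates through the appropriate combination of $F_i\dashv R_i$ and tensor/cotensor adjunctions, transforms each of them into the corresponding pasting composite appearing in the cotensor condition on $\vartheta$. The desired equivalence then follows from bijectivity of the mate correspondence. The main obstacle will be the combinatorial bookkeeping: there are three adjunctions whose mates must be taken in tandem, so some care is needed to ensure the mating is performed through the correct composite and to keep track of directions. Reducing the two-variable structure to single-variable modifications parametrised by $Y\in\E(\A)$ via \cref{Cartesian closure of PDer}, or equivalently working with the cancelling-tensor formulation of \cref{E-module modifications respect the cancelling product}, should make this bookkeeping manageable.
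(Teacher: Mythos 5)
Your proposal is correct and follows essentially the same strategy as the paper's proof: fix a weight in a single variable, express both modification conditions as equalities of pasting diagrams, and pass between them by conjugation under $F_i\dashv R_i$ combined with the tensor/cotensor adjunctions, using the explicit description of the cotensor structure from the proof of \cref{Left adjoint preserves tensors iff right adjoint preserves cotensors} and invoking \cref{E-module modifications respect the cancelling product} to close the argument.
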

\begin{proof}
Let $\theta:F_1\Rightarrow F_2$ be a modification. Suppose $\theta$ respects tensors. We will show that its conjugate $\vartheta:R_2\Rightarrow R_1$ respects cotensors. As in the proof of~\cref{Left adjoint preserves tensors iff right adjoint preserves cotensors}, the converse follows from this result by taking opposites.

Let $\A$ be a category and let $X\in\E(\A)$. We need to show that the pasting diagrams below are equal:
\begin{center}
\begin{tikzcd}
 \D_2 \arrow[rr,"-\;\widetilde{\vartriangleleft}\;X"]\arrow[dd,"R_1" right,bend left=40,""{name=R,left}]\arrow[dd,"R_2" left,bend right=40,""{name=L,right}] && 
 \D_2^{\A\op}\arrow[Leftarrow,ddll,"\iso" above=3,shorten >=0.9cm,shorten <=0.7cm,shift left=2] \arrow[dd,"R_1"] 
 && \D_2 \arrow[rr,"-\;\widetilde{\vartriangleleft}\;X"]\arrow[dd,"R_2" left] 
 && \D_2^{\A\op}\arrow[Leftarrow,ddll,"\iso" above left,shorten >=0.8cm,shorten <=0.9cm, shift right=2] \arrow[dd,"R_2" left,bend right=40,""{name=U,right}]\arrow[dd,"R_1" right,bend left=40,""{name=D,left}]
 \\\\
\D_1\arrow[rr,"-\;\widetilde{\vartriangleleft}\;X" below] 
&& \D_1^{\A\op}
&& \D_1\arrow[rr,"-\;\widetilde{\vartriangleleft}\;X" below] 
&& \D_1^{\A\op}
\arrow[Leftarrow,from=R,to=L,shorten >=0.1cm,shorten <=0.1cm,"\vartheta" above,shift left]
\arrow[Leftarrow,from=D,to=U,shorten >=0.1cm,shorten <=0.1cm,"\vartheta" above,shift left]
\end{tikzcd}
\end{center}
Taking conjugates, and using the definition of the isomorphisms $R_i(-\;\widetilde{\vartriangleleft}\;X)\iso R_i(-)\;\widetilde{\vartriangleleft}\;X$ from the proof of~\cref{Left adjoint preserves tensors iff right adjoint preserves cotensors}, these pasting diagrams are equal if and only if the pasting diagrams below are equal:
\begin{center}
\begin{tikzcd}
 \D_2 \arrow[leftarrow,rr,"X\otimes_{\A\op}-"]\arrow[leftarrow,dd,"F_1" right,bend left=40,""{name=R,left}]\arrow[leftarrow,dd,"F_2" left,bend right=40,""{name=L,right}] && 
 \D_2^{\A\op}\arrow[Rightarrow,ddll,"\iso" above=3,"\varphi" below right,shorten >=0.9cm,shorten <=0.7cm,shift left=2] \arrow[leftarrow,dd,"F_1"] 
 && \D_2 \arrow[leftarrow,rr,"X\otimes_{\A\op}-"]\arrow[leftarrow,dd,"F_2" left] 
 && \D_2^{\A\op}\arrow[Rightarrow,ddll,"\iso" above left,"\varphi" below right,shorten >=0.8cm,shorten <=0.9cm, shift right=2] \arrow[leftarrow,dd,"F_2" left,bend right=40,""{name=U,right}]\arrow[leftarrow,dd,"F_1" right,bend left=40,""{name=D,left}]
 \\\\
\D_1\arrow[leftarrow,rr,"X\otimes_{\A\op}-" below] 
&& \D_1^{\A\op}
&& \D_1\arrow[leftarrow,rr,"X\otimes_{\A\op}-" below] 
&& \D_1^{\A\op}
\arrow[Rightarrow,from=R,to=L,shorten >=0.1cm,shorten <=0.1cm,"\theta" above,shift left]
\arrow[Rightarrow,from=D,to=U,shorten >=0.1cm,shorten <=0.1cm,"\theta" above,shift left]
\end{tikzcd}
\end{center}
Since $\theta$ respects tensors, these two pasting diagrams are equal by~\cref{E-module modifications respect the cancelling product}.
\end{proof}

\begin{example}\label{map(X -) preserves cotensors}
Let $\D$ be a closed $\E$-module, and let $X\in\D(\A)$. The derivator map $-\otimes_\A X:\E^{\A\op}\rightarrow\D$ preserves tensors. To see this, note that the external component of the structure isomorphism from~\cref{E-module map definition} has the form $(W\widetilde{\otimes}Z)\otimes_\A X\xrightarrow{\;\;\iso\;\;}W\widetilde{\otimes}(Z\otimes_\A X)$, for any $Z\in\E(\A\op\times\B)$ and $W\in\E(\C)$. This is given by the composite below:
\begin{center}
\begin{tikzcd}[column sep=40]
\int^\A(W\widetilde{\otimes} Z)\widetilde{\otimes} X \arrow[r,"\int^\A \alpha"]
& \int^\A W\widetilde{\otimes} (Z\widetilde{\otimes} X) \arrow[r,"\iso"] &  W\widetilde{\otimes} \int^\A(Z\widetilde{\otimes} X) 
\end{tikzcd}
\end{center}
The second isomorphism in the composite above comes from the cocontinuity of $\otimes$. The internal component of this map can be recovered using \cref{Bimorphisms}. By \cref{Left adjoint preserves tensors iff right adjoint preserves cotensors}, it follows that 
\[
\widetilde{\map}_{\D}(X,-):\D\rightarrow\E^{\A\op}
\]
preserves cotensors. 
\end{example}

\begin{example}
Let $\D$ be a closed $\E$-module and let $u:\A\rightarrow\B$ be a functor. By \cref{u* and alpha* respect the E-action} and~\cref{u* preserves cotensors}, the map $u^*:\D^\B\rightarrow\D^\A$ preserves both tensors and cotensors. Thus, by \cref{Left adjoint preserves tensors iff right adjoint preserves cotensors}, its left adjoint $u_!:\D^\A\rightarrow\D^\B$ preserves tensors, and its right adjoint $u_*:\D^\A\rightarrow\D^\B$ preserves cotensors.
\end{example}

\section{A representability theorem for triangulated $\E$-modules}\label{Section A representability theorem for triangulated E-modules}

We begin this section with a characterisation of cocontinuous $\E$-module maps from $\E^{\A\op}$ to a closed $\E$-module $\D$; using the results of \cref{Section The cancelling tensor product}, it is easy to see that any such map is isomorphic to $-\otimes_\A X:\E^{\A\op}\rightarrow\D$, for some object $X\in\D(\A)$. When $\E$ is triangulated, we can combine this observation with \cref{Left adjoint preserves tensors iff right adjoint preserves cotensors}, to prove a representability theorem for triangulated closed $\E$-modules. This is \cref{Representability for perfectly generated triangulated derivator}, the final result of this chapter.

\begin{proposition}\label{Universal property of E shifted by A}
Let $\D$ be an $\E$-module and let $\A$ be a category. The functor 
\[
\Hom^\E_!(\E^{\A\op},\D)\rightarrow\D(\A),
\]
given by evaluation at $\h_\A\in\E(\A\op\times\A)$, is an equivalence.
\end{proposition}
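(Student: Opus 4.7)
The plan is to construct an explicit quasi-inverse functor
\[
L:\D(\A)\to\Hom^\E_!(\E^{\A\op},\D),\qquad X\mapsto(-\otimes_\A X),
\]
where $-\otimes_\A X:\E^{\A\op}\to\D$ is the derivator map of \cref{Definition of cancelling product on morphisms} sending $W\in\E^{\A\op}(\B)=\E(\A\op\times\B)$ to $W\otimes_\A X\in\D(\B)$, and a morphism $f:X\to Y$ in $\D(\A)$ to the modification $-\otimes_\A f$. The map $L(X)$ is cocontinuous, since it factors as $-\widetilde{\otimes}\,X$ followed by the left adjoint $\int^\A$; its $\E$-module structure isomorphism is supplied by the associativity $\alpha:(Z\,\widetilde{\otimes}\,W)\otimes_\A X\iso Z\,\widetilde{\otimes}\,(W\otimes_\A X)$ of \cref{Coherence for the cancelling version of the closed E-action}, and the hexagon and unit conditions of \cref{E-module map definition} reduce directly to conditions (1) and (2) of that same proposition. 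That $L(f)$ is an $\E$-module modification is immediate from \cref{E-module modifications respect the cancelling product}.

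Next, I would verify that $\mathrm{ev}_{\h_\A}\circ L$ is naturally isomorphic to $\id_{\D(\A)}$: this is the unit isomorphism $\lambda:\h_\A\otimes_\A X\iso X$ from \cref{Coherence for the cancelling version of the closed E-action}, whose naturality in $X$ comes from the fact that $\lambda$ is itself a modification. For essential surjectivity of $L$, given $F\in\Hom^\E_!(\E^{\A\op},\D)$, set $X:=F(\h_\A)$ and apply \cref{Cocontinuous E-module maps preserve the cancelling product} to $F$ with the lemma's $Y=W\in\E(\A\op\times\B)$ and its $X$ replaced by $\h_\A\in\E^{\A\op}(\A)$. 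This yields a natural isomorphism $W\otimes_\A F(\h_\A)\iso F(W\otimes_\A\h_\A)$ in $\D(\B)$; composing with $F(\rho):F(W\otimes_\A\h_\A)\iso F(W)$, where $\rho$ is the unit isomorphism of \cref{Coherence for the cancelling version of the closed E-action}, produces the desired isomorphism $L(X)(W)\iso F(W)$. The coherence diagrams in \cref{Cocontinuous E-module maps preserve the cancelling product} ensure that this assembles into an $\E$-module modification.

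For full faithfulness, given $X,Y\in\D(\A)$, I would show that an $\E$-module modification $\theta:L(X)\Rightarrow L(Y)$ is uniquely determined by its component $\theta_{\h_\A}$, and hence by the morphism $f:X\to Y$ corresponding to $\theta_{\h_\A}$ under $\lambda$. The key step is to apply \cref{E-module modifications respect the cancelling product} to $\theta$: the commuting square there, with $W\in\E(\A\op\times\B)$ and the lemma's $X$ set to $\h_\A$, forces $\theta_{W\otimes_\A\h_\A}$ to agree with $W\otimes_\A\theta_{\h_\A}$ up to the structure isomorphism $\varphi$. Transporting along $\rho:W\otimes_\A\h_\A\iso W$ then gives $\theta_W=W\otimes_\A f$, i.e.\ $\theta=L(f)$. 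Injectivity of $f\mapsto L(f)$ follows from the same identity evaluated at $W=\h_\A$.

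The main obstacle will be the bookkeeping needed to verify that the $\E$-module structure on $L(X)$ satisfies the coherence axioms of \cref{E-module map definition}, and, symmetrically, that the constructed isomorphism $F\iso L(F(\h_\A))$ is compatible with the $\E$-module structures on both sides. These verifications are not conceptually subtle---all the required coherences are packaged in \cref{Section The cancelling tensor product}---but they involve careful tracking of how the internal, external, and cancelling variants of $\otimes$ interact with coends and with one another through $\alpha$, $\lambda$, and $\rho$.
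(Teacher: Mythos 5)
Your proposal is correct and takes essentially the same approach as the paper: the same quasi-inverse $X\mapsto -\otimes_\A X$, the same use of $\lambda$ for the composite $\mathrm{ev}\circ L$, the same $\varphi$-then-$F(\rho)$ composite for the other direction, and the same reliance on \cref{Cocontinuous E-module maps preserve the cancelling product} and \cref{E-module modifications respect the cancelling product}. The only difference is organizational — you package the second half as ``$L$ is essentially surjective and fully faithful'' whereas the paper packages it as a single natural isomorphism $L\circ\mathrm{ev}\iso\id$ and checks its naturality in $F$; both finishes invoke the same modification lemma, and your fullness step additionally uses the $\lambda$/$\rho$/$\alpha$ triangle identity from \cref{Coherence for the cancelling version of the closed E-action} to identify $\theta_W$ with $W\otimes_\A f$, which is a genuine (if routine) extra verification.
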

\begin{proof}
We claim that the functor
\begin{center}
\begin{tikzcd}[row sep=small]
\D(\A)\arrow[r] & \Hom^\E_!(\E^{\A\op},\D)\\
X \arrow[mapsto,r,shorten >=0.8cm,shorten <=0.6cm] & -\otimes_\A X
\end{tikzcd}
\end{center}
is an inverse equivalence for evaluation at $\h_\A$. To see this, we must show that both composites are naturally isomorphic to the identity.

Suppose we have an object $X\in\D(\A)$. The isomorphisms 
\[
\lambda:\h_\A\otimes_\A X\xrightarrow{\;\;\;\iso\;\;\;} X
\]
of \cref{Coherence for the cancelling version of the closed E-action} are natural in $X$, so these provide one of the required isomorphisms. 

For the other isomorphism, let $F:\E^{\A\op}\rightarrow\D$ be a cocontinuous $\E$-module map. For any category $\B$ and any $Z\in\E(\A\op\times\B)$, consider the isomorphism
\[
Z\otimes_\A F\h_\A\xrightarrow{\;\;\;\varphi\;\;\;} F(Z\otimes_\A \h_\A)\xrightarrow{\;\;\;F(\rho)\;\;\;} FZ,
\]
where $\varphi$ is the map of \cref{Cocontinuous E-module maps preserve the cancelling product}, and $\rho$ is the map of \cref{Coherence for the cancelling version of the closed E-action}. This is a modification in $Z$; we must check that it respects tensors. It suffices to check this on the external product, but this follows from the first commutative diagram of \cref{Cocontinuous E-module maps preserve the cancelling product}, using \cref{Coherence for the cancelling product gives coherence for the internal product}. Thus, we have an isomorphism
\begin{center}
\begin{tikzcd}
-\otimes_\A F\h_\A\arrow[r,Rightarrow,"\iso"] & F
\end{tikzcd}
\end{center}
in $\Hom^\E_!(\E^{\A\op},\D)$. Finally, we need to show that this isomorphism is natural in $F$. That is, given cocontinuous $\E$-module maps $F,G:\E^{\A\op}\rightarrow\D$ and an $\E$-module modification $\beta:F\Rightarrow G$, we need the diagram below to commute, for any $Z\in\E(\A\op\times\B)$:
\begin{center}
\begin{tikzcd}
Z\otimes_\A F\h_\A\arrow{r}{\varphi}\arrow[dd,"Z\otimes_\A \beta_{\h_\A}" left] & F(Z\otimes_\A \h_\A)\arrow{r}{F(\rho)} & FZ\arrow[dd,"\beta_Z" right]\\\\
Z\otimes_\A G\h_\A\arrow{r}[below]{\varphi} & G(Z\otimes_\A \h_\A)\arrow{r}[below]{G(\rho)} & GZ
\end{tikzcd}
\end{center} 
This commutes by~\cref{E-module modifications respect the cancelling product}.
\end{proof}

\begin{definition}\label{Representable map from a closed E-module}
Let $\D$ be a closed $\E$-module, and let $\A$ be a category. We say a derivator map $F:\D\op\rightarrow\E^{\A}$ is \textbf{representable} if there is an object $X\in\D(\A)$ and an isomorphism:
\[
F\iso\widetilde{\map}_{\D}(-,X):\D\op\rightarrow\E^{\A}
\]
There is also a dual concept for maps of the form $F:\D\rightarrow\E^{\A\op}$.
\end{definition}

\begin{theorem}\label{Representability for perfectly generated triangulated derivator}
Let $\E$ be a symmetric monoidal derivator, let $\D$ be a closed $\E$-module, and let $\A$ be a category. Suppose that $\E$ and $\D$ are triangulated, and that, for any category $\C$, the triangulated category $\D(\C)$ satisfies Brown representability. Then a derivator map
\[
F:\D\op\rightarrow\E^{\A}
\]
is representable if and only if it is continuous and preserves cotensors.
\end{theorem}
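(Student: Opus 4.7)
The ``only if'' direction is essentially automatic. If $F\iso\widetilde{\map}_{\D}(-,X)$ for some $X\in\D(\A)$, then by \cref{mapping space in the opposite closed E-module} $F$ is identified with $\widetilde{\map}_{\D\op}(X,-):\D\op\rightarrow\E^{\A}$, which is the mapping space morphism of the closed $\E$-module $\D\op$ from \cref{Opposite of a closed E-module is an E-module}. This map is a right adjoint in a two-variable adjunction, so it is continuous, and it preserves cotensors by \cref{map(X -) preserves cotensors} applied to $\D\op$.

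For the converse, the plan is to pass to the opposite and invoke the adjoint functor theorem. Suppose $F:\D\op\rightarrow\E^{\A}$ is continuous and preserves cotensors. Then $F\op:\D\rightarrow(\E^{\A})\op$ is cocontinuous and, by definition of preservation of cotensors, is an $\E$-module morphism. The derivator $(\E^{\A})\op$ is triangulated, since stability and \textbf{Der 5} are preserved by shifts and by passing to opposites, so \cref{Adjoint functor theorem for triangulated derivators} applies and produces a right adjoint $G:(\E^{\A})\op\rightarrow\D$ of $F\op$.

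Taking opposites again yields an adjunction $G\op\dashv F$ with $G\op:\E^{\A}\rightarrow\D\op$. The left adjoint $G\op$ is cocontinuous, and by \cref{Left adjoint preserves tensors iff right adjoint preserves cotensors} applied to the adjunction $G\op\dashv F$, it preserves tensors precisely because $F$ preserves cotensors. Thus $G\op$ is a cocontinuous $\E$-module morphism. Invoking \cref{Universal property of E shifted by A} with the $\E$-module $\D\op$ and the category $\A\op$ produces an object $X\in\D\op(\A\op)=\D(\A)$ and an isomorphism $G\op\iso -\otimes_{\A\op}X$ in $\Hom^\E_!(\E^{\A},\D\op)$. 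Passing to right adjoints, $F\iso\widetilde{\map}_{\D\op}(X,-)$, which, by \cref{mapping space in the opposite closed E-module}, is isomorphic to $\widetilde{\map}_{\D}(-,X)$, so that $F$ is representable.

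The main obstacle is essentially bookkeeping: confirming that the opposite shifted derivator $(\E^{\A})\op=\E\op^{\A\op}$ is triangulated (so that \cref{Adjoint functor theorem for triangulated derivators} applies), and carefully tracking variance when moving between $\D$ and $\D\op$ and between the two descriptions of the mapping space. Once this setup is in place, the proof is a direct chain of applications of \cref{Adjoint functor theorem for triangulated derivators}, \cref{Left adjoint preserves tensors iff right adjoint preserves cotensors}, \cref{Universal property of E shifted by A}, and \cref{mapping space in the opposite closed E-module}.
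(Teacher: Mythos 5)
Your proof is correct and follows essentially the same path as the paper's: apply \cref{Adjoint functor theorem for triangulated derivators} to $F\op$ to get a left adjoint, transfer cotensor-preservation of $F$ to tensor-preservation of the left adjoint via \cref{Left adjoint preserves tensors iff right adjoint preserves cotensors}, identify the left adjoint as a cancelling tensor via \cref{Universal property of E shifted by A}, and pass back to right adjoints. One small slip that does not affect the argument: $\D\op(\A\op)=\D(\A)\op$ rather than $\D(\A)$, though the two have the same objects so the conclusion $X\in\D(\A)$ is still correct.
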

\begin{proof}
Note that a continuous, cotensor-preserving map from $\D\op$ to $\E^{\A}$ takes homotopy colimits in $\D$ to homotopy limits in $\E^\A$, and tensors in $\D$ to cotensors in $\E^\A$. For the forward implication, let $X\in\D(\A)$, and consider the derivator map
\[
\widetilde{\map}_{\D}(-,X):\D\op\rightarrow\E^{\A}.
\]
This map is continuous, and it preserves cotensors by \cref{mapping space in the opposite closed E-module} and \cref{map(X -) preserves cotensors}.

On the other hand, suppose we have a continuous, cotensor-preserving map $F:\D\op\rightarrow\E^\A$. By \cref{Adjoint functor theorem for triangulated derivators} (applied to $F\op$), this map must have a left adjoint $G:\E^\A\rightarrow\D\op$. By \cref{Left adjoint preserves tensors iff right adjoint preserves cotensors}, $G$ preserves tensors. Thus, by \cref{Universal property of E shifted by A}, we have an isomorphism
\[
G\iso G\h_\A \vartriangleleft_\A -:\E^\A\rightarrow \D\op.
\]
Here $\vartriangleleft_\A$ is the cancelling version of the $\E$-action on $\D\op$ from \cref{Opposite of a closed E-module is an E-module}. But we know that the right adjoint of this map is $\widetilde{\map}_{\D}(-,G\h_\A):\D\op\rightarrow\E^{\A}$. Thus, this map $\widetilde{\map}_{\D}(-,G\h_\A)$ must be isomorphic to $F$.
\end{proof}

Note that, by~\cref{Perfectly generated triangulated categories satisfy representability} and~\cref{Perfectly generated triangulated derivator}, the previous theorem applies to any triangulated derivator $\D$ whose underlying category is perfectly generated. 

In particular, applying this theorem with $\E=\dHo(\Spt)$, we get the following special case: given a triangulated derivator $\D$ whose underlying category is perfectly generated, a derivator map
\[
F:\D\op\rightarrow\dHo(\Spt)
\]
is representable if and only if it is continuous.

\chapter{$\E$-Categories}\label{Chapter E-categories}

In this chapter, we study $\E$-categories, a precursor to the $\E$-prederivators and $\E$-derivators of~\cref{Chapter Enriched derivators}. In~\cref{Section Basic definitions of E-categories} we develop the basic theory  and give a number of examples; in particular, in~\cref{E-modules are E-categories}, we prove that any closed $\E$-module gives rise to an associated $\E$-category. In~\cref{Section The yoneda lemma and adjunctions for E-categories}, we prove a Yoneda lemma for $\E$-categories, and use this to study $\E$-category adjunctions. The Yoneda lemma is extremely useful, and we use it repeatedly as we develop the theory of $\E$-prederivators in~\cref{Chapter Enriched derivators}. We end this chapter with~\cref{Section Transferring enrichments}, in which we study monoidal morphisms, and prove that enrichment can be transferred along a monoidal adjunction. 

Throughout the rest of the thesis, unless otherwise specified, $\E$ will denote a closed symmetric monoidal derivator.

\section{Basic definitions}\label{Section Basic definitions of E-categories}

In this section, we define $\E$-categories, and develop the basic theory that we will need in subsequent sections. Many of the results in this section are formally similar to the development of basic enriched category theory in~\cite{Kelly82}. Note, in fact, that an $\E$-category in the sense of~\cref{E-category definition} is a $\Prof(\E)$-category in the sense of~\cite{Street83}; that is, a category enriched over the bicategory $\Prof(\E)$ of~\cref{Definition of Prof(E)}. However, a number of fundamental constructions, such as the mapping space $\E$-morphisms of~\cref{mapping space is an E-morphism}, are not definable over a general enriching bicategory. Thus, the majority of the results in this section do not follow from any general results that we know of in the bicategorical literature, so we include a complete account of the theory, starting from the definition. 

\begin{definition}\label{E-category definition}
An \textbf{$\E$-category} $\eA$ consists of the following data:
\begin{itemize}
\item For each small category $\A$, a (large) set of objects $\dA_0(\A)$.
\item For any two objects $X\in\dA_0(\A)$ and $Y\in\dA_0(\B)$, an object $\widetilde{\map}_{\dA}(X,Y)\in\E(\A\op\times\B)$.
\item For any three objects $X\in\dA_0(\A)$, $Y\in\dA_0(\B)$ and $Z\in\dA_0(\C)$, a map
\[
\circ:\widetilde{\map}_{\dA}(Y,Z)\otimes_{\B}\widetilde{\map}_{\dA}(X,Y)\rightarrow\widetilde{\map}_{\dA}(X,Z)
\]
in $\E(\A\op\times\C)$, which we call \textbf{composition}.
\item For every object $X\in\dA_0(\A)$, a map
\[
j:\h_\A\rightarrow\widetilde{\map}_{\dA}(X,X)
\]
in $\E(\A\op\times\A)$, which we call the \textbf{unit}.
\end{itemize}
These must satisfy the following coherence conditions:
\begin{enumerate}
\item For any $X\in\dA_0(\A)$, $Y\in\dA_0(\B)$, $Z\in\dA_0(\C)$ and $W\in\dA_0(\mathrm{D})$, the diagram below commutes:
\begin{center}
\begin{tikzcd}[column sep=-1cm]
\widetilde{\map}_{\dA}(Z,W)\otimes_{\C}(\widetilde{\map}_{\dA}(Y,Z)\otimes_{\B}\widetilde{\map}_{\dA}(X,Y))\arrow[leftarrow,rd,"\alpha",controls={+(5,0)and +(-1,0.8)}]
\arrow{ddd}[left]{\widetilde{\map}_{\dA}(Z,W)\otimes_{\C}\;\circ}\\
 
& (\widetilde{\map}_{\dA}(Z,W)\otimes_{\C}\widetilde{\map}_{\dA}(Y,Z))\otimes_{\B}\widetilde{\map}_{\dA}(X,Y)\arrow{dd}{\circ\;\otimes_{\B}\widetilde{\map}_{\dA}(X,Y)}\\\\
 
\widetilde{\map}_{\dA}(Z,W)\otimes_{\C}\widetilde{\map}_{\dA}(X,Z)\arrow[ddr,"\circ" below left, bend right=20] &
\widetilde{\map}_{\dA}(Y,W)\otimes_{\B}\widetilde{\map}_{\dA}(X,Y)\arrow{dd}[right]{\circ}  \\\\
 
&\widetilde{\map}_{\dA}(X,W)
\end{tikzcd}
\end{center} 
\item For any $X\in\dA_0(\A)$ and $Y\in\dA_0(\B)$, the diagram below commutes:
\begin{center}
\begin{tikzcd}
\h_\B\otimes_{\B}\widetilde{\map}_{\dA}(X,Y)\arrow{rr}{j\otimes_{\B}\widetilde{\map}_{\dA}(X,Y)}\arrow[bend right=20]{rdd}[below left]{\lambda} & & \widetilde{\map}_{\dA}(Y,Y)\otimes_{\B}\widetilde{\map}_{\dA}(X,Y)\arrow[ldd,"\circ", bend left=20]\\\\
& \widetilde{\map}_{\dA}(X,Y)
\end{tikzcd}
\end{center} 
\item For any $X\in\dA_0(\A)$ and $Y\in\dA_0(\B)$, the diagram below commutes:
\begin{center}
\begin{tikzcd}
\widetilde{\map}_{\dA}(X,Y)\otimes_{\A}\h_\A\arrow{rr}{\widetilde{\map}_{\dA}(X,Y)\otimes_{\A} j}\arrow[bend right=20]{rdd}[below left]{\rho} & & \widetilde{\map}_{\dA}(X,Y)\otimes_{\A}\widetilde{\map}_{\dA}(X,X)\arrow[ldd,"\circ", bend left=20]\\\\
& \widetilde{\map}_{\dA}(X,Y)
\end{tikzcd}
\end{center} 
\end{enumerate}
Here the maps $\alpha$, $\lambda$ and $\rho$ are those of~\cref{Coherence for the cancelling version of the closed E-action}.
\end{definition}

\begin{definition}\label{E-morphism definition}
Let $\eA$ and $\eB$ be $\E$-categories. An \textbf{$\E$-morphism} $F:\eA\rightarrow\eB$ consists of the following data:
\begin{itemize}
\item For any category $\A$, and any $X\in\dA_0(\A)$, an object $FX\in\dB_0(\A)$.
\item For any objects $X\in\dA_0(\A)$ and $Y\in\dA_0(\B)$, a map
\[
F:\widetilde{\map}_{\dA}(X,Y)\rightarrow\widetilde{\map}_{\dB}(FX,FY)
\]
in $\E(\A\op\times\B)$.
\end{itemize}
These must satisfy the following coherence conditions:
\begin{enumerate}
\item For any $X\in\dA_0(\A)$, $Y\in\dA_0(\B)$ and $Z\in\dA_0(\C)$, the diagram below commutes:
\begin{center}
\begin{tikzcd}
\widetilde{\map}_{\dA}(Y,Z)\otimes_{\B}\widetilde{\map}_{\dA}(X,Y)\arrow{rr}[above]{\circ}\arrow{dd}[left]{F\otimes_{\B}F} && 
 \widetilde{\map}_{\dA}(X,Z) \arrow{dd}[right]{F} \\\\
 \widetilde{\map}_{\dB}(FY,FZ)\otimes_{\B}\widetilde{\map}_{\dB}(FX,FY)\arrow{rr}[below]{\circ} && 
\widetilde{\map}_{\dB}(FX,FZ)
\end{tikzcd}
\end{center} 
\item For any $X\in\dA_0(\A)$, the diagram below commutes:
\begin{center}
\begin{tikzcd}
\;&&& \widetilde{\map}_{\dA}(X,X)\arrow{dd}[right]{F} \\
 
\h_\A\arrow[bend left=13]{rrru}[above left]{j}\arrow[bend right=15]{rrrd}[below left]{j}\\
 
\;&&& \widetilde{\map}_{\dB}(FX,FX)
\end{tikzcd}
\end{center} 
\end{enumerate}
\end{definition}

\begin{definition}\label{E-natural transformation definition}
Let $F,G:\eA\rightarrow\eB$ be $\E$-morphisms. An \textbf{$\E$-natural transformation} $\beta:F\Rightarrow G$ consists of maps
\[
\beta_X:\h_\A\rightarrow\widetilde{\map}_{\dB}(FX,GX)
\]
in $\E(\A\op\times\A)$, for every category $\A$, and every $X\in\dA_0(\A)$. These must make the diagram below commute, for any $X\in\dA_0(\A)$ and $Y\in\dA_0(\B)$:

\begin{center}
\begin{tikzcd}
\h_\B\otimes_\B\widetilde{\map}_{\dA}(X,Y)\arrow{rr}[above]{\beta_Y\;\otimes_\B F} && \widetilde{\map}_{\dB}(FY,GY)\otimes_\B\widetilde{\map}_{\dB}(FX,FY)\arrow{dd}[right]{\circ} \\\\
 
\widetilde{\map}_{\dA}(X,Y)\arrow{uu}[left]{\lambda^{-1}}\arrow{dd}[left]{\rho^{-1}} &&\widetilde{\map}_{\dB}(FX,GY)\\\\
 
\widetilde{\map}_{\dA}(X,Y)\otimes_\A\h_\A\arrow{rr}[below]{G\;\otimes_\A\beta_X} && \widetilde{\map}_{\dB}(GX,GY)\otimes_\A\widetilde{\map}_{\dB}(FX,GX)\arrow{uu}[right]{\circ}
\end{tikzcd}
\end{center} 
\end{definition}

The following construction can be carried out for categories enriched over any bicategory (see~\cite[Section 2]{Street83}):

\begin{remark}\label{A([0]) is E([0])-enriched}\label{Underlying categories of an E-category}
Let $\eA$ be an $\E$-category, and let $\A$ be a category. It is immediate from \cref{E-category definition} that the set of objects $\dA_0(\A)$ are the objects of an $\E(\A\op\times\A)$-enriched category, where $\E(\A\op\times\A)$ is equipped with the monoidal structure induced by \cref{E shifted by Aop x A is monoidal}. Given objects $X,Y\in\dA_0(\A)$, the mapping object between them is given by $\widetilde{\map}_{\dA}(X,Y)\in\E(\A\op\times\A)$, and units and composition are inherited from the $\E$-category structure. We will denote this $\E(\A\op\times\A)$-enriched category by $\eA(\A)$. 

Denote the underlying category of this $\E(\A\op\times\A)$-enriched category by $\dA(\A)$. To avoid confusion with the established meaning of the underlying category of a prederivator, we will call $\dA(\A)$ the \textbf{category induced by $\dA$ at $\A$.} 

Explicitly, $\dA(\A)$ is defined as follows: objects are given by the set $\dA_0(\A)$, and, for any $X,Y\in\dA_0(\A)$, we have
\[
\dA(\A)(X,Y)=\E(\A\op\times\A)(\h_\A,\widetilde{\map}_{\dA}(X,Y)).
\]
For any object $X\in\dA_0(\A)$, the identity map is given by the unit 
\[
j:\h_\A\rightarrow\widetilde{\map}_{\dA}(X,X)
\]
in $\E(\A\op\times\A)(\h_\A,\widetilde{\map}_{\dA}(X,X))$. Given objects $X,Y,Z\in\dA_0(\A)$ and maps
\[
f:\h_\A\rightarrow\widetilde{\map}_{\dA}(X,Y)
\]
\[
g:\h_\A\rightarrow\widetilde{\map}_{\dA}(Y,Z)
\]
their composite is given by the following:
\begin{center}
\begin{tikzcd}
\h_\A\arrow{r}{\iso} & \h_\A\otimes_\A \h_\A\arrow{r}{\;\;\;g\;\otimes_\A f\;\;\;\;\;} & \widetilde{\map}_{\dA}(Y,Z)\otimes_{\A}\widetilde{\map}_{\dA}(X,Y)\arrow{r}{\circ} & \widetilde{\map}_{\dA}(X,Z)
\end{tikzcd}
\end{center} 
The unnamed isomorphism is $\lambda^{-1}=\rho^{-1}:\h_\A\rightarrow\h_\A\otimes_\A\h_\A$.

Similarly, any $\E$-morphism $F:\eA\rightarrow\eB$ induces an $\E(\A\op\times\A)$-functor $F:\eA(\A)\rightarrow\eB(\A)$. The underlying functor of this $\E(\A\op\times\A)$-functor is called the \textbf{functor induced by $F$ at $\A$}. It takes any object $X\in\dA(\A)$ to $FX\in\dB(\A)$, and given a map 
\[
f:\h_\A\rightarrow\widetilde{\map}_{\dA}(X,Y)
\]
in $\dA(\A)$, its image is given by
\begin{center}
\begin{tikzcd}
\h_\A\arrow{r}{f} & \widetilde{\map}_{\dA}(X,Y)\arrow{r}{F} & \widetilde{\map}_{\dB}(FX,FY).
\end{tikzcd}
\end{center} 

Suppose we have $\E$-morphisms $F,G:\eA\rightarrow\eB$. Any $\E$-natural transformation $\beta:F\Rightarrow G$ induces an $\E(\A\op\times\A)$-natural transformation
\begin{center}
\begin{tikzcd}
\eA(\A)\arrow[r, bend left=50, "F" above,""{name=U, below}]\arrow[r, bend right=50, "G" below, ""{name=D}]
& \eB(\A).
\arrow[Rightarrow,from=U,to=D,shorten >=0.1cm,shorten <=0.1cm,"\beta"]
\end{tikzcd}
\end{center}
For any $X\in\dA(\A)$, the component
\[
\beta_X:\h_\A\rightarrow\widetilde{\map}_{\dB}(FX,GX)
\]
may be thought of as a map in $\dB(\A)$. It follows immediately from the $\E$-naturality condition that these form a natural transformation between the functors induced by $F$ and $G$, which we call the \textbf{natural transformation induced by $\beta$ at $\A$.}
\end{remark}

\begin{lemma}\label{2-category of E-categories}
$\E$-Categories, $\E$-morphisms and $\E$-natural transformations form a $2$-category $\ECat$. Moreover, any category $\A$ induces a $2$-functor:
\begin{align*}
\ECat &\rightarrow \; \CAT\\
 \eA \;\; &\mapsto \;\;\; \dA(\A)
\end{align*}
\end{lemma}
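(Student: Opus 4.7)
The plan is to observe that, by \cref{Definition of Prof(E)}, an $\E$-category is precisely a category enriched over the bicategory $\Prof(\E)$ in the sense of~\cite{Street83}. Under this identification, the definitions of $\E$-morphism and $\E$-natural transformation given in \cref{E-morphism definition} and \cref{E-natural transformation definition} are exactly the standard definitions of $\Prof(\E)$-enriched functor and $\Prof(\E)$-enriched natural transformation. Thus the first statement of the lemma can in principle be deduced from the general fact that categories enriched over a fixed bicategory form a $2$-category. However, since we will need the explicit formulas later, I would instead proceed by constructing $\ECat$ directly.

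First I would define composition of $\E$-morphisms: given $F:\eA\to\eB$ and $G:\eB\to\eC$, set $(GF)X := G(FX)$ on objects, and define the mapping space component as the composite
\[
\widetilde{\map}_{\dA}(X,Y)\xrightarrow{\;F\;}\widetilde{\map}_{\dB}(FX,FY)\xrightarrow{\;G\;}\widetilde{\map}_{\dC}(GFX,GFY).
\]
The identity $\E$-morphism $\id_{\eA}$ is the identity on objects and on each mapping space. The two coherence conditions of \cref{E-morphism definition} for $GF$ follow by pasting the corresponding diagrams for $F$ and $G$. Next, I would define vertical composition of $\E$-natural transformations $\alpha:F\Rightarrow G$ and $\beta:G\Rightarrow H$ by
\[
(\beta\alpha)_X:\h_\A\xrightarrow{\;\lambda^{-1}\;}\h_\A\otimes_\A\h_\A\xrightarrow{\;\beta_X\otimes_\A\alpha_X\;}\widetilde{\map}_{\dB}(GX,HX)\otimes_\A\widetilde{\map}_{\dB}(FX,GX)\xrightarrow{\;\circ\;}\widetilde{\map}_{\dB}(FX,HX),
\]
with identity $(\id_F)_X := j:\h_\A\to\widetilde{\map}_{\dB}(FX,FX)$. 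Horizontal composition of $\alpha:F\Rightarrow F'$ (between maps $\eA\to\eB$) and $\gamma:G\Rightarrow G'$ (between maps $\eB\to\eC$) is defined in the usual way, via either of the two composites that are forced to agree by the $\E$-naturality conditions. The $\E$-naturality of each composite, associativity of vertical composition, and the middle-four interchange law all reduce, via the coherence results for the cancelling tensor in \cref{Coherence for the cancelling version of the closed E-action}, to diagram chases in the mapping object $\widetilde{\map}_{\dC}(-,-)$ of $\eC$. The main obstacle is the interchange axiom, which will require combining the $\E$-naturality square from \cref{E-natural transformation definition} with the associativity and unit coherence diagrams of \cref{Coherence for the cancelling version of the closed E-action}, but it is purely a diagram chase once the right pieces are fitted together.

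For the $2$-functor $\ECat\to\CAT$ at a fixed category $\A$, the construction on objects is already supplied by \cref{A([0]) is E([0])-enriched}: $\eA$ is sent to the underlying category $\dA(\A)$ of the $\E(\A\op\times\A)$-enriched category $\eA(\A)$. On $1$-cells and $2$-cells, I would use the assignments described in \cref{Underlying categories of an E-category}: an $\E$-morphism $F$ is sent to the functor induced by $F$ at $\A$, and an $\E$-natural transformation $\beta$ to the natural transformation induced by $\beta$ at $\A$. To check $2$-functoriality, note that the passage $\eA\mapsto\eA(\A)$ sends $\E$-morphisms to $\E(\A\op\times\A)$-functors and $\E$-natural transformations to $\E(\A\op\times\A)$-natural transformations, and this is strictly $2$-functorial because composition, identities and both compositions of $2$-cells in $\ECat$ were defined using precisely the composition and units in each $\eB(\B)$. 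The remaining step $\eA(\A)\mapsto\dA(\A)$ is the standard underlying-category $2$-functor from $\E(\A\op\times\A)$-enriched categories to $\CAT$, implemented by applying $\E(\A\op\times\A)(\h_\A,-)$ to hom-objects; this is $2$-functorial by a routine check that the formulas for composition of morphisms and of natural transformations in $\dA(\A)$ are compatible with the corresponding formulas in $\ECat$ under $\E(\A\op\times\A)(\h_\A,-)$, using the isomorphism $\h_\A\iso\h_\A\otimes_\A\h_\A$. Composing these two $2$-functorial assignments yields the desired $2$-functor.
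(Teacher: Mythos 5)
Your proposal is correct and follows essentially the same approach as the paper: define composition of $\E$-morphisms on objects and mapping spaces by stacking the structure maps, define vertical and horizontal composition of $\E$-natural transformations componentwise using the category structure of each $\dC(\A)$, and then observe that the $2$-category axioms reduce to equations of components in the categories $\dC(\A)$; the paper packages this last step by noting they "follow from the corresponding facts for the $2$-category $\CAT$." Your factoring of the $2$-functor through $\eA(\A)$, the $\E(\A\op\times\A)$-enriched category, followed by the underlying-category $2$-functor, matches the two-step construction already set up in the paper's \cref{Underlying categories of an E-category}, so the decomposition is the same in substance.
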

\begin{proof}
We need to define composition for $\E$-morphisms and $\E$-natural transformations. First, suppose we have $\E$-morphsims $F:\eA\rightarrow\eB$ and $G:\eB\rightarrow\eC$. The composite $G\circ F$ takes an object $X\in\dA_0(\A)$ to $GFX\in\dC_0(\A)$, and the action on mapping spaces is simply given by
\begin{center}
\begin{tikzcd}
\widetilde{\map}_{\dA}(X,Y)\arrow{r}{F} & \widetilde{\map}_{\dB}(FX,FY)\arrow{r}{G} & \widetilde{\map}_{\dC}(GFX,GFY)
\end{tikzcd}
\end{center} 
for any $X\in\dA_0(\A)$ and $Y\in\dA_0(\B)$.

Given $\E$-natural transformations $\alpha:F\Rightarrow G$ and $\beta: G\Rightarrow H$, their vertical composite $\beta\cdot\alpha:F\Rightarrow H$ has component at $X\in\dA_0(\A)$ given by the composite $\beta_X\circ\alpha_X$ in $\dB(\A)$. Note that an $\E$-natural transformation is an isomorphism if and only if each of its components is an isomorphism. 

Given $\E$-morphisms $F,H:\eA\rightarrow\eB$ and $G,K:\eB\rightarrow\eC$, suppose we have $\E$-natural transformations $\alpha:F\Rightarrow H$ and $\beta:G\Rightarrow K$. Their horizontal composite $\beta\circ\alpha$ has component at $X\in\dA_0(\A)$ given by 
\[
\beta_{HX}\circ G(\alpha_X)=K(\alpha_X)\circ\beta_{FX}
\]
in $\dC(\A)$.

It is routine to check that the composite of $\E$-morphisms is an $\E$-morphism, and that the vertical and horizontal composites above define $\E$-natural transformations. The $2$-category axioms for this data follow from the corresponding facts for the $2$-category $\CAT$.
\end{proof}


\begin{example}\label{Full sub-E-category}
Let $\eA$ be an $\E$-category. For each category $\A$, suppose we have a set of objects $\dB_0(\A)\subseteq\dA_0(\A)$. The \textbf{full sub-$\E$-category} of $\eA$ on these objects is the $\E$-category $\eB$ defined as follows:
\begin{itemize}
\item For each category $\A$, the objects are $\dB_0(\A)$.
\item For any two objects $X\in\dB_0(\A)$ and $Y\in\dB_0(\B)$, the mapping object is $\widetilde{\map}_{\dA}(X,Y)\in\E(\A\op\times\B)$.
\item Units and composition are inherited from $\eA$.
\end{itemize}
The $\E$-category axioms for $\eB$ follow immediately from the axioms for $\eA$. Note that, for any category $\A$, the induced category $\dB(\A)$ is the full subcategory of $\dA(\A)$ on the objects $\dB_0(\A)$.
\end{example}

\begin{definition}\label{Opposite E-category}
Let $\eA$ be an $\E$-category. We may form the \textbf{opposite} $\E$-category $\eA\op$ as follows:
for any category $\A$, we define 
\[
\dA\op_0(\A)=\dA_0(\A\op),
\]
and, given $X\in\dA_0(\A\op)$ and $Y\in\dA_0(\B\op)$, define
\[
\widetilde{\map}_{\dA\op}(X,Y)=\sigma^*\widetilde{\map}_{\dA}(Y,X)\in\E(\A\op\times\B),
\]
where $\sigma:\A\op\times\B\xrightarrow{\;\;\iso\;\;}\B\times\A\op$ is the canonical isomorphism.

For any $X\in\dA_0(\A\op)$, the unit is given by the composite below
\[
\h_\A\xrightarrow{\;\;\;\iso\;\;\;}\sigma^*\h_{\A\op}\xrightarrow{\;\;\;\sigma^*j\;\;\;}\sigma^*\widetilde{\map}_{\dA}(X,X)
\]
where the unnamed isomorphism is an instance of~\cref{Coends over A and Aop}, as in~\cref{Symmetry for the cancelling tensor}.

Similarly, given objects $X\in\dA_0(\A\op)$, $Y\in\dA_0(\B\op)$ and $Z\in\dA_0(\C\op)$, composition is as follows:
\begin{center}
\begin{tikzcd}
\sigma^*\widetilde{\map}_{\dA}(Z,Y)\otimes_{\B}\sigma^*\widetilde{\map}_{\dA}(Y,X)\arrow[r,"\tau"]
& \sigma^*(\widetilde{\map}_{\dA}(Y,X)\otimes_{\B\op}\widetilde{\map}_{\dA}(Z,Y))\arrow[d,"\sigma^*(\circ)" right]\\
& \sigma^*\widetilde{\map}_{\dA}(Z,X)
\end{tikzcd}
\end{center}
Here $\tau$ is the canonical isomorphism of~\cref{Symmetry for the cancelling tensor}.
\end{definition}

The $\E$-category axioms for $\eA\op$ follow easily from the axioms for $\eA$, using the coherence of~\cref{Symmetry for the cancelling tensor}.

\begin{remark}
For any $\E$-category $\eA$, and any category $\A$, we have an isomorphism of categories
\[
\dA\op(\A)\iso\dA(\A\op)\op.
\]
To see this, note that the objects of both categories are the same by definition. Moreover, given two objects $X,Y\in\dA_0(\A\op)$, we have:
\begin{align*}
\dA\op(\A)(X,Y) & = \E(\A\op\times\A)(\h_\A,\sigma^*\widetilde{\map}_{\dA}(X,Y))\\
 & \iso \E(\A\op\times\A)(\sigma^*\h_{\A\op},\sigma^*\widetilde{\map}_{\dA}(Y,X))\\
 & \iso \E(\A\times\A\op)(\h_{\A\op},\widetilde{\map}_{\dA}(Y,X))\\
 & = \dA(\A\op)(Y,X)
\end{align*}
It is easy to check that this bijection respects identities and composition,  using~\cref{Symmetry for the cancelling tensor}. 
\end{remark}

\begin{remark}
In the obvious way, we can extend the opposite $\E$-category construction of~\cref{Opposite E-category} to $\E$-morphisms and $\E$-natural transformations. We obtain a $2$-functor on $\ECat$, which preserves the direction of $1$-cells but reverses the direction of $2$-cells.
\end{remark}

\begin{theorem}\label{E-modules are E-categories}
Let $\D$ be a closed $\E$-module, with action: 
\begin{align*}
\otimes&:\E\times\D\rightarrow\D\\
\vartriangleleft&:\D\times\E\op\rightarrow\D\\
\map_\D(-,-)&:\D\op\times\D\rightarrow\E
\end{align*}
Then there is an associated $\E$-category $\eD$ such that, for any category $\A$, the induced category $\D(\A)$ of~\cref{Underlying categories of an E-category} recovers the value at $\A$ of the derivator $\D$.
\end{theorem}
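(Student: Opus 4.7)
The plan is to construct $\eD$ by transporting the adjoint structure of $(\otimes, \vartriangleright, \vartriangleleft)$ into the enriched-category framework, using the external mapping space as the hom-object. Concretely, set $\dD_0(\A)$ to be the class of objects of $\D(\A)$. Given $X\in\dD_0(\A)$ and $Y\in\dD_0(\B)$, I will take the hom-object to be the external version of $\map_\D$ from \cref{Bimorphisms} applied to $(X,Y)$, viewed as a pair in $\D\op(\A\op)\times\D(\B)$, so that $\widetilde{\map}_\D(X,Y)\in\E(\A\op\times\B)$, as required. The unit $j:\h_\A\to\widetilde{\map}_\D(X,X)$ will be taken as the mate, under the adjunction $-\otimes_\A X\dashv \widetilde{\map}_\D(X,-)$ of Theorem~\ref{2-variable adjunction characterisation} (equivalently, \cref{Closed E-module definition}), of the unit isomorphism $\lambda:\h_\A\otimes_\A X\xrightarrow{\iso}X$ from \cref{Coherence for the cancelling version of the closed E-action}.

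For composition, I will exploit the counits of the same adjunctions. For each pair $X\in\D(\A)$, $Y\in\D(\B)$, the counit provides an evaluation $\epsilon:\widetilde{\map}_\D(X,Y)\otimes_\A X\to Y$ in $\D(\B)$. Given a third object $Z\in\D(\C)$, the composite
\[
\bigl(\widetilde{\map}_\D(Y,Z)\otimes_\B \widetilde{\map}_\D(X,Y)\bigr)\otimes_\A X
\xrightarrow{\alpha}
\widetilde{\map}_\D(Y,Z)\otimes_\B\bigl(\widetilde{\map}_\D(X,Y)\otimes_\A X\bigr)
\xrightarrow{\widetilde{\map}_\D(Y,Z)\otimes_\B\epsilon}
\widetilde{\map}_\D(Y,Z)\otimes_\B Y
\xrightarrow{\epsilon}
Z,
\]
using the associativity $\alpha$ of \cref{Coherence for the cancelling version of the closed E-action}, corresponds under the adjunction $-\otimes_\A X\dashv \widetilde{\map}_\D(X,-)$ to a unique map $\circ:\widetilde{\map}_\D(Y,Z)\otimes_\B \widetilde{\map}_\D(X,Y)\to \widetilde{\map}_\D(X,Z)$ in $\E(\A\op\times\C)$. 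This is the required composition.

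Next I will verify the three axioms of \cref{E-category definition}. For associativity, the standard trick is to apply the adjunction to both sides of the axiom: after transposing across $-\otimes_\A X\dashv\widetilde{\map}_\D(X,-)$, the equation of maps out of $\widetilde{\map}_\D(Z,W)\otimes_\C(\widetilde{\map}_\D(Y,Z)\otimes_\B\widetilde{\map}_\D(X,Y))$ becomes an equation of maps from a fourfold iterated tensor into $W$, and both sides reduce, by repeated use of the counit triangle identity, to the pasting of three counits $\epsilon$. The pentagon coherence of $\alpha$ from \cref{Coherence for the cancelling version of the closed E-action}~(1) is exactly what is needed to match the two associators that appear. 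The unit axioms follow similarly: after transposition, they reduce to the compatibility of $\alpha$ with $\lambda$ and $\rho$ recorded in \cref{Coherence for the cancelling version of the closed E-action}~(2), using that $j$ was defined as the transpose of $\lambda$.

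Finally, for the identification $\dD(\A)\iso\D(\A)$, the hom-sets of $\dD(\A)$ are $\E(\A\op\times\A)(\h_\A,\widetilde{\map}_\D(X,Y))$, and the two-variable adjunction (applied via $X$ viewed in $\D(\A)$) gives a natural bijection with $\D(\A)(\h_\A\otimes_\A X,Y)\iso\D(\A)(X,Y)$ using $\lambda$. It remains to check this bijection sends $j$ to the identity (immediate from the definition of $j$) and sends the enriched composite of \cref{Underlying categories of an E-category} to ordinary composition in $\D(\A)$; the latter follows by unfolding the definition of the induced composition in $\dD(\A)$ and chasing through the adjunction using the triangle identity. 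The main obstacle is the associativity verification, which requires a careful pasting argument across the two-variable adjunction; all the coherence needed has been assembled in \cref{Section The cancelling tensor product}, in particular \cref{Coherence for the cancelling version of the closed E-action}, so the argument is in principle routine, but bookkeeping the repeated adjoint transpositions cleanly is the delicate step.
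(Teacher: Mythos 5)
Your proposal takes essentially the same route as the paper: objects of $\eD$ are those of $\D$, hom-objects are given by the external $\widetilde{\map}_\D$, units and composition are defined by transposing $\lambda$ and the iterated counit-and-associator composite across the adjunction $-\otimes_\A X\dashv\widetilde{\map}_\D(X,-)$, the axioms are checked by the same transposition argument using the coherence diagrams of \cref{Coherence for the cancelling version of the closed E-action}, and the identification of $\dD(\A)$ with $\D(\A)$ uses the same adjunction-induced bijection on hom-sets. The only difference is presentational; the paper writes out the same adjunct manipulations in slightly more detail.
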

\begin{proof}
The $\E$-category $\eD$ is defined as follows. For any category $\A$, the set $\D_0(\A)$ is the set of objects of $\D(\A)$. Given $X\in\D_0(\A)$ and $Y\in\D_0(\B)$, the mapping object $\widetilde{\map}_{\D}(X,Y)\in\E(\A\op\times\B)$ is their image under the functor:
\[
\widetilde{\map}_\D(-,-):\D(\A)\op\times\D(\B)\rightarrow\E(\A\op\times\B)
\]

To describe composition and units, let $X\in\D(\A)$, and consider the adjunction
\begin{center}
\begin{tikzcd}
\E^{\A\op}\arrow[rr, bend left=40, "-\;\otimes_\A X" above,""{name=U, below}]\arrow[rr,leftarrow, bend right=40, "\widetilde{\map}_\D(X\text{,}-)" below, ""{name=D}]
&& \D,
\arrow[phantom,from=U,to=D,"\bot"]
\end{tikzcd}
\end{center}
described in~\cref{2-variable adjunction characterisation}. Given any $X\in\D(\A)$, the unit 
\[
j:\h_\A\rightarrow\widetilde{\map}_\D(X,X)
\]
is adjunct under this adjunction to the canonical isomorphism
\[
\lambda:\h_\A\otimes_\A X\rightarrow X
\]
of~\cref{Coherence for the cancelling version of the closed E-action}.
Similarly, given objects $X\in\D(\A)$, $Y\in\D(\B)$ and $Z\in\D(\C)$, composition is adjunct to the following:
\begin{center}
\begin{tikzcd}
(\widetilde{\map}_{\D}(Y,Z)\otimes_{\B}\widetilde{\map}_{\D}(X,Y))\otimes_\A X\arrow[r,"\alpha"]
& \widetilde{\map}_{\D}(Y,Z)\otimes_{\B}(\widetilde{\map}_{\D}(X,Y)\otimes_\A X)\arrow[d,"\widetilde{\map}_{\D}(Y\text{,}Z)\;\mathsmaller{\otimes_{\B}}\;\epsilon" right]\\
& \widetilde{\map}_{\D}(Y,Z)\otimes_{\B} Y\arrow[d,"\epsilon" right]\\
& Z
\end{tikzcd}
\end{center}

To see that this data satisfies the axioms of~\cref{E-category definition}, we can replace each diagram that we need to commute by its adjunct. The first axiom then follows using the first commutative diagram of~\cref{Coherence for the cancelling version of the closed E-action}. The second and third axioms of~\cref{E-category definition} follow using the second commutative diagram in~\cref{Coherence for the cancelling version of the closed E-action}. 

For any category $\A$, we still need to show that the induced category of~\cref{Underlying categories of an E-category} recovers the value of $\D$ at $\A$. Given $X,Y\in\D(\A)$, consider the isomorphsims:
\begin{align*}
\D(\A)(X,Y) & \iso \D(\A)(\h_\A\otimes_\A X,Y) \\
 & \iso \E(\A\op\times\A)(\h_\A,\widetilde{\map}_{\D}(X,Y))
\end{align*}
Given a map $f:X\rightarrow Y$ in $\D(\A)$, write $\tilde{f}:\h_\A\rightarrow\widetilde{\map}_{\D}(X,Y)$ for the corresponding map in $\E(\A\op\times\A)$. Thus, $\tilde{f}$ is the unique map making the diagram below commute:
\begin{center}
\begin{tikzcd}
\h_\A\otimes_{\A} X\arrow{rr}[above]{\lambda}\arrow{dd}[left]{\tilde{f}\otimes_{\A}X} && 
 X \arrow{dd}[right]{f} \\\\
 \widetilde{\map}_{\D}(X,Y)\otimes_\A X\arrow{rr}[below]{\epsilon} && 
Y
\end{tikzcd}
\end{center} 
We claim that the description of identity and composition in~\cref{Underlying categories of an E-category} corresponds to identity and composition in $\D(\A)$ under these bijections. 

By definition, given any $X\in\D(\A)$, we have $\widetilde{\id_X}=j:\h_\A\rightarrow\widetilde{\map}_{\D}(X,X)$. This shows that the bijection respects identities. To see that it respects composition, suppose we have composable maps $X\xrightarrow{\;\;f\;\;} Y\xrightarrow{\;\;g\;\;} Z$ in $\D(\A)$. Using the definition of composition from \cref{Underlying categories of an E-category}, we need the diagram below to commute:
\begin{center}
\begin{tikzcd}
\h_\A\arrow{r}{\rho^{-1}}\arrow[drrr,"\widetilde{(g\circ f)}" below left, bend right=10] & \h_\A\otimes_\A \h_\A\arrow{rr}{\tilde{g}\otimes_\A \tilde{f}} && \widetilde{\map}_{\D}(Y,Z)\otimes_{\A}\widetilde{\map}_{\D}(X,Y)\arrow{d}{\circ} \\
&&& \widetilde{\map}_{\D}(X,Z)
\end{tikzcd}
\end{center} 
Taking the adjunct under the adjunction $-\otimes_\A X\dashv \widetilde{\map}_{\D}(X,-)$, and using the definition of composition in $\eD$ given above, we can see that this commutes.
\end{proof}

By~\cref{Shift of an E-module is an E-module}, for any category $\J$ and any closed $\E$-module $\D$, the shifted derivator $\D^\J$ is a closed $\E$-module. Thus, by~\cref{E-modules are E-categories}, $\D^\J$ gives rise to an associated $\E$-category. In particular, by~\cref{E is an E-module}, this is the case for $\E^\J$. More generally, we will prove in~\cref{Section Transferring enrichments} that, for any $\E$-category $\eA$ and any category $\J$, we may form a shifted $\E$-category $\eA^\J$.

\begin{proposition}\label{mapping space is an E-morphism}
Let $\eA$ be an $\E$-category, let $\A$ be a category, and let $X\in\dA_0(\A)$. The mapping objects in $\eA$ induce an $\E$-morphism:
\[
\widetilde{\map}_{\dA}(X,-):\eA\rightarrow\eE^{\A\op}
\]
\end{proposition}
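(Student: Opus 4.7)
The plan is to construct the $\E$-morphism using the adjunctions coming from the closed $\E$-module structure on $\E^{\A\op}$. Recall from \cref{E-modules are E-categories} that $\eE^{\A\op}$ is the $\E$-category associated to the closed $\E$-module $\E^{\A\op}$, and in particular, for $FY \in \E(\A\op\times\B) = (\E^{\A\op})_0(\B)$, we have the adjunction from \cref{2-variable adjunction characterisation}:
\[
-\otimes_\B FY : \E^{\B\op}\leftrightarrows\E^{\A\op} : \widetilde{\map}_{\E^{\A\op}}(FY,-).
\]

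First, I would set $FY := \widetilde{\map}_{\dA}(X,Y) \in \E(\A\op\times\B)$ on objects. For the action on mapping objects, given $Y\in\dA_0(\B)$ and $Z\in\dA_0(\C)$, the required map $F : \widetilde{\map}_{\dA}(Y,Z)\rightarrow\widetilde{\map}_{\eE^{\A\op}}(FY,FZ)$ in $\E(\B\op\times\C)$ is defined to be adjunct, under the adjunction displayed above, to the composition map
\[
\widetilde{\map}_{\dA}(Y,Z)\otimes_\B FY = \widetilde{\map}_{\dA}(Y,Z)\otimes_\B\widetilde{\map}_{\dA}(X,Y)\xrightarrow{\;\circ\;}\widetilde{\map}_{\dA}(X,Z) = FZ
\]
in $\E(\A\op\times\C)$. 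This is the obvious ``Yoneda'' construction: composition in $\eA$ with the fixed object $X$ gives the representable $\E$-morphism.

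It remains to verify the two axioms of \cref{E-morphism definition}. For both, the strategy is to pass to adjuncts under the appropriate instance of the adjunction $-\otimes_\B FY\dashv \widetilde{\map}_{\E^{\A\op}}(FY,-)$ and to unwind the description of units and composition in $\eE^{\A\op}$ coming from the proof of \cref{E-modules are E-categories}. For the composition axiom, the adjunct of the lower-left composite involves $\alpha$-reassociation followed by two applications of composition in $\eA$; the adjunct of the upper-right composite is one application of composition in $\eA$ preceded by an $\alpha$; the two are then identified by the associativity axiom for $\eA$, using \cref{Coherence for the cancelling version of the closed E-action} to control the reassociation of $\otimes_\B$ and $\otimes_\C$. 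For the unit axiom, the adjunct reduces, via the coherence of $\lambda$ and $\rho$ in \cref{Coherence for the cancelling version of the closed E-action}, to the unit axiom in $\eA$ applied to $\widetilde{\map}_{\dA}(X,Y)$.

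The main obstacle is purely bookkeeping: making sure that each adjunct is computed using the correct component of the adjunction (there are several instances, parameterised by $FY$, $FZ$, and their tensors with the mapping objects in $\eA$), and that the coherence isomorphisms $\alpha$, $\lambda$, $\rho$ of the cancelling tensor invoked on both sides of each axiom diagram match up. Once the adjuncts are written out carefully, the two $\E$-morphism axioms correspond exactly to the associativity and unit axioms of the $\E$-category $\eA$.
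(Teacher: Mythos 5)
Your proposal is correct and matches the paper's proof essentially verbatim: both define the action on objects by $Y\mapsto\widetilde{\map}_{\dA}(X,Y)$, define the structure map as the adjunct of composition under $-\otimes_\B\widetilde{\map}_{\dA}(X,Y)\dashv\widetilde{\map}_{\E^{\A\op}}(\widetilde{\map}_{\dA}(X,Y),-)$, and verify the two $\E$-morphism axioms by passing to adjuncts and using the description of units and composition in $\eE^{\A\op}$ from \cref{E-modules are E-categories}, reducing them to the corresponding $\E$-category axioms for $\eA$.
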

\begin{proof}
Let $X\in\dA_0(\A)$. On objects, the $\E$-morphism 
\[
\widetilde{\map}_{\dA}(X,-):\eA\rightarrow\eE^{\A\op}
\]
takes $Y\in\dA_0(\B)$ to $\widetilde{\map}_{\dA}(X,Y)\in\E(\A\op\times\B)$. Given a further object $Z\in\dA_0(\C)$, we need a map
\[
\widetilde{\map}_{\dA}(X,-):\widetilde{\map}_{\dA}(Y,Z)\rightarrow\widetilde{\map}_{\E^{\A\op}}(\widetilde{\map}_{\dA}(X,Y),\widetilde{\map}_{\dA}(X,Z))
\]
in $\E(\B\op\times\C)$. Consider the adjunction below:
\begin{center}
\begin{tikzcd}
\E^{\B\op}\arrow[rrr, bend left=40, "-\;\otimes_\B \widetilde{\map}_{\dA}(X\text{,}Y)" above,""{name=U, below}]\arrow[rrr,leftarrow, bend right=40, "\widetilde{\map}_{\E^{\A\op}}(\widetilde{\map}_{\dA}(X\text{,}Y)\text{,}-)" below, ""{name=D}]
&&& \;\E^{\A\op}
\arrow[phantom,from=U,to=D,"\bot"]
\end{tikzcd}
\end{center}
Under this adjunction, take the structure map to be adjunct to composition:
\[
\circ:\widetilde{\map}_{\dA}(Y,Z)\otimes_{\B}\widetilde{\map}_{\dA}(X,Y)\rightarrow\widetilde{\map}_{\dA}(X,Z)
\]
To prove that this satisfies the axioms of~\cref{E-morphism definition}, replace the diagrams that we need to commute by their adjuncts, and use the description of units and composition in $\eE^{\A\op}$ from~\cref{E-modules are E-categories}. The second axiom of~\cref{E-morphism definition} for $\widetilde{\map}_{\dA}(X,-)$ corresponds precisely to the second axiom of~\cref{E-category definition} for $\eA$. Similarly, the first axiom of~\cref{E-morphism definition} corresponds to the first of~\cref{E-category definition}.
\end{proof}

\begin{definition}\label{Representable E-morphism definition}
Let $F:\eA\rightarrow\eE^{\A\op}$ be an $\E$-morphism. We say that $F$ is \textbf{representable} if there is an object $X\in\dA_0(\A)$ and an $\E$-natural isomorphism:
\[
F\iso\widetilde{\map}_{\dA}(X,-):\eA\rightarrow\eE^{\A\op}
\]
\end{definition}

Let $\eA$ be an $\E$-category, let $\A$ and $\B$ be categories, and let $X\in\dA_0(\A)$. As in~\cref{Underlying categories of an E-category}, consider the functor
\[
\widetilde{\map}_{\dA}(X,-):\dA(\B)\rightarrow\E(\A\op\times\B)
\]
induced by the $\E$-morphism $\widetilde{\map}_{\dA}(X,-)$. Given any map $f:\h_\B\rightarrow\widetilde{\map}_{\dA}(Y,Z)$ in $\dA(\B)$, write 
\[
\widetilde{\map}_{\dA}(X,f):\widetilde{\map}_{\dA}(X,Y)\rightarrow\widetilde{\map}_{\dA}(X,Z)
\]
for its image in $\E(\A\op\times\B)$. Using the description of $\widetilde{\map}_{\dA}(X,-)$ from~\cref{mapping space is an E-morphism}, this is the composite below:
\begin{center}
\begin{tikzcd}
\widetilde{\map}_{\dA}(X,Y)\arrow[r,"\lambda^{-1}"]
& \h_\B\otimes_{\B}\widetilde{\map}_{\dA}(X,Y)\arrow[rrr,"f\otimes_\B\widetilde{\map}_{\dA}(X\text{,}Y)" above] &&& \widetilde{\map}_{\dA}(Y,Z)\otimes_{\B}\widetilde{\map}_{\dA}(X,Y)\arrow[d,"\circ" right]\\
&&&& \widetilde{\map}_{\dA}(X,Z)
\end{tikzcd}
\end{center}
Given any object $Y\in\dA_0(\B)=\dA\op_0(\B\op)$, we will write
\[
\widetilde{\map}_{\dA}(-,Y):=\widetilde{\map}_{\dA\op}(Y,-):\eA\op\rightarrow\eE^{\B}.
\]
As above, any map $g:\h_\A\rightarrow\widetilde{\map}_{\dA}(X,Z)$ in $\dA(\A)$ induces a map
\[
\widetilde{\map}_{\dA}(g,Y):\widetilde{\map}_{\dA}(Z,Y)\rightarrow\widetilde{\map}_{\dA}(X,Y)
\]
in $\E(\A\op\times\B)$. Explicitly, this is the following composite:
\begin{center}
\begin{tikzcd}
\widetilde{\map}_{\dA}(Z,Y)\arrow[r,"\rho^{-1}"]
& \widetilde{\map}_{\dA}(Z,Y)\otimes_\A\h_\A\arrow[rrr,"\widetilde{\map}_{\dA}(Z\text{,}Y)\otimes_\A g" above] &&& \widetilde{\map}_{\dA}(Z,Y)\otimes_\A\widetilde{\map}_{\dA}(X,Z)\arrow[d,"\circ" right]\\
&&&& \widetilde{\map}_{\dA}(X,Y)
\end{tikzcd}
\end{center}

\begin{remark}\label{Extracting f from map(f X)}
Let $\eA$ be an $\E$-category, let $\A$ be a category, and let $f:\h_\A\rightarrow\widetilde{\map}_{\dA}(X,Y)$ be a map in $\dA(\A)$. The unit axioms for $\eA$ imply that the diagram below commutes:
\begin{center}
\begin{tikzcd}
\h_\A\arrow{rrr}[above]{j}\arrow{dd}[left]{j}\arrow{rrrdd}[above right]{f} &&& 
 \widetilde{\map}_{\dA}(X,X) \arrow{dd}[right]{\widetilde{\map}_{\dA}(X,f)} \\\\
 \widetilde{\map}_{\dA}(Y,Y) \arrow{rrr}[below]{\widetilde{\map}_{\dA}(f,Y)} &&& 
\widetilde{\map}_{\dA}(X,Y)
\end{tikzcd}
\end{center} 
So, precomposing with $j$, we can recover the map $f$ from $\widetilde{\map}_{\dA}(X,f)$ or $\widetilde{\map}_{\dA}(f,Y)$.
\end{remark}

\begin{remark}
Suppose $\eD$ is the $\E$-category associated to a closed $\E$-module $\D$,  and let $X\in\D(\A)$ and $Y\in\D(\B)$. Then, for any category $\C$, the functors
\begin{align*}
\widetilde{\map}_{\D}(X,-)&:\D(\C)\rightarrow\E(\A\op\times\C)\\
\widetilde{\map}_{\D}(-,Y)&:\D(\C)\op\rightarrow\E(\C\op\times\B),
\end{align*}
induced by the $\E$-morphisms $\widetilde{\map}_{\D}(X,-)$ and $\widetilde{\map}_{\D}(-,Y)$, recover the component functors of the original derivator maps:
\begin{align*}
\widetilde{\map}_{\D}(X,-)&:\D\rightarrow\E^{\A\op}\\
\widetilde{\map}_{\D}(-,Y)&:\D\op\rightarrow\E^\B
\end{align*}
\end{remark}

Using the descriptions above of $\widetilde{\map}_{\dA}(X,-)$ and $\widetilde{\map}_{\dA}(-,Y)$, we can give a useful characterisation of $\E$-naturality:

\begin{lemma}\label{Rephrasing E-naturality}
Let $\eA$ and $\eB$ be $\E$-categories, and let $F,G:\eA\rightarrow\eB$ be $\E$-morphisms. Suppose we have a collection of maps
\[
\beta_X:\h_\A\rightarrow\widetilde{\map}_{\dB}(FX,GX),
\]
for every category $\A$, and every $X\in\dA_0(\A)$. These maps form an $\E$-natural transformation $\beta:F\Rightarrow G$ if and only if the diagram below commutes, for any $X\in\dA_0(\A)$ and $Y\in\dA_0(\B)$:
\begin{center}
\begin{tikzcd}
\widetilde{\map}_{\dA}(X,Y)\arrow{rrr}[above]{F}\arrow{dd}[left]{G} &&& 
 \widetilde{\map}_{\dB}(FX,FY) \arrow{dd}[right]{\widetilde{\map}_{\dB}(FX,\beta_Y)} \\\\
 \widetilde{\map}_{\dB}(GX,GY)\arrow{rrr}[below]{\widetilde{\map}_{\dB}(\beta_X,GY)} &&& 
\widetilde{\map}_{\dB}(FX,GY)
\end{tikzcd}
\end{center} 
\end{lemma}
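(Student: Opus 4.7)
The plan is to reduce both conditions to a common diagram by unfolding the explicit descriptions of $\widetilde{\map}_{\dB}(FX,\beta_Y)$ and $\widetilde{\map}_{\dB}(\beta_X,GY)$ recorded in the paragraph immediately preceding the lemma. Once this substitution is performed, I expect the top composite of the square to coincide with the upper path of the $\E$-naturality hexagon from \cref{E-natural transformation definition}, and the bottom composite to coincide with the lower path, so that the two conditions are manifestly equivalent.

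Concretely, for the top composite, I would rewrite $\widetilde{\map}_{\dB}(FX,\beta_Y)$ as the composite
\[
\widetilde{\map}_{\dB}(FX,FY)\xrightarrow{\lambda^{-1}}\h_\B\otimes_\B\widetilde{\map}_{\dB}(FX,FY)\xrightarrow{\beta_Y\otimes_\B\widetilde{\map}_{\dB}(FX,FY)}\widetilde{\map}_{\dB}(FY,GY)\otimes_\B\widetilde{\map}_{\dB}(FX,FY)\xrightarrow{\circ}\widetilde{\map}_{\dB}(FX,GY),
\]
and then slide $F$ past $\lambda^{-1}$ using naturality of the isomorphism $\lambda\colon\h_\B\otimes_\B -\xRightarrow{\;\iso\;}\id$, which is available because $\lambda$ is a modification by~\cref{Coherence for the cancelling version of the closed E-action}. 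Bifunctoriality of $\otimes_\B$ then collapses $(\beta_Y\otimes_\B\widetilde{\map}_{\dB}(FX,FY))\circ(\h_\B\otimes_\B F)$ into the single arrow $\beta_Y\otimes_\B F$, which is precisely the arrow along the top edge of the hexagon in \cref{E-natural transformation definition}. An entirely symmetric argument, using naturality of $\rho$ with respect to $G$ together with bifunctoriality of $\otimes_\A$, rewrites the bottom composite of the square as the lower path of the hexagon. Since each of these rewriting steps is invertible, the square commutes if and only if the hexagon does.

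I do not anticipate any serious obstacle, as the argument is essentially a direct diagram chase. The only delicate point is verifying that the naturality squares for $\lambda$ and $\rho$ apply to the correct components; this causes no trouble because $\lambda$ and $\rho$ are modifications between derivator maps on the appropriate shifted derivators, and so are natural in every variable appearing in the expansion.
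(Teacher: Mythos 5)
Your proposal is correct and matches the paper's (one-line) proof in substance: expand the definitions of $\widetilde{\map}_{\dB}(FX,\beta_Y)$ and $\widetilde{\map}_{\dB}(\beta_X,GY)$ and observe that the resulting paths coincide with the two branches of the $\E$-naturality hexagon. The paper states this as an immediate identification, while you helpfully make explicit the intermediate steps — naturality of $\lambda$ and $\rho$ and bifunctoriality of $\otimes_\B$, $\otimes_\A$ — that justify sliding $F$ and $G$ past the unit isomorphisms and merging the two product factors.
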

\begin{proof}
The diagram in~\cref{E-natural transformation definition} expressing $\E$-naturality is the same as this diagram, once we expand $\widetilde{\map}_{\dB}(FX,\beta_Y)$ and $\widetilde{\map}_{\dB}(\beta_X,GY)$.
\end{proof}

Using this characterisation, we can now prove the following two lemmas:

\begin{lemma}\label{map(f -) is E-natural}
Let $\eA$ be an $\E$-category, let $\A$ be a category, and let $f:\h_\A\rightarrow\widetilde{\map}_{\dA}(X,Y)$ be a map in $\dA(\A)$. Then the maps
\[
\widetilde{\map}_{\dA}(f,Z):\widetilde{\map}_{\dA}(Y,Z)\rightarrow\widetilde{\map}_{\dA}(X,Z)
\]
are $\E$-natural in $Z\in\dA_0(\B)$.
\end{lemma}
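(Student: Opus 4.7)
The plan is to invoke \cref{Rephrasing E-naturality} to recast the required $\E$-naturality as commutativity, for each $Z\in\dA_0(\B)$ and $W\in\dA_0(\C)$, of a single square in $\E(\B\op\times\C)$ built from the $\E$-morphism structure maps of $\widetilde{\map}_\dA(X,-)$ and $\widetilde{\map}_\dA(Y,-)$, together with pre- and post-composition by the morphisms $\tilde{\beta}_Z$ and $\tilde{\beta}_W$ in $\eE^{\A\op}$ corresponding to $\widetilde{\map}_\dA(f,Z)$ and $\widetilde{\map}_\dA(f,W)$ under the bijection of \cref{E-modules are E-categories} applied to $\E^{\A\op}$. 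This reduces $\E$-naturality to an equality of two composites in a fixed diagram of mapping spaces.

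The main step will be to take adjuncts of both composites under the adjunction $-\otimes_\B\widetilde{\map}_\dA(Y,Z)\dashv\widetilde{\map}_{\eE^{\A\op}}(\widetilde{\map}_\dA(Y,Z),-)$ furnished by \cref{2-variable adjunction characterisation}. Under this adjunction, the structure maps of the $\E$-morphisms $\widetilde{\map}_\dA(X,-)$ and $\widetilde{\map}_\dA(Y,-)$ are, by their construction in the proof of \cref{mapping space is an E-morphism}, the composition morphisms in $\eA$. Likewise, pre- and post-composition with $\tilde{\beta}_Z$ and $\tilde{\beta}_W$ in $\eE^{\A\op}$ become pre- and post-composition with $\widetilde{\map}_\dA(f,Z)$ and $\widetilde{\map}_\dA(f,W)$ as morphisms in $\E(\A\op\times\B)$ and $\E(\A\op\times\C)$ respectively.

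Unfolding the defining formulae for $\widetilde{\map}_\dA(f,Z)$ and $\widetilde{\map}_\dA(f,W)$ from the discussion preceding \cref{Extracting f from map(f X)}, I expect both adjunct composites to reduce to maps
\[
\widetilde{\map}_\dA(Z,W)\otimes_\B\widetilde{\map}_\dA(Y,Z)\otimes_\A\widetilde{\map}_\dA(X,Y)\longrightarrow\widetilde{\map}_\dA(X,W)
\]
obtained by composing the three mapping objects in different orders, so that their equality is exactly the associativity axiom of composition in $\eA$ applied to $X,Y,Z,W$, i.e.~the first diagram of \cref{E-category definition}. The hard part is the bookkeeping: the unit isomorphisms $\lambda$ and $\rho$ and the associator $\alpha$ must be tracked carefully when unfolding $\widetilde{\map}_\dA(f,-)$ on either side of the square, so that the two adjunct composites genuinely align into the common three-fold composition to which associativity can be applied. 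Once this alignment is made, the remainder is purely formal.
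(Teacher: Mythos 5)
Your proposal matches the paper's proof: both invoke the rephrasing of $\E$-naturality, take adjuncts under $-\otimes_\B\widetilde{\map}_\dA(Y,Z)\dashv\widetilde{\map}_{\E^{\A\op}}(\widetilde{\map}_\dA(Y,Z),-)$ to reduce to a square relating $\circ$ and $\widetilde{\map}_\dA(f,-)$, and then unfold the defining formula for $\widetilde{\map}_\dA(f,-)$ so that commutativity follows from associativity. The paper leaves that last unfolding implicit ("we can see that this diagram commutes"), so your description of the bookkeeping with $\rho^{-1}$, $\alpha$, and the three-fold composite is a correct expansion of exactly the step it suppresses.
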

\begin{proof}
Fix a map $f:\h_\A\rightarrow\widetilde{\map}_{\dA}(X,Y)$ in $\dA(\A)$. By~\cref{Rephrasing E-naturality}, we need the diagram below to commute, for any $Z\in\dA_0(\B)$ and $W\in\dA_0(\C)$:
\begin{center}
\begin{tikzcd}[column sep=0cm]
\widetilde{\map}_{\dA}(Z,W)\arrow[rd,"\widetilde{\map}_{\dA}(Y\text{,}-)",bend left=12]
\arrow{dd}[left]{\widetilde{\map}_{\dA}(X,-)}\\
 
& \widetilde{\map}_{\E^{\A\op}}(\widetilde{\map}_{\dA}(Y,Z),\widetilde{\map}_{\dA}(Y,W)) \arrow{dd}{\widetilde{\map}_{\E^{\A\op}}(\widetilde{\map}_{\dA}(Y,Z),\widetilde{\map}_{\dA}(f,W))}\\
 
\widetilde{\map}_{\E^{\A\op}}(\widetilde{\map}_{\dA}(X,Z),\widetilde{\map}_{\dA}(X,W))\arrow[dr,"\widetilde{\map}_{\E^{\A\op}}(\widetilde{\map}_{\dA}(f\text{,}Z)\text{,}\widetilde{\map}_{\dA}(X\text{,}W))" below left, bend right=12] 
\\
&\widetilde{\map}_{\E^{\A\op}}(\widetilde{\map}_{\dA}(Y,Z),\widetilde{\map}_{\dA}(X,W))
\end{tikzcd}
\end{center} 
Under the adjunction
\begin{center}
\begin{tikzcd}
\E(\B\op\times\C)\arrow[rr, bend left=20, "-\;\otimes_\B \widetilde{\map}_{\dA}(Y\text{,}Z)" above,""{name=U, below}]\arrow[rr,leftarrow, bend right=20, "\widetilde{\map}_{\E^{\A\op}}(\widetilde{\map}_{\dA}(Y\text{,}Z)\text{,}-)" below=2, ""{name=D}]
&& \;\E(\A\op\times\C)
\arrow[phantom,from=U,to=D,"\bot"]
\end{tikzcd}
\end{center}
this reduces to the square:
\begin{center}
\begin{tikzcd}
\widetilde{\map}_{\dA}(Z,W)\otimes_\B \widetilde{\map}_{\dA}(Y,Z)\arrow{rr}[above]{\circ}\arrow{dd}[left]{\widetilde{\map}_{\dA}(Z,W)\otimes_\B \widetilde{\map}_{\dA}(f,Z)} && 
 \widetilde{\map}_{\dA}(Y,W) \arrow{dd}[right]{\widetilde{\map}_{\dA}(f,W)} \\\\
\widetilde{\map}_{\dA}(Z,W)\otimes_\B \widetilde{\map}_{\dA}(X,Z)\arrow{rr}[below]{\circ} && 
\widetilde{\map}_{\dA}(X,W)
\end{tikzcd}
\end{center} 
Using the descriptions of $\widetilde{\map}_{\dA}(f,Z)$ and $\widetilde{\map}_{\dA}(f,W)$ given above, we can see that this diagram commutes.
\end{proof}

\begin{remark}\label{E-category yoneda embedding definition}
For any $\E$-category $\eA$, and any category $\A$, the assignment
\begin{align*}
y:\dA(\A)\op &\longrightarrow \;\ECat(\eA,\eE^{\A\op})\\
 X\;\;\;\; &\;\mapsto \;\;\; \widetilde{\map}_{\dA}(X,-)
\end{align*}
is functorial. Two $\E$-natural transformations are equal if and only if their components are equal, so the functoriality of $y$ follows by the functoriality of the map
\[
\widetilde{\map}_{\dA}(-,W):\dA(\A)\op\rightarrow\E(\A\op\times\B),
\]
for any category $\B$ and any $W\in\dA_0(\B)$.
\end{remark}

\begin{lemma}\label{F:map(X Y) -> map(FX FY) is E-natural}
Let $F:\eA\rightarrow\eB$ be an $\E$-morphism. The maps
\[
F:\widetilde{\map}_{\dA}(X,Y)\rightarrow\widetilde{\map}_{\dB}(FX,FY)
\]
are $\E$-natural in both $X\in\dA_0(\A)$ and $Y\in\dA_0(\B)$. 
\end{lemma}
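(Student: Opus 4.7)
The plan is to fix one variable at a time and invoke the characterisation of $\E$-naturality from Lemma~\ref{Rephrasing E-naturality}. First, fix $X\in\dA_0(\A)$. The family of maps $F:\widetilde{\map}_{\dA}(X,Y)\rightarrow\widetilde{\map}_{\dB}(FX,FY)$ indexed by $Y$ should be viewed as a candidate $\E$-natural transformation from the $\E$-morphism $\widetilde{\map}_{\dA}(X,-):\eA\rightarrow\eE^{\A\op}$ of Proposition~\ref{mapping space is an E-morphism} to the composite $\E$-morphism $\widetilde{\map}_{\dB}(FX,-)\circ F:\eA\rightarrow\eE^{\A\op}$.

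By Lemma~\ref{Rephrasing E-naturality}, $\E$-naturality in $Y$ amounts to checking that, for all $Y\in\dA_0(\B)$ and $Z\in\dA_0(\C)$, the following square in $\E(\B\op\times\C)$ commutes:
\begin{center}
\begin{tikzcd}[column sep=-0.4cm]
\widetilde{\map}_{\dA}(Y,Z) \arrow[rr, "\widetilde{\map}_{\dA}(X,-)"]\arrow[dd, "\widetilde{\map}_{\dB}(FX,F-)" left] && \widetilde{\map}_{\E^{\A\op}}(\widetilde{\map}_{\dA}(X,Y),\widetilde{\map}_{\dA}(X,Z))\arrow[dd, "\widetilde{\map}_{\E^{\A\op}}(\widetilde{\map}_{\dA}(X,Y),F)"] \\ \\
\widetilde{\map}_{\E^{\A\op}}(\widetilde{\map}_{\dB}(FX,FY),\widetilde{\map}_{\dB}(FX,FZ)) \arrow[rr, "\widetilde{\map}_{\E^{\A\op}}(F,\widetilde{\map}_{\dB}(FX,FZ))" below] && \widetilde{\map}_{\E^{\A\op}}(\widetilde{\map}_{\dA}(X,Y),\widetilde{\map}_{\dB}(FX,FZ))
\end{tikzcd}
\end{center}
Taking the adjunct under $-\otimes_\B\widetilde{\map}_{\dA}(X,Y)\dashv\widetilde{\map}_{\E^{\A\op}}(\widetilde{\map}_{\dA}(X,Y),-)$, and using the descriptions in Proposition~\ref{mapping space is an E-morphism} and Theorem~\ref{E-modules are E-categories} of $\widetilde{\map}_{\dA}(X,-)$, $\widetilde{\map}_{\E^{\A\op}}(-,W)$ and composition in $\eE^{\A\op}$, this reduces to the familiar square
\begin{center}
\begin{tikzcd}
\widetilde{\map}_{\dA}(Y,Z)\otimes_\B\widetilde{\map}_{\dA}(X,Y)\arrow[rr,"\circ"]\arrow[dd,"F\otimes_\B F" left] && \widetilde{\map}_{\dA}(X,Z)\arrow[dd,"F"]\\\\
\widetilde{\map}_{\dB}(FY,FZ)\otimes_\B\widetilde{\map}_{\dB}(FX,FY)\arrow[rr,"\circ" below] && \widetilde{\map}_{\dB}(FX,FZ)
\end{tikzcd}
\end{center}
which is exactly the first axiom of the $\E$-morphism $F$ from Definition~\ref{E-morphism definition}.

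For $\E$-naturality in $X$, the argument is formally dual: we fix $Y\in\dA_0(\B)$ and view the maps as a candidate $\E$-natural transformation $\widetilde{\map}_{\dA}(-,Y)\Rightarrow\widetilde{\map}_{\dB}(F-,FY):\eA\op\rightarrow\eE^{\B}$. Applying Lemma~\ref{Rephrasing E-naturality} in $\eA\op$ and $\eE^\B$, and taking the adjunct under the appropriate closed $\E$-action, reduces the required square again to the first axiom of Definition~\ref{E-morphism definition} (with the roles of the two tensor factors swapped via the symmetry $\tau$ of Lemma~\ref{Symmetry for the cancelling tensor}, which is precisely what is encoded in the opposite $\E$-category structure from Definition~\ref{Opposite E-category}). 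The only real content is the adjunct reformulation; no step presents a serious obstacle, the work is all in matching the unravelled definitions to the $\E$-morphism axiom.
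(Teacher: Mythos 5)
Your proposal is correct and matches the paper's proof step for step: fix $X$, recast the family as a candidate $\E$-natural transformation $\widetilde{\map}_{\dA}(X,-)\Rightarrow\widetilde{\map}_{\dB}(FX,-)\circ F$, invoke \cref{Rephrasing E-naturality}, take the adjunct under $-\otimes_\B\widetilde{\map}_{\dA}(X,Y)\dashv\widetilde{\map}_{\E^{\A\op}}(\widetilde{\map}_{\dA}(X,Y),-)$, and observe that the resulting square is exactly the first axiom of \cref{E-morphism definition}, with $\E$-naturality in the first variable handled by duality. The only difference is that you spell out the duality slightly more (via \cref{Opposite E-category} and the symmetry $\tau$ of \cref{Symmetry for the cancelling tensor}) where the paper simply says "dual".
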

\begin{proof}
Fix $X\in\dA_0(\A)$. We will prove that the maps 
\[
F:\widetilde{\map}_{\dA}(X,Y)\rightarrow\widetilde{\map}_{\dB}(FX,FY)
\]
are $\E$-natural in $Y\in\dA_0(\B)$; $\E$-naturality in the first variable is dual. Given $Y\in\dA_0(\B)$ and $Z\in\dA_0(\C)$, we need the diagram below to commute:
\begin{center}
\begin{tikzcd}[column sep=0cm]
\widetilde{\map}_{\dA}(Y,Z)\arrow[rd,"\widetilde{\map}_{\dA}(X\text{,}-)",bend left=12]\arrow{dd}[left]{F}\\

& \widetilde{\map}_{\E^{\A\op}}(\widetilde{\map}_{\dA}(X,Y),\widetilde{\map}_{\dA}(X,Z)) \arrow{dddd}{\widetilde{\map}_{\E^{\A\op}}(\widetilde{\map}_{\dA}(X,Y),F)}\\

\widetilde{\map}_{\dB}(FY,FZ)\arrow{dd}[left]{\widetilde{\map}_{\dB}(FX,-)}
\\\\
 
\widetilde{\map}_{\E^{\A\op}}(\widetilde{\map}_{\dB}(FX,FY),\widetilde{\map}_{\dB}(FX,FZ))\arrow[dr,"\widetilde{\map}_{\E^{\A\op}}(F\text{,}\widetilde{\map}_{\dB}(FX\text{,}FZ))" below left, bend right=12] 
\\
&\widetilde{\map}_{\E^{\A\op}}(\widetilde{\map}_{\dA}(X,Y),\widetilde{\map}_{\dB}(FX,FZ))
\end{tikzcd}
\end{center} 
Under the adjunction
\begin{center}
\begin{tikzcd}
\E(\B\op\times\C)\arrow[rr, bend left=20, "-\;\otimes_\B \widetilde{\map}_{\dA}(X\text{,}Y)" above,""{name=U, below}]\arrow[rr,leftarrow, bend right=20, "\widetilde{\map}_{\E^{\A\op}}(\widetilde{\map}_{\dA}(X\text{,}Y)\text{,}-)" below=2, ""{name=D}]
&& \;\E(\A\op\times\C)
\arrow[phantom,from=U,to=D,"\bot"]
\end{tikzcd}
\end{center}
this corresponds exactly to the first axiom of~\cref{E-morphism definition} for the $\E$-morphism $F$.
\end{proof}

We end this section by proving that cocontinuous $\E$-module maps, and $\E$-module modifications between them, induce $\E$-morphisms and $\E$-natural transformations, using the coherence results at the end of~\cref{Section The cancelling tensor product}.

\begin{proposition}\label{Cocontinuous E-module maps induce E-morphisms}
Let $\D_1$ and $\D_2$ be closed $\E$-modules. Then any cocontinuous $\E$-module map $F:\D_1\rightarrow\D_2$ induces an $\E$-morphism
\[
F:\eD_1\rightarrow\eD_2
\]
on the associated $\E$-categories of~\cref{E-modules are E-categories}. Moreover, for any category $\A$, the functor $F:\D_1(\A)\rightarrow\D_2(\A)$ induced by this $\E$-morphism is the component at $\A$ of the original derivator map.
\end{proposition}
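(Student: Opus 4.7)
The plan is to use the adjunction $-\otimes_{\A}FX\dashv\widetilde{\map}_{\D_2}(FX,-)$ of \cref{2-variable adjunction characterisation} to define the required structure map for $F$, and then reduce each axiom of \cref{E-morphism definition} to a coherence diagram that holds by \cref{Cocontinuous E-module maps preserve the cancelling product}.

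First, I would specify the $\E$-morphism structure on $F$. On objects, given $X\in\D_1(\A)$, we assign $FX\in\D_2(\A)$, which is simply the component of $F$ at $\A$. For the structure map on hom-objects, given $X\in\D_1(\A)$ and $Y\in\D_1(\B)$, define
\[
F:\widetilde{\map}_{\D_1}(X,Y)\rightarrow\widetilde{\map}_{\D_2}(FX,FY)
\]
to be adjunct under the adjunction $-\otimes_{\A}FX\dashv\widetilde{\map}_{\D_2}(FX,-)$ to the composite
\[
\widetilde{\map}_{\D_1}(X,Y)\otimes_{\A}FX\xrightarrow{\;\;\varphi\;\;}F(\widetilde{\map}_{\D_1}(X,Y)\otimes_{\A}X)\xrightarrow{\;\;F(\epsilon)\;\;}FY,
\]
where $\varphi$ is the isomorphism of \cref{Cocontinuous E-module maps preserve the cancelling product} and $\epsilon$ is the counit of the adjunction $-\otimes_{\A}X\dashv\widetilde{\map}_{\D_1}(X,-)$ (which by construction of $\eD_1$ in \cref{E-modules are E-categories} is adjunct to composition in $\eD_1$).

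Next, I would verify the unit axiom. Using the definition of $j$ in $\eD_i$ from the proof of \cref{E-modules are E-categories}, the unit $j:\h_{\A}\rightarrow\widetilde{\map}_{\D_i}(X,X)$ is adjunct to $\lambda:\h_{\A}\otimes_{\A}X\rightarrow X$. So it suffices to check that the adjunct of $F\circ j$ and the adjunct of $j:\h_{\A}\rightarrow\widetilde{\map}_{\D_2}(FX,FX)$ agree, i.e.\ that $F(\lambda)\circ\varphi$ equals $\lambda$ on $\h_{\A}\otimes_{\A}FX\rightarrow FX$. This is precisely the second commutative diagram in \cref{Cocontinuous E-module maps preserve the cancelling product}.

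For the composition axiom, I would take adjuncts under $-\otimes_{\A}FX\dashv\widetilde{\map}_{\D_2}(FX,-)$ of the two sides. Using the description of composition in $\eD_i$ from \cref{E-modules are E-categories} (which itself involves $\alpha$ and two applications of $\epsilon$), the axiom reduces to showing that two composites out of $(\widetilde{\map}_{\D_1}(Y,Z)\otimes_{\B}\widetilde{\map}_{\D_1}(X,Y))\otimes_{\A}FX$ agree in $\D_2(\A\op\times\C)$. Unwinding, both composites involve applying $\varphi$ twice and manipulating with $\alpha$; the required equality reduces to the first commutative diagram of \cref{Cocontinuous E-module maps preserve the cancelling product} together with naturality of $\varphi$ with respect to $\epsilon$. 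This diagram chase is the main obstacle: it has several layers of adjuncts and associativity cells, and getting the bookkeeping right is the essential content of the proof, though no new idea is required beyond the coherence of $\varphi$ with $\alpha$.

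Finally, for the second claim, I would use the bijection $\D_i(\A)(X,Y)\iso\E(\A\op\times\A)(\h_{\A},\widetilde{\map}_{\D_i}(X,Y))$ recorded in the proof of \cref{E-modules are E-categories}, sending $f$ to $\tilde{f}$. Under this bijection, the functor induced at $\A$ by the $\E$-morphism $F$ sends $f:X\to Y$ to the composite $F\circ\tilde{f}:\h_{\A}\to\widetilde{\map}_{\D_2}(FX,FY)$. Taking adjuncts and using the definition of the structure map together with pseudonaturality of $F$, this composite is precisely $\widetilde{F(f)}$, so the induced functor agrees with the component $F:\D_1(\A)\to\D_2(\A)$ of the original derivator map.
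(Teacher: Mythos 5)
Your proposal is correct and follows essentially the same route as the paper's proof: define the structure map adjointly as $F(\epsilon)\circ\varphi$, reduce the two axioms of the $\E$-morphism definition to the two coherence diagrams of \cref{Cocontinuous E-module maps preserve the cancelling product} by taking adjuncts, and recover the underlying functor via the bijection $\D_i(\A)(X,Y)\iso\E(\A\op\times\A)(\h_\A,\widetilde{\map}_{\D_i}(X,Y))$ together with the second coherence diagram.
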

\begin{proof}
The $\E$-morphism $F:\eD_1\rightarrow\eD_2$ takes any object $X\in\D_1(\A)$ to its image under the derivator map, $FX\in\D_2(\A)$. Let $X\in\D_1(\A)$ and $Y\in\D_1(\B)$, and consider the adjunction
\begin{center}
\begin{tikzcd}
\E^{\A\op}\arrow[rr, bend left=40, "-\;\otimes_\A FX" above,""{name=U, below}]\arrow[rr,leftarrow, bend right=40, "\widetilde{\map}_{\D_2}(FX\text{,}-)" below, ""{name=D}]
&& \;\D_2.
\arrow[phantom,from=U,to=D,"\bot"]
\end{tikzcd}
\end{center}
Under this adjunction, define the structure map 
\[
F:\widetilde{\map}_{\D_1}(X,Y)\rightarrow\widetilde{\map}_{\D_2}(FX,FY)
\]
to be adjunct to the composite below:
\[
\widetilde{\map}_{\D_1}(X,Y)\otimes_{\A}FX\xrightarrow{\;\;\varphi\;\;}F(\widetilde{\map}_{\D_1}(X,Y)\otimes_{\A} X)\xrightarrow{\;\;F(\epsilon)\;\;}FY
\]
Here the map $\varphi$ is the canonical isomorphism of~\cref{Cocontinuous E-module maps preserve the cancelling product}.

To prove that this satisfies the axioms of~\cref{E-morphism definition}, we replace the diagrams that need to commute by their adjuncts. Using the description of units and composition in $\eD_2$, from~\cref{E-modules are E-categories}, the first axiom of~\cref{E-morphism definition} follows from the first commutative diagram of~\cref{Cocontinuous E-module maps preserve the cancelling product}, and the second axiom follows from the second commutative diagram.

To see that the functor of~\cref{Underlying categories of an E-category} recovers the original functor $F:\D_1(\A)\rightarrow\D_2(\A)$, suppose we have a map $f:X\rightarrow Y$ in $\D_1(\A)$. Using~\cref{E-modules are E-categories}, we need to check that its image under the prederivator map $F$ is adjunct to the composite below (up to the isomorphism $\lambda:\h_\A\otimes_\A FX\xrightarrow{\;\iso\;} FX$):
\begin{center}
\begin{tikzcd}
\h_\A\arrow{r}{j} & \widetilde{\map}_{\D_1}(X,X)\arrow{rr}{\widetilde{\map}_{\D_1}(X\text{,}f)} && \widetilde{\map}_{\D_1}(X,Y)\arrow{r}{F} & \widetilde{\map}_{\D_2}(FX,FY)
\end{tikzcd}
\end{center} 
This is immediate, using the definition of the unit in $\eD_2$, from~\cref{E-modules are E-categories}, and the second commutative diagram of~\cref{Cocontinuous E-module maps preserve the cancelling product}. 
\end{proof}

\begin{lemma}\label{E-module modifications induce E-natural maps}
Let $F,G:\D_1\rightarrow\D_2$ be cocontinuous $\E$-module maps between closed $\E$-modules. Then any $\E$-module modification
\begin{center}
\begin{tikzcd}
\D_1\arrow[r, bend left=50, "F" above,""{name=U, below}]\arrow[r, bend right=50, "G" below, ""{name=D}]
& \D_2
\arrow[Rightarrow,from=U,to=D,shorten >=0.1cm,shorten <=0.1cm,"\beta", shift right=0.5]
\end{tikzcd}
\end{center}
induces an $\E$-natural transformation between the associated $\E$-morphisms of~\cref{Cocontinuous E-module maps induce E-morphisms}.
\end{lemma}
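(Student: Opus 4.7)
The plan is to define the components of the $\E$-natural transformation by transposing the components of $\beta$ under adjunction, and then verify the $\E$-naturality square by reducing it to two applications of the modification condition.

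For each category $\A$ and each $X\in\D_1(\A)$, I would define the component $\beta_X:\h_\A\to\widetilde{\map}_{\D_2}(FX,GX)$ to be the transpose, under the adjunction $-\otimes_\A FX\dashv\widetilde{\map}_{\D_2}(FX,-)$ of \cref{2-variable adjunction characterisation}, of the composite $\h_\A\otimes_\A FX\xrightarrow{\lambda}FX\xrightarrow{\beta_X}GX$, where the second arrow is the component of the original modification in $\D_2(\A)$. Under the bijection in the proof of \cref{E-modules are E-categories} between morphisms in $\D_2(\A)$ and maps $\h_\A\to\widetilde{\map}_{\D_2}(-,-)$, this is just the original modification component viewed in the induced category.

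To prove $\E$-naturality, I would apply the characterisation of \cref{Rephrasing E-naturality}: given $X\in\D_1(\A)$ and $Y\in\D_1(\B)$, it suffices to check that the upper path $\widetilde{\map}_{\D_2}(FX,\beta_Y)\circ F$ and the lower path $\widetilde{\map}_{\D_2}(\beta_X,GY)\circ G$ from $\widetilde{\map}_{\D_1}(X,Y)$ to $\widetilde{\map}_{\D_2}(FX,GY)$ agree in $\E(\A\op\times\B)$. Passing through the adjunction $-\otimes_\A FX\dashv\widetilde{\map}_{\D_2}(FX,-)$, this is equivalent to equality of two maps $\widetilde{\map}_{\D_1}(X,Y)\otimes_\A FX\to GY$ in $\D_2(\B)$. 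Using the definition of $F$ on mapping objects from the proof of \cref{Cocontinuous E-module maps induce E-morphisms}, the upper path transposes to $\beta_Y\circ F(\epsilon)\circ\varphi^F$, where $\varphi^F$ is the structure isomorphism of \cref{Cocontinuous E-module maps preserve the cancelling product} for $F$.

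For the lower path, I would unfold the definition of $\widetilde{\map}_{\D_2}(\beta_X,GY)$ (via the description given before \cref{Extracting f from map(f X)}) and then use the description of composition in $\eD_2$ from \cref{E-modules are E-categories}, together with the defining property of $\beta_X$ as transpose of $\beta_X\circ\lambda$ and the coherence of \cref{Coherence for the cancelling version of the closed E-action}, to reduce the lower transpose to $G(\epsilon)\circ\varphi^G\circ(\id\otimes\beta_X)$. The modification condition from \cref{E-module modifications respect the cancelling product} then gives $\varphi^G\circ(\id\otimes\beta_X)=\beta_{\widetilde{\map}_{\D_1}(X,Y)\otimes_\A X}\circ\varphi^F$, and the ordinary modification condition for $\beta$ applied to the counit $\epsilon:\widetilde{\map}_{\D_1}(X,Y)\otimes_\A X\to Y$ gives $G(\epsilon)\circ\beta=\beta_Y\circ F(\epsilon)$, so the two transposes agree. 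I expect the main obstacle to be the bookkeeping in the lower path: the coherence manipulations required to move $\beta_X$ out of the argument of $\widetilde{\map}_{\D_2}(-,GY)$ and past $G$, into the form where the modification condition of \cref{E-module modifications respect the cancelling product} can be directly applied.
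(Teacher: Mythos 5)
Your proposal is correct and follows essentially the same strategy as the paper: invoke \cref{Rephrasing E-naturality}, transpose the commutativity condition under the adjunction $-\otimes_\A FX\dashv\widetilde{\map}_{\D_2}(FX,-)$, and then reduce to \cref{E-module modifications respect the cancelling product} using the adjunct descriptions of the structure maps $F$ and $G$ from the proof of \cref{Cocontinuous E-module maps induce E-morphisms}. The paper states this much more tersely — it simply asserts that after transposing, the square commutes by those descriptions and \cref{E-module modifications respect the cancelling product} — while you spell out the reduction: upper transpose is $\beta_Y\circ F(\epsilon)\circ\varphi^F$, lower transpose is $G(\epsilon)\circ\varphi^G\circ(\widetilde{\map}_{\D_1}(X,Y)\otimes_\A\beta_X)$, and these are matched via \cref{E-module modifications respect the cancelling product} plus ordinary naturality of $\beta$ applied to the counit $\epsilon$. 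That last naturality step is suppressed in the paper's wording but is part of the same computation. Your worry about the bookkeeping in the lower path is legitimate but non-essential: the coherence diagrams in \cref{Coherence for the cancelling version of the closed E-action} exist precisely to make this mechanical, and the unfolding you describe does go through.
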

\begin{proof}
Suppose we have categories $\A$ and $\B$, and objects $X\in\D_1(\A)$ and $Y\in\D_1(\B)$. We need to check that the diagram of~\cref{Rephrasing E-naturality} commutes; equivalently, we may consider its adjunct under the adjunction below:
\begin{center}
\begin{tikzcd}
\E^{\A\op}\arrow[rr, bend left=40, "-\;\otimes_\A FX" above,""{name=U, below}]\arrow[rr,leftarrow, bend right=40, "\widetilde{\map}_{\D_2}(FX\text{,}-)" below, ""{name=D}]
&& \;\D_2
\arrow[phantom,from=U,to=D,"\bot"]
\end{tikzcd}
\end{center}
That diagram commutes, using the description of $F$ and $G$ from~\cref{Cocontinuous E-module maps induce E-morphisms}, and the fact that $\beta$ respects the cancelling tensor product, as in~\cref{E-module modifications respect the cancelling product}.
\end{proof}

\begin{remark}
Suppose $F:\D_1\rightarrow\D_2$ is a continuous map between closed $\E$-modules. If $F$ preserves cotensors then the dual of~\cref{Cocontinuous E-module maps induce E-morphisms} implies that $F$ induces an $\E$-category map $F:\eD_1\rightarrow\eD_2$. Given a map $F$ that is both continuous and cocontinuous, and preserves tensors and cotensors, there are two potentially distinct enrichments on $F$. Without coherence between the isomorphisms that express the preservation of tensors and cotensors, these two induced enrichments need not agree.
\end{remark}


\section{The Yoneda lemma and adjunctions for $\E$-categories}\label{Section The yoneda lemma and adjunctions for E-categories}

We begin this section with~\cref{Yoneda lemma for E-categories}, an $\E$-category analogue of the Yoneda lemma. As is the case in other settings, this result is extremely useful, and we will use it repeatedly in our discussion of enriched derivators in~\cref{Chapter Enriched derivators}. We also use it immediately to prove~\cref{Characterising adjunctions in E-Cat}, which gives a convenient characterisation of adjunctions in the $2$-category $\ECat$, reminiscent of the familiar result for enriched categories in~\cite[Chapter 1]{Kelly82}. We finish this section with~\cref{E-module adjunctions induce E-category adjunctions}, showing that an adjunction between closed $\E$-modules whose left adjoint preserves tensors induces an $\E$-category adjunction. This result is important for the proof, in~\cref{Chapter Enriched derivators}, that closed $\E$-modules induce enriched derivators.

\begin{theorem}[Yoneda lemma for $\E$-categories]\label{Yoneda lemma for E-categories}
Let $\eA$ be an $\E$-category, let $\A$ be a category, and let $X\in\dA_0(\A)$. Let $F:\eA\rightarrow\eE^{\A\op}$ be an $\E$-morphism. Then we have a natural bijection:
\[
\ECat(\eA,\eE^{\A\op})(\widetilde{\map}_{\dA}(X,-),F)\iso\E(\A\op\times\A)(\h_\A,FX)
\]
\end{theorem}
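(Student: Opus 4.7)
The plan is to exhibit an explicit bijection $\Phi$ with inverse $\Psi$ and verify the two composites are identity; naturality in $F$ will then follow from the formulas. For the forward map $\Phi$: given an $\E$-natural transformation $\beta:\widetilde{\map}_{\dA}(X,-)\Rightarrow F$, the component $\beta_X:\h_\A\to\widetilde{\map}_{\eE^{\A\op}}(\widetilde{\map}_{\dA}(X,X),FX)$ in $\E(\A\op\times\A)$ corresponds, under the adjunction $-\otimes_\A\widetilde{\map}_{\dA}(X,X)\dashv\widetilde{\map}_{\eE^{\A\op}}(\widetilde{\map}_{\dA}(X,X),-)$ from \cref{2-variable adjunction characterisation} and the unit isomorphism $\lambda:\h_\A\otimes_\A\widetilde{\map}_{\dA}(X,X)\xrightarrow{\iso}\widetilde{\map}_{\dA}(X,X)$, to a morphism $\overline{\beta}_X:\widetilde{\map}_{\dA}(X,X)\to FX$ in $\E(\A\op\times\A)$. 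I would define $\Phi(\beta):=\overline{\beta}_X\circ j$, where $j:\h_\A\to\widetilde{\map}_{\dA}(X,X)$ is the unit of $\eA$ at $X$.

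For the inverse $\Psi$: given $f:\h_\A\to FX$ in $\E(\A\op\times\A)$, I would define $\Psi(f):=\beta^f$ by specifying each component through its adjunct $\overline{\beta}^f_Y:\widetilde{\map}_{\dA}(X,Y)\to FY$ as the composite
\[
\widetilde{\map}_{\dA}(X,Y)\xrightarrow{\;\rho^{-1}\;}\widetilde{\map}_{\dA}(X,Y)\otimes_\A\h_\A\xrightarrow{\;F\otimes_\A f\;}\widetilde{\map}_{\eE^{\A\op}}(FX,FY)\otimes_\A FX\xrightarrow{\;\epsilon\;}FY,
\]
where $\epsilon$ is the counit of the relevant adjunction and $F$ denotes the structure map of the $\E$-morphism $F$ at $(X,Y)$. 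To check that $\beta^f$ is $\E$-natural, I would appeal to \cref{Rephrasing E-naturality} and reduce the square there to an identity using \cref{F:map(X Y) -> map(FX FY) is E-natural} for the $\E$-naturality of the structure maps of $F$ in both slots, together with the coherence of $\lambda$, $\rho$ and $\alpha$ from \cref{Coherence for the cancelling version of the closed E-action}.

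To see $\Phi\circ\Psi=\id$, I would compute $\Phi(\beta^f)=\overline{\beta}^f_X\circ j$: the second $\E$-morphism axiom from \cref{E-morphism definition} gives $F\circ j=j_{FX}$, and the description of the unit $j_{FX}$ in $\eE^{\A\op}$ provided by \cref{E-modules are E-categories} identifies $\epsilon\circ(j_{FX}\otimes_\A FX)$ with $\lambda$; naturality of $\lambda$ together with the coherence identity $\lambda=\rho:\h_\A\otimes_\A\h_\A\to\h_\A$ then collapses the composite to $f$. For $\Psi\circ\Phi=\id$, with $f:=\Phi(\beta)=\overline{\beta}_X\circ j$, the task is to show $\overline{\beta}^f_Y=\overline{\beta}_Y$ for each $Y\in\dA_0(\B)$. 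I would apply the $\E$-naturality of $\beta$ in the form of \cref{Rephrasing E-naturality} to replace the $F$ appearing in the definition of $\overline{\beta}^f_Y$, and then use \cref{Extracting f from map(f X)} (that $\widetilde{\map}_{\dA}(X,-)$ applied to $j$ is the identity after precomposition with $j$) together with the coherence of the cancelling tensor to collapse the resulting composite back to $\overline{\beta}_Y$.

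The main obstacle will be the verification that $\Psi\circ\Phi=\id$: threading the $\E$-naturality hypothesis through the adjunction $-\otimes_\A\widetilde{\map}_{\dA}(X,Y)\dashv\widetilde{\map}_{\eE^{\A\op}}(\widetilde{\map}_{\dA}(X,Y),-)$ and lining up the coherence isomorphisms. Everything else is routine bookkeeping with the $\E$-morphism and $\E$-category axioms, and naturality of the bijection in $F$ for fixed $X$ is immediate from the explicit descriptions of $\Phi$ and $\Psi$.
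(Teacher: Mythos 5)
Your proof is correct and takes essentially the same approach as the paper: your $\Phi$ matches the paper's, and your formula for $\Psi(f)$ is exactly the alternative description that the paper itself records in \cref{Another description of Yoneda}, so the two inverses agree. Two small cautions when filling in the sketch: \cref{Extracting f from map(f X)} is stated for maps $\h_\A\to\widetilde{\map}_{\dA}(X,Y)$ landing in a mapping object of $\eA$, not for a general $f:\h_\A\to FX$, so it cannot be invoked directly in the $\Psi\circ\Phi=\id$ step --- what is actually used there is the $\E$-naturality square of \cref{Rephrasing E-naturality} for $\beta$, the third unit axiom of \cref{E-category definition}, and the defining identity for $\varrho$ (your $\epsilon\circ\rho^{-1}$). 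Finally, the theorem asserts naturality in $X$ as well as in $F$; your sketch only addresses naturality in $F$, and the $X$-naturality requires a short additional check (again via \cref{Extracting f from map(f X)}, this time applied correctly).
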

\begin{proof}
Let $\beta:\widetilde{\map}_{\dA}(X,-)\Rightarrow F$ be an $\E$-natural transformation. This determines a map in $\E(\A\op\times\A)$ as follows:
\begin{center}
\begin{tikzcd}
\h_\A\arrow{r}{j} & \widetilde{\map}_{\dA}(X,X)\arrow{r}{\beta_X} & FX
\end{tikzcd}
\end{center} 
On the other hand, given $f:\h_\A\rightarrow FX$ in $\E(\A\op\times\A)$, we can construct the following family of maps, one for each $Y\in\dA_0(\B)$:
\begin{center}
\begin{tikzcd}
\widetilde{\map}_{\dA}(X,Y)\arrow[r,"F"]
& \widetilde{\map}_{\E^{\A\op}}(FX,FY)\arrow[rrr,"\widetilde{\map}_{\E^{\A\op}}(f\text{,}FY)" above] &&& \widetilde{\map}_{\E^{\A\op}}(\h_\A,FY)\arrow[d,"\iso" left,"\varrho" right]\\
&&&& FY
\end{tikzcd}
\end{center}
Here the isomorphism $\varrho:\widetilde{\map}_{\E^{\A\op}}(\h_\A,-)\xrightarrow{\;\iso\;}\id$ is conjugate to the inverse of the unit isomorphism $\rho^{-1}:\id \xrightarrow{\;\iso\;}-\otimes_\A \h_\A$. Thus, for any $Z\in\E(\A\op\times\B)$, the diagram below commutes:
\begin{center}
\begin{equation}\label{Diagram defining varrho}
\begin{tikzcd}[baseline=(current  bounding  box.center)]
\widetilde{\map}_{\E^{\A\op}}(\h_\A,Z)\otimes_\A \h_\A \arrow[rrrd,"\epsilon" above right, bend left=15]\arrow{dd}[left]{\rho}  \\
&&& Z \arrow[leftarrow,llld,"\varrho" below right, bend left=15]\\
\widetilde{\map}_{\E^{\A\op}}(\h_\A,Z)
\end{tikzcd}
\end{equation}
\end{center} 
We claim that this map $\varrho$ is $\E$-natural in $Z\in\E^{\A\op}(\B)$, in which case the composite above is $\E$-natural in $Y\in\dA_0(\B)$, by~\cref{map(f -) is E-natural} and~\cref{F:map(X Y) -> map(FX FY) is E-natural}.

To see this, let $Z\in\E(\A\op\times\B)$ and $W\in\E(\A\op\times\C)$. By \cref{Rephrasing E-naturality}, we need the diagram below to commute:
\begin{center}
\begin{tikzcd}
\widetilde{\map}_{\E^{\A\op}}(Z,W)\arrow{rrr}[above]{\widetilde{\map}_{\E^{\A\op}}(\h_\A,-)}\arrow[rrrdd,"\widetilde{\map}_{\E^{\A\op}}(\varrho\text{,}W)" below left, bend right=15]&&& 
 \widetilde{\map}_{\E^{\A\op}}(\widetilde{\map}_{\E^{\A\op}}(\h_\A,Z),\widetilde{\map}_{\E^{\A\op}}(\h_\A,W)) \arrow{dd}[right]{\widetilde{\map}_{\E^{\A\op}}(\widetilde{\map}_{\E^{\A\op}}(\h_\A,Z),\varrho)} \\\\
&&& 
\widetilde{\map}_{\E^{\A\op}}(\widetilde{\map}_{\E^{\A\op}}(\h_\A,Z),W)
\end{tikzcd}
\end{center} 
Taking adjuncts under the adjunction
\begin{center}
\begin{tikzcd}
\E^{\B\op}\arrow[rrr, bend left=30, "-\otimes_\B \widetilde{\map}_{\E^{\A\op}}(\h_\A\text{,}Z)" above,""{name=U, below}]\arrow[rrr,leftarrow, bend right=30, "\widetilde{\map}_{\E^{\A\op}}(\widetilde{\map}_{\E^{\A\op}}(\h_\A\text{,}Z)\text{,}-)" below=2, ""{name=D}]
&&& \;\E^{\A\op},
\arrow[phantom,from=U,to=D,"\bot"]
\end{tikzcd}
\end{center}
this corresponds to the square below, using the description of $\widetilde{\map}_{\E^{\A\op}}(\h_\A,-)$ given in \cref{mapping space is an E-morphism}:
\begin{center}
\begin{tikzcd}
\widetilde{\map}_{\E^{\A\op}}(Z,W)\otimes_\B \widetilde{\map}_{\E^{\A\op}}(\h_\A,Z)\arrow{rr}[above]{\circ}\arrow{dd}[left]{\widetilde{\map}_{\E^{\A\op}}(Z,W)\otimes_\B\varrho} && 
 \widetilde{\map}_{\E^{\A\op}}(\h_\A,W) \arrow{dd}[right]{\varrho} \\\\
\widetilde{\map}_{\E^{\A\op}}(Z,W)\otimes_\B Z\arrow{rr}[below]{\epsilon} && 
W
\end{tikzcd}
\end{center} 
We can expand the vertical arrows in this diagram using the equation $\varrho=\epsilon\circ\rho^{-1}$ from (\ref{Diagram defining varrho}). Using the description of composition in $\eE^{\A\op}$ from~\cref{E-modules are E-categories}, we can then see that this diagram commutes.

Thus, we have well-defined functions in both directions. We will now show that they are mutually inverse.

First, suppose we have a map $f:\h_\A\rightarrow FX$ in $\E(\A\op\times\A)$. We need to show that the diagram below commutes:
\begin{center}
\begin{tikzcd}
\h_\A\arrow[r,"j"]\arrow[rrrrrd,"f" below left,bend right=8] & \widetilde{\map}_{\dA}(X,X)\arrow[r,"F"]
& \widetilde{\map}_{\E^{\A\op}}(FX,FX)\arrow[rrr,"\widetilde{\map}_{\E^{\A\op}}(f\text{,}FX)" above] &&& \widetilde{\map}_{\E^{\A\op}}(\h_\A,FX)\arrow[d,"\varrho" right]\\
&&&&& FX
\end{tikzcd}
\end{center}
Using~\cref{Extracting f from map(f X)}, this diagram is equal to the diagram below:
\vspace{-2em}
\begin{center}
\begin{equation}\label{Diagram Yoneda lemma}
\begin{tikzcd}[baseline=(current  bounding  box.center)]
\h_\A\arrow[rr,"\tilde{f}"]\arrow[rrd,"f" below left,bend right=8] && \widetilde{\map}_{\E^{\A\op}}(\h_\A,FX)\arrow[d,"\varrho" right]\\
&& FX
\end{tikzcd}
\end{equation}
\end{center}
Here $\tilde{f}$ is the map corresponding to $f:\h_\A\rightarrow FX$, as in the proof of \cref{E-modules are E-categories}. That is, $\tilde{f}$ makes the diagram below commute:
\begin{center}
\begin{tikzcd}
\h_\A\otimes_{\A}\h_\A\arrow{rr}[above]{\lambda}\arrow{dd}[left]{\tilde{f}\otimes_{\A}\h_\A} && 
 \h_\A \arrow{dd}[right]{f} \\\\
 \widetilde{\map}_{\D}(\h_\A,FX)\otimes_\A \h_\A\arrow{rr}[below]{\epsilon} && 
FX
\end{tikzcd}
\end{center} 
Using this commutative diagram, and (\ref{Diagram defining varrho}), it is easy to see that (\ref{Diagram Yoneda lemma}) commutes, using the fact that $\lambda=\rho:\h_\A\otimes\h_\A\rightarrow\h_\A$.

On the other hand, suppose we have an $\E$-natural transformation $\beta:\widetilde{\map}_{\dA}(X,-)\Rightarrow F$. Given any $Y\in\dA_0(\B)$, we need the diagram below to commute:
\begin{center}
\begin{tikzcd}
\widetilde{\map}_{\dA}(X,Y)\arrow[r,"F"]\arrow[rrrrrdd,"\beta_Y" below left,bend right=8] & \widetilde{\map}_{\E^{\A\op}}(FX,FY)\arrow[rrrr,"\widetilde{\map}_{\E^{\A\op}}(\beta_X\text{,}FY)"]
&&&& \widetilde{\map}_{\E^{\A\op}}(\widetilde{\map}_{\dA}(X,X),FY)\arrow[d,"\widetilde{\map}_{\E^{\A\op}}(j\text{,}FY)" right]\\ 
&&&&& \widetilde{\map}_{\E^{\A\op}}(\h_\A,FY)\arrow[d,"\varrho" right]\\
&&&&& FY
\end{tikzcd}
\end{center}
This diagram commutes, using~\cref{Rephrasing E-naturality} for $\beta$, the definition of the structure map $\widetilde{\map}_{\dA}(X,-)$ from \cref{mapping space is an E-morphism}, the commutative diagram (\ref{Diagram defining varrho}), and the third axiom of~\cref{E-category definition} for $\eA$.

Thus, these functions form a bijection
\[
\ECat(\eA,\eE^{\A\op})(\widetilde{\map}_{\dA}(X,-),F)\iso\E(\A\op\times\A)(\h_\A,FX).
\]
We still need to check that the bijection is natural in both $X$ and $F$; note that both sides are indeed functorial in $X$ and $F$, by~\cref{E-category yoneda embedding definition}. 

It is easy to check that this bijection is natural in $F$. On the other hand, suppose we have a map $f:X\rightarrow Y$ in $\dA(\A)$. Naturality in the first variable amounts to the commutativity of the diagram below, for any $F\in\ECat(\eA,\eE^{\A\op})$, and any $\E$-natural map $\beta:\widetilde{\map}_{\dA}(X,-)\Rightarrow F$:
\begin{center}
\begin{tikzcd}
\h_\A\arrow{rrr}[above]{j}\arrow{dd}[left]{j} &&& \widetilde{\map}_{\dA}(X,X)\arrow{rr}[above]{\beta_X} && FX \arrow{dd}[right]{F(f)} \\\\
 \widetilde{\map}_{\dA}(Y,Y) \arrow{rrr}[below]{\widetilde{\map}_{\dA}(f,Y)} &&& 
\widetilde{\map}_{\dA}(X,Y)\arrow{rr}[below]{\beta_Y} && FY
\end{tikzcd}
\end{center} 
This diagram commutes, using~\cref{Extracting f from map(f X)}, and the naturality of the induced natural transformation $\beta$.
\end{proof}

\begin{remark}\label{Another description of Yoneda}
Let $\eA$ be an $\E$-category and let $\A$ be a category. Let $X\in\dA_0(\A)$, let $F:\eA\rightarrow\eE^{\A\op}$ be an $\E$-morphism, and suppose we have a map $f:\h_\A\rightarrow FX$ in $\E(\A\op\times\A)$.  Consider the $\E$-natural map $\beta:\widetilde{\map}_{\dA}(X,-)\Rightarrow F$ determined by~\cref{Yoneda lemma for E-categories}. The component of $\beta$ at $Y\in\dA_0(\B)$, given in~\cref{Yoneda lemma for E-categories}, can also be described as follows:
\begin{center}
\begin{tikzcd}
\widetilde{\map}_{\dA}(X,Y)\arrow[r,"\rho^{-1}"]
& \widetilde{\map}_{\dA}(X,Y)\otimes_\A\h_\A\arrow[rr,"F\otimes_\A f" above] && \widetilde{\map}_{\E^{\A\op}}(FX,FY)\otimes_\A FX\arrow[d,"\epsilon" right]\\
&&& FY
\end{tikzcd}
\end{center}
\end{remark}

\begin{corollary}\label{E-category yoneda embedding is fully faithful}
For any $\E$-category $\eA$ and any category $\A$, the functor
\[
y:\dA(\A)\op\rightarrow\ECat(\eA,\eE^{\A\op})
\]
of~\cref{E-category yoneda embedding definition} is fully faithful.
\end{corollary}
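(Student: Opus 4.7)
The plan is to apply \cref{Yoneda lemma for E-categories} in the case where the target $\E$-morphism is itself representable, and then identify the resulting bijection with the action of $y$ on morphisms.

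First I would fix objects $X, Y \in \dA_0(\A)$ and specialise the Yoneda lemma to $F = \widetilde{\map}_{\dA}(Y,-): \eA \rightarrow \eE^{\A\op}$. This yields a natural bijection
\[
\ECat(\eA,\eE^{\A\op})(\widetilde{\map}_{\dA}(X,-),\widetilde{\map}_{\dA}(Y,-)) \iso \E(\A\op\times\A)(\h_\A,\widetilde{\map}_{\dA}(Y,X)).
\]
By the very definition of the category $\dA(\A)$ in \cref{Underlying categories of an E-category}, the right-hand side is $\dA(\A)(Y,X) = \dA(\A)\op(X,Y)$. So we have a bijection between the hom-sets that $y$ is supposed to act on.

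Next I would check that this bijection is precisely the function induced by $y$. By the construction in the proof of \cref{Yoneda lemma for E-categories}, the forward map sends an $\E$-natural transformation $\beta : \widetilde{\map}_{\dA}(X,-) \Rightarrow \widetilde{\map}_{\dA}(Y,-)$ to the composite $\beta_X \circ j : \h_\A \rightarrow \widetilde{\map}_{\dA}(Y,X)$. Given a morphism $g : Y \rightarrow X$ in $\dA(\A)$, the functor $y$ from \cref{E-category yoneda embedding definition} sends $g$ to the $\E$-natural transformation $\widetilde{\map}_{\dA}(g,-)$, whose component at $X$ is $\widetilde{\map}_{\dA}(g,X) : \widetilde{\map}_{\dA}(X,X) \rightarrow \widetilde{\map}_{\dA}(Y,X)$. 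Thus the Yoneda bijection sends $y(g)$ to $\widetilde{\map}_{\dA}(g,X) \circ j$, and by \cref{Extracting f from map(f X)} this composite is precisely $g$ viewed as an element of $\E(\A\op\times\A)(\h_\A,\widetilde{\map}_{\dA}(Y,X))$.

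Since the Yoneda bijection agrees with $y$ on morphisms and is a bijection, $y$ is fully faithful. There is no real obstacle here; the only point that requires attention is keeping the variance of $y$ straight (so that the roles of $X$ and $Y$ are correctly swapped between $\dA(\A)\op$ and $\dA(\A)$), and confirming that the recipe from the Yoneda proof aligns with the functoriality prescription of \cref{E-category yoneda embedding definition}. Both of these reduce immediately to \cref{Extracting f from map(f X)}.
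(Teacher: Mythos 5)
Your proof is correct and follows essentially the same strategy as the paper: specialise the Yoneda lemma of \cref{Yoneda lemma for E-categories} to a representable target $F=\widetilde{\map}_{\dA}(Y,-)$ and use \cref{Extracting f from map(f X)} to compare with $y$. The paper packages it slightly differently — it argues faithfulness directly from \cref{Extracting f from map(f X)} and fullness by reconstructing $\beta$ as $\widetilde{\map}_{\dA}(f,-)$ via \cref{Another description of Yoneda} — whereas you note that the Yoneda bijection composed with the action of $y$ is the identity on $\dA(\A)\op(X,Y)$, which forces $y$ to be the inverse bijection; both arguments are sound and the mathematical content is the same.
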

\begin{proof}
This map is faithful by~\cref{Extracting f from map(f X)}. To see that it is full, suppose we have objects $X,Y\in\dA_0(\A)$, and an $\E$-natural transformation
\[
\beta:\widetilde{\map}_{\dA}(Y,-)\Rightarrow \widetilde{\map}_{\dA}(X,-).
\]
Consider the corresponding map $f:\h_\A\rightarrow\widetilde{\map}_{\dA}(X,Y)$ determined by~\cref{Yoneda lemma for E-categories}: 
\[
f:\h_\A\xrightarrow{\;\;j\;\;}\widetilde{\map}_{\dA}(Y,Y)\xrightarrow{\;\;\beta_Y\;\;}\widetilde{\map}_{\dA}(X,Y)
\]
We may think of this as a map $f:X\rightarrow Y$ in $\dA(\A)$. Using~\cref{Another description of Yoneda} and~\cref{mapping space is an E-morphism}, the component of $\beta$ at $Z\in\dA_0(\B)$ is the following composite:
\begin{center}
\begin{tikzcd}
\widetilde{\map}_{\dA}(Y,Z)\arrow[r,"\rho^{-1}"]
& \widetilde{\map}_{\dA}(Y,Z)\otimes_\A\h_\A\arrow[rrr,"\widetilde{\map}_{\dA}(Y\text{,}Z)\otimes_\A f" above] &&& \widetilde{\map}_{\dA}(Y,Z)\otimes_\A\widetilde{\map}_{\dA}(X,Y)\arrow[d,"\circ" right]\\
&&&& \widetilde{\map}_{\dA}(X,Z)
\end{tikzcd}
\end{center}
But this is exactly $\widetilde{\map}_{\dA}(f,Z)$. Thus, we have:
\[
\beta=\widetilde{\map}_{\dA}(f,-):\widetilde{\map}_{\dA}(Y,-)\Rightarrow \widetilde{\map}_{\dA}(X,-)
\]
\end{proof}

\begin{definition}
An $\E$-category \textbf{adjunction} is an adjunction in the $2$-category $\ECat$, in the sense of~\cite{KS74}. Thus, an $\E$-category adjunction consists of $\E$-categories $\eA$ and $\eB$, a left adjoint $\E$-morphism $F:\eA\rightarrow\eB$ and a $\E$-morphism right adjoint $G:\eB\rightarrow\eA$, together with the counit $\E$-natural transformation $\epsilon:F\circ G\Rightarrow \id_{\eB}$ and the unit $\E$-natural transformation $\eta:\id_{\eA}\Rightarrow G\circ F$. These must satisfy the \textbf{triangle identities} $(G\circ\epsilon)\cdot (\eta\circ G)=\id_G$ and $(\epsilon\circ F)\cdot (F\circ \eta)=\id_F$.
\end{definition}

\begin{theorem}\label{Characterising adjunctions in E-Cat}
The data of an adjunction in $\ECat$ is equivalent to the following: a pair of $\E$-morphisms $F:\eA\rightarrow\eB$ and $G:\eB\rightarrow\eA$, and a family of isomorphisms
\[
\widetilde{\map}_{\dB}(FX,Y)\iso\widetilde{\map}_{\dA}(X,GY)
\]
$\E$-natural in both $X\in\dA_0(\A)$ and $Y\in\dB_0(\B)$.
\end{theorem}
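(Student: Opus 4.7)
The plan is to mimic the classical proof of the corresponding result in enriched category theory (cf.\ \cite[Chapter 1]{Kelly82}), using the $\E$-category Yoneda lemma (\cref{Yoneda lemma for E-categories}) as the main tool, together with \cref{mapping space is an E-morphism} and \cref{F:map(X Y) -> map(FX FY) is E-natural} to produce the relevant $\E$-morphisms.

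For the forward direction, suppose we are given an adjunction $F\dashv G$ in $\ECat$ with unit $\eta:\id_{\eA}\Rightarrow G\circ F$ and counit $\epsilon:F\circ G\Rightarrow\id_{\eB}$. For $X\in\dA_0(\A)$ and $Y\in\dB_0(\B)$, I would define the map
\[
\widetilde{\map}_{\dB}(FX,Y)\longrightarrow \widetilde{\map}_{\dA}(X,GY)
\]
as the composite of $G:\widetilde{\map}_{\dB}(FX,Y)\to\widetilde{\map}_{\dA}(GFX,GY)$ with $\widetilde{\map}_{\dA}(\eta_X,GY)$, and the inverse map as the composite of $F$ with $\widetilde{\map}_{\dB}(FX,\epsilon_Y)$. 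The triangle identities for $(\eta,\epsilon)$, together with the coherence axioms of~\cref{E-morphism definition} for $F$ and $G$, imply that these are mutually inverse. $\E$-naturality in $Y$ and $X$ follows from \cref{map(f -) is E-natural}, \cref{F:map(X Y) -> map(FX FY) is E-natural}, and the $\E$-naturality of $\eta$ and $\epsilon$.

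For the reverse direction, suppose we are given $\E$-morphisms $F:\eA\to\eB$, $G:\eB\to\eA$, and an $\E$-natural isomorphism $\theta_{X,Y}:\widetilde{\map}_{\dB}(FX,Y)\xrightarrow{\iso}\widetilde{\map}_{\dA}(X,GY)$. I construct the unit and counit via the Yoneda lemma. Specifically, for each $X\in\dA_0(\A)$, the map
\[
\eta_X\colon \h_\A\xrightarrow{\;j\;}\widetilde{\map}_{\dB}(FX,FX)\xrightarrow{\;\theta_{X,FX}\;}\widetilde{\map}_{\dA}(X,GFX)
\]
defines the component of a candidate unit, and dually
\[
\epsilon_Y\colon \h_\B\xrightarrow{\;j\;}\widetilde{\map}_{\dA}(GY,GY)\xrightarrow{\;\theta_{GY,Y}^{-1}\;}\widetilde{\map}_{\dB}(FGY,Y)
\]
gives the candidate counit. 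I would check $\E$-naturality in $X$ (resp.\ $Y$) using $\E$-naturality of $\theta$ in its two arguments, by applying \cref{Rephrasing E-naturality} and tracing through the resulting diagrams; equivalently, using \cref{Yoneda lemma for E-categories} to transport $\E$-natural transformations out of $\widetilde{\map}_{\dA}(X,-)$ and $\widetilde{\map}_{\dB}(-,Y)$. By \cref{Yoneda lemma for E-categories} (applied in the form of \cref{Another description of Yoneda}), the original isomorphism $\theta_{X,Y}$ can be recovered as the composite of $G$ with $\widetilde{\map}_{\dA}(\eta_X,GY)$, and its inverse as the composite of $F$ with $\widetilde{\map}_{\dB}(FX,\epsilon_Y)$.

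The main obstacle will be verifying the triangle identities $(G\circ\epsilon)\cdot(\eta\circ G)=\id_G$ and $(\epsilon\circ F)\cdot(F\circ\eta)=\id_F$ for the constructed $\eta,\epsilon$. Here the strategy is to apply the faithfulness half of \cref{E-category yoneda embedding is fully faithful}: the two composites in each triangle identity are maps $\h_\A\to\widetilde{\map}_{\dA}(X,GFX)$ (or the dual), and two such maps are equal iff they induce equal $\E$-natural transformations under the Yoneda embedding. Unravelling these using the two descriptions of $\theta$ above, the identities reduce to the fact that $\theta_{X,Y}\circ\theta_{X,Y}^{-1}=\id$ and $\theta_{X,Y}^{-1}\circ\theta_{X,Y}=\id$, together with the unit axioms in $\eA$, $\eB$ and the $\E$-morphism axioms for $F$, $G$. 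Finally, I would note that the two constructions are mutually inverse: applying the forward construction to the $(\eta,\epsilon)$ produced by the reverse one recovers the original $\theta$ (by the two formulas for $\theta$ above), and starting from an adjunction and returning, the Yoneda lemma and faithfulness of $y$ ensure the original $\eta$ and $\epsilon$ are recovered.
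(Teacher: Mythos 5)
Your proposal is correct and mirrors the paper's own proof: both use the $\E$-category Yoneda lemma (in the form of~\cref{Another description of Yoneda}) to set up the correspondence between the components $\eta_X$, $\epsilon_Y$ and the isomorphisms $\widetilde{\map}_{\dB}(FX,Y)\iso\widetilde{\map}_{\dA}(X,GY)$, then argue that $\E$-naturality in the non-Yoneda variable transfers across this correspondence, and finally observe that the triangle identities are equivalent to the two composites being inverse. The paper packages this as a single equivalence of data rather than two separate constructions, but the formulas, lemmas, and diagram chases (including the slightly delicate converse direction of the $\E$-naturality transfer, which you correctly flag as needing a careful check via~\cref{Rephrasing E-naturality}) are the same.
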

\begin{proof}
Let $F:\eA\rightarrow\eB$ and $G:\eB\rightarrow\eA$  be $\E$-morphisms, and suppose we have a family of maps as below, for each category $\A$ and each $X\in\dA_0(\A)$:
\[
\eta_X:\h_\A\rightarrow\widetilde{\map}_{\dA}(X,GFX)
\]
By~\cref{Yoneda lemma for E-categories}, each map $\eta_X$ determines an $\E$-natural transformation:
\begin{center}
\begin{tikzcd}[row sep=tiny]

\eB\arrow[rrrr,"\widetilde{\map}_{\dB}(FX\text{,}-)" above,bend left=25,""{name=D}]\arrow[drr,"G" below left,bend right=10]&&&&\eE^{\A\op}\\
 && 
\eA \arrow[rru,"\widetilde{\map}_{\dA}(X\text{,}-)" below right, bend right=10]
\arrow[Leftarrow,from=2-3,to=D,"\Omega_{\mathsmaller{{X,-}}}" right=2,shorten >=0.4cm,shorten <=0.2cm]
\end{tikzcd}
\end{center}
Using~\cref{Another description of Yoneda}, the component of this map at $Y\in\dB_0(\B)$ is given by the composite
\begin{center}
\begin{tikzcd}
\Omega_{X,Y}:\widetilde{\map}_{\dB}(FX,Y)\arrow[r,"G"]
& \widetilde{\map}_{\dA}(GFX,GY)\arrow[rrr,"\widetilde{\map}_{\dA}(\eta_X\text{,}GY)" above] &&& \widetilde{\map}_{\dA}(X,GY).
\end{tikzcd}
\end{center}
We claim that the family $\eta_X:\h_\A\rightarrow \widetilde{\map}_{\dA}(X,GFX)$ is $\E$-natural in $X\in\dA_0(\A)$ if and only if these maps $\Omega_{X,Y}$ are $\E$-natural in $X$.

First, suppose $\eta:\id_{\eA} \Rightarrow G\circ F$ is $\E$-natural. Using~\cref{F:map(X Y) -> map(FX FY) is E-natural} for $G$, and the description of horizontal composition for $\E$-natural transformations in~\cref{2-category of E-categories}, it follows immediately that $\Omega_{X,Y}$ is $\E$-natural in $X$.

Conversely, suppose that for any category $\B$, and any $Y\in\dB_0(\B)$, the map
\begin{center}
\begin{tikzcd}[row sep=tiny]
 && 
\eB\op \arrow[rrd,"\widetilde{\map}_{\dB}(-\text{,}Y)" above right, bend left=10]
\\
\eA\op\arrow[rrrr,"\widetilde{\map}_{\dA}(-\text{,}GY)" below,bend right=25,""{name=D}]\arrow[urr,"F\op" above left,bend left=10]&&&&\eE^{\B}
\arrow[Rightarrow,from=1-3,to=D,"\Omega_{\mathsmaller{{-,Y}}}",shorten >=0.2cm,shorten <=0.2cm]
\end{tikzcd}
\end{center}
is $\E$-natural. The commutative diagram of~\cref{Rephrasing E-naturality} that expresses this $\E$-naturality corresponds by adjointness to the following commutative diagram, for any $X\in\dA_0(\A)$ and $Z\in\dA_0(\C)$:
\begin{center}
\begin{tikzcd}[row sep=small]
\widetilde{\map}_{\dB}(FZ,Y)\otimes_\C\widetilde{\map}_{\dA}(X,Z)\arrow{ddd}[left]{G\otimes_\C\widetilde{\map}_{\dA}(X,Z)} \arrow{rrr}[above]{\widetilde{\map}_{\dB}(FZ,Y)\otimes_\C F} &&& \widetilde{\map}_{\dB}(FZ,Y)\otimes_\C\widetilde{\map}_{\dB}(FX,FZ)\arrow{dd}[right]{\circ} \\\\
 
&&&\widetilde{\map}_{\dB}(FX,Y)\arrow{dd}[right]{G}\\

\widetilde{\map}_{\dA}(GFZ,GY)\otimes_\C\widetilde{\map}_{\dA}(X,Z)\arrow{ddd}[left]{\widetilde{\map}_{\dA}(\eta_Z,GY)\otimes_\C\widetilde{\map}_{\dA}(X,Z)} \\

&&&\widetilde{\map}_{\dA}(GFX,GY)\arrow{dd}[right]{\widetilde{\map}_{\dA}(\eta_X,GY)}\\\\
 
\widetilde{\map}_{\dA}(Z,GY)\otimes_\C\widetilde{\map}_{\dA}(X,Z)\arrow{rrr}[below]{\circ} &&& \widetilde{\map}_{\dA}(X,GY)
\end{tikzcd}
\end{center} 
In particular, take $X\in\dA_0(\A)$, $Z\in\dA_0(\C)$ and $Y=FZ\in\dB_0(\C)$.  Precomposing with the composite
\begin{center}
\begin{tikzcd}
\widetilde{\map}_{\dA}(X,Z)\arrow[r,"\lambda^{-1}"]
& \h_\C\otimes_\C\widetilde{\map}_{\dA}(X,Z)\arrow[rrr,"j\otimes_\C\widetilde{\map}_{\dA}(X\text{,}Z)" above] &&& \widetilde{\map}_{\dB}(FZ,FZ)\otimes_\A \widetilde{\map}_{\dA}(X,Z)
\end{tikzcd}
\end{center}
this commutative diagram reduces to the following:
\begin{center}
\begin{tikzcd}
\widetilde{\map}_{\dA}(X,Z)\arrow[rr,"F"]\arrow[rrrrdd,"\widetilde{\map}_{\dA}(X\text{,}\eta_Z)" below left,bend right=8] && \widetilde{\map}_{\dB}(FX,FZ)\arrow[rr,"G"]
&& \widetilde{\map}_{\dA}(GFX,GFZ)\arrow[dd,"\widetilde{\map}_{\dA}(\eta_X\text{,}GFZ)" right]\\\\
&&&& \widetilde{\map}_{\dA}(X,GFZ)
\end{tikzcd}
\end{center}
But this is the diagram of~\cref{Rephrasing E-naturality}, expressing the $\E$-naturality of $\eta$.

So the maps $\eta_X$ are $\E$-natural in $X$ if and only if the maps $\Omega_{X,Y}$ are $\E$-natural in $X$. Dually, a family of maps 
\[
\epsilon_Y:\h_\B\rightarrow\widetilde{\map}_{\dB}(FGY,Y)
\]
corresponds to maps, $\E$-natural in $X\in\dA_0(\A)$:
\begin{center}
\begin{tikzcd}
\Lambda_{X,Y}:\widetilde{\map}_{\dA}(X,GY)\arrow[r,"F"]
& \widetilde{\map}_{\dB}(FX,FGY)\arrow[rrr,"\widetilde{\map}_{\dB}(FX\text{,}\epsilon_Y)" above] &&& \widetilde{\map}_{\dB}(FX,Y)
\end{tikzcd}
\end{center}
These maps $\Lambda_{X,Y}$ are $\E$-natural in $Y$ if and only if the maps $\epsilon_Y$ are $\E$-natural in $Y$.

Given this data, the equations $\Lambda_{X,Y}\circ\Omega_{X,Y}=\id$ and $\Omega_{X,Y}\circ\Lambda_{X,Y}=\id$ are equivalent to the triangle identities for $\epsilon$ and $\eta$.
\end{proof} 

\begin{theorem}\label{Defining adjoints representably}
Let $G:\eB\rightarrow\eA$ be an $\E$-morphism. Suppose that, for every category $\A$ and any object $X\in\dA_0(\A)$, there is an object $FX\in\dB_0(\A)$ representing the composite $\E$-morphism below:
\begin{center}
\begin{tikzcd}
\eB\arrow[r,"G"]
& \eA\arrow[rr,"\widetilde{\map}_{\dA}(X\text{,}-)" above] && \eE^{\A\op}
\end{tikzcd}
\end{center}
Then there is a unique way to extend $F$ to an $\E$-morphism $F:\eA\rightarrow\eB$, such that $F$ gives a left adjoint to $G:\eB\rightarrow\eA$.
\end{theorem}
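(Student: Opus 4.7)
The plan is to use the $\E$-category Yoneda lemma of \cref{Yoneda lemma for E-categories} to extract unit data from the representability hypothesis, to construct the $\E$-morphism structure on $F$ from these units, and then to appeal to \cref{Characterising adjunctions in E-Cat} to assemble everything into a genuine adjunction. By hypothesis, for each category $\A$ and each $X\in\dA_0(\A)$, we have an $\E$-natural isomorphism
\[
\Omega_X\;:\;\widetilde{\map}_{\dB}(FX,-)\iso\widetilde{\map}_{\dA}(X,G-)\;:\;\eB\rightarrow\eE^{\A\op}.
\]
By \cref{Yoneda lemma for E-categories} this is classified by a unique \emph{unit} $\eta_X:\h_\A\rightarrow\widetilde{\map}_{\dA}(X,GFX)$ in $\E(\A\op\times\A)$, and a short computation using the description of $\Omega_X$ from \cref{Another description of Yoneda} together with the pseudonaturality of $\rho$ from \cref{Coherence for the cancelling version of the closed E-action} shows that the component of $\Omega_X$ at $Y\in\dB_0(\B)$ can be rewritten as
\[
\Omega_X(Y)\;:\;\widetilde{\map}_{\dB}(FX,Y)\xrightarrow{G}\widetilde{\map}_{\dA}(GFX,GY)\xrightarrow{\widetilde{\map}_{\dA}(\eta_X,GY)}\widetilde{\map}_{\dA}(X,GY).
\]

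To endow $F$ with an $\E$-morphism structure, for $X\in\dA_0(\A)$ and $Z\in\dA_0(\C)$ I define the structure map $F_{X,Z}$ to be the composite
\[
\widetilde{\map}_{\dA}(X,Z)\xrightarrow{\widetilde{\map}_{\dA}(X,\eta_Z)}\widetilde{\map}_{\dA}(X,GFZ)\xrightarrow{\Omega_X^{-1}(FZ)}\widetilde{\map}_{\dB}(FX,FZ),
\]
where $\widetilde{\map}_{\dA}(X,\eta_Z)$ is the image of $\eta_Z\in\dA(\C)(Z,GFZ)$ under the functor induced by the $\E$-morphism $\widetilde{\map}_{\dA}(X,-)$ of \cref{mapping space is an E-morphism}. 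The unit axiom for $F$ in \cref{E-morphism definition} reduces, via \cref{Extracting f from map(f X)} and the second axiom of \cref{E-morphism definition} for $G$, to the identity $\Omega_X^{-1}(FX)\circ\eta_X=j$, which is the inverse of $\Omega_X(FX)\circ j=\eta_X$ and so holds by construction of $\eta_X$.

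The composition axiom for $F$ is the main obstacle. To verify it, I compose both sides of the equation $F_{X,Z}\circ(\circ_\eA)=(\circ_\eB)\circ(F_{Y,Z}\otimes_\B F_{X,Y})$ on the left with the isomorphism $\Omega_X(FZ)$, transporting the identity into $\widetilde{\map}_{\dA}(X,GFZ)$. The left-hand side collapses to $\widetilde{\map}_{\dA}(X,\eta_Z)\circ(\circ_\eA)$, while the right-hand side, after expanding $F_{X,Y}$ and $F_{Y,Z}$ and inserting the displayed formula for $\Omega_X$, reduces by the $\E$-naturality of $\Omega_Y$ in its second variable to a composite that is shown to equal the left-hand side by combining the composition axiom of \cref{E-morphism definition} for $G$, the associator of \cref{Coherence for the cancelling version of the closed E-action}, and the pseudonaturality of composition in its external parameters. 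Once the composition axiom is established, the $\E$-naturality of the family $\Omega_X(Y)$ in $X$ follows directly from the very definition of $F_{X,Z}$ via \cref{Rephrasing E-naturality}, and \cref{Characterising adjunctions in E-Cat} then assembles $F$, $G$, and the family $\Omega_X$ into the desired adjunction.

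For uniqueness, suppose $F':\eA\rightarrow\eB$ is any other $\E$-morphism extending the same object assignment and left adjoint to $G$. Its adjunction with $G$ produces, via \cref{Characterising adjunctions in E-Cat}, $\E$-natural isomorphisms $\Omega'_X:\widetilde{\map}_{\dB}(FX,-)\iso\widetilde{\map}_{\dA}(X,G-)$, which by \cref{Yoneda lemma for E-categories} are classified by units $\eta'_X$; the $\E$-naturality of $\Omega'_X$ in $X$ together with \cref{Rephrasing E-naturality} forces the action of $F'$ on mapping objects to be given by the same formula as $F_{X,Z}$ with $(\Omega_X,\eta_X)$ replaced by $(\Omega'_X,\eta'_X)$. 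Fully faithfulness of the Yoneda embedding, \cref{E-category yoneda embedding is fully faithful}, then gives $F'=F$ up to the unique natural isomorphism identifying $(\Omega'_X,\eta'_X)$ with $(\Omega_X,\eta_X)$.
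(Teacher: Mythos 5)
Your overall strategy — using \cref{Yoneda lemma for E-categories} to extract units $\eta_X$, defining the structure maps $F_{X,Z}$ by $\Omega_{X,FZ}^{-1}\circ\widetilde{\map}_\dA(X,\eta_Z)$, and then appealing to \cref{Characterising adjunctions in E-Cat} — is exactly the paper's. Your treatment of the unit axiom is also correct and matches the paper. However, there are two places where you diverge, one of which I think is a genuine gap.

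On the composition axiom you propose a direct calculation: postcompose both sides with $\Omega_{X,FZ}$, collapse the left side to $\widetilde{\map}_{\dA}(X,\eta_Z)\circ\circ_{\dA}$, and simplify the right side using the composition axiom for $G$, the associator, and $\E$-naturality of $\Omega_Y$. The paper instead proceeds indirectly: it shows that the defining property of $F_{X,Z}$ forces the would-be $\E$-naturality of $\Omega_{X,Y}$ in $X$ (which is the same square that appears in \cref{Characterising adjunctions in E-Cat}), combines this with \cref{map(f -) is E-natural}, and observes that the resulting diagram, once one takes adjuncts, \emph{is} the composition axiom for $F$ — this is precisely \cref{F:map(X Y) -> map(FX FY) is E-natural} run backwards. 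This is not just a stylistic difference. Your direct route runs into the problem that the right side, after applying the axiom for $G$, produces terms of the form $G\circ\Omega_{Y,FZ}^{-1}$, and neither $G$ nor $\widetilde{\map}_\dA(\eta_Y,-)$ is invertible separately — only the composite $\Omega_Y$ is. You would need the adjoint form of the $\E$-naturality of $\Omega_Y$ in its $\eB$-variable to untangle this, and as written it is not clear how the associator and ``pseudonaturality of composition'' help; the paper doesn't use the associator at this step at all. The would-be-naturality trick sidesteps all of this, which is why it is the right move.

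On uniqueness, your conclusion that $F'=F$ ``up to the unique natural isomorphism identifying $(\Omega'_X,\eta'_X)$ with $(\Omega_X,\eta_X)$'' is weaker than what the theorem asserts and is not quite coherent as stated: if the representing isomorphisms $\Omega'_X$ and $\Omega_X$ differ, the resulting $\E$-morphism structures on the \emph{same} object assignment genuinely differ, so there is no canonical ``natural isomorphism identifying'' the two pairs. The paper's uniqueness argument is terser but sharper: it observes that the formula for $F_{X,Z}$ is the \emph{only} one making the diagram expressing $\E$-naturality of $\eta$ commute, so once the $\E$-morphism axioms are verified, $F$ is the unique extension making $\eta$ the unit of an adjunction with $G$. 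You should recast your uniqueness argument in these terms: fix the $\eta_X$ (equivalently, fix the $\Omega_X$) coming from a given adjunction with $G$, and show that the $\E$-naturality of $\eta$ in the sense of diagram (\ref{Diagram would-be E-naturality of eta}) forces $F_{X,Z}$ to equal the displayed formula, rather than arguing ``up to isomorphism.''
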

\begin{proof}
On objects, the $\E$-morphism $F:\eA\rightarrow\eB$ takes $X\in\dA_0(\A)$ to the given object $FX\in\dB_0(\A)$.  For any two objects $X\in\dA_0(\A)$ and $Z\in\dA_0(\C)$, we require the structure map
\[
F:\widetilde{\map}_{\dA}(X,Z)\rightarrow\widetilde{\map}_{\dB}(FX,FZ)
\]
in $\E(\A\op\times\C)$. 

By assumption, for any $X\in\dA_0(\A)$, we have an isomorphism
\[
\Omega_{X,Y}:\widetilde{\map}_{\dB}(FX,Y)\xrightarrow{\;\iso\;}\widetilde{\map}_{\dA}(X,GY),
\]
$\E$-natural in $Y\in\dB_0(\B)$. By~\cref{Yoneda lemma for E-categories}, this map is given by the composite
\begin{center}
\begin{tikzcd}
\Omega_{X,Y}:\widetilde{\map}_{\dB}(FX,Y)\arrow[r,"G"]
& \widetilde{\map}_{\dA}(GFX,GY)\arrow[rrr,"\widetilde{\map}_{\dA}(\eta_X\text{,}GY)" above] &&& \widetilde{\map}_{\dA}(X,GY),
\end{tikzcd}
\end{center}
for a map $\eta_X:X\rightarrow GFX$, as in the proof of~\cref{Characterising adjunctions in E-Cat}. Using these maps, we define the structure map as follows, for $X\in\dA_0(\A)$ and $Z\in\dA_0(\C)$:
\begin{center}
\begin{tikzcd}
F:\widetilde{\map}_{\dA}(X,Z)\arrow[rrr,"\widetilde{\map}_{\dA}(X\text{,}\eta_Z)"]
&&& \widetilde{\map}_{\dA}(X,GFZ)\arrow[rr,"\Omega_{X,FZ}^{-1}" above] && \widetilde{\map}_{\dA}(FX,FZ)
\end{tikzcd}
\end{center}
Note that this definition is necessary to make the diagram below commute:
\begin{center}
\begin{equation}\label{Diagram would-be E-naturality of eta}
\begin{tikzcd}[baseline=(current  bounding  box.center)]
\widetilde{\map}_{\dA}(X,Z)\arrow[rr,"F"]\arrow[rrrrdd,"\widetilde{\map}_{\dA}(X\text{,}\eta_Z)" below left,bend right=8] && \widetilde{\map}_{\dB}(FX,FZ)\arrow[rr,"G"]
&& \widetilde{\map}_{\dA}(GFX,GFZ)\arrow[dd,"\widetilde{\map}_{\dA}(\eta_X\text{,}GFZ)" right]\\\\
&&&& \widetilde{\map}_{\dA}(X,GFZ)
\end{tikzcd}
\end{equation}
\end{center}
If we show that $F$ is indeed an $\E$-morphism, then this diagram will express $\E$-naturality for the maps $\eta_X:X\rightarrow GFX$. Thus, once we verify the axioms of~\cref{E-morphism definition}, it will follow that $F$ is the unique left adjoint to $G$.

To verify the first axiom, note that the commutativity of the diagram (\ref{Diagram would-be E-naturality of eta}) above implies that the diagram below commutes, for any $X\in\dA_0(\A)$, $Z\in\dA_0(\C)$ and $Y\in\dB_0(\B)$:
\begin{center}
\begin{equation}\label{Diagram would-be E-naturality of omega}
\begin{tikzcd}[column sep=0cm,baseline=(current  bounding  box.center)]
\widetilde{\map}_{\dA}(X,Z)\arrow[rd,"\widetilde{\map}_{\dA}(-\text{,}GY)",bend left=12]\arrow{dd}[left]{F}\\
& \widetilde{\map}_{\E^\B}(\widetilde{\map}_{\dA}(Z,GY),\widetilde{\map}_{\dA}(X,GY)) \arrow{dddd}{\widetilde{\map}_{\E^{\B}}(\Omega_{Z,Y},\widetilde{\map}_{\dA}(X,GY))}\\
\widetilde{\map}_{\dB}(FX,FZ)\arrow{dd}[left]{\widetilde{\map}_{\dB}(-,Y)}
\\\\
\widetilde{\map}_{\E^{\B}}(\widetilde{\map}_{\dB}(FZ,Y),\widetilde{\map}_{\dB}(FX,Y))\arrow[dr,"\widetilde{\map}_{\E^\B}(\widetilde{\map}_{\dB}(FZ\text{,}Y)\text{,}\Omega_{X\text{,}Y})" below left, bend right=12] 
\\
&\widetilde{\map}_{\E^\B}(\widetilde{\map}_{\dB}(FZ,Y),\widetilde{\map}_{\dA}(X,GY))
\end{tikzcd}
\end{equation}
\end{center} 
If we knew that $F$ was an $\E$-morphism, this diagram would express the $\E$-naturality of the maps $\Omega_{X,Y}$ in $X$. The commutativity of this diagram follows by the same argument as in the proof of~\cref{Characterising adjunctions in E-Cat}. It follows from the commutativity of the diagram (\ref{Diagram would-be E-naturality of omega}) and the $\E$-naturality of $\widetilde{\map}_{\dA}(-,\eta_Z)$, as in~\cref{map(f -) is E-natural}, that the composite
\begin{center}
\begin{tikzcd}
F:\widetilde{\map}_{\dA}(X,Z)\arrow[rrr,"\widetilde{\map}_{\dA}(X\text{,}\eta_Z)"]
&&& \widetilde{\map}_{\dA}(X,GFZ)\arrow[rr,"\Omega_{X,FZ}^{-1}" above] && \widetilde{\map}_{\dA}(FX,FZ)
\end{tikzcd}
\end{center}
also satisfies the would-be $\E$-naturality condition in $X$. That is, the diagram below commutes, for $X\in\dA_0(\A)$, $Z\in\dA_0(\C)$ and $W\in\dA_0(\cD)$:
\begin{center}
\begin{tikzcd}[column sep=0cm]
\widetilde{\map}_{\dA}(X,Z)\arrow[rd,"\widetilde{\map}_{\dA}(-\text{,}W)",bend left=12]\arrow{dd}[left]{F}\\

& \widetilde{\map}_{\E^\cD}(\widetilde{\map}_{\dA}(Z,W),\widetilde{\map}_{\dA}(X,W)) \arrow{dddd}{\widetilde{\map}_{\E^{\cD}}(\widetilde{\map}_{\dA}(Z,W),F)}\\

\widetilde{\map}_{\dB}(FX,FZ)\arrow{dd}[left]{\widetilde{\map}_{\dB}(-,FW)}
\\\\
 
\widetilde{\map}_{\E^{\cD}}(\widetilde{\map}_{\dB}(FZ,FW),\widetilde{\map}_{\dB}(FX,FW))\arrow[dr,"\widetilde{\map}_{\E^\cD}(F\text{,}\widetilde{\map}_{\dB}(FX\text{,}FW))" below left, bend right=12] 
\\
&\widetilde{\map}_{\E^\cD}(\widetilde{\map}_{\dA}(Z,W),\widetilde{\map}_{\dB}(FX,FW))
\end{tikzcd}
\end{center} 
Taking adjuncts, this diagram reduces to the first axiom of~\cref{E-morphism definition} for $F$. Essentially, this is the proof of~\cref{F:map(X Y) -> map(FX FY) is E-natural} in reverse.

The second axiom of~\cref{E-morphism definition} for $F$ follows easily from~\cref{Extracting f from map(f X)} and the corresponding axiom for $G$.
\end{proof}

Suppose we have a adjunction 
\begin{center}
\begin{tikzcd}
\D_1\arrow[rr, bend left=40, "F" above,""{name=U, below}]\arrow[rr,leftarrow, bend right=40, "R" below, ""{name=D}]
&& \D_2
\arrow[phantom,from=U,to=D,"\bot"]
\end{tikzcd}
\end{center}
between closed $\E$-modules. By~\cref{Left adjoint preserves tensors iff right adjoint preserves cotensors}, $F$ preserves tensors if and only if $R$ preserves cotensors. If this is the case, then, by~\cref{Cocontinuous E-module maps induce E-morphisms}, $F$ and $R$ both induce $\E$-morphisms on the associated $\E$-categories. We will now show that these $\E$-morphisms form an adjunction:

\begin{proposition}\label{E-module adjunctions induce E-category adjunctions}
Let $\D_1$ and $\D_2$ be closed $\E$-modules, and suppose we have an adjunction:
\begin{center}
\begin{tikzcd}
\D_1\arrow[rr, bend left=40, "F" above,""{name=U, below}]\arrow[rr,leftarrow, bend right=40, "R" below, ""{name=D}]
&& \D_2
\arrow[phantom,from=U,to=D,"\bot"]
\end{tikzcd}
\end{center}
If $F$ preserves tensors, then the induced $\E$-morphisms form an $\E$-category adjunction.
\end{proposition}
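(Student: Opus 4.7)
The plan is to apply the representability criterion \cref{Defining adjoints representably} to the $\E$-morphism $R:\eD_2\to\eD_1$, noting that $R$ preserves cotensors by \cref{Left adjoint preserves tensors iff right adjoint preserves cotensors} and so induces an $\E$-morphism via the dual statement to \cref{Cocontinuous E-module maps induce E-morphisms}. For each category $\A$ and each $X\in\D_1(\A)$, I will show that $FX\in\D_2(\A)$ represents the composite $\E$-morphism $\widetilde{\map}_{\D_1}(X,-)\circ R:\eD_2\to\eE^{\A\op}$; then \cref{Defining adjoints representably} will produce a unique left-adjoint $\E$-morphism, and a short uniqueness argument will identify it with the $\E$-morphism $F:\eD_1\to\eD_2$ produced by \cref{Cocontinuous E-module maps induce E-morphisms}.

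To produce the required $\E$-natural transformation $\widetilde{\map}_{\D_2}(FX,-)\Rightarrow\widetilde{\map}_{\D_1}(X,R-)$, I will invoke \cref{Yoneda lemma for E-categories}: such a transformation is determined by a single map $\h_\A\to\widetilde{\map}_{\D_1}(X,RFX)$ in $\E(\A\op\times\A)$, and I take this to be the map $\tilde\eta_X$ associated, via the bijection from the proof of \cref{E-modules are E-categories}, to the unit $\eta_X:X\to RFX$ of the underlying derivator adjunction $F\dashv R$. The main technical task is then to verify that the resulting $\E$-natural transformation is pointwise an isomorphism in $\E(\A\op\times\B)$. My strategy is to compare it with a canonical isomorphism of derivator maps: by \cref{Cocontinuous E-module maps preserve the cancelling product}, the structure isomorphism $\varphi:Z\otimes_\A FX\iso F(Z\otimes_\A X)$ identifies $-\otimes_\A FX$ with $F\circ(-\otimes_\A X)$ as cocontinuous maps $\E^{\A\op}\to\D_2$, and taking mates under the adjunctions of \cref{2-variable adjunction characterisation} and $F\dashv R$ yields an isomorphism $\widetilde{\map}_{\D_2}(FX,-)\iso\widetilde{\map}_{\D_1}(X,R-)$ of derivator maps. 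A diagram chase using the triangle identities for $F\dashv R$ and the explicit description of $\tilde\eta_X$ from the proof of \cref{E-modules are E-categories} should identify this canonical isomorphism pointwise with the Yoneda-derived transformation, completing the verification.

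Once representability is established, \cref{Defining adjoints representably} provides a unique $\E$-morphism $F':\eD_1\to\eD_2$ left adjoint to $R$ that sends $X\in\D_1(\A)$ to $FX\in\D_2(\A)$. To conclude, I must verify that $F'$ coincides with the $\E$-morphism $F$ from \cref{Cocontinuous E-module maps induce E-morphisms}: the two agree on objects, and by the uniqueness portion of the proof of \cref{Defining adjoints representably}, $F'$ is characterised by the condition that the maps $\eta_X$ assemble into an $\E$-natural transformation $\id_{\eD_1}\Rightarrow F'\circ R$. A direct check using the second unit axiom of \cref{E-morphism definition} for $F$ together with the triangle identity of the underlying adjunction will show that $F$ satisfies this property, forcing $F=F'$. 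The main obstacle is the middle step: aligning the Yoneda-derived transformation with the mate of $\varphi$ requires a careful diagram chase that balances the $\E$-category Yoneda correspondence against the standard construction of mates of adjoint isomorphisms.
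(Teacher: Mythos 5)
Your proposal follows the same overall route as the paper's proof: establish the $\E$-natural isomorphism $\Omega_{X,Y}:\widetilde{\map}_{\D_2}(FX,Y)\iso\widetilde{\map}_{\D_1}(X,RY)$, invoke \cref{Defining adjoints representably} to obtain a unique left-adjoint $\E$-morphism $F'$, and then identify $F'$ with the $\E$-morphism $F$ of \cref{Cocontinuous E-module maps induce E-morphisms}. Where you diverge from the paper is in how you certify the key $\E$-natural isomorphism. The paper simply observes that $\Omega$ is the conjugate of the tensor-preserving structure isomorphism $\varphi$, applies \cref{Modification respects tensors iff conjugate respects cotensors} to conclude that $\Omega$ respects cotensors, and then applies the dual of \cref{E-module modifications induce E-natural maps} to turn the iso of derivator maps directly into an $\E$-natural iso --- those two lemmas were set up precisely so this step is a one-liner. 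You instead build an $\E$-natural transformation from scratch via \cref{Yoneda lemma for E-categories}, seeding it with the map $\tilde\eta_X$ corresponding to the unit $\eta_X:X\to RFX$, and then propose to prove it is pointwise invertible by matching its components against the mate of $\varphi$. That plan is sound, and the fact that the Yoneda-produced transformation is automatically $\E$-natural is a pleasant feature of your organization. But be aware that the "careful diagram chase" you flag as the main obstacle is essentially re-proving the content of \cref{Modification respects tensors iff conjugate respects cotensors} by hand for this instance; you do not avoid that work, you relocate it. Using the prepared lemma, as the paper does, is the tidier path.

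Two small points to fix. First, the unit of the adjunction $F'\dashv R$ should be $\id_{\eD_1}\Rightarrow R\circ F'$, not $\id_{\eD_1}\Rightarrow F'\circ R$ as you wrote; the direction matters for the uniqueness argument coming out of \cref{Defining adjoints representably}. Second, for the final identification $F=F'$, the paper's route is to compute the structure map of $F'$ produced by \cref{Defining adjoints representably} (namely $\Omega_{X,FZ}^{-1}\circ\widetilde{\map}_{\D_1}(X,\eta_Z)$), use the fact that $\eta_Z$ is the unenriched unit, and check directly that this is adjunct under $-\otimes_\A FX\dashv\widetilde{\map}_{\D_2}(FX,-)$ to the composite $\varphi$ followed by $F(\epsilon)$ that defines $F$ in \cref{Cocontinuous E-module maps induce E-morphisms}. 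Your alternative --- show that the $\E$-morphism $F$ satisfies the $\E$-naturality condition characterising $F'$, and invoke uniqueness --- is equivalent, but when you carry it out you will find you need more than the unit axiom of \cref{E-morphism definition}; you will need the composition axiom and the explicit tensor/cotensor-preservation isomorphisms for both $F$ and $R$ to make the diagram close up.
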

\begin{proof}
Let $\A$ be a category and let $X\in\D_1(\A)$. Consider the isomorphism 
\begin{center}
\begin{tikzcd}
\E^{\A\op}\arrow[ddrr,bend right,"-\otimes_\A FX" below left,""{name=L,above right}]\arrow[rr,"-\mathsmaller{\otimes_\A} X" above] && \D_1\arrow[Leftarrow,to=L,"\varphi" left=5,"\iso" below=4,shorten >=0.3cm,shorten <=0.7cm] \arrow[dd,"F" right] 
\\\\
&&\D_2
\end{tikzcd}
\end{center}
of~\cref{Cocontinuous E-module maps preserve the cancelling product}. The conjugate of this map gives an isomorphism between the right adjoints:
\begin{center}
\begin{tikzcd}
\E^{\A\op}\arrow[leftarrow,ddrrr,bend right,"\widetilde{\map}_{\D_2}(FX\text{,}-)" below left,""{name=L,above right}]\arrow[leftarrow,rrr,"\widetilde{\map}_{\D_1}(X\text{,}-)" above] &&& \D_1\arrow[Rightarrow,to=L,"\iso" left=5,"\Omega" below=4,shorten >=0.3cm,shorten <=0.8cm] \arrow[leftarrow,dd,"R" right] 
\\\\
&&&\D_2
\end{tikzcd}
\end{center}
By~\cref{Modification respects tensors iff conjugate respects cotensors}, and the dual of~\cref{E-module modifications induce E-natural maps}, this isomorphism induces an $\E$-natural isomorphism between the induced $\E$-category maps. Thus, we obtain an isomorphism
\[
\Omega_{X,Y}:\widetilde{\map}_{\D_2}(FX,Y)\xrightarrow{\;\iso\;}\widetilde{\map}_{\D_1}(X,RY),
\]
$\E$-natural in $Y\in\D_2(\B)$. By~\cref{Defining adjoints representably}, there is a unique way to define an $\E$-morphism structure on $F$, such that $F$ is a left adjoint to $R$. Specifically, for $X\in\D_1(\A)$ and $Z\in\D_1(\C)$, we have:
\begin{center}
\begin{tikzcd}
F:\widetilde{\map}_{\D_1}(X,Z)\arrow[rrr,"\widetilde{\map}_{\D_1}(X\text{,}\eta_Z)"]
&&& \widetilde{\map}_{\D_1}(X,RFZ)\arrow[rr,"\Omega_{X,FZ}^{-1}" above] && \widetilde{\map}_{\D_2}(FX,FZ)
\end{tikzcd}
\end{center} 
Here the map $\eta_Z:Z\rightarrow RFZ$ is obtained using~\cref{Yoneda lemma for E-categories}, but it is easy to check that this is the unit of the unenriched adjunction. It follows that this composite is adjunct to
\[
\widetilde{\map}_{\D_1}(X,Y)\otimes_{\A}FX\xrightarrow{\;\;\varphi\;\;}F(\widetilde{\map}_{\D_1}(X,Y)\otimes_{\A} X)\xrightarrow{\;\;F(\epsilon)\;\;}FY
\]
under the adjunction $-\otimes_{\A}FX\dashv \widetilde{\map}_{\D_2}(FX,-)$. Thus, the $\E$-morphism structure on $F$ induced by~\cref{Defining adjoints representably} is the original structure of~\cref{Cocontinuous E-module maps induce E-morphisms}.
\end{proof}


\section{Transferring enrichments}\label{Section Transferring enrichments}

In this section we study monoidal morphisms, starting with~\cref{Monoidal maps respect the cancelling tensor}. This is an analogue of~\cref{Cocontinuous E-module maps preserve the cancelling product}, for monoidal maps rather than module maps. In~\cref{Transferring enrichment along a monoidal left adjoint}, we use this result to show that we can transfer enrichment along a monoidal adjunction. This construction has a number of applications, but in particular, we use it to define shifted $\E$-categories in~\cref{Shifted E-category is E-enriched}.

\begin{definition}\label{Monoidal morphism definition}
Let $\E_1$ and $\E_2$ be monoidal derivators. A derivator map $F:\E_1\rightarrow\E_2$ is called a \textbf{monoidal morphism} if it is equipped with an isomorphism
\[
\xi:F\mathbb{1}\xrightarrow{\;\;\iso\;\;}\mathbb{1}
\]
in $\E_2(\0)$, and an isomorphism:
\begin{center}
\begin{tikzcd}
 \E_1\times\E_1 \arrow{rr}[above]{F\times F}\arrow{dd}[left]{\otimes} && 
 \E_2\times\E_2 \arrow{dd}[right]{\otimes} \\
 
 \\
 
 \E_1 \arrow{rr}[below]{F}\arrow[uurr,Rightarrow,"\chi" above left,"\iso" below right,shorten >=1.1cm,shorten <=1.1cm] && 
 \E_2

\end{tikzcd}
\end{center} 
These isomorphisms must satisfy familiar coherence conditions, which precisely mirror the axioms for monoidal functors, as in~\cite{Borceux94}.
\end{definition}

\begin{lemma}\label{Monoidal maps respect the cancelling tensor}
Let $F:\E_1\rightarrow\E_2$ be a cocontinuous monoidal morphism. For any categories $\A$, $\B$ and $\C$, and any $X\in\E_1(\A\op\times\B)$ and $Y\in\E_1(\B\op\times\C)$, the isomorphism $\chi$ of~\cref{Monoidal morphism definition} induces an isomorphism
\[
\chi:F(Y\otimes_\B X)\xrightarrow{\;\;\;\iso\;\;\;} FY\otimes_\B FX
\]
in $\E_2(\A\op\times\C)$. Similarly, the isomorphism $\xi$ induces an isomorphism
\[
\xi:F\h_\A\xrightarrow{\;\;\;\iso\;\;\;} \h_\A
\]
in $\E_2(\A\op\times\A)$, for any category $\A$. 

These satisfy the following coherence conditions:
\begin{enumerate}
\item Given categories $\A$, $\B$, $\C$ and $\cD$, and objects $X\in\E_1(\A\op\times\B)$, $Y\in\E_1(\B\op\times\C)$ and $Z\in\E_1(\C\op\times\cD)$, the diagram below commutes: 
\begin{center}
\begin{tikzcd}
F((Z\otimes_\C Y)\otimes_\B X)\arrow{dd}[left]{\chi} \arrow{rr}[above]{F(\alpha)} && F(Z\otimes_\C (Y\otimes_\B X))\arrow{dd}[right]{\chi} \\\\
 
F(Z\otimes_\C Y)\otimes_\B F X\arrow{dd}[left]{\chi\;\mathsmaller{\otimes_{\B}}\;F X} 
&& FZ\otimes_\C F(Y\otimes_\B X)\arrow{dd}[right]{FZ\;\mathsmaller{\otimes_\C}\;\chi}\\\\
 
(FZ\otimes_\C FY)\otimes_\B FX \arrow{rr}[below]{\alpha}
&& FZ\otimes_\C (FY\otimes_\B FX)
\end{tikzcd}
\end{center} 
\item Given $X\in\E(\A\op\times\B)$, the diagram below commutes:
\begin{center}
\begin{tikzcd}
 F(X\otimes_\A \h_\A) \arrow{rr}[above]{\chi}\arrow{dd}[left]{F(\rho)} && 
 FX\otimes_\A F\h_\A \arrow{dd}[right]{FX\mathsmaller{\otimes_\A}\;\xi} \\\\
 FX  && 
 FX\otimes_\A \h_\A\arrow{ll}[below]{\rho}
\end{tikzcd}
\end{center} 
\item Given $X\in\E(\A\op\times\B)$, the diagram below commutes:
\begin{center}
\begin{tikzcd}
 F(\h_\B\otimes_\B X) \arrow{rr}[above]{\chi}\arrow{dd}[left]{F(\lambda)} && 
 F\h_\B \otimes_{\B} F X \arrow{dd}[right]{\xi\;\mathsmaller{\otimes_\B}FX} \\\\
 FX  && 
 \h_\B \otimes_\B F X\arrow{ll}[below]{\lambda}
\end{tikzcd}
\end{center} 
\end{enumerate}
\end{lemma}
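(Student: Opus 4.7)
My plan is to model the proof closely on that of \cref{Cocontinuous E-module maps preserve the cancelling product}, which handles the analogous statement for cocontinuous $\E$-module morphisms. The structural parallel is almost perfect: $\chi$ here plays the role of $\varphi$ there, and $\xi$ is new but elementary.

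First I would construct the two required isomorphisms. For the unit, note that $\h_\A = \partial_\A\mathbb{1}$, and $\partial_\A = (\mathrm{s},\mathrm{t})_! \circ \p^*$ is a composite of a pullback functor and a left Kan extension, both of which are preserved by any cocontinuous derivator map (\cref{u* is cocontinuous} and the definition of cocontinuity). Therefore $F$ admits a canonical isomorphism $F\partial_\A \iso \partial_\A F$; pasting this with $\partial_\A$ applied to $\xi:F\mathbb{1}\xrightarrow{\iso}\mathbb{1}$ yields the desired $\xi:F\h_\A\xrightarrow{\iso}\h_\A$ in $\E_2(\A\op\times\A)$. For the product, given $X\in\E_1(\A\op\times\B)$ and $Y\in\E_1(\B\op\times\C)$, I define
\[
\chi\;:\;F(Y\otimes_\B X) \;=\; F\mathsmaller{\int^\B}(Y\;\widetilde{\otimes}\;X) \;\xrightarrow{\;\iso\;}\; \mathsmaller{\int^\B} F(Y\;\widetilde{\otimes}\;X) \;\xrightarrow{\;\int^\B\chi\;}\; \mathsmaller{\int^\B}(FY\;\widetilde{\otimes}\;FX) \;=\; FY\otimes_\B FX,
\]
where the first isomorphism uses cocontinuity of $F$ (it follows from~\cref{Maps that preserve delta}) and the second is $\int^\B$ applied to the external-product version of $\chi$ from \cref{Monoidal morphism definition}.

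Next I would verify the three coherence diagrams, each by reducing it, via the definitions of $\chi$, $\xi$, $\alpha$, $\lambda$ and $\rho$ given in \cref{Coherence for the cancelling version of the closed E-action} and above, to the analogous diagram on the external product. Specifically, for condition $(1)$, using \cref{Modifications respect coends} and \cref{Prederivator maps respect Fubini isomorphism} to pull instances of $F$, $\int^\C$ and $\int^\B$ through the coherent isomorphisms, the hexagon collapses to $\int^\C\int^\B$ applied to the external coherence hexagon for $\chi$ and $\alpha$, which is the first coherence axiom for the monoidal morphism $F$. Conditions $(2)$ and $(3)$ are handled similarly: after expanding $\lambda$ and $\rho$ using the pasting diagram in the proof of \cref{Coherence for the cancelling version of the closed E-action}, and using \cref{Derivator maps respect delta coend cancellation} to commute $F$ past the $\int^\B\partial_\B$ cancellation, each reduces to the external unit coherence axiom for a monoidal morphism.

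The main obstacle will be bookkeeping rather than any conceptual difficulty: one must be careful in condition~$(1)$ to identify where the Fubini swap $\int^\C\int^\B\iso\int^\B\int^\C$ and the cocontinuity isomorphism $F\int^\B\iso\int^\B F$ interact, and verify that the resulting diagram on the external product is genuinely the first axiom of \cref{Monoidal morphism definition}; this is exactly the pattern of the corresponding step in \cref{Cocontinuous E-module maps preserve the cancelling product}, and the arguments transfer verbatim after replacing $\varphi$ by $\chi$ and the action by the tensor on $\E_2$. For $(2)$ and $(3)$, the extra ingredient beyond the proof of \cref{Cocontinuous E-module maps preserve the cancelling product} is the definition of $\xi$ via $\partial_\A$; here one uses that the canonical isomorphism $F\partial_\A \iso \partial_\A F$ is compatible, by \cref{Derivator maps respect delta coend cancellation}, with the cancellation isomorphism $\int^\A\partial_\A\iso\id$, so the triangles close.
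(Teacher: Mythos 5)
Your proposal is correct and follows essentially the same route as the paper's proof: the constructions of $\chi$ and $\xi$ match the paper's definitions exactly, and the paper likewise dispatches the three coherence diagrams by remarking that they reduce, via \cref{Modifications respect coends}, \cref{Prederivator maps respect Fubini isomorphism}, and \cref{Derivator maps respect delta coend cancellation}, to the monoidal-morphism axioms, "using essentially the same arguments as in the proof of \cref{Cocontinuous E-module maps preserve the cancelling product}."
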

\begin{proof}
Let $X\in\E_1(\A\op\times\B)$ and $Y\in\E_1(\B\op\times\C)$. The map $\chi:F(Y\otimes_\B X)\xrightarrow{\;\;\iso\;\;} FY\otimes_\B FX$ is given by the composite
\[
F\int^\B(Y\widetilde{\otimes}X)\xrightarrow{\;\;\;\;\;\iso\;\;\;\;\;}\int^\B F(Y\widetilde{\otimes}X)\xrightarrow{\;\;\;\;\;\int^\B\chi\;\;\;\;\;}\int^\B (FY\widetilde{\otimes}FX),
\]
where the first isomorphism is induced by the cocontinuity of $F$.

For any category $\A$, the isomorphism $\xi:F\h_\A\xrightarrow{\;\;\iso\;\;} \h_\A$ is given by the composite
\[
F\partial_\A\mathbb{1}\xrightarrow{\;\;\;\iso\;\;\;}\partial_\A F\mathbb{1}\xrightarrow{\;\;\;\partial_\A \xi\;\;\;} \partial_\A\mathbb{1}
\]
where the first isomorphism follows from the cocontinuity of $F$.

Each diagram in the statement corresponds to an axiom for the monoidal morphism $F$, as in~\cref{Monoidal morphism definition}. The commutativity of each diagram can be reduced easily to the corresponding axiom, using essentially the same arguments as in the proof of~\cref{Cocontinuous E-module maps preserve the cancelling product}.
\end{proof}

\begin{remark}
Let $F:\E_1\rightarrow\E_2$ be a cocontinuous monoidal morphism. Then $F$ induces a bicategory map 
\[
\Prof(F):\Prof(\E_1)\rightarrow\Prof(\E_2)
\]
between the bicategories of~\cref{Definition of Prof(E)}. Explicitly, the map is the identity on objects, and, given categories $\A$ and $\B$, the map on hom-categories is $F:\E_1(\B\op\times\A)\rightarrow\E_2(\B\op\times\A)$. The structure isomorphisms are $\chi$ and $\xi$ of~\cref{Monoidal maps respect the cancelling tensor}.
\end{remark}

\begin{proposition}\label{Transferring enrichment along a monoidal left adjoint}
Let $\E_1$ and $\E_2$ be closed symmetric monoidal derivators, and suppose we have an adjunction
\begin{center}
\begin{tikzcd}
\E_1\arrow[rr, bend left=40, "F" above,""{name=U, below}]\arrow[rr,leftarrow, bend right=40, "R" below, ""{name=D}]
&& \;\E_2
\arrow[phantom,from=U,to=D,"\bot"]
\end{tikzcd}
\end{center}
such that the left adjoint $F:\E_1\rightarrow\E_2$ is monoidal. Let $\eA$ be an $\E_2$-category. Then we can construct an $\E_1$-category with the same objects as $\eA$, and, for $X\in\dA_0(\A)$ and $Y\in\dA_0(\B)$, with mapping objects given by
\[
R\;\widetilde{\map}_{\dA}(X,Y)\in\E_1(\A\op\times\B).
\]
\end{proposition}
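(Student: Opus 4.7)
The plan is to define the $\E_1$-enriched structure by taking adjuncts under $F\dashv R$, and then to verify the axioms of \cref{E-category definition} by reducing them to the corresponding axioms for $\eA$, using the coherence of the monoidal morphism $F$ from \cref{Monoidal maps respect the cancelling tensor}. Denote the identity profunctors in the bicategories $\Prof(\E_1)$ and $\Prof(\E_2)$ by $\h^1_\A$ and $\h^2_\A$ respectively; similarly we will write $\otimes^1_\B$ and $\otimes^2_\B$ for the cancelling tensors. Since $F$ is a cocontinuous monoidal morphism, \cref{Monoidal maps respect the cancelling tensor} gives isomorphisms $\chi: F(Y\otimes^1_\B X)\xrightarrow{\;\iso\;} FY\otimes^2_\B FX$ and $\xi:F\h^1_\A\xrightarrow{\;\iso\;}\h^2_\A$.

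Define the unit for the new enrichment on an object $X\in\dA_0(\A)$ to be the adjunct under $F\dashv R$ of the composite
\[
F\h^1_\A\xrightarrow{\;\;\xi\;\;}\h^2_\A\xrightarrow{\;\;j\;\;}\widetilde{\map}_{\dA}(X,X).
\]
For objects $X\in\dA_0(\A)$, $Y\in\dA_0(\B)$ and $Z\in\dA_0(\C)$, define composition to be the adjunct of
\[
F(R\widetilde{\map}_{\dA}(Y,Z)\otimes^1_\B R\widetilde{\map}_{\dA}(X,Y))\xrightarrow{\;\;\chi\;\;}FR\widetilde{\map}_{\dA}(Y,Z)\otimes^2_\B FR\widetilde{\map}_{\dA}(X,Y)\xrightarrow{\;\;\epsilon\otimes^2_\B\epsilon\;\;}\widetilde{\map}_{\dA}(Y,Z)\otimes^2_\B\widetilde{\map}_{\dA}(X,Y)\xrightarrow{\;\;\circ\;\;}\widetilde{\map}_{\dA}(X,Z),
\]
where $\epsilon:FR\Rightarrow\id$ is the counit of the ambient adjunction. (Note that the counit $\epsilon$ on these mapping objects is just a map in $\E_2(\A\op\times\B)$; tensoring with it is legitimate.)

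To verify the $\E_1$-category axioms, I would replace each required commutative diagram by its $F\dashv R$-adjunct; since $F$ is a left adjoint, it preserves the cancelling tensor $\otimes^1$ coherently (via $\chi$), so each adjunct diagram lives in $\E_2$ and can be decomposed into three regions. The associativity axiom will reduce, after repeatedly applying $\chi$ and pulling the counits $\epsilon$ to the outside, to the first commutative diagram of \cref{Monoidal maps respect the cancelling tensor} (for the two ways of rebracketing a triple product inside $F$) together with the associativity axiom of \cref{E-category definition} for $\eA$. The two unit axioms will reduce similarly to the second and third diagrams of \cref{Monoidal maps respect the cancelling tensor} together with the unit axioms for $\eA$, using the fact that $\epsilon\circ F(\eta)=\id$ where $\eta$ is the unit of $F\dashv R$ (the triangle identity) to deal with the factors of $\xi$ introduced by the units.

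The main obstacle will be the bookkeeping in the associativity verification: one must carefully track four applications of $\chi$ (two on each side of the diagram), two applications of $\epsilon$ on each side, and the associator $\alpha$ both inside and outside of $F$. However, once the adjunct diagrams are written down and the naturality of $\epsilon$ is used to commute it past $\chi$, the diagram pastes together cleanly from the first coherence of \cref{Monoidal maps respect the cancelling tensor} and the associativity of composition in $\eA$. No new coherence data is needed beyond $\chi$, $\xi$, and the axioms already established.
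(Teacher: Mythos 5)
Your proposal is correct and follows the same approach as the paper: you define the unit and composition as the $F\dashv R$-adjuncts of precisely the same composites the paper uses, and you verify the axioms by taking adjuncts and reducing to the coherence diagrams of Lemma~\ref{Monoidal maps respect the cancelling tensor} together with the $\E_2$-category axioms for $\eA$. Your extra observation about the triangle identity being needed to absorb the unit $\eta$ introduced when unfolding $j$ is a correct (and useful) detail that the paper leaves implicit in the phrase ``Using adjointness.''
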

\begin{proof}
Units and composition for the $\E_1$-enrichment are defined as follows. Given any\linebreak$X\in\dA_0(\A)$, the unit
\[
j:\h_\A\rightarrow R\;\widetilde{\map}_{\dA}(X,X)
\]
is adjunct to the composite
\begin{center}
\begin{tikzcd}
F\h_\A\arrow[r,"\xi"] & \h_\A \arrow[r,"j"]
& \widetilde{\map}_{\dA}(X,X),
\end{tikzcd}
\end{center}
where $\xi$ is the isomorphism of~\cref{Monoidal maps respect the cancelling tensor}, and $j$ is the unit for the $\E_2$-enrichment on $\eA$. Similarly, given $X\in\dA_0(\A)$, $Y\in\dA_0(\B)$ and $Z\in\dA_0(\C)$, composition is adjunct to the following, where $\chi$ is the isomorphism of~\cref{Monoidal maps respect the cancelling tensor}:
\begin{center}
\begin{tikzcd}
F(R\;\widetilde{\map}_{\dA}(Y,Z)\otimes_{\B}R\;\widetilde{\map}_{\dA}(X,Y))\arrow[r,"\chi"]
& FR\;\widetilde{\map}_{\dA}(Y,Z)\otimes_{\B}FR\;\widetilde{\map}_{\dA}(X,Y)\arrow[d,"\epsilon\;\mathsmaller{\otimes_{\B}}\;\epsilon" right]\\
& \widetilde{\map}_{\dA}(Y,Z)\otimes_{\B}\widetilde{\map}_{\dA}(X,Y) \arrow[d,"\circ" right]\\
& \widetilde{\map}_{\dA}(X,Z)
\end{tikzcd}
\end{center}

Using adjointness, and the commutative diagrams of~\cref{Monoidal maps respect the cancelling tensor}, the $\E_1$-category axioms of~\cref{E-category definition} reduce easily to the corresponding $\E_2$-category axioms for $\eA$.
\end{proof}

Given a monoidal left adjoint $F:\E_1\rightarrow\E_2$ as in~\cref{Transferring enrichment along a monoidal left adjoint}, and an $\E_2$-category $\eA$, we will continue to denote the associated $\E_1$-category by $\eA$. This is partially justified by the following:

\begin{remark}\label{Underlying category of transferred enrichment}
Let $F:\E_1\rightarrow\E_2$ be a monoidal left adjoint, and let $\eA$ be an $\E_2$-category. Then for any category $\A$, the induced category $\dA(\A)$, calculated with respect to the induced $\E_1$-enrichment of~\cref{Transferring enrichment along a monoidal left adjoint}, agrees with that calculated using the original $\E_2$-enrichment. To see this, suppose we have $X,Y\in\dA_0(\A)$, and consider the isomorphisms:
\begin{align*}
\E_2(\A\op\times\A)(\h_\A,\widetilde{\map}_{\dA}(X,Y)) & \iso \E_2(\A\op\times\A)(F\h_\A,\widetilde{\map}_{\dA}(X,Y))\\
 & \iso \E_1(\A\op\times\A)(\h_\A,R\;\widetilde{\map}_{\dA}(X,Y))
\end{align*}
It is easy to see that these isomorphisms preserve composition and units, using~\cref{Monoidal maps respect the cancelling tensor}.
\end{remark}


\begin{proposition}\label{Interaction of cancelling tensor with delta}
Let $\E$ be a monoidal derivator. Then for any category $\J$, the map
\[
\partial_\J:\E\rightarrow\E^{\J\op\times\J}
\]
is a monoidal morphism, where $\E^{\J\op\times\J}$ is considered with the monoidal structure of~\cref{E shifted by Aop x A is monoidal}.
\end{proposition}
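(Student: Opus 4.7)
The unit isomorphism $\xi:\partial_\J\mathbb{1}\xrightarrow{\iso}\h_\J$ is just the identity on $\h_\J$, since $\h_\J$ is defined (in \cref{Definition of h on objects and morphisms}) precisely as $\partial_\J\mathbb{1}$. All the content is in constructing the tensor compatibility isomorphism $\chi$ and verifying its coherence. My plan is first to establish, for any $Z\in\E(\A)$, a natural isomorphism $\partial_\J Z\iso Z\,\widetilde{\otimes}\,\h_\J$. Using the explicit description $\partial_\J=(\mathrm{s},\mathrm{t})_!\circ\p^*$, this reduces to a base change argument: the square
\[
\begin{tikzcd}
\A\times\tw(\J)\arrow[r,"\A\times (\mathrm{s}\text{,}\mathrm{t})"]\arrow[d,"\p_\A"] & \A\times\J\op\times\J\arrow[d,"\p_\A"]\\
\tw(\J)\arrow[r,"(\mathrm{s}\text{,}\mathrm{t})" below] & \J\op\times\J
\end{tikzcd}
\]
is homotopy exact (this is a special case of \cref{Lifting the external product to a derivator map}, i.e.\ cocontinuity of $\p_\A^*$ applied as in \cref{u* is cocontinuous}), and then the unitality of $\otimes$ yields $\p_\A^*\h_\J\iso(\A\times(\mathrm{s},\mathrm{t}))_!\p_\A^*\p^*\mathbb{1}\iso (\A\times(\mathrm{s},\mathrm{t}))_!(\p_{\tw(\J)}^*Z\otimes\mathbb{1})\iso\partial_\J Z$ after tensoring with $\p_{\J\op\times\J}^*Z$ and using cocontinuity of $\otimes$.

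Given this identification, I will construct $\chi$ as the composite
\[
\partial_\J Y \otimes_\J \partial_\J X
\;\iso\;
(Y\,\widetilde{\otimes}\,\h_\J)\otimes_\J (X\,\widetilde{\otimes}\,\h_\J)
\;\iso\;
(Y\,\widetilde{\otimes}\,X)\,\widetilde{\otimes}\,(\h_\J\otimes_\J \h_\J)
\;\iso\;
(Y\,\widetilde{\otimes}\,X)\,\widetilde{\otimes}\,\h_\J
\;\iso\;
\partial_\J(Y\,\widetilde{\otimes}\,X),
\]
where the middle step uses cocontinuity of $\widetilde{\otimes}$ in both variables together with the symmetry isomorphism of \cref{Symmetry for the cancelling tensor} to separate the $\h_\J$ factors, and the penultimate step is the unit isomorphism $\lambda=\rho:\h_\J\otimes_\J\h_\J\xrightarrow{\iso}\h_\J$ arising from \cref{Coherence for the cancelling version of the closed E-action} (this is where the core input, \cref{Interaction of delta and integral}, enters, since it is what makes $\lambda$ and $\rho$ isomorphisms). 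Passing from this external form to the internal $\chi:\partial_\J(Y\otimes X)\iso\partial_\J Y\otimes_\J\partial_\J X$ uses \cref{Bimorphisms}.

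For the coherence axioms of \cref{Monoidal morphism definition}: after unpacking $\chi$ as above, the unit coherence triangles reduce directly to the triangle of \cref{Coherence for the cancelling version of the closed E-action} relating $\lambda$ and $\rho$ (applied to $\h_\J$), while the associativity pentagon for $\chi$ reduces --- via the naturality of the identification $\partial_\J Z\iso Z\,\widetilde{\otimes}\,\h_\J$, cocontinuity of $\widetilde{\otimes}$, and the bicategorical coherence of $\Prof(\E)$ --- to the pentagon for $\alpha$ in \cref{Coherence for the cancelling version of the closed E-action} together with the associativity coherence of $\otimes$ on $\E$ itself.

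The main obstacle is bookkeeping rather than conceptual: one must carefully track the multiple indices $\J=\J_i$ appearing in nested external tensors and coends, keep the symmetry isomorphism $\tau$ of \cref{Symmetry for the cancelling tensor} straight when rearranging $\h_\J$ factors, and verify the pentagon at a sufficiently fine level to apply \cref{Coherence for the cancelling version of the closed E-action}. Given how much of that bookkeeping has already been done in \cref{Section The cancelling tensor product}, the verification is routine once the isomorphism $\partial_\J Z\iso Z\,\widetilde{\otimes}\,\h_\J$ is in hand.
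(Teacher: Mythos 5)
Your approach is genuinely different from the paper's. The paper defines $\chi$ directly as a pasting built from the cocontinuity isomorphisms of $\otimes$ together with \cref{Interaction of delta and integral}, and then verifies the coherence axioms by exhibiting a second, equivalent description of $\chi$ using the interchange of $\partial$ and $\int$ recorded in \cref{Cancelling As and Fubini}. Your idea of first establishing $\partial_\J Z\iso Z\,\widetilde{\otimes}\,\h_\J$ and then treating $\chi$ as a rearrangement of $\h_\J$-factors is a reasonable and potentially cleaner reorganisation of the same ingredients, and the isomorphism $\partial_\J Z\iso Z\,\widetilde{\otimes}\,\h_\J$ itself is correct for the reason you give (cocontinuity of $\p^*$ and of $\otimes$, plus unitality).

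There is, however, a genuine gap: the proposition is stated for an arbitrary \emph{monoidal} derivator $\E$, not a \emph{symmetric} monoidal one, and your middle step explicitly invokes the symmetry isomorphism $\tau$ of \cref{Symmetry for the cancelling tensor}. Concretely, to pass from $(Y\,\widetilde{\otimes}\,\h_{\J_{1,2}})\,\widetilde{\otimes}\,(X\,\widetilde{\otimes}\,\h_{\J_{3,4}})$ to $(Y\,\widetilde{\otimes}\,X)\,\widetilde{\otimes}\,(\h_{\J_{1,2}}\,\widetilde{\otimes}\,\h_{\J_{3,4}})$ you must commute the factor $\h_{\J_{1,2}}$ past $X$, and associativity alone cannot do this --- it is a braiding, which is exactly what a non-symmetric monoidal derivator lacks. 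The paper's proof makes no appeal to symmetry anywhere, so your argument only proves a strictly weaker statement. (The thesis convention in effect from Chapter 4 onward is that $\E$ is closed symmetric monoidal, but the proposition deliberately overrides that convention, and indeed the paper's proof is careful to stay inside the non-symmetric setting.) The gap is repairable within your framework: use $\partial_\J Y\iso Y\,\widetilde{\otimes}\,\h_\J$ for the left factor but $\partial_\J X\iso\h_\J\,\widetilde{\otimes}\,X$ for the right one, so that the target is $Y\,\widetilde{\otimes}\,\h_\J\,\widetilde{\otimes}\,X$ with no transposition required, and then prove a ``centred'' variant $\partial_\J(Y\,\widetilde{\otimes}\,X)\iso Y\,\widetilde{\otimes}\,\h_\J\,\widetilde{\otimes}\,X$ by the same base-change argument. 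But that is a different proof from the one you wrote, and the distinction matters precisely because the proposition claims the non-symmetric generality.

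A secondary concern is that the verification of the pentagon axiom for $\chi$ is asserted to be ``bookkeeping'' and is not carried out. In the paper this is where the real content lies: the two descriptions of $\chi$ obtained via \cref{Cancelling As and Fubini} are what make the associativity diagram tractable. Your reduction ``to the pentagon for $\alpha$ in \cref{Coherence for the cancelling version of the closed E-action} together with the associativity coherence of $\otimes$ on $\E$'' is plausible but not shown, and the naturality of the identification $\partial_\J Z\iso Z\,\widetilde{\otimes}\,\h_\J$ with respect to the coend $\int^\J$ (needed to compare nested applications of $\chi$) is exactly the kind of fact that requires care in this setting. As it stands, the proof is a strategy rather than a complete argument.
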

\begin{proof}
The structure isomorphisms for $\partial_\J$ are as follows. Define
\[
\xi=\id:\partial_\J\mathbb{1}\rightarrow\h_\J
\]
in $\E(\J\op\times\J)$, and define $\chi$ to be the composite below:
\begin{center}
\begin{tikzcd}[row sep=tiny]
\E\times\E\arrow[dddddddd,"\otimes" left]\arrow[rrr,"\partial_{\J_{1,2}}\times\E" above] &&& \E^{\J_1\op\times\J_2}\times\E\arrow[dddddddd,"\widetilde{\otimes}" left]\arrow[rrr,"\E^{\J_1\op\times\J_2}\times\partial_{\J_{3,4}}" above,] &&& \E^{\J_1\op\times\J_2}\times\E^{\J_3\op\times\J_4}\arrow[ddddd,"\widetilde{\otimes}" right]
\\\\\\\\\\ &&&&&& 
\E^{\J_1\op\times\J_2\times\J_3\op\times\J_4} \arrow[rddd,"\int^{\J_{1,4}}" above right, bend left=10]
\\\\\\
\E\arrow[Rightarrow,rrruuuuuuuu,"\iso" above left,shorten >=1.8cm,shorten <=1.8cm]\arrow[rrr,"\partial_{\J_{1,2}}" below]  &&&
\E^{\J_1\op\times\J_2}\arrow[Rightarrow,rrruuuuuuuu,"\iso" above left,shorten >=2cm,shorten <=2cm,shift left=5]\arrow[rrrr,equal,bend right=20]\arrow[uuurrr,"\partial_{\J_{3,4}}" below right,bend left=10]&&&&\E^{\J_3\op\times\J_2}
\\\\ &&&&&& \;

\arrow[Rightarrow,from=6-7,to=11-7,"\iso",shorten >=0.4cm,shorten <=0.4cm]
\end{tikzcd}
\end{center}
Here the first two isomorphisms follow from the cocontinuity of $\otimes$, as in~\cref{Cocontinuity of tensor as a modification}. The third is the isomorphism of~\cref{Interaction of delta and integral}. Using~\cref{Cancelling As and Fubini}, the pasting diagram above is equal to the one below:
\begin{center}
\begin{tikzcd}[row sep=tiny]
\E\times\E\arrow[dddddddd,"\otimes" left]\arrow[rrr,"\E\times\partial_{\J_{3,4}}" above] &&& \E\times\E^{\J_3\op\times\J_4}\arrow[dddddddd,"\widetilde{\otimes}" left]\arrow[rrr,"\partial_{\J_{1,2}}\times\E^{\J_3\op\times\J_4}" above,] &&& \E^{\J_1\op\times\J_2}\times\E^{\J_3\op\times\J_4}\arrow[ddddd,"\widetilde{\otimes}" right]
\\\\\\\\\\ &&&&&& 
\E^{\J_1\op\times\J_2\times\J_3\op\times\J_4} \arrow[rddd,"\int^{\J_{1,4}}" above right, bend left=10]
\\\\\\
\E\arrow[Rightarrow,rrruuuuuuuu,"\iso" above left,shorten >=1.8cm,shorten <=1.8cm]\arrow[rrr,"\partial_{\J_{3,4}}" below]  &&&
\E^{\J_3\op\times\J_4}\arrow[Rightarrow,rrruuuuuuuu,"\iso" above left,shorten >=2cm,shorten <=2cm,shift left=5]\arrow[rrrr,equal,bend right=20]\arrow[uuurrr,"\partial_{\J_{1,2}}" below right,bend left=10]&&&&\E^{\J_3\op\times\J_2}
\\\\ &&&&&& \;

\arrow[Rightarrow,from=6-7,to=11-7,"\iso",shorten >=0.4cm,shorten <=0.4cm]
\end{tikzcd}
\end{center}
Thus, this diagram gives a second description of $\chi$. Using these descriptions, and the definitions of $\lambda$ and $\rho$ from~\cref{Coherence for the cancelling version of the closed E-action}, it is easy to see that the diagrams below commute, for any $X\in\E(\A)$:
\begin{center}
\begin{tikzcd}
 \partial_{\J}(X\;\widetilde{\otimes}\;\mathbb{1})\arrow{rr}[above]{\chi}\arrow{dd}[left]{\partial_{\J}(\rho)} && 
 \partial_{\J_{1,2}}X \otimes_{\J_{1,4}}\partial_{\J_{3,4}}\mathbb{1} \arrow[dd, equal]\\\\
 \partial_{\J}X  && 
 \partial_{\J_{1,2}}X \otimes_{\J_{1,4}}\h_{\J_{3,4}}\arrow{ll}[below]{\rho}
\end{tikzcd}
\hspace{0.1cm}
\begin{tikzcd}
 \partial_{\J}(\mathbb{1}\;\widetilde{\otimes}\;X)\arrow{rr}[above]{\chi}\arrow{dd}[left]{\partial_{\J}(\lambda)} && 
 \partial_{\J_{1,2}}\mathbb{1} \otimes_{\J_{1,4}}\partial_{\J_{3,4}}X \arrow[dd, equal]\\\\
 \partial_{\J}X  && 
 \h_{\J_{1,2}}\otimes_{\J_{1,4}}\partial_{\J_{3,4}}X\arrow{ll}[below]{\lambda}
\end{tikzcd}
\end{center} 
These express the unit axioms for $\partial_\J$. It remains to show that the diagram below commutes, for any $X,Y,Z\in\E(\A)$:
\begin{center}
\begin{tikzcd}
\partial_{\J}((Z\otimes Y)\otimes X)\arrow{dd}[left]{\chi} \arrow{rr}[above]{\partial_{\J}(\alpha)} && \partial_{\J}(Z\otimes (Y\otimes X))\arrow{dd}[right]{\chi} \\\\
 
\partial_{\J_{3,2}}(Z\otimes Y)\otimes_{\J_{3,6}} \partial_{\J_{5,6}} X\arrow{dd}[left]{\chi\;\mathsmaller{\otimes_{\J_{3,6}}}\;\partial_{\J_{5,6}} X} 
&& \partial_{\J_{1,2}}Z\otimes_{\J_{1,4}} \partial_{\J_{5,4}}(Y\otimes_\B X)\arrow{dd}[right]{\partial_{\J_{1,2}}Z\;\mathsmaller{\otimes_{\J_{1,4}}}\;\chi}\\\\
 
(\partial_{\J_{1,2}}Z\otimes_{\J_{1,4}} \partial_{\J_{3,4}}Y)\otimes_{\J_{3,6}} \partial_{\J_{5,6}}X \arrow{rr}[below]{\alpha}
&& \partial_{\J_{1,2}}Z\otimes_{\J_{1,4}} (\partial_{\J_{3,4}}Y\otimes_{\J_{3,6}} \partial_{\J_{5,6}}X)
\end{tikzcd}
\end{center} 
Note that, using the shifted derivator, we may assume $X,Y,Z\in\E(\0)$. Using the definition of $\alpha$ from~\cref{Coherence for the cancelling version of the closed E-action}, and~\cref{Modifications respect coends} and~\cref{Derivator maps respect delta coend cancellation}, we can see that this diagram commutes.
\end{proof}

\begin{example}\label{Shifted E-category enriched over E Jop x J}
Let $\eA$ be an $\E$-category. For any category $\J$, we may form an~\linebreak$\E^{\J\op\times\J}$-category $\eA^\J$ as follows. For any category $\A$, we define 
\[
\dA^\J_0(\A)=\dA_0(\J\times\A).
\]
Given $X\in\dA^\J_0(\A)$ and $Y\in\dA^\J_0(\B)$, define
\[
\widetilde{\map}_{\dA^\J}(X,Y)=\widetilde{\map}_{\dA}(X,Y)\in\E(\J\op\times\J\times\A\op\times\B).
\]
Units and composition are inherited from the $\E$-category structure on $\eA$, and the coherence conditions of~\cref{E-category definition} carry over immediately. Note that, for any category $\A$, we have an isomorphism
\begin{equation}\label{Diagram undelying category of shifted E-prederivator}
\dA^\J(\A)\iso\dA(\J\times\A)
\end{equation}
between the category induced by the $\E^{\J\op\times\J}$-category $\eA^\J$ at $\A$, and the category induced by the $\E$-category $\eA$ at $\J\times\A$. 
\end{example}

\begin{example}\label{Shifted E-category is E-enriched}
Let $\eA$ be an $\E$-category, let $\J$ be a category, and consider the~\linebreak$\E^{\J\op\times\J}$-category $\eA^\J$ of~\cref{Shifted E-category enriched over E Jop x J}. By~\cref{Transferring enrichment along a monoidal left adjoint} and~\cref{Interaction of cancelling tensor with delta}, we may transfer the $\E^{\J\op\times\J}$-enrichment of $\eA^\J$ along the adjunction
\begin{center}
\begin{tikzcd}
\E\arrow[rr, bend left=40, "\partial_\J" above,""{name=U, below}]\arrow[rr,leftarrow, bend right=40, "\int_\J" below, ""{name=D}]
&& \;\;\;\;\E^{\J\op\times\J}.
\arrow[phantom,from=U,to=D,"\bot"]
\end{tikzcd}
\end{center} 
Thus, we obtain the \textbf{shifted} $\E$-category $\eA^\J$. Explicitly, for any category $\A$, we have~\linebreak$\dA^\J_0(\A)=\dA_0(\J\times\A)$, and, given $X\in\dA^\J_0(\A)$ and $Y\in\dA^\J_0(\B)$, we have
\[
\widetilde{\map}_{\dA^\J}(X,Y)=\int_\J\widetilde{\map}_{\dA}(X,Y)\in\E(\A\op\times\B).
\]
\end{example}

\begin{remark}\label{Any A(J) is E(0)-enriched for an E-prederivator}
For any $\E$-category $\eA$ and any categories $\J$ and $\A$, \cref{Underlying category of transferred enrichment} and the isomorphism (\ref{Diagram undelying category of shifted E-prederivator}) give rise to an isomorphism
\[
\dA^\J(\A)\iso\dA(\J\times\A),
\]
where $\dA^\J(\A)$ is the category induced by the $\E$-category $\eA^\J$ at $\A$, and $\dA(\J\times\A)$ is the category induced by the $\E$-category $\eA$ at $\J\times\A$. 
Thus, taking $\A=\0$ and using \cref{A([0]) is E([0])-enriched}, the induced category $\dA(\J)$ is canonically $\E(\0)$-enriched, as well as $\E(\J\op\times\J)$-enriched.
\end{remark}


\chapter{Enriched Derivators}\label{Chapter Enriched derivators}

In this chapter, we study additional structure and properties that we can ask for in an $\E$-category. In \cref{Section E-prederivators}, we study $\E$-prederivators, which we introduce in \cref{E-prederivator definition}. These are $\E$-categories equipped with a notion of pullback along functors; we show in \cref{E-prederivators induce prederivators} that these pullbacks are part of a prederivator structure on the induced categories of \cref{Underlying categories of an E-category}. In \cref{Section The 2-category of E-prederivators}, we show that $\E$-morphisms and $\E$-natural transformations between $\E$-prederivators induce prederivator maps and modifications. Using this, and the Yoneda lemma of \cref{Yoneda lemma for E-categories}, we prove a representability theorem for $\E$-prederivators in \cref{E-morphism is representable if underlying morphism is}. Finally, in \cref{Section E-derivators}, we define weighted homotopy limits and colimits in an $\E$-category, and use these to define $\E$-derivators in \cref{(Left) E-derivator definition}. We show, in \cref{Closed E-modules induce E-derivators}, that the $\E$-category associated to a closed $\E$-module is an $\E$-derivator. Finally, in \cref{(Left) E-derivators induce (left) derivators}, we show that any $\E$-derivator induces a derivator.

\section{$\E$-Prederivators}\label{Section E-prederivators}

In this section, we introduce $\E$-prederivators, which are $\E$-categories endowed with extra structure. In particular, given a functor $u:\A\rightarrow\B$ and an $\E$-prederivator $\eA$, we are able to form a pullback along $u$ in $\eA$. These pullbacks form part of a prederivator structure on the categories induced by $\eA$. We record this fact in \cref{E-prederivators induce prederivators}. We also show, in \cref{In an E-prederivator map(X -) is a prederivator map}, that the mapping objects in an $\E$-prederivator induce prederivator maps. Finally, in \cref{E-modules induce E-prederivators}, we show that, given a closed $\E$-module $\D$, the associated $\E$-category $\eD$ is an $\E$-prederivator. 

Let $\eA$ be an $\E$-category, and suppose we have a functor $u:\A\rightarrow\B$. For $\eA$ to be an $\E$-prederivator, we want $\eA$ to have a notion of pullback along $u$. Suppose we have an object $X\in\dA_0(\B)$. Using~\cref{u* and alpha* respect the E-action} and~\cref{Cocontinuous E-module maps induce E-morphisms}, we can form the following $\E$-morphism: 
\vspace{-1em}
\begin{center}
\begin{tikzcd}[column sep=tiny]
\eA\arrow{rrrrrrrrr}{\widetilde{\map}_{\dA}(X,-)} &&&&&&&&& \eE^{\B\op}\arrow{rrrrr}{(u\op)^*} &&&&&\eE^{\A\op}
\end{tikzcd}
\end{center} 
The pullback of $X$ along $u$ should be an object $u^*X\in\dA_0(\A)$ that represents this $\E$-morphism. Dually, this same object should also represent the $\E$-morphism below:
\begin{center}
\begin{tikzcd}[column sep=tiny]
\eA\op\arrow{rrrrrrrrr}{\widetilde{\map}_{\dA}(-,X)} &&&&&&&&& \eE^{\B}\arrow{rrrrr}{u^*} &&&&&\eE^{\A}
\end{tikzcd}
\end{center} 
This motivates the following definition:

\begin{definition}\label{E-prederivator definition}
Let $\eA$ be an $\E$-category.  We call $\eA$ an \textbf{$\E$-prederivator} if we have the following data:
\begin{itemize}
\item
For any functor $u:\A\rightarrow\B$ and any $X\in\dA_0(\B)$, an object $u^*X\in\dA_0(\A)$.
\item
For any functors $u:\A\rightarrow\B$ and $v:\C\rightarrow\cD$, and any $X\in\dA_0(\B)$ and $Y\in\dA_0(\cD)$, an isomorphism
\[
\gamma^{u,v}:(u\op\times v)^*\widetilde{\map}_{\dA}(X,Y)\xrightarrow{\;\;\;\iso\;\;\;}\widetilde{\map}_{\dA}(u^*X,v^*Y)
\]
in $\E(\A\op\times\C)$.
\end{itemize}
These must satisfy the following axioms:
\begin{enumerate}
\item
For any category $\A$ and any $X\in\dA_0(\A)$, we have $\id^*X=X\in\dA_0(\A)$. Given an additional category $\B$, and any $Y\in\dA_0(\B)$, we have:
\[
\gamma^{\id,\id}=\id:\widetilde{\map}_{\dA}(X,Y)\rightarrow\widetilde{\map}_{\dA}(X,Y)
\] 

Moreover, given composable functors $\A\xrightarrow{\;\;u\;\;}\B\xrightarrow{\;\;v\;\;}\C$, and $X\in\dA_0(\C)$ we have $(v\circ u)^*X=u^*v^*X$, and given additional functors $\cD\xrightarrow{\;\;w\;\;}\mathrm{E}\xrightarrow{\;\;z\;\;}\mathrm{F}$, and $Y\in\dA_0(\mathrm{F})$, the diagram below commutes:
\begin{center}
\begin{tikzcd}
(u\op\times w)^*(v\op\times z)^*\widetilde{\map}_{\dA}(X,Y)\arrow{rrr}[above]{(u\op\times w)^*\gamma^{v,z}}\arrow[d,equal] &&& 
 (u\op\times w)^*\widetilde{\map}_{\dA}(v^*X,z^*Y) \arrow{dd}[right]{\gamma^{u,w}} \\
 ((v\circ u)\op\times (z\circ w))^*\widetilde{\map}_{\dA}(X,Y)\arrow[bend right=20]{rrrdd}[below left]{\gamma^{v\circ u,z\circ w}}
 \\
&&&\widetilde{\map}_{\dA}(u^*v^*X,w^*z^*Y)\arrow[d,equal] \\
&&&\widetilde{\map}_{\dA}((v\circ u)^*X,(z\circ w)^*Y) 
\end{tikzcd}
\end{center} 

\item
For any $u:\A\rightarrow\B$ and $X\in\dA_0(\B)$, the maps
\[
\gamma^{u,\id}:(u\op\times \C)^*\widetilde{\map}_{\dA}(X,Y)\xrightarrow{\;\;\;\iso\;\;\;}\widetilde{\map}_{\dA}(u^*X,Y)
\]
are $\E$-natural in $Y\in\dA_0(\C)$. For any $v:\C\rightarrow\cD$ and $Y\in\dA_0(\cD)$, the maps
\[
\gamma^{\id,v}:(\A\op\times v)^*\widetilde{\map}_{\dA}(X,Y)\xrightarrow{\;\;\;\iso\;\;\;}\widetilde{\map}_{\dA}(X,v^*Y)
\]
are $\E$-natural in $X\in\dA_0(\A)$.

\item
For any functor $u:\A\rightarrow\B$ and any $X\in\dA_0(\B)$, the diagram below commutes:
\begin{center}
\begin{tikzcd}
\h_\A\arrow{rr}{\h_u}\arrow[bend right=15]{rrrrdd}[below left]{j} && (u\op\times u)^*\h_\B\arrow{rr}{(u\op\times u)^*j} &&(u\op\times u)^*\widetilde{\map}_{\dA}(X,X)\arrow{dd}{\gamma^{u,u}}\\\\
&&&& \widetilde{\map}_{\dA}(u^*X,u^*X) 
\end{tikzcd}
\end{center} 
Here $\h_u$ is the map from~\cref{Definition of h on objects and morphisms}. 
\item
For any categories $\A$, $\B$, $\C$ and $\cD$, any functor $v:\B\rightarrow\C$, and any $X\in\dA_0(\A)$, $Y\in\dA_0(\C)$ and $Z\in\dA_0(\cD)$, the diagram below commutes:
\begin{center}
\begin{tikzcd}[column sep=0cm]
(v\op\times\cD)^*\widetilde{\map}_{\dA}(Y,Z)\otimes_\B (\A\op\times v)^*\widetilde{\map}_{\dA}(X,Y)\arrow[rd,"\gamma^{v,\id}\;\mathsmaller{\otimes_\B}\gamma^{\id,v}",controls={+(5,0)and +(-1,0.8)}]
\arrow{ddd}[left]{\widetilde{\map}_{\dA}(Y,Z)\;\otimes_v\;\widetilde{\map}_{\dA}(X,Y)}\\
 
& \widetilde{\map}_{\dA}(v^*Y,Z)\otimes_\B \widetilde{\map}_{\dA}(X,v^*Y)\arrow{ddd}{\circ}\\\\
 
\widetilde{\map}_{\dA}(Y,Z)\otimes_\C \widetilde{\map}_{\dA}(X,Y)\arrow[dr,"\circ" below left, bend right=12] 
\\
&\widetilde{\map}_{\dA}(X,Z)
\end{tikzcd}
\end{center} 
Here $\otimes_v$ is the map from~\cref{Definition of cancelling product on morphisms}.
\end{enumerate}
\end{definition}

\begin{remark}\label{Rephrasing E-naturality in definition of E-prederivator}
In~\cref{E-prederivator definition} Axiom $2$, the $\E$-naturality conditions amount to the following: 
\begin{enumerate}
\item
For any categories $\A$, $\B$, $\C$ and $\cD$, any functor $u:\A\rightarrow\B$, and any $X\in\dA_0(\B)$, $Y\in\dA_0(\C)$ and $Z\in\dA_0(\cD)$, the diagram below commutes:
\begin{center}
\begin{tikzcd}[column sep=tiny]
(u\op\times\cD)^*(\widetilde{\map}_{\dA}(Y,Z)\otimes_\C\widetilde{\map}_{\dA}(X,Y))\arrow{dddd}[left]{(u\op\times\cD)^*(\circ)} \arrow{rrr}[above]{\iso} &&& \widetilde{\map}_{\dA}(Y,Z)\otimes_\C(u\op\times\C)^*\widetilde{\map}_{\dA}(X,Y)\arrow{dd}[right]{\widetilde{\map}_{\dA}(Y,Z)\mathsmaller{\otimes_\C}\;\gamma^{u,\id}} \\\\
 
&&&\widetilde{\map}_{\dA}(Y,Z)\otimes_\C\widetilde{\map}_{\dA}(u^*X,Y)\arrow{dd}[right]{\circ}\\\\
 
(u\op\times\cD)^*\widetilde{\map}_{\dA}(X,Z)\arrow{rrr}[below]{\gamma^{u,\id}} &&& \widetilde{\map}_{\dA}(u^*X,Z)
\end{tikzcd}
\end{center} 
\item
For any categories $\A$, $\B$, $\C$ and $\cD$, any functor $w:\C\rightarrow\cD$, and any $X\in\dA_0(\A)$, $Y\in\dA_0(\B)$ and $Z\in\dA_0(\cD)$, the diagram below commutes:
\begin{center}
\begin{tikzcd}[column sep=tiny]
(\A\op\times w)^*(\widetilde{\map}_{\dA}(Y,Z)\otimes_\B\widetilde{\map}_{\dA}(X,Y))\arrow{dddd}[left]{(\A\op\times w)^*(\circ)} \arrow{rrr}[above]{\iso} &&& (\B\op\times w)^*\widetilde{\map}_{\dA}(Y,Z)\otimes_\B\widetilde{\map}_{\dA}(X,Y)\arrow{dd}[right]{\gamma^{\id,w}\;\mathsmaller{\otimes_\B}\widetilde{\map}_{\dA}(X,Y)} \\\\
 
&&&\widetilde{\map}_{\dA}(Y,w^*Z)\otimes_\B\widetilde{\map}_{\dA}(X,Y)\arrow{dd}[right]{\circ}\\\\
 
(\A\op\times w)^*\widetilde{\map}_{\dA}(X,Z)\arrow{rrr}[below]{\gamma^{\id,w}} &&& \widetilde{\map}_{\dA}(X,w^*Z)
\end{tikzcd}
\end{center} 
\end{enumerate}
This follows, using adjointness, from the $\E$-naturality conditions in the form described in~\cref{Rephrasing E-naturality}.
\end{remark}


\begin{example}\label{Maximal sub-E-prederivator}
Let $\eA$ be an $\E$-prederivator, and let $\mathcal{L}\subseteq\dA_0(\A)$ be a set of objects in $\dA_0(\0)$. For any category $\A$, consider the set
\[
\dB_0(\A) = \{X\in\dA_0(\A) \;|\; a^*X\in\mathcal{L} \;\; \forall a\in\A\}.
\]
Note that $\dB_0(\0)=\mathcal{L}$. As in \cref{Full sub-E-category}, we may consider the full sub-$\E$-category $\dB$ on these objects. We claim that this $\E$-category $\dB$ is an $\E$-prederivator. 

To see this, let $u:\A\rightarrow\B$ be a functor and let $X\in\dB_0(\B)$; we need to give an object $u^*X\in\dB_0(\A)$. Consider the object $u^*X\in\dA_0(\A)$. Using Axiom $1$ of \cref{E-prederivator definition}, it follows that this object $u^*X$ is in $\dB_0(\A)$. We take this to be the required object. The structure isomorphisms $\gamma$ are also inherited from $\dA$. The $\E$-prederivator axioms for $\dB$ follow from the axioms for $\dA$. We call $\dB$ the \textbf{maximal sub-$\E$-prederivator on $\mathcal{L}$}.
\end{example}

Given an $\E$-prederivator, we want to show that the induced categories of~\cref{Underlying categories of an E-category} organise into a prederivator. We begin to give the required structure in the following lemma; the final statement appears in~\cref{E-prederivators induce prederivators}.

\begin{lemma}\label{The functor u* from an E-prederivator}
Let $\eA$ be an $\E$-prederivator and let $u:\A\rightarrow\B$ be a functor. Then the assignment
\begin{align*}
\dA_0(\B) &\longrightarrow \;\dA_0(\A)\\
 X\;\; &\;\mapsto \;\;\; u^*X
\end{align*}
extends to a functor $u^*:\dA(\B)\rightarrow\dA(\A)$ on the induced categories.
\end{lemma}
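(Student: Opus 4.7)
The plan is to define $u^*$ on morphisms by the natural candidate and then verify the functor axioms using the axioms of \cref{E-prederivator definition} together with the coherence results from \cref{Section The maps otimes u and h u}.

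For a morphism $f:X\rightarrow Y$ in $\dA(\B)$, represented by a map $f:\h_\B\rightarrow\widetilde{\map}_{\dA}(X,Y)$ in $\E(\B\op\times\B)$, define $u^*f:u^*X\rightarrow u^*Y$ in $\dA(\A)$ to be the composite
\[
\h_\A\xrightarrow{\h_u}(u\op\times u)^*\h_\B\xrightarrow{(u\op\times u)^*f}(u\op\times u)^*\widetilde{\map}_{\dA}(X,Y)\xrightarrow{\gamma^{u,u}}\widetilde{\map}_{\dA}(u^*X,u^*Y).
\]

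Preservation of identities follows directly from Axiom~3 of \cref{E-prederivator definition}: the identity of $X$ in $\dA(\B)$ is the unit $j:\h_\B\rightarrow\widetilde{\map}_{\dA}(X,X)$, and substituting this into the definition of $u^*f$ produces exactly the outer composite appearing in Axiom~3, which that axiom identifies with $j:\h_\A\rightarrow\widetilde{\map}_{\dA}(u^*X,u^*X)$, i.e.\ the identity of $u^*X$ in $\dA(\A)$.

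Preservation of composition is the main obstacle. Given $f:X\rightarrow Y$ and $g:Y\rightarrow Z$ in $\dA(\B)$, I would expand both $u^*(g\circ f)$ and $u^*g\circ u^*f$ using the description of composition in an induced category from \cref{Underlying categories of an E-category}, reducing the equality $u^*(g\circ f) = u^*g\circ u^*f$ to the commutativity of a single diagram with interior faces provided by the following ingredients:
\begin{itemize}
\item Axiom~4 of \cref{E-prederivator definition}, which, together with the factorisation $(u\op\times u)^* = (u\op\times\B)^*\circ(\A\op\times u)^*$ and $\gamma^{u,u} = \gamma^{u,\id}\circ\gamma^{\id,u}$ from Axiom~1, encodes the interaction between $(u\op\times u)^*$, composition, and the comparison map $\otimes_u$ of \cref{Definition of cancelling product on morphisms} that expresses the failure of $(u\op\times u)^*$ to strictly commute with the cancelling tensor;
\item The two $\E$-naturality statements rephrased in \cref{Rephrasing E-naturality in definition of E-prederivator}, applied in turn to handle the outer pullbacks of $X$ and $Z$ via the compatibility of $\gamma^{u,\id}$ and $\gamma^{\id,u}$ with composition;
\item \cref{Interaction of cancelling tensor with h on the morphism level}, applied to each tensorand, which relates $(\h_u\otimes_\A\h_u)\circ\lambda^{-1}$ to $(u\op\times u)^*\lambda^{-1}\circ\h_u$ and accounts for the two instances of the unit isomorphism appearing in the definition of composition.
\end{itemize}

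The hardest part is not any individual step but the bookkeeping required to arrange these coherences into a single commuting diagram; each interior face, however, commutes for one of the reasons above.
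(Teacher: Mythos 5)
Your definition of $u^*f$ by the explicit formula
\[
\h_\A\xrightarrow{\h_u}(u\op\times u)^*\h_\B\xrightarrow{(u\op\times u)^*f}(u\op\times u)^*\widetilde{\map}_{\dA}(X,Y)\xrightarrow{\gamma^{u,u}}\widetilde{\map}_{\dA}(u^*X,u^*Y)
\]
is correct --- it is exactly what the paper records in \cref{Explicit description of u*f in an E-prederivator} --- but it is not how the paper \emph{defines} $u^*f$, and the difference in strategy matters. The paper defines $u^*f$ representably: given $f:\h_\B\rightarrow\widetilde{\map}_{\dA}(X,Y)$, one observes (via \cref{map(f -) is E-natural}) that $\gamma^{u,\id}\circ(u\op\times\C)^*\widetilde{\map}_{\dA}(f,Z)\circ(\gamma^{u,\id})^{-1}$ is $\E$-natural in $Z$, and then invokes the fully faithful Yoneda embedding of \cref{E-category yoneda embedding is fully faithful} to obtain a unique $u^*f$ in $\dA(\A)$ representing it. The payoff is that functoriality is immediate from uniqueness: $u^*(\id_X)$ and $j$ represent the same $\E$-natural transformation, and $u^*(g\circ f)$ and $u^*g\circ u^*f$ do too, so they are equal. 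Your reading of Axiom~3 as directly giving preservation of identities is fine, and your inventory of ingredients for preservation of composition (Axiom~4, the rephrased $\E$-naturality of \cref{Rephrasing E-naturality in definition of E-prederivator}, and \cref{Interaction of cancelling tensor with h on the morphism level}) is plausible, since these encode precisely the coherence between $\gamma$, composition, $\h_u$, and the cancelling tensor. But you have only sketched where each ingredient would go, not produced the commuting diagram, and the coherence of the cancelling tensor is subtle enough that the paper evidently found the representable route worth the detour. As a check: the fact that your explicit formula genuinely does give a functor is not in doubt --- the paper establishes it via \cref{Explicit description of u*f in an E-prederivator} and the Yoneda argument --- but if you are committed to a direct verification you need to complete the bookkeeping you have outlined, and in particular sort out carefully the comparison between $(u\op\times u)^*\rho^{-1}$ and $\rho^{-1}\circ\h_u$, which involves the interaction of $\h_u$ with the unitor and $\otimes_u$ in a way that goes slightly beyond what \cref{Interaction of cancelling tensor with h on the morphism level} states verbatim.
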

\begin{proof}
Let $f:\h_\B\rightarrow\widetilde{\map}_{\dA}(X,Y)$ be a map in $\dA(\B)$. For any category $\C$ and any object $Z\in\dA_0(\C)$, consider the composite below:
\begin{center}
\begin{tikzcd}[column sep=tiny]
\widetilde{\map}_{\dA}(u^*Y,Z)\arrow[rrrrr,"(\gamma^{u,\id})^{-1}"]
&&&&& (u\op\times\C)^*\widetilde{\map}_{\dA}(Y,Z)\arrow[rrrrrrrrrrrrr,"(u\op\times\C)^*\widetilde{\map}_{\dA}(f\text{,}Z)" above] &&&&&&&&&&&&& (u\op\times\C)^*\widetilde{\map}_{\dA}(X,Z)\arrow[d,"\gamma^{u,\id}" right]\\
&&&&&&&&&&&&&&&&&& \widetilde{\map}_{\dA}(u^*X,Z)
\end{tikzcd}
\end{center}
By~\cref{map(f -) is E-natural}, this map is $\E$-natural in $Z\in\dA_0(\C)$. Using~\cref{E-category yoneda embedding is fully faithful}, we define $u^*f:\h_\A\rightarrow\widetilde{\map}_{\dA}(u^*X,u^*Y)$ to be the unique map in $\dA(\A)$ representing this $\E$-natural transformation. That is, $u^*f$ is the unique map that makes the diagram below commute, for any category $\C$ and any $Z\in\dA_0(\C)$:
\vspace{-2em}
\begin{center}
\begin{equation}\label{Diagram defining u*}
\begin{tikzcd}[baseline=(current  bounding  box.center)]
(u\op\times\C)^*\widetilde{\map}_{\dA}(Y,Z)\arrow[dd,"\gamma^{u,\id}" left]\arrow[rrrr,"(u\op\times\C)^*\widetilde{\map}_{\dA}(f\text{,}Z)" above]
&&&& (u\op\times\C)^*\widetilde{\map}_{\dA}(X,Z)\arrow[dd,"\gamma^{u,\id}" right]\\\\
\widetilde{\map}_{\dA}(u^*Y,Z)\arrow[rrrr,"\widetilde{\map}_{\dA}(u^*f\text{,}Z)" below]&&&& \widetilde{\map}_{\dA}(u^*X,Z)
\end{tikzcd}
\end{equation}
\end{center}
Functoriality of this construction follows by the uniqueness, using~\cref{E-category yoneda embedding definition}.
\end{proof}

\begin{remark}
In the proof of \cref{The functor u* from an E-prederivator}, to define $u^*:\dA(\B)\rightarrow\dA(\A)$ and prove it is functorial, it suffices to use the Yoneda lemma for the category $\dA(\A)$, rather than the $\E$-category Yoneda lemma of \cref{Yoneda lemma for E-categories}. However, \cref{Yoneda lemma for E-categories} implies that the diagram (\ref{Diagram defining u*}) commutes, and we will use this repeatedly in the rest of the chapter, which is why we used the $\E$-category Yoneda lemma rather than the unenriched Yoneda lemma in this proof.
\end{remark}

\begin{remark}\label{Explicit description of u*f in an E-prederivator}
Suppose we have an $\E$-prederivator $\eA$ and a functor $u:\A\rightarrow\B$. Given a map $f:\h_\B\rightarrow\widetilde{\map}_{\dA}(X,Y)$ in $\dA(\B)$, the map $u^*f:\h_\A\rightarrow\widetilde{\map}_{\dA}(u^*X,u^*Y)$ of~\cref{The functor u* from an E-prederivator} can be explicitly described as follows:
\begin{center}
\begin{tikzcd}
\h_\A\arrow[r,"\h_u"]
& (u\op\times u)^*\h_\B\arrow[rr,"(u\op\times u)^*f" above] && (u\op\times u)^*\widetilde{\map}_{\dA}(X,Y)\arrow[r,"\gamma^{u,u}" above] & \widetilde{\map}_{\dA}(u^*X,u^*Y)
\end{tikzcd}
\end{center}
This follows from Axioms $1$ and $3$ of~\cref{E-prederivator definition}, using~\cref{Extracting f from map(f X)}.
\end{remark}

In the proof of~\cref{The functor u* from an E-prederivator}, the action of $u^*:\dA(\B)\rightarrow\dA(\A)$ on maps is defined representably using the $\E$-natural isomorphisms $\gamma^{u,\id}$. However, the explicit description in~\cref{Explicit description of u*f in an E-prederivator}, implies that $u^*$ can also be defined using the dual isomorphisms $\gamma^{\id,u}$:

\begin{remark}\label{Dual description of u*f}
Let $\eA$ be an $\E$-prederivator, let $u:\A\rightarrow\B$ be a functor, and let \linebreak~$f:\h_\B\rightarrow\widetilde{\map}_{\dA}(X,Y)$ be a map in $\dA(\B)$. Then $u^*f:\h_\A\rightarrow\widetilde{\map}_{\dA}(u^*X,u^*Y)$ makes the diagram below commute, for any category $\C$ and any object $Z\in\dA_0(\C)$:
\begin{center}
\begin{tikzcd}
(\C\op\times u)^*\widetilde{\map}_{\dA}(Z,X)\arrow[dd,"\gamma^{\id,u}" left]\arrow[rrrr,"(\C\op\times u)^*\widetilde{\map}_{\dA}(Z\text{,}f)" above]
&&&& (\C\op\times u)^*\widetilde{\map}_{\dA}(Z,Y)\arrow[dd,"\gamma^{\id,u}" right]\\\\
\widetilde{\map}_{\dA}(Z,u^*X)\arrow[rrrr,"\widetilde{\map}_{\dA}(Z\text{,}u^*f)" below]&&&& \widetilde{\map}_{\dA}(Z,u^*Y)
\end{tikzcd}
\end{center}
Note that this property characterises $u^*f$, by~\cref{E-category yoneda embedding is fully faithful}. We can check this equality using~\cref{Extracting f from map(f X)}, and the explicit description of $u^*f$ from~\cref{Explicit description of u*f in an E-prederivator}.
\end{remark}

We now give the corresponding results for natural transformations.

\begin{lemma}\label{Natural transformation omega* from an E-prederivator}
Let $\eA$ be an $\E$-prederivator, and let 
\begin{center}
\begin{tikzcd}
\A\arrow[rr, bend left=45, "u" above,""{name=U, below}]\arrow[rr, bend right=45, "v" below, ""{name=D}]
&& \B
\arrow[Rightarrow,from=U,to=D,shorten >=0.1cm,shorten <=0.1cm,"\kappa"]
\end{tikzcd}
\end{center}
be a natural transformation. This induces a canonical natural transformation
\begin{center}
\begin{tikzcd}
\dA(\A)\arrow[leftarrow,r, bend left=50, "u^*" above,""{name=U, below}]\arrow[leftarrow,r, bend right=50, "v^*" below, ""{name=D}]
& \dA(\B)
\arrow[Rightarrow,from=U,to=D,shorten >=0.1cm,shorten <=0.1cm,"\kappa^*", shift right]
\end{tikzcd}
\end{center}
between the functors of~\cref{The functor u* from an E-prederivator}.
\end{lemma}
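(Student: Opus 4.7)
The plan is to construct the components $\kappa^*_X$ from a canonical $2$-cell $\h_\kappa : \h_\A \to (u\op\times v)^*\h_\B$ obtained by applying \cref{Interaction of delta with natural transformations} to $\mathbb{1} \in \E(\0)$. That result equips $\h_\kappa$ with two equal expressions, $(u\op\times\kappa)^*_{\h_\B} \circ \h_u$ and $(\kappa\op\times v)^*_{\h_\B} \circ \h_v$, both of which will be needed in the naturality check. For each $X \in \dA_0(\B)$, define $\kappa^*_X \in \dA(\A)(u^*X, v^*X)$ to correspond under \cref{A([0]) is E([0])-enriched} to the composite
\[
\h_\A \xrightarrow{\;\h_\kappa\;} (u\op\times v)^*\h_\B \xrightarrow{(u\op\times v)^*j} (u\op\times v)^*\widetilde{\map}_{\dA}(X,X) \xrightarrow{\;\gamma^{u,v}\;} \widetilde{\map}_{\dA}(u^*X, v^*X).
\]

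For naturality in $X$, given $f : X \to Y$ in $\dA(\B)$, one must verify $\kappa^*_Y \circ u^*f = v^*f \circ \kappa^*_X$ in $\dA(\A)$. The strategy is to pass through the enriched Yoneda embedding: by \cref{E-category yoneda embedding is fully faithful} it suffices to show that the induced $\E$-natural transformations agree, i.e.\ that $\widetilde{\map}_{\dA}(u^*f, Z) \circ \widetilde{\map}_{\dA}(\kappa^*_Y, Z) = \widetilde{\map}_{\dA}(\kappa^*_X, Z) \circ \widetilde{\map}_{\dA}(v^*f, Z)$ for all $Z \in \dA_0(\C)$. The key identification, which must be established first, is that
\[
\widetilde{\map}_{\dA}(\kappa^*_X, Z) = \gamma^{u,\id} \circ (\kappa\op\times\C)^*_{\widetilde{\map}_{\dA}(X,Z)} \circ (\gamma^{v,\id})^{-1},
\]
and similarly for $\kappa^*_Y$. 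With this in hand, together with the characterisation of $\widetilde{\map}_{\dA}(u^*f, Z)$ and $\widetilde{\map}_{\dA}(v^*f, Z)$ coming from the proof of \cref{The functor u* from an E-prederivator}, the desired equality reduces to the naturality square of the $2$-cell $(\kappa\op\times\C)^*$ of the $2$-functor $\E$ with respect to the morphism $\widetilde{\map}_{\dA}(f, Z)$ in $\E(\B\op\times\C)$, which holds by pseudofunctoriality of $\E$.

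The main obstacle is the identification of $\widetilde{\map}_{\dA}(\kappa^*_X, Z)$ with the composite $\gamma^{u,\id} \circ (\kappa\op\times\C)^* \circ (\gamma^{v,\id})^{-1}$. Expanding the left-hand side via the explicit formulas for composition and mapping objects in $\dA(\A)$, one must simplify $\circ \;\circ\; (\id \otimes_\A \kappa^*_X) \circ \rho^{-1}$, with $\kappa^*_X = \gamma^{u,v} \circ (u\op\times v)^* j \circ \h_\kappa$. This is achieved by factoring $\gamma^{u,v}$ through $\gamma^{u,\id}$ and $\gamma^{\id,v}$ via Axiom 1 of \cref{E-prederivator definition}, applying Axiom 4 (once for $u$ and once for $v$) to convert composition via $\otimes_\A$ into composition via $\otimes_\B$, invoking \cref{Interaction of cancelling tensor with h on the morphism level} to absorb $\h_u$ or $\h_v$ (here the two descriptions of $\h_\kappa$ are used interchangeably), and finally using the unit axioms of $\eA$ to collapse the occurrence of $j$. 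Once this identification is established, the Yoneda reduction renders the naturality check transparent.
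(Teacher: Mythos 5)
Your proposal is correct, and it takes a genuinely different (though ultimately equivalent) route from the paper. The paper defines $\kappa^*_X$ \emph{implicitly}: it first verifies that $\gamma^{u,\id}\circ(\kappa\op\times\C)^*\circ(\gamma^{v,\id})^{-1}$ is $\E$-natural in $Z$ (via \cref{u* and alpha* respect the E-action} and \cref{E-module modifications induce E-natural maps}), then invokes \cref{E-category yoneda embedding is fully faithful} to obtain $\kappa^*_X$ as the unique map representing this family, and observes that the naturality square then falls out of the definitions of $u^*$ and $v^*$. You instead define $\kappa^*_X$ \emph{explicitly} as $\gamma^{u,v}\circ(u\op\times v)^*j\circ\h_\kappa$, with $\h_\kappa$ coming from \cref{Interaction of delta with natural transformations}, and then verify that this formula satisfies the Yoneda-theoretic characterisation. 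This parallels the relationship between \cref{The functor u* from an E-prederivator} (implicit) and \cref{Explicit description of u*f in an E-prederivator} (explicit) — and indeed the paper itself establishes (the dual of) your identification, but only afterwards, in the remark following the lemma. Your approach buys an explicit formula up front; the paper's approach buys not having to carry out the identification computation at all inside the lemma.

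One small remark on the identification step: you propose to expand $\widetilde{\map}_{\dA}(\kappa^*_X,Z)$ through the $\otimes_\A$-composition formula and invoke Axiom 4 of \cref{E-prederivator definition} and \cref{Interaction of cancelling tensor with h on the morphism level}. That should go through, but it is heavier than necessary. A leaner route, closer in spirit to the remark following the lemma, is to note that by \cref{Yoneda lemma for E-categories} the identification is equivalent to the single equation
\[
\kappa^*_X \;=\; \gamma^{u,\id}\circ(\kappa\op\times\A)^*\circ(\gamma^{v,\id})^{-1}\circ j
\]
in $\dA(\A)$, which follows directly from Axiom 3 (expanding $j = \gamma^{v,v}\circ(v\op\times v)^*j\circ\h_v$), the factorisation $\gamma^{v,v}=\gamma^{v,\id}\circ(v\op\times\A)^*\gamma^{\id,v}$ from Axiom 1, naturality of $(\kappa\op\times\A)^*$, and the description $\h_\kappa=(\kappa\op\times v)^*\circ\h_v$ from \cref{Interaction of delta with natural transformations} — no Axiom 4 required. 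Your final reduction of naturality in $X$ to pseudofunctoriality of $\E$ is exactly right.
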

\begin{proof}
Let $X\in\dA_0(\B)$. We need to define the component of $\kappa^*$ at $X$. For any category $\C$ and any object $Z\in\dA_0(\C)$, consider the composite below:
\begin{center}
\begin{tikzcd}
\widetilde{\map}_{\dA}(v^*X,Z)\arrow[rr,"(\gamma^{v,\id})^{-1}"]
&& (v\op\times\C)^*\widetilde{\map}_{\dA}(X,Z)\arrow[rrr,"(\kappa\op\times\C)^*_{\widetilde{\map}_{\dA}(X\text{,}Z)}" above] &&& (u\op\times\C)^*\widetilde{\map}_{\dA}(X,Z)\arrow[d,"\gamma^{u,\id}" right]\\
&&&&& \widetilde{\map}_{\dA}(u^*X,Z)
\end{tikzcd}
\end{center}
This map is $\E$-natural in $Z\in\dA_0(\C)$, by~\cref{u* and alpha* respect the E-action} and~\cref{E-module modifications induce E-natural maps}. Thus, by~\cref{E-category yoneda embedding is fully faithful}, we may define $\kappa^*_X:\h_\A\rightarrow\widetilde{\map}_{\dA}(u^*X,v^*X)$ to be the unique map in $\dA(\A)$ representing this $\E$-natural transformation. That is, $\kappa^*_X$ is the unique map that makes the diagram below commute, for any category $\C$ and any $Z\in\dA_0(\C)$:
\begin{center}
\begin{tikzcd}
(v\op\times\C)^*\widetilde{\map}_{\dA}(X,Z)\arrow[dd,"\gamma^{v,\id}" left]\arrow[rrrr,"(\kappa\op\times\C)^*_{\widetilde{\map}_{\dA}(X\text{,}Z)}" above]
&&&& (u\op\times\C)^*\widetilde{\map}_{\dA}(X,Z)\arrow[dd,"\gamma^{u,\id}" right]\\\\
\widetilde{\map}_{\dA}(v^*X,Z)\arrow[rrrr,"\widetilde{\map}_{\dA}(\kappa^*_X\text{,}Z)" below]&&&& \widetilde{\map}_{\dA}(u^*X,Z)
\end{tikzcd}
\end{center}
Suppose we have a map $f:\h_\B\rightarrow\widetilde{\map}_{\dA}(X,Y)$ in $\dA(\B)$. To check naturality, we need to show that $v^*(f)\circ\kappa^*_X$ and $\kappa^*_Y\circ u^*(f)$ represent the same $\E$-natural transformation.  This follows easily from the definitions of $u^*$ and $v^*$ in~\cref{The functor u* from an E-prederivator}.
\end{proof}

As in~\cref{Dual description of u*f}, we also have a dual description of the map $\kappa^*_X$:

\begin{remark}
Let $\eA$ be an $\E$-prederivator, let $u,v:\A\rightarrow\B$ be functors, let $\kappa:u\Rightarrow v$ be a natural transformation, and let $X\in\dA_0(\B)$. For any category $\C$ and any $Z\in\dA_0(\C)$, the diagram below commutes:
\begin{center}
\begin{tikzcd}
(\C\op\times u)^*\widetilde{\map}_{\dA}(Z,X)\arrow[dd,"\gamma^{\id,u}" left]\arrow[rrrr,"(\C\op\times\kappa)^*_{\widetilde{\map}_{\dA}(Z\text{,}X)}" above]
&&&& (\C\op\times v)^*\widetilde{\map}_{\dA}(Z,X)\arrow[dd,"\gamma^{\id,v}" right]\\\\
\widetilde{\map}_{\dA}(Z,u^*X)\arrow[rrrr,"\widetilde{\map}_{\dA}(Z\text{,}\;\kappa^*_X)" below]&&&& \widetilde{\map}_{\dA}(Z,v^*X)
\end{tikzcd}
\end{center}
To check this, we can use~\cref{Extracting f from map(f X)}; the diagram above commutes if and only if the diagram below commutes:
\begin{center}
\begin{tikzcd}[column sep=small]
\widetilde{\map}_{\dA}(v^*X,v^*X)\arrow[rrr,"(\gamma^{v,\id})^{-1}"]
&&& (v\op\times\A)^*\widetilde{\map}_{\dA}(X,v^*X)\arrow[rrrr,"(\kappa\op\times\A)^*" above] &&&& (u\op\times\A)^*\widetilde{\map}_{\dA}(X,v^*X)\arrow[d,"\gamma^{u,\id}" right]\\
\h_\A\arrow[u,"j" left]\arrow[d,"j" left] &&&&&&& \widetilde{\map}_{\dA}(u^*X,v^*X)\\

\widetilde{\map}_{\dA}(u^*X,u^*X)\arrow[rrr,"(\gamma^{\id,u})^{-1}" below]
&&& (\A\op\times u)^*\widetilde{\map}_{\dA}(u^*X,X)\arrow[rrrr,"(\A\op\times\kappa)^*" below] &&&& (A\op\times v)^*\widetilde{\map}_{\dA}(u^*X,X)\arrow[u,"\gamma^{\id,v}" right]
\end{tikzcd}
\end{center}

Using Axioms $1$ and $3$ of~\cref{E-prederivator definition}, we can reduce the commutativity of this diagram to the commutativity of the diagram below:
\begin{center}
\begin{tikzcd}
\h_\A \arrow{rr}[above]{\h_u}\arrow{dd}[left]{\h_v} && 
 (u\op\times u)^*\h_\B \arrow{dd}[right]{(u\op\times \kappa)^*} \\\\
 (v\op\times v)^*\h_\B \arrow{rr}[below]{(\kappa\op\times v)^*} && 
 (u\op\times v)^*\h_\B 
\end{tikzcd}
\end{center}
This diagram commutes by~\cref{Interaction of delta with natural transformations}.
\end{remark}

\begin{theorem}\label{E-prederivators induce prederivators}
Any $\E$-prederivator $\eA$ induces a prederivator $\dA:\Cat\op\rightarrow\CAT$, defined as follows:
\begin{center}
\begin{tikzcd}
\A\arrow[rr, bend left=50, "u" above,""{name=U, below}]\arrow[rr, bend right=50, "v" below, ""{name=D}]
&& \B
& \mapsto
& \dA(\A)
& \dA(\B)\arrow[l,bend left=50,"v^*" below,""{name=L,above}]\arrow[l,bend right=50,"u^*" above,""{name=R,below}]
\arrow[Rightarrow,from=U,to=D,shorten >=0.1cm,shorten <=0.1cm,"\kappa"]
\arrow[Rightarrow,from=R,to=L,shorten >=0.1cm,shorten <=0.1cm,"\kappa^*"]
\end{tikzcd}
\end{center}
Here $u^*$ is the functor of~\cref{The functor u* from an E-prederivator}, and $\kappa^*$ is the natural transformation of~\cref{Natural transformation omega* from an E-prederivator}.
\end{theorem}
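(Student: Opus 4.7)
The plan is to verify the five $2$-functoriality axioms (preservation of identity $1$-cells, composition of $1$-cells, identity $2$-cells, vertical composition, and horizontal composition) by reducing each to a uniqueness statement provided by the $\E$-category Yoneda lemma in the form of \cref{E-category yoneda embedding is fully faithful}. The point is that for any $\E$-prederivator $\eA$, a map in $\dA(\A)$ between two fixed objects is uniquely determined by the $\E$-natural transformation of mapping objects it induces, so every identity of morphisms in $\dA$ that we need may be checked by showing that two candidate morphisms represent the same $\E$-natural transformation.

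First I would verify preservation of identities and composition of $1$-cells. On objects, both equalities are built into Axiom $1$ of \cref{E-prederivator definition}. On morphisms, I would take a map $f\colon\h_\B\to\widetilde{\map}_{\dA}(X,Y)$ in $\dA(\B)$ and use the defining commutative square (\ref{Diagram defining u*}) from the proof of \cref{The functor u* from an E-prederivator}. For the identity, one substitutes $\gamma^{\id,\id}=\id$ (Axiom $1$) to see that $\id_\A^* f = f$. For composable $u\colon\A\to\B$ and $v\colon\B\to\C$, one applies (\ref{Diagram defining u*}) successively to $v^*$ and then $u^*$, and uses the second half of Axiom $1$ (the pasting coherence for $\gamma$) to collapse $(u\op\times\C)^*\gamma^{v,\id}$ followed by $\gamma^{u,\id}$ into $\gamma^{v\circ u,\id}$; this shows $u^*(v^*f)$ satisfies the defining property of $(v\circ u)^*f$, so the two agree by \cref{E-category yoneda embedding is fully faithful}.

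For $2$-cells I would proceed similarly. That $\id_u^*=\id_{u^*}$ reduces, via the characterising square from the proof of \cref{Natural transformation omega* from an E-prederivator}, to the statement that $\id^*=\id$ at the level of pseudonatural transformations of $\E$, together with the third $\E$-prederivator axiom ensuring that the resulting map in $\dA(\A)$ is $j$. For vertical composition $\kappa'\cdot\kappa$ with $\kappa\colon u\Rightarrow v$ and $\kappa'\colon v\Rightarrow w$, I would stack two instances of the characterising square for $\kappa^*_X$ and $\kappa'^*_X$ and use the functoriality $\bigl((\kappa'\cdot\kappa)\op\times\C\bigr)^* = (\kappa\op\times\C)^* \circ (\kappa'\op\times\C)^*$ in $\E$ to deduce that the composite $\kappa^*_X\circ\kappa'^*_X$ represents the same $\E$-natural transformation as $(\kappa'\cdot\kappa)^*_X$; invoking \cref{E-category yoneda embedding is fully faithful} again closes the argument.

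The main obstacle will be horizontal composition: given $\kappa\colon u\Rightarrow v$ in $\Cat(\A,\B)$ and $\kappa'\colon u'\Rightarrow v'$ in $\Cat(\B,\C)$ and an object $X\in\dA_0(\C)$, one must identify $(\kappa'\circ\kappa)^*_X$ with the composite obtained by whiskering $\kappa^*$ with $u'^*$ and $v'^*$ and then composing with $u^*\kappa'^*_X$. Both morphisms live in $\dA(\A)(u^*u'^*X, v^*v'^*X)$, and the defining square for each produces a pasting of an instance of $\gamma$ over $v\circ v'$ and an instance over $u\circ u'$ with the pullback of $((\kappa'\circ\kappa)\op\times\C)^*$ applied to $\widetilde{\map}_{\dA}(X,Z)$. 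The argument reduces, after a bookkeeping diagram chase, to the interchange identity for $\kappa'\circ\kappa$ in $\Cat$ together with the coherence axioms of $\gamma$ and the fact that for the (pre)derivator $\E$, the assignment $u\mapsto u\op\times -$ is itself a pseudofunctor compatible with whiskering. Assembling all of these checks establishes the $2$-functoriality of $\A\mapsto\dA(\A)$, so that $\dA\colon\Cat\op\to\CAT$ is a prederivator.
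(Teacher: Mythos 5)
Your proposal takes essentially the same route as the paper: both reduce each $2$-functoriality equation to an identity between maps in $\dA(\A)$, then use the faithfulness of $y$ from \cref{E-category yoneda embedding is fully faithful} to replace it by an identity of $\E$-natural transformations, which is in turn verified using the characterising squares (\ref{Diagram defining u*}) and the one in \cref{Natural transformation omega* from an E-prederivator} together with the $\E$-prederivator coherence axioms and the $2$-functoriality of $\E$ itself. The paper's own proof is terser --- it carries out only the check for composition of $1$-cells and asserts that the others follow similarly --- whereas you enumerate the cases, which is fine. Two small slips worth flagging: in the identity-$2$-cell check, once $(\id_u\op\times\C)^*=\id$ the characterising square already forces $\widetilde{\map}_{\dA}(\kappa^*_X,Z)=\id$ and hence $\kappa^*_X=\id=j$ by faithfulness; Axiom $3$ of \cref{E-prederivator definition} (which concerns $\h_u$) plays no role here. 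And in the vertical-composition check the composite should read $\kappa'^*_X\circ\kappa^*_X$ (a map $u^*X\to w^*X$), not $\kappa^*_X\circ\kappa'^*_X$; with that order the pasting of the two characterising squares and the identity $((\kappa'\cdot\kappa)\op\times\C)^*=(\kappa\op\times\C)^*\circ(\kappa'\op\times\C)^*$ give exactly the defining property of $(\kappa'\cdot\kappa)^*_X$. Finally, the horizontal-composition case is not really a separate obstacle: once one knows the naturality of $\kappa^*$ established in \cref{Natural transformation omega* from an E-prederivator}, preservation of whiskering on both sides together with vertical functoriality and the interchange law in $\CAT$ yield preservation of general horizontal composites without further diagram-chasing.
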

\begin{proof}
We need to check that this assignment is $2$-functorial. This follows from the corresponding fact for $\E$, using Axiom $1$ of~\cref{E-prederivator definition}. To illustrate how this works, we will prove that $\dA$ preserves composition of functors. 

Suppose we have composable functors $\A\xrightarrow{\;\;u\;\;}\B\xrightarrow{\;\;v\;\;}\C$, and an object $X\in\dA_0(\C)$. By definition, we have $(v\circ u)^*X=u^*v^*X$. Suppose we have a map $f:\h_\C\rightarrow\widetilde{\map}_{\dA}(X,Y)$ in $\dA(\C)$. We want to show that $(v\circ u)^*f=u^*v^*f$. By definition of the pullback functors in~\cref{The functor u* from an E-prederivator}, we must show that the diagram below commutes, for any $Z\in\dA_0(\cD)$:
\begin{center}
\begin{tikzcd}[column sep=tiny,row sep=large]
\widetilde{\map}_{\dA}((v\circ u)^*X,Z)\arrow[d,"(\gamma^{u,\id})^{-1}" left]\arrow[rr,"(\gamma^{v\circ u,\id})^{-1}" above] && ((v\circ u)\op\times\cD)^*\widetilde{\map}_{\dA}(X,Z) \arrow[d,"((v\circ u)\op\times\cD)^*\widetilde{\map}_{\dA}(f\text{,}Z)"]
\\
(u\op\times\cD)^*\widetilde{\map}_{\dA}(v^*X,Z)\arrow[dd,"(u\op\times\cD)^*(\gamma^{v,\id})^{-1}" left] && ((v\circ u)\op\times\cD)^*\widetilde{\map}_{\dA}(Y,Z)\arrow{d}{\gamma^{v\circ u,\id}}\\
&& \widetilde{\map}_{\dA}((v\circ u)^*Y,Z)
\\
 
(u\op\times\cD)^*(v\op\times\cD)^*\widetilde{\map}_{\dA}(X,Z)\arrow[rrd,"(u\op\times\cD)^*(v\op\times\cD)^*\widetilde{\map}_{\dA}(f\text{,}Z)" below left,bend right=10] && (u\op\times\cD)^*\widetilde{\map}_{\dA}(v^*Y,Z)\arrow[u,"\gamma^{u,\id}" right]
\\
&& (u\op\times\cD)^*(v\op\times\cD)^*\widetilde{\map}_{\dA}(Y,Z)\arrow[u,"(u\op\times\cD)^*(\gamma^{v,\id})" right] 
\end{tikzcd}
\end{center} 

This commutes by Axiom $2$ of~\cref{E-prederivator definition}.
\end{proof}

We will refer to the prederivator $\dA$ of~\cref{E-prederivators induce prederivators} as the prederivator \textbf{induced} by $\eA$.

\begin{lemma}
Let $\eA$ be an $\E$-prederivator. The opposite $\E$-category $\eA\op$ carries a natural $\E$-prederivator structure, such that the induced prederivator $\dA\op$ is the opposite of the prederivator induced by $\eA$. 
\end{lemma}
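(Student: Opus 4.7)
The plan is to construct the $\E$-prederivator structure on $\eA\op$ from the one on $\eA$ via the symmetry isomorphisms of \cref{Symmetry for the cancelling tensor}, and then match the resulting induced prederivator with $\dA\op$ using the isomorphism $\dA\op(\A)\iso\dA(\A\op)\op$ recorded after \cref{Opposite E-category}.

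First, I would define the structure. Given $u:\A\rightarrow\B$ and $X\in\dA\op_0(\B)=\dA_0(\B\op)$, set $u^*X:=(u\op)^*X\in\dA_0(\A\op)=\dA\op_0(\A)$. For functors $u:\A\rightarrow\B$ and $v:\C\rightarrow\cD$ and objects $X\in\dA\op_0(\B)$, $Y\in\dA\op_0(\cD)$, recall that $\widetilde{\map}_{\dA\op}(X,Y)=\sigma^*\widetilde{\map}_{\dA}(Y,X)$. The identity $(u\op\times v)^*\sigma^*=\sigma^*(v\times u\op)^*$ lets us define
\[
\gamma^{u,v}_{\dA\op}:=\sigma^*\bigl(\gamma^{v\op,u\op}_{\dA}\bigr):(u\op\times v)^*\widetilde{\map}_{\dA\op}(X,Y)\xrightarrow{\;\iso\;}\widetilde{\map}_{\dA\op}(u^*X,v^*Y),
\]
since $\sigma^*\widetilde{\map}_{\dA}((v\op)^*Y,(u\op)^*X)=\widetilde{\map}_{\dA\op}(u^*X,v^*Y)$ by construction.

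Next, I would verify the four axioms of \cref{E-prederivator definition}. Axiom 1 (functoriality in the two arguments) is immediate from the corresponding axiom for $\eA$ together with the $2$-functoriality of $\sigma^*$. Axiom 2 ($\E$-naturality of the unary components $\gamma^{u,\id}$ and $\gamma^{\id,v}$) reduces, via the characterisation in \cref{Rephrasing E-naturality in definition of E-prederivator}, to the analogous squares for $\eA$, after cancelling $\sigma^*$ throughout and using the coherence of $\tau$ with composition (diagram 1 of \cref{Symmetry for the cancelling tensor}). Axiom 3 (unit compatibility) unpacks, using the definition of the unit in $\eA\op$ from \cref{Opposite E-category} together with the description of $\h_u$, to the first coherence diagram of \cref{Symmetry for the cancelling tensor} applied on the morphism level — that is, to the identity $\sigma^*\h_{u\op}\iso\h_u$ compatibly with units. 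I expect axiom 4 to be the main obstacle: it asserts compatibility between composition, $\otimes_v$, and the $\gamma^{v,\id}$ and $\gamma^{\id,v}$. After substituting the definitions, this reduces to a diagram built from $\tau$, $\sigma^*$, and $\otimes_{v\op}$, which should follow by combining the composition axiom for $\eA$ with the interaction of $\tau$ with associativity (diagram 1 of \cref{Symmetry for the cancelling tensor}) and an appropriate ``opposite'' form of $\otimes_u$ under the symmetry $\tau$; this last ingredient is the piece that needs to be checked carefully.

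Finally, to identify the induced prederivator of $\eA\op$ with $\dA\op$, I would proceed levelwise. On objects at $\A$, the identification $\dA\op(\A)\iso\dA(\A\op)\op$ from the remark after \cref{Opposite E-category} already gives the required bijection on objects and a compatible hom-set bijection. On morphisms, given $u:\A\rightarrow\B$, I compare $u^*$ on the induced prederivator of $\eA\op$ with $(u\op)^{*\,\mathrm{op}}$ on $\dA(\B\op)\op\rightarrow\dA(\A\op)\op$: on objects both send $X$ to $(u\op)^*X$; on morphisms one uses the characterisation of \cref{Dual description of u*f} applied to $\eA\op$ together with the definition $\gamma^{\id,u}_{\dA\op}=\sigma^*\gamma^{u\op,\id}_{\dA}$, which turns the defining square for $u^*$ in $\eA\op$ into (the opposite of) the defining square for $(u\op)^*$ in $\eA$ from \cref{Diagram defining u*}. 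The statement for natural transformations $\kappa^*$ is analogous, using \cref{Natural transformation omega* from an E-prederivator} and noting that taking opposites reverses $2$-cells as in \cref{Opposite E-category}. This gives an equality (not merely an equivalence) of prederivators, as required.
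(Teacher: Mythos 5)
Your proof takes the same approach as the paper: define $u^*X:=(u\op)^*X$ and $\gamma^{u,v}_{\dA\op}:=\sigma^*(\gamma^{v\op,u\op}_{\dA})$, then reduce the axioms to the corresponding facts for $\eA$ via the coherence of $\sigma^*$ with $\h_u$ and $\otimes_u$. The paper leaves the axiom verification and the identification with $\dA\op$ as a one-line ``it is easy to check''; your proposal usefully spells these out. One small citation slip: for Axiom~3 you point to the \emph{first} coherence diagram of \cref{Symmetry for the cancelling tensor}, but that diagram is the associativity triangle --- the unit compatibility you describe (relating $\h_\A\iso\sigma^*\h_{\A\op}$ and $\h_u$ with $\sigma^*\h_{u\op}$) corresponds to the second and third diagrams, together with a (not explicitly stated in the paper) naturality of the isomorphism $\h_\A\iso\sigma^*\h_{\A\op}$ in the functor $u$, which is what the paper calls ``the respect of $\sigma^*$ for $\h_u$.''
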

\begin{proof}
Let $u:\A\rightarrow\B$ be a functor and let $X\in\dA\op_0(\B)=\dA_0(\B\op)$. Define $u^*X$ with respect to $\eA\op$ to be $(u\op)^*X\in\dA_0(\A\op)$ with respect to $\eA$. 

Given functors $u:\A\rightarrow\B$ and $v:\C\rightarrow\cD$, and $X\in\dA_0(\B\op)$ and $Y\in\dA_0(\cD\op)$, we define $\gamma^{u,v}$ with respect to $\eA\op$ to be the isomorphism
\[
(u\op\times v)^*\sigma^*\widetilde{\map}_{\dA}(Y,X)=\sigma^*(v\times u\op)^*\widetilde{\map}_{\dA}(Y,X)\xrightarrow{\;\;\;\sigma^*(\gamma^{v\op,u\op})\;\;\;}\sigma^*\widetilde{\map}_{\dA}((v\op)^*Y,(u\op)^*X)
\]
with respect to $\eA$. It is easy to check that this data satisfies~\cref{E-prederivator definition}, using the corresponding facts for $\eA$, and the respect of $\sigma^*$ for $\h_u$ and $\otimes_u$.
\end{proof}

\begin{proposition}\label{In an E-prederivator map(X -) is a prederivator map}
Let $\eA$ be an $\E$-prederivator, let $\A$ and $\B$ be categories, and let $X\in\dA_0(\A)$ and $Y\in\dA_0(\B)$. For any category $\C$, consider the functors
\[
\widetilde{\map}_{\dA}(X,-):\dA(\C)\rightarrow\E(\A\op\times\C)
\]
\[
\widetilde{\map}_{\dA}(-,Y):\dA(\C)\op\rightarrow\E(\C\op\times\B)
\]
induced by the representable $\E$-morphisms $\widetilde{\map}_{\dA}(X,-)$ and $\widetilde{\map}_{\dA}(-,Y)$. These form the components of prederivator maps
\[
\widetilde{\map}_{\dA}(X,-):\dA\rightarrow\E^{\A\op}
\]
\[
\widetilde{\map}_{\dA}(-,Y):\dA\op\rightarrow\E^\B
\]
with structure isomorphisms given by the maps $\gamma$ of~\cref{E-prederivator definition}. 
\end{proposition}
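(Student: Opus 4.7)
The plan is to verify the three axioms of pseudonatural transformation (Definition~\ref{Pseudonatural transformation definition}) for the candidate structure isomorphisms given by the maps $\gamma$ of Definition~\ref{E-prederivator definition}. I will focus on the first map $\widetilde{\map}_{\dA}(X,-):\dA\rightarrow\E^{\A\op}$; the second follows by passing to the opposite $\E$-prederivator.

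First I would fix notation. The component at $\C$ is the functor $\widetilde{\map}_{\dA}(X,-):\dA(\C)\rightarrow\E(\A\op\times\C)$ of Lemma~\ref{The functor u* from an E-prederivator} applied to the shifted $\E$-category, i.e.\ the functor induced in the sense of Remark~\ref{A([0]) is E([0])-enriched} by the representable $\E$-morphism of Proposition~\ref{mapping space is an E-morphism}. For a functor $v:\C\rightarrow\cD$, the pullback functor along $v$ in $\E^{\A\op}$ is $(\A\op\times v)^*$, so I would take the structure isomorphism to be
\[
\gamma^{\id,v} : (\A\op\times v)^*\widetilde{\map}_{\dA}(X,-) \xrightarrow{\;\iso\;} \widetilde{\map}_{\dA}(X, v^*-).
\]

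Next I would verify that $\gamma^{\id,v}$ is natural as a transformation between functors $\dA(\cD)\rightarrow\E(\A\op\times\C)$. Given $f:\h_\cD\rightarrow\widetilde{\map}_{\dA}(Y,Z)$ in $\dA(\cD)$, the required naturality square is exactly the commutative square characterising $v^*f$ in Remark~\ref{Dual description of u*f} (with the roles relabelled: here $X\in\dA_0(\A)$ plays the role of the test object). Together with the $\E$-naturality of $\gamma^{\id,v}$ in $X$ guaranteed by Axiom~2 of Definition~\ref{E-prederivator definition}, this gives the required pseudonatural structure isomorphism.

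Then I would check the three coherence axioms of Definition~\ref{Pseudonatural transformation definition} in turn. For axiom (1), the identity condition $\gamma^{\id,\id}=\id$ is part of Axiom~1 of Definition~\ref{E-prederivator definition}. For axiom (2), composition, given $\C\xrightarrow{v}\cD\xrightarrow{w}\mathrm{E}$, the two pasting diagrams reduce to the assertion that the composite
\[
(\A\op\times v)^*(\A\op\times w)^*\widetilde{\map}_{\dA}(X,-) \xrightarrow{(\A\op\times v)^*\gamma^{\id,w}} (\A\op\times v)^*\widetilde{\map}_{\dA}(X,w^*-) \xrightarrow{\gamma^{\id,v}} \widetilde{\map}_{\dA}(X,v^*w^*-)
\]
agrees with $\gamma^{\id,w\circ v}$ under the identification $(w\circ v)^*=v^*\circ w^*$; this is exactly the composition clause in Axiom~1 of Definition~\ref{E-prederivator definition}. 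For axiom (3), given a natural transformation $\kappa:v\Rightarrow w$ of functors $\C\rightarrow\cD$, I must show that the evident pasting with $\gamma^{\id,v}$ and $\gamma^{\id,w}$ agrees when joined by $(\A\op\times\kappa)^*$ on one side and $\widetilde{\map}_{\dA}(X,\kappa^*_{(-)})$ on the other. This is precisely the commutative square displayed in the remark immediately following Lemma~\ref{Natural transformation omega* from an E-prederivator}, which defines $\kappa^*$ via the dual isomorphisms $\gamma^{\id,v}$.

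Finally, the contravariant map $\widetilde{\map}_{\dA}(-,Y):\dA\op\rightarrow\E^\B$ is handled dually: either by direct repetition of the argument using the isomorphisms $\gamma^{v,\id}$ and Remark~\ref{Rephrasing E-naturality in definition of E-prederivator}, or by applying the covariant result to the opposite $\E$-prederivator $\eA\op$ and unwinding via the isomorphism $\dA\op(\C)\iso\dA(\C\op)\op$. The main potential obstacle is bookkeeping: carefully tracking which copy of $(\A\op\times v)^*$ sits where in each diagram, and matching the $\E$-prederivator axioms to the pseudonatural transformation axioms at the level of the underlying (rather than $\E$-enriched) diagrams. However, no new coherence is required beyond what is already packaged into Definition~\ref{E-prederivator definition}, so the proof is essentially a bookkeeping exercise once the correct structure isomorphisms are identified.
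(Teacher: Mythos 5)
Your proof is correct and follows essentially the same approach as the paper: verify naturality of the structure isomorphism against the defining property of the pullback functor from Lemma~\ref{The functor u* from an E-prederivator} (via its dual form in Remark~\ref{Dual description of u*f}), then derive the three axioms of Definition~\ref{Pseudonatural transformation definition} from Axiom~1 of Definition~\ref{E-prederivator definition} and the definition of $\kappa^*$ in Lemma~\ref{Natural transformation omega* from an E-prederivator}. The only difference is that the paper works out the details for $\widetilde{\map}_{\dA}(-,Y)$ using the primal isomorphisms $\gamma^{u,\id}$ and declares $\widetilde{\map}_{\dA}(X,-)$ analogous, whereas you do the reverse; this is a purely cosmetic choice, and your passing remark about $\E$-naturality of $\gamma^{\id,v}$ in $X$ from Axiom~2 is superfluous for this particular verification (only naturality in the varying argument $Z$ is needed) but does no harm.
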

\begin{proof}
Let $u:\C\rightarrow\cD$ be a functor, and let $X\in\dA_0(\A)$ and $Y\in\dA_0(\B)$. The structure isomorphism for $\widetilde{\map}_{\dA}(X,-):\dA\rightarrow\E^{\A\op}$ has component at $Z\in\dA_0(\cD)$ given by:
\[
\gamma^{\id,u}:(\A\op\times u)^*\widetilde{\map}_{\dA}(X,Z)\xrightarrow{\;\;\;\iso\;\;\;}\widetilde{\map}_{\dA}(X,u^*Z)
\]
The structure isomorphism for $\widetilde{\map}_{\dA}(-,Y):\dA\op\rightarrow\E^\B$ has component at $Z\in\dA_0(\cD)$ given by:
\[
\gamma^{u,\id}:(u\op\times \B)^*\widetilde{\map}_{\dA}(Z,Y)\xrightarrow{\;\;\;\iso\;\;\;}\widetilde{\map}_{\dA}(u^*Z,Y)
\]
We will outline a proof for the second map $\widetilde{\map}_{\dA}(-,Y)$. The other is analogous.

First, we need to check that the maps $\gamma^{u,\id}$ are natural in $Z\in\dA(\cD)\op$. That is, for any map $f:\h_{\cD}\rightarrow\widetilde{\map}_{\dA}(Z,W)$ in $\dA(\cD)$, we need the diagram below to commute:
\begin{center}
\begin{tikzcd}
(u\op\times\B)^*\widetilde{\map}_{\dA}(W,Y)\arrow[dd,"\gamma^{u,\id}" left]\arrow[rrrr,"(u\op\times\B)^*\widetilde{\map}_{\dA}(f\text{,}Y)" above]
&&&& (u\op\times\B)^*\widetilde{\map}_{\dA}(Z,Y)\arrow[dd,"\gamma^{u,\id}" right]\\\\
\widetilde{\map}_{\dA}(u^*W,Y)\arrow[rrrr,"\widetilde{\map}_{\dA}(u^*f\text{,}Y)" below]&&&& \widetilde{\map}_{\dA}(u^*Z,Y)
\end{tikzcd}
\end{center}
This holds by the commutativity of the diagram (\ref{Diagram defining u*}) in~\cref{The functor u* from an E-prederivator}.

We now need to check that $\gamma^{u,\id}$ satisfies the axioms of~\cref{Pseudonatural transformation definition}. Axioms $1$ and $2$ follow immediately from Axiom $1$ of~\cref{E-prederivator definition}. Finally, given a natural transformation $\kappa$, Axiom $3$ of~\cref{Pseudonatural transformation definition} follows from the definition of $\kappa^*$ in~\cref{Natural transformation omega* from an E-prederivator}.
\end{proof}

\begin{remark}\label{In an E-prederivator map(- -) is a prederivator map}
Let $\eA$ be an $\E$-prederivator. The prederivator maps of~\cref{In an E-prederivator map(X -) is a prederivator map} are the external components of a prederivator map
\[
\map_\dA(-,-):\dA\op\times\dA\rightarrow\E.
\]
Given a category $\A$ and objects $X\in\dA\op(\A)$ and $Y\in\dA(\A)$, we have
\[
\map_\dA(X,Y)=\delta^*\widetilde{\map}_{\dA}(X,Y)\in\E(\A),
\]
where $\delta:\A\rightarrow\A\times\A$ is the diagonal map, as in~\cref{Cartesian closure of PDer}.
\end{remark}

\begin{theorem}\label{E-modules induce E-prederivators}
Let $\D$ be a closed $\E$-module. The associated $\E$-category $\eD$ has a canonical $\E$-prederivator structure, such that the induced prederivator recovers the original $\D$.
\end{theorem}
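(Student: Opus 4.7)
The plan is to equip $\eD$ with the data of \cref{E-prederivator definition} using the prederivator structure of $\D$ together with the pseudonaturality of the two-variable prederivator map $\map_\D(-,-):\D\op\times\D\rightarrow\E$. For any functor $u:\A\rightarrow\B$ and any object $X\in\D(\B)=\dD_0(\B)$, I define $u^*X\in\dD_0(\A)$ to be the pullback of $X$ along $u$ in the prederivator $\D$. For functors $u:\A\rightarrow\B$ and $v:\C\rightarrow\cD$ and objects $X\in\D(\B)$ and $Y\in\D(\cD)$, I define
\[
\gamma^{u,v}:(u\op\times v)^*\widetilde{\map}_{\D}(X,Y)\xrightarrow{\;\iso\;}\widetilde{\map}_{\D}(u^*X,v^*Y)
\]
to be the pseudonaturality isomorphism associated to the external version of $\map_\D(-,-)$, as in \cref{Lifting the external product to a derivator map}.

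With these definitions, Axiom $1$ of \cref{E-prederivator definition} follows directly from the $2$-functoriality encoded in the axioms of \cref{Pseudonatural transformation definition} applied to the external two-variable map. For Axiom $2$, I pass to the reformulation in \cref{Rephrasing E-naturality in definition of E-prederivator} and take adjuncts under the adjunction $-\otimes_{\B}\widetilde{\map}_\D(X,Y)\dashv \widetilde{\map}_{\E^{\A\op}}(\widetilde{\map}_\D(X,Y),-)$; using the description of composition in $\eD$ from the proof of \cref{E-modules are E-categories}, the diagram reduces to the pseudonaturality of $\otimes:\E\times\D\rightarrow\D$ together with the compatibility of $\epsilon$ with pullback.

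The main obstacles are Axioms $3$ and $4$, and both are handled by adjunction together with the coherence lemmas of \cref{Section The maps otimes u and h u}. For Axiom $3$, taking adjuncts under $-\otimes_\A u^*X\dashv \widetilde{\map}_\D(u^*X,-)$ and unwinding $j$ as the adjunct of $\lambda$, as in \cref{E-modules are E-categories}, the claim reduces precisely to the commutative diagram of \cref{Interaction of cancelling tensor with h on the morphism level}, which expresses the compatibility of $\h_u$ with $\lambda$ under pullback along $u$. For Axiom $4$, the analogous adjointness argument, using the description of composition in $\eD$ and the definition of $\otimes_v$ from \cref{Definition of cancelling product on morphisms}, reduces the diagram to a combination of the associativity coherence of \cref{Cancelling tensor and associativity on morphism level} with the counit compatibility recorded in \cref{Cancelling tensor and epsilon}.

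Finally, to check that the induced prederivator $\dD$ of \cref{E-prederivators induce prederivators} recovers $\D$: on underlying categories, the isomorphism $\dD(\A)\iso\D(\A)$ has already been established in \cref{E-modules are E-categories}. For a functor $u:\A\rightarrow\B$ and a map $f:X\rightarrow Y$ in $\D(\B)$ represented by $\tilde f:\h_\B\rightarrow \widetilde{\map}_\D(X,Y)$, the definition of $u^*\tilde f$ in \cref{The functor u* from an E-prederivator} combined with the fact that $\gamma^{u,\id}$ is by construction the pseudonaturality isomorphism of $\widetilde{\map}_\D(-,Z):\D(\B)\op\rightarrow\E(\B\op\times\C)$ shows, by uniqueness in the $\E$-category Yoneda lemma, that $u^*\tilde f$ coincides with the representation of the image of $f$ under the original pullback functor $u^*:\D(\B)\rightarrow\D(\A)$. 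The case of natural transformations $\kappa:u\Rightarrow v$ is analogous, using the definition of $\kappa^*$ in \cref{Natural transformation omega* from an E-prederivator} together with the fact that the mates in $\D$ and the maps $(\kappa\op\times\C)^*$ on $\widetilde{\map}_\D(X,Z)$ correspond under $\gamma$.
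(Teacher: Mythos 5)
Your proposal defines exactly the same data as the paper (pullbacks from the underlying prederivator, $\gamma^{u,v}$ as the pseudonaturality isomorphisms of $\map_\D(-,-)$ via \cref{Lifting the external product to a derivator map}), and your arguments for Axioms 1, 3 and 4 and for the recovery of $\D$ are the paper's arguments, just with the recovery spelled out where the paper calls it immediate. The one genuinely different step is Axiom 2: the paper observes that $\gamma^{u,\id}$ is a modification between the cotensor-preserving maps $(u\op\times-)^*\widetilde{\map}_\D(X,-)$ and $\widetilde{\map}_\D(u^*X,-)$, shows it respects cotensors by unwinding the isomorphism of \cref{map(X -) preserves cotensors}, and then concludes $\E$-naturality from the dual of \cref{E-module modifications induce E-natural maps}; you instead reduce the diagram of \cref{Rephrasing E-naturality in definition of E-prederivator} by adjunction to pseudonaturality of $\otimes$ and the compatibility of $\epsilon$ with pullback, running essentially the same computation you use for Axiom 4. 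Both routes should go through; the paper's is more conceptual and isolates the structural reason (respecting cotensors) for the naturality, while yours is more uniform with the treatment of the other axioms but requires more care tracing the structure isomorphism for the external product and an appeal to \cref{Cancelling tensor and epsilon} (and \cref{Cancelling tensor and associativity on morphism level}) that your sketch compresses into ``pseudonaturality of $\otimes$ together with compatibility of $\epsilon$ with pullback.'' Also, in your Axiom 3 argument you should cite \cref{Cancelling tensor and epsilon} alongside \cref{Interaction of cancelling tensor with h on the morphism level}, as the paper does, since both are needed to close the adjunct diagram.
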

\begin{proof}
Let $u:\A\rightarrow\B$ be a functor, and let $X\in\D(\B)$. To satisfy~\cref{E-prederivator definition}, we require an object $u^*X\in\D(\A)$. We take this object to be the image of $X$ under the functor $u^*:\D(\B)\rightarrow\D(\A)$, coming from the prederivator structure on $\D$.

Given functors $u:\A\rightarrow\B$ and $v:\C\rightarrow\cD$, and $X\in\D(\B)$ and $Y\in\D(\cD)$, the required isomorphism
\[
\gamma^{u,v}:(u\op\times v)^*\widetilde{\map}_{\D}(X,Y)\xrightarrow{\;\;\;\iso\;\;\;}\widetilde{\map}_{\D}(u^*X,v^*Y)
\]
is induced by the structure isomorphisms of the prederivator map
\[
\map_\D(-,-):\D\op\times\D\rightarrow\E,
\]
as in~\cref{Lifting the external product to a derivator map}. With these definitions, once we show that we do indeed obtain an $\E$-prederivator, it is immediate that the induced prederivator must recover the original prederivator. We will now check the axioms of~\cref{E-prederivator definition}.

Axiom $1$ follows immediately from Axioms $1$ and $2$ of~\cref{Pseudonatural transformation definition}. For Axiom $2$, suppose we have functors $u:\A\rightarrow\B$ and $v:\C\rightarrow\cD$, and objects $X\in\D(\B)$ and $Y\in\D(\cD)$, and consider the maps below:
\[
\gamma^{u,\id}:(u\op\times \cE)^*\widetilde{\map}_{\D}(X,Z)\xrightarrow{\;\;\;\iso\;\;\;}\widetilde{\map}_{\D}(u^*X,Z)
\]
\[
\gamma^{\id,v}:(\cE\op\times v)^*\widetilde{\map}_{\D}(Z,Y)\xrightarrow{\;\;\;\iso\;\;\;}\widetilde{\map}_{\D}(Z,v^*Y)
\]
We must show that these maps are $\E$-natural in $Z\in\D(\cE)$. We will prove this for the first map $\gamma^{u,\id}$; the proof for $\gamma^{\id,v}$ is dual. First, note that $\gamma^{u,\id}$ is the component of a modification in $Z$. Moreover, by~\cref{u* preserves cotensors} and~\cref{map(X -) preserves cotensors}, the source and target of this modification preserve cotensors. Thus, if we can prove that $\gamma^{u,\id}$
respects cotensors, $\E$-naturality will follow by the dual of~\cref{E-module modifications induce E-natural maps}. Therefore, given any object $W\in\E(\C)$, we need the diagram below to commute:
\begin{center}
\begin{tikzcd}
(u\op\times\C\op\times\cE)^*\widetilde{\map}_{\D}(X,Z\;\widetilde{\vartriangleleft}\;W)\arrow{dddd}[left]{\gamma^{u,\id}} \arrow{rr}[above]{\iso} && (u\op\times\C\op\times\cE)^*\widetilde{\map}_{\E}(W,\widetilde{\map}_{\D}(X,Z))\arrow{dd}[right]{\iso} \\\\
 
&& \widetilde{\map}_{\E}(W,(u\op\times\cE)^*\widetilde{\map}_{\D}(X,Z))\arrow{dd}[right]{\widetilde{\map}_{\E}(W,\;\gamma^{u,\id})}\\\\
 
\widetilde{\map}_{\D}(u^*X,Z\;\widetilde{\vartriangleleft}\;W)\arrow[rr,"\iso" below]
&& \widetilde{\map}_{\E}(W,\widetilde{\map}_{\D}(u^*X,Z))
\end{tikzcd}
\end{center}
This diagram commutes; it expresses the fact that the isomorphism 
\[
\widetilde{\map}_{\D}(X,Z\;\widetilde{\vartriangleleft}\;W)\iso\widetilde{\map}_{\E}(W,\widetilde{\map}_{\D}(X,Z))
\]
of~\cref{map(X -) preserves cotensors} is a modification in $X$.
We will now consider Axiom $3$ of~\cref{E-prederivator definition}. Given a functor $u:\A\rightarrow\B$ and $X\in\D(\B)$, the diagram in Axiom $3$ commutes if and only if its adjunct diagram below commutes:
\begin{center}
\begin{tikzcd}
\h_\A\otimes_\A u^*X\arrow{rr}{\h_u\otimes_\A u^*X}\arrow[bend right=10]{rrrrrddd}[below left]{\lambda} && (u\op\times u)^*\h_\B\otimes_\A u^*X\arrow{rrr}{(u\op\times u)^*j\otimes_\A u^*X} &&&(u\op\times u)^*\widetilde{\map}_{\D}(X,X)\otimes_\A u^*X\arrow{dd}{\gamma^{u,u}\otimes_\A u^*X}\\\\
&&&&& \widetilde{\map}_{\D}(u^*X,u^*X)\otimes_\A u^*X\arrow{d}{\epsilon} \\
&&&&& u^*X
\end{tikzcd}
\end{center} 
Here we have used the definition of the unit $j:\h_\A\rightarrow\widetilde{\map}_{\D}(u^*X,u^*X)$, from~\cref{E-modules are E-categories}, to simplify the left hand branch of the diagram. Using~\cref{Interaction of cancelling tensor with h on the morphism level} and~\cref{Cancelling tensor and epsilon}, this diagram reduces to the definition of the unit $j:\h_\A\rightarrow\widetilde{\map}_{\D}(X,X)$, and so it commutes.

Finally, consider Axiom $4$ of~\cref{E-prederivator definition}. Once again, we can show the required diagram commutes by taking adjuncts, and using the definition of composition in $\eD$, from~\cref{E-modules are E-categories}. The diagram we obtain commutes, using~\cref{Cancelling tensor and associativity on morphism level} and~\cref{Cancelling tensor and epsilon}.
\end{proof}

\begin{remark}
Let $\D$ be a closed $\E$-module, let $\A$ and $\B$ be categories, and let $X\in\D(\A)$ and $Y\in\D(\B)$. Then the prederivator maps
\[
\widetilde{\map}_{\D}(X,-):\D\rightarrow\E^{\A\op}
\]
\[
\widetilde{\map}_{\D}(-,Y):\D\op\rightarrow\E^{\B}
\]
of~\cref{In an E-prederivator map(X -) is a prederivator map} recover the original prederivator maps coming from the closed $\E$-module structure. The same is true for $\map_\D(-,-):\D\op\times\D\rightarrow\E$ of~\cref{In an E-prederivator map(- -) is a prederivator map}.
\end{remark}

\section{The 2-category of $\E$-prederivators}\label{Section The 2-category of E-prederivators}

In the first half of this section, we show that $\E$-morphisms and $\E$-natural transformations respect the extra structure on $\E$-prederivators. In light of this, in \cref{2-category of E-prederivators}, we take these to be the $1$-cells and $2$-cells in the $2$-category of $\E$-prederivators. It follows immediately that the Yoneda lemma of \cref{Yoneda lemma for E-categories} carries over to $\E$-prederivators, and we use this in the second half of this section to prove \cref{E-morphism is representable if underlying morphism is}, a representability theorem for $\E$-morphisms of the form $F:\eA\rightarrow\eE$. In \cref{Representability for closed E-modules}, we apply this theorem to the $\E$-prederivator $\eD$ associated to a closed $\E$-module $\D$ to show that representability for a derivator map $F:\D\rightarrow\E$ can be deduced from representability theorems for the underlying category $\D(\0)$.

\begin{proposition}\label{E-morphisms induce prederivator maps}
Let $\eA$ and $\eB$ be $\E$-prederivators, and let $F:\eA\rightarrow\eB$ be an $\E$-morphism between the underlying $\E$-categories. For any functor $u:\A\rightarrow\B$ and any $X\in\dA_0(\B)$, we have a canonical isomorphism:
\[
\phi^u:u^*FX\xrightarrow{\;\;\;\iso\;\;\;}Fu^*X
\]
These are the structure isomorphisms for a prederivator map $F:\dA\rightarrow\dB$, with component at $\A$ given by the induced functor $F:\dA(\A)\rightarrow\dB(\A)$. Moreover, this is the unique prederivator map structure on these functors with the property that, for any $X\in\dA_0(\A)$ and $Y\in\dA_0(\B)$, the map
\[
F:\widetilde{\map}_{\dA}(X,Y)\rightarrow\widetilde{\map}_{\dB}(FX,FY)
\]
is a modification in both variables.
\end{proposition}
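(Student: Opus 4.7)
The plan is to define $\phi^u$ via the $\E$-category Yoneda lemma of \cref{Yoneda lemma for E-categories}, verify the modification condition on mapping objects directly, and deduce the isomorphism property, pseudonaturality, and uniqueness from it.

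By Yoneda for $\eB$, a map $\phi^u \colon u^*FX \to Fu^*X$ in $\dB(\A)$ is determined by an element $\widetilde{\phi}^u \in \E(\A\op\times\A)(\h_\A, \widetilde{\map}_\dB(u^*FX, Fu^*X))$. I define $\widetilde{\phi}^u$ to be the composite
\[
\h_\A \xrightarrow{j} \widetilde{\map}_\dA(u^*X, u^*X) \xleftarrow[\iso]{\gamma^{u,\id}} (u\op\times\A)^* \widetilde{\map}_\dA(X, u^*X) \xrightarrow{(u\op\times\A)^*F} (u\op\times\A)^* \widetilde{\map}_\dB(FX, Fu^*X) \xrightarrow[\iso]{\gamma^{u,\id}} \widetilde{\map}_\dB(u^*FX, Fu^*X),
\]
and dually construct $\psi^u \colon Fu^*X \to u^*FX$ by the same recipe with $\gamma^{\id,u}$ in place of $\gamma^{u,\id}$.

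The structure isomorphism of $\widetilde{\map}_\dA(X,-) \colon \dA \to \E^{\A\op}$ from \cref{In an E-prederivator map(X -) is a prederivator map} is $\gamma^{\id,u}$ for $\eA$, while the structure isomorphism of the composite $\widetilde{\map}_\dB(FX, F(-)) \colon \dA \to \E^{\A\op}$ is the pasting of $\gamma^{\id,u}$ for $\eB$ with $\widetilde{\map}_\dB(FX, \phi^u)$. The modification condition on $F$ on mapping objects in the $Y$-variable therefore amounts to the identity
\[
\widetilde{\map}_\dB(FX, \phi^u) \circ \gamma^{\id,u}_{\eB} \circ (\A\op\times u)^*F = F \circ \gamma^{\id,u}_{\eA}
\]
in $\E(\A\op \times \A)$, which I verify by a direct diagram chase using axiom 1 of \cref{E-morphism definition}, axiom 4 of \cref{E-prederivator definition} applied to both $\eA$ and $\eB$, and the explicit description of $\phi^u$ above, unpacking $\widetilde{\map}_\dB(FX, \phi^u)$ via \cref{mapping space is an E-morphism}; the dual identity in the $X$-variable follows symmetrically. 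Specialising the $Y$-variable identity to $X = u^*Y$ and precomposing with $(\gamma^{\id,u}_{\eA})^{-1}\circ j$, the right side collapses to $j$ via the unit axiom of $F$, while the left side is identified with the Yoneda element $\widetilde{\phi^u \circ \psi^u}$; Yoneda then gives $\phi^u \circ \psi^u = \id_{Fu^*Y}$, and symmetrically $\psi^u \circ \phi^u = \id_{u^*FX}$, so $\phi^u$ is an isomorphism.

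Finally, I verify pseudonaturality of $(\phi^u)$ in the sense of \cref{Pseudonatural transformation definition}. Each axiom reduces via the Yoneda description of $\phi^u$ to a coherence identity: the identity case to axiom 2 of \cref{E-morphism definition} and the identity clause of axiom 1 of \cref{E-prederivator definition}; the composition case to the composition clause of axiom 1 of \cref{E-prederivator definition} applied to both $\eA$ and $\eB$ together with axiom 1 of \cref{E-morphism definition}; and the natural-transformation axiom to the formula for $\kappa^*$ in \cref{Natural transformation omega* from an E-prederivator} combined with \cref{F:map(X Y) -> map(FX FY) is E-natural}. Uniqueness follows immediately: any alternative prederivator-map structure $\phi'^u$ making $F$ on mapping objects a modification satisfies $\phi'^u \circ \psi^u = \id$ by the same specialisation, so cancelling the isomorphism $\psi^u$ gives $\phi'^u = \phi^u$. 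The main obstacle is the direct verification of the modification identity displayed above, where one must thread $\E$-prederivator axiom 4 (linking $\gamma^{\id,u}$ to $\otimes_u$) with the $\E$-morphism compatibility with composition and the explicit description of $\widetilde{\map}_\dB(FX, -)$ from \cref{mapping space is an E-morphism}; there is no shortcut via a purely representable argument, since the hom-object maps of $F$ are not assumed invertible.
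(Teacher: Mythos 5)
Your formula for $\phi^u$ matches the paper's (after unravelling the Yoneda description), and your overall strategy -- define $\phi^u$ and $\psi^u$ explicitly, verify a modification condition, deduce invertibility by specialisation, then check pseudonaturality and uniqueness -- is close in spirit to the paper's proof. The main organisational difference is the order of operations: the paper defines $\phi^u$ as the Yoneda image of an $\E$-natural transformation, so the $X$-variable modification condition (the paper's characterising square involving $\gamma^{u,\id}$ and $\widetilde{\map}_{\dB}(\phi^u_X,FZ)$) is automatic from \cref{Yoneda lemma for E-categories} and \cref{Another description of Yoneda}, and the real content is the direct computation $\psi^u\circ\phi^u = \id$ (via axioms 2 and 3 of \cref{E-prederivator definition}) and $\phi^u\circ\psi^u = \id$ (via axiom 4). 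You instead plan to prove the $Y$-variable modification identity for $\phi^u$ by a direct diagram chase and then specialise. That is workable, but note two points. First, your closing remark that ``there is no shortcut via a purely representable argument'' is not right: the paper's shortcut is exactly the Yoneda lemma, which hands you the $X$-variable modification condition for free without assuming invertibility of $F$ on mapping objects; you could exploit the same for $\psi^u$ in the $Y$-variable. Second, the claim that $\psi^u\circ\phi^u = \id$ follows ``symmetrically'' glosses over the fact that the two composites are not symmetric in the axioms they use -- the paper invokes axiom 4 for one direction and axioms 2--3 for the other -- so you would need a genuinely separate verification (or the dual diagram chase with a different set of coherence axioms) rather than a literal repetition of the first chase. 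Neither of these is a fatal gap, but as written the proposal understates the amount of independent work in the second inverse law.
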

\begin{proof}
Let $u:\A\rightarrow\B$ be a functor and let $X\in\dA_0(\B)$. For any category $\C$ and any object $Z\in\dA_0(\C)$, consider the composite below:
\begin{center}
\begin{tikzcd}[column sep=small]
\widetilde{\map}_{\dA}(u^*X,Z)\arrow[rrr,"(\gamma^{u,\id})^{-1}"]
&&& (u\op\times\C)^*\widetilde{\map}_{\dA}(X,Z)\arrow[rrrr,"(u\op\times\C)^*F" above] &&&& (u\op\times\C)^*\widetilde{\map}_{\dB}(FX,FZ)\arrow[d,"\gamma^{u,\id}" right]\\
&&&&&&& \widetilde{\map}_{\dB}(u^*FX,FZ)
\end{tikzcd}
\end{center}
By~\cref{F:map(X Y) -> map(FX FY) is E-natural} this map is $\E$-natural in $Z\in\dA_0(\C)$. Thus, by~\cref{Yoneda lemma for E-categories}, this determines a unique map
\[
\phi^u_X:\h_\A\rightarrow\widetilde{\map}_{\dB}(u^*FX,Fu^*X)
\] 
in $\dB(\A)$. Using~\cref{Another description of Yoneda}, the map $\phi^u$ is characterised by the fact that, for any category $\C$ and any $Z\in\dA_0(\C)$, the diagram below commutes:
\begin{center}
\begin{tikzcd}
(u\op\times \C)^*\widetilde{\map}_{\dA}(X,Z)\arrow[dd,"\gamma^{u,\id}" left]\arrow[rrrr,"(u\op\times\C)^*F" above]
&&&& (u\op\times\C)^*\widetilde{\map}_{\dB}(FX,FZ)\arrow[dddd,"\gamma^{u,\id}" right]\\\\
\widetilde{\map}_{\dA}(u^*X,Z)\arrow[dd,"F" left]
\\\\
\widetilde{\map}_{\dB}(Fu^*X,FZ)\arrow[rrrr,"\widetilde{\map}_{\dB}(\phi^u_X\text{,}FZ)" below]&&&& \widetilde{\map}_{\dB}(u^*FX,FZ)
\end{tikzcd}
\end{center}
Dually, for any category $\C$ and any $Z\in\dA_0(\C)$, consider the following composite:
\begin{center}
\begin{tikzcd}[column sep=small]
\widetilde{\map}_{\dA}(Z,u^*X)\arrow[rrr,"(\gamma^{\id,u})^{-1}"]
&&& (\C\op\times u)^*\widetilde{\map}_{\dA}(Z,X)\arrow[rrrr,"(\C\op\times u)^*F" above] &&&& (\C\op\times u)^*\widetilde{\map}_{\dB}(FZ,FX)\arrow[d,"\gamma^{\id,u}" right]\\
&&&&&&& \widetilde{\map}_{\dB}(FZ,u^*FX)
\end{tikzcd}
\end{center}
This map is $\E$-natural in $Z\in\dA_0(\C)$, so, by~\cref{Yoneda lemma for E-categories}, it determines a map
\[
\psi^u_X:\h_\A\rightarrow\widetilde{\map}_{\dB}(Fu^*X,u^*FX)
\] 
in $\dB(\A)$, which is unique such that the diagram below commutes, for any category $\C$ and any $Z\in\dA_0(\C)$:
\begin{center}
\begin{tikzcd}
(\C\op\times u)^*\widetilde{\map}_{\dA}(Z,X)\arrow[dd,"\gamma^{\id,u}" left]\arrow[rrrr,"(\C\op\times u)^*F" above]
&&&& (\C\op\times u)^*\widetilde{\map}_{\dB}(FZ,FX)\arrow[dddd,"\gamma^{\id,u}" right]\\\\
\widetilde{\map}_{\dA}(Z,u^*X)\arrow[dd,"F" left]
\\\\
\widetilde{\map}_{\dB}(FZ,Fu^*X)\arrow[rrrr,"\widetilde{\map}_{\dB}(FZ\text{,}\;\psi^u_X)" below]&&&& \widetilde{\map}_{\dB}(FZ,u^*FX)
\end{tikzcd}
\end{center}
We claim that $\psi^u_X$ is inverse to $\phi^u_X$, and that the isomorphisms $\phi^u$ organise into structure isomorphisms for $F$. Note that once these facts are established, the commutative diagrams above express the fact that
\[
F:\widetilde{\map}_{\dA}(X,Y)\rightarrow\widetilde{\map}_{\dB}(FX,FY)
\]
is a modification in both variables. The uniqueness statement in the theorem follows from this observation.

We will start by showing that $\phi^u$ satisfies the conditions of~\cref{Pseudonatural transformation definition}. First, we must show that the maps $\psi^u_X$ are natural in $X\in\dA(\B)$. Let $f:\h_\B\rightarrow\widetilde{\map}_{\dA}(X,Y)$ be a map in $\dA(\B)$. To show that the naturality condition $Fu^*f\circ \phi^u_X=\phi^u_Y\circ u^*Ff$ is satisfied, we may equivalently show that the diagram below commutes, for any category $\C$ and any $Z\in\dA_0(\C)$:
\begin{center}
\begin{tikzcd}
\widetilde{\map}_{\dA}(u^*Y,Z)\arrow[r,"F"] & \widetilde{\map}_{\dB}(Fu^*Y,FZ)\arrow[dd,"\widetilde{\map}_{\dB}(Fu^*f\text{,}FZ)" left]\arrow[rrr,"\widetilde{\map}_{\dB}(\phi^u_Y\text{,}FZ)" above]
&&& \widetilde{\map}_{\dB}(u^*FY,FZ)\arrow[dd,"\widetilde{\map}_{\dB}(u^*Ff\text{,}FZ)" right]\\\\
& \widetilde{\map}_{\dB}(Fu^*X,FZ)\arrow[rrr,"\widetilde{\map}_{\dB}(\phi^u_X\text{,}FZ)" below]&&& \widetilde{\map}_{\dB}(u^*FX,FZ)
\end{tikzcd}
\end{center}
This diagram commutes, using the definition of $\phi^u$ above, and the commutative diagram (\ref{Diagram defining u*}) that defines $u^*$.

To verify Axiom $1$ of~\cref{Pseudonatural transformation definition}, for any category $\A$ and any $X\in\dA_0(\A)$, we must show that the unit $j:\h_\A\rightarrow \widetilde{\map}_{\dB}(FX,FX)$ satisfies the defining property of $\phi^\id_X$. This is immediate once we unravel the definitions.

Similarly, suppose we have composable functors $\A\xrightarrow{\;\;u\;\;}\B\xrightarrow{\;\;v\;\;}\C$, and let $X\in\dA_0(\C)$. To verify Axiom $2$ of~\cref{Pseudonatural transformation definition}, we must check that $\phi^u_{v^*X}\circ u^*\phi^v_X$ satisfies the defining property of $\phi^{v\circ u}_X$. That is, we need the diagram below to commute, for any category $\cD$ and any $Z\in\dA_0(\cD)$:
\begin{center}
\begin{tikzcd}
((v\circ u)\op\times \cD)^*\widetilde{\map}_{\dA}(X,Z)\arrow[dd,"\gamma^{v\circ u,\id}" left]\arrow[rrrr,"((v\circ u)\op\times \cD)^*F" above]
&&&& ((v\circ u)\op\times \cD)^*\widetilde{\map}_{\dB}(FX,FZ)\arrow[dd,"\gamma^{v\circ u,\id}" right]\\\\
\widetilde{\map}_{\dA}(u^*v^*X,Z)\arrow[dd,"F" left] &&&& \widetilde{\map}_{\dB}(u^*v^*FX,FZ)
\\\\
\widetilde{\map}_{\dB}(Fu^*v^*X,FZ)\arrow[rrrr,"\widetilde{\map}_{\dB}(\phi^u_{v^*X}\text{,}FZ)" below]&&&& \widetilde{\map}_{\dB}(u^*Fv^*X,FZ)\arrow[uu,"\widetilde{\map}_{\dB}(u^*\phi^v_{X}\text{,}FZ)" right]
\end{tikzcd}
\end{center}
This diagram commutes, using Axiom $1$ of~\cref{E-prederivator definition}, the defining property of $\phi^u$ given above, and the commutative diagram (\ref{Diagram defining u*}).

Finally, suppose we have functors $u,v:\A\rightarrow\B$ and a natural transformation $\kappa:u\Rightarrow v$. To check Axiom $3$ of~\cref{Pseudonatural transformation definition}, we must verify the equality $F(\kappa^*_X)\circ\phi^u_X=\phi^v_X\circ \kappa^*_{FX}$, for any $X\in\dA_0(\B)$. Equivalently, we may show that the diagram below commutes, for any category $\C$ and any $Z\in\dA_0(\C)$:
\begin{center}
\begin{tikzcd}
\widetilde{\map}_{\dA}(v^*X,Z)\arrow[r,"F"] & \widetilde{\map}_{\dB}(Fv^*X,FZ)\arrow[dd,"\widetilde{\map}_{\dB}(F(\kappa^*_X)\text{,}FZ)" left]\arrow[rrr,"\widetilde{\map}_{\dB}(\phi^v_X\text{,}FZ)" above]
&&& \widetilde{\map}_{\dB}(v^*FX,FZ)\arrow[dd,"\widetilde{\map}_{\dB}(\kappa^*_{FX}\text{,}FZ)" right]\\\\
& \widetilde{\map}_{\dB}(Fu^*X,FZ)\arrow[rrr,"\widetilde{\map}_{\dB}(\phi^u_X\text{,}FZ)" below]&&& \widetilde{\map}_{\dB}(u^*FX,FZ)
\end{tikzcd}
\end{center}
This diagram commutes, using the definition of $\phi^u$ given above, and the definition of $\kappa^*$ from~\cref{Natural transformation omega* from an E-prederivator}.

For any functor $u:\A\rightarrow\B$ and any $X\in\dA_0(\B)$, it remains to show that $\psi^u_X$ is inverse to $\phi^u_X$. To do this, note that we may describe $\psi^u_X$ and $\phi^u_X$ explicitly as follows:
\begin{center}
\begin{tikzcd}[column sep=tiny,row sep=small]
\phi^u_X:\h_\A\arrow[rr,"j"] && \widetilde{\map}_{\dA}(u^*X,u^*X)\arrow[rrrrr,"(\gamma^{u,\id})^{-1}"]
&&&&& (u\op\times\A)^*\widetilde{\map}_{\dA}(X,u^*X)\arrow[rd,"(u\op\times\A)^*F" above right,bend left=10]\\

&&&&&&&& (u\op\times\A)^*\widetilde{\map}_{\dB}(FX,Fu^*X)\arrow[ddd,"\gamma^{u,\id}" right]\\\\\\
&&&&&&&& \widetilde{\map}_{\dB}(u^*FX,Fu^*X)
\end{tikzcd}
\end{center}
\begin{center}
\begin{tikzcd}[column sep=tiny,row sep=small]
\psi^u_X:\h_\A\arrow[rr,"j"] && \widetilde{\map}_{\dA}(u^*X,u^*X)\arrow[rrrrr,"(\gamma^{\id,u})^{-1}"]
&&&&& (\A\op\times u)^*\widetilde{\map}_{\dA}(u^*X,X)\arrow[rd,"(\A\op\times u)^*F" above right,bend left=10]\\

&&&&&&&& (\A\op\times u)^*\widetilde{\map}_{\dB}(Fu^*X,FX)\arrow[ddd,"\gamma^{\id,u}" right]\\\\\\
&&&&&&&& \widetilde{\map}_{\dB}(Fu^*X,u^*FX)
\end{tikzcd}
\end{center}
Using the description of composition in $\dB(\A)$ from~\cref{Underlying categories of an E-category}, we see that $\psi^u_X\circ\phi^u_X=\id$, using Axiom $3$ of~\cref{E-prederivator definition}, and Axiom $2$ in the form given in~\cref{Rephrasing E-naturality in definition of E-prederivator}. Similarly, $\phi^u_X\circ\psi^u_X=\id$, using Axiom $4$ of~\cref{E-prederivator definition}. In both cases, we also use the axioms for $F$ from~\cref{E-morphism definition}.
\end{proof}

\begin{lemma}\label{E-natural maps induce modifications}
Let $\eA$ and $\eB$ be $\E$-prederivators, and $F,G:\eA\rightarrow\eB$ be $\E$-morphisms. Then any $\E$-natural transformation
\begin{center}
\begin{tikzcd}
\eA\arrow[rr, bend left=45, "F" above,""{name=U, below}]\arrow[rr, bend right=45, "G" below, ""{name=D}]
&& \eB
\arrow[Rightarrow,from=U,to=D,shorten >=0.2cm,shorten <=0.2cm,"\beta"]
\end{tikzcd}
\end{center}
induces a modification $\beta:F\Rightarrow G$ between the prederivator maps $F,G:\dA\rightarrow\dB$ of~\cref{E-morphisms induce prederivator maps}.
\end{lemma}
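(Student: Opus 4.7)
The plan is to use the definition of modifications, the characterising properties of the structure isomorphisms $\phi^u$ from \cref{E-morphisms induce prederivator maps}, and the $\E$-category Yoneda lemma to reduce the claim to a diagram chase with the $\E$-naturality condition on $\beta$.

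First I would extract the pointwise natural transformations. For each category $\A$, the $\E$-natural transformation $\beta$ gives components $\beta_X : \h_\A \to \widetilde{\map}_{\dB}(FX, GX)$ in $\dB(\A)$, one for each $X \in \dA_0(\A)$. By \cref{Underlying categories of an E-category}, these assemble into a natural transformation $\beta_\A : F \Rightarrow G$ between the functors $F, G : \dA(\A) \to \dB(\A)$ of \cref{E-morphisms induce prederivator maps}. The modification condition to verify is then: for every functor $u : \A \to \B$ and every $X \in \dA_0(\B)$, the equality
\[
\beta_{u^*X} \circ \phi^u_{F, X} \;=\; \phi^u_{G, X} \circ u^*\beta_X
\]
holds in $\dB(\A)(u^*FX, G u^*X)$, where $\phi^u_F$ and $\phi^u_G$ are the structure isomorphisms of the prederivator maps induced by $F$ and $G$ respectively.

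Next, by \cref{E-category yoneda embedding is fully faithful} applied in $\dB$, two maps in $\dB(\A)(u^*FX, Gu^*X)$ are equal iff the associated $\E$-natural transformations $\widetilde{\map}_{\dB}(Gu^*X, -) \Rightarrow \widetilde{\map}_{\dB}(u^*FX, -)$ agree. Concretely, I would test the equality by post-composing with $\widetilde{\map}_{\dB}(-, Z)$ for arbitrary $Z \in \dB_0(\C)$, and then use the characterising diagram for $\phi^u$ in the proof of \cref{E-morphisms induce prederivator maps} (which expresses $\widetilde{\map}_{\dB}(\phi^u_{F,X}, FZ)$ as the composite $\gamma^{u, \id} \circ (u\op\times\C)^*F \circ (\gamma^{u,\id})^{-1} \circ F^{-1}$, and similarly for $\phi^u_G$), together with the characterising diagram (\ref{Diagram defining u*}) for $u^*\beta_X$. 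Each side of the equation then unfolds into a pasting involving $\gamma^{u, \id}$, the structure maps $F$ and $G$ on mapping objects, and $\widetilde{\map}_{\dB}(\beta_{(-)}, -)$.

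After peeling off the (inverse) $\gamma^{u, \id}$'s on both ends (which cancel identically), the statement reduces to the commutativity, after pulling back along $u\op \times \C$, of the square
\[
\begin{tikzcd}
\widetilde{\map}_{\dA}(X, Z) \arrow[r, "F"] \arrow[d, "G" left] & \widetilde{\map}_{\dB}(FX, FZ) \arrow[d, "\widetilde{\map}_{\dB}(FX,\, \beta_Z)"] \\
\widetilde{\map}_{\dB}(GX, GZ) \arrow[r, "\widetilde{\map}_{\dB}(\beta_X,\, GZ)" below] & \widetilde{\map}_{\dB}(FX, GZ),
\end{tikzcd}
\]
which is precisely the reformulation of $\E$-naturality of $\beta$ given in \cref{Rephrasing E-naturality}, restricted to the objects $X$ and $Z$. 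The main obstacle will be organising the diagram chase so that the various $\gamma$'s, units $\h_u$, and instances of $F$ and $G$ on hom-objects match up correctly on both sides; this amounts to a careful composite application of Axioms $1$ and $2$ of \cref{E-prederivator definition} together with \cref{Rephrasing E-naturality}. Once these are lined up, the required equality follows without any further work.
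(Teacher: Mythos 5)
Your approach is essentially the paper's: set up the modification square $\beta_{u^*X}\circ\phi^u_{F,X}=\phi^u_{G,X}\circ u^*\beta_X$, deploy the $\E$-category Yoneda lemma (\cref{E-category yoneda embedding is fully faithful}) to replace it with an equality of $\E$-natural transformations, unfold using the characterising diagrams for $\phi^u$ and for $u^*\beta_X$ (diagram (\ref{Diagram defining u*})), and land on the $\E$-naturality of $\beta$ in the form of \cref{Rephrasing E-naturality}.

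There is, however, one genuine inaccuracy worth fixing. You claim the characterising diagram from \cref{E-morphisms induce prederivator maps} ``expresses $\widetilde{\map}_{\dB}(\phi^u_{F,X},FZ)$ as the composite $\gamma^{u,\id}\circ(u\op\times\C)^*F\circ(\gamma^{u,\id})^{-1}\circ F^{-1}$''. The map $F:\widetilde{\map}_{\dA}(u^*X,Z)\to\widetilde{\map}_{\dB}(Fu^*X,FZ)$ is not invertible, so $F^{-1}$ is not available. What the characterising diagram actually gives you is the precomposed map $\widetilde{\map}_{\dB}(\phi^u_{F,X},FZ)\circ F = \gamma^{u,\id}\circ(u\op\times\C)^*F\circ(\gamma^{u,\id})^{-1}$, and analogously for $G$. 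Relatedly, testing the Yoneda comparison against arbitrary $Z'\in\dB_0(\C)$ does not directly interface with the characterising diagrams, which only describe $\widetilde{\map}_{\dB}(\phi^u,FZ)$ or $\widetilde{\map}_{\dB}(\phi^u,GZ)$ for $Z\in\dA_0(\C)$. The paper's move is to precompose the whole comparison with $G:\widetilde{\map}_{\dA}(u^*X,Z)\to\widetilde{\map}_{\dB}(Gu^*X,GZ)$ and test only against $Z\in\dA_0(\C)$; this still recovers the original equality by \cref{Extracting f from map(f X)} at $Z=u^*X$, and with this domain change the $F^{-1}$ becomes unnecessary and every piece of your pasting has a valid match. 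With that correction, your reduction to the square of \cref{Rephrasing E-naturality} is exactly what the paper does.
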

\begin{proof}
Suppose we have a functor $u:\A\rightarrow\B$ and an object $X\in\dA(\B)$. We need to show that the following diagram in $\dB(\A)$ commutes:
\begin{center}
\begin{tikzcd}
u^*FX \arrow{rr}[above]{u^*\beta_X}\arrow{dd}[left]{\phi^u_X} && 
 u^*GX \arrow{dd}[right]{\phi^u_X} \\\\
 Fu^*X \arrow{rr}[below]{\beta_{u^*X}} && 
 Gu^*X
\end{tikzcd}
\end{center}
Equivalently, we may show that the diagram below commutes, for any category $\C$ and any $Z\in\dA_0(\C)$:
\begin{center}
\begin{tikzcd}
\widetilde{\map}_{\dA}(u^*X,Z)\arrow[r,"G"] & \widetilde{\map}_{\dB}(Gu^*X,GZ)\arrow[dd,"\widetilde{\map}_{\dB}(\phi^u_X\text{,}GZ)" left]\arrow[rrr,"\widetilde{\map}_{\dB}(\beta_{u^*X}\text{,}GZ)" above]
&&& \widetilde{\map}_{\dB}(Fu^*X,GZ)\arrow[dd,"\widetilde{\map}_{\dB}(\phi^u_X\text{,}GZ)" right]\\\\
& \widetilde{\map}_{\dB}(u^*GX,GZ)\arrow[rrr,"\widetilde{\map}_{\dB}(u^*\beta_X\text{,}GZ)" below]&&& \widetilde{\map}_{\dB}(u^*FX,GZ)
\end{tikzcd}
\end{center}
Using the $\E$-naturality of $\beta$, in the form given in~\cref{Rephrasing E-naturality}, the definition of $u^*\beta_X$ from~\cref{The functor u* from an E-prederivator}, and the definition of $\phi^u_X$ from~\cref{E-morphisms induce prederivator maps}, we can see that this diagram commutes. 
\end{proof}

In light of~\cref{E-morphisms induce prederivator maps} and~\cref{E-natural maps induce modifications}, $\E$-morphisms and $\E$-natural transformations are the appropriate $1$-cells and $2$-cells for the $2$-category of $\E$-prederivators:

\begin{definition}\label{2-category of E-prederivators}
The \textbf{$2$-category of $\E$-prederivators} is the $2$-category $\EPDer$, with objects given by $\E$-prederivators, $1$-cells given by $\E$-morphisms and $2$-cells given by $\E$-natural transformations.
\end{definition}

\begin{remark}
Let $\eA$ be an $\E$-category. By \cref{2-category of E-prederivators}, the identity $\E$-morphism $\id_{\eA}$ induces an equivalence between any two $\E$-prederivator structures on $\eA$. Thus, being an $\E$-prederivator is a property of an $\E$-category, rather than extra structure. See \cref{Pullbacks are weighted colimits} for a related discussion.
\end{remark}

With this definition,~\cref{Yoneda lemma for E-categories} carries over immediately to an $\E$-prederivator version:

\begin{corollary}\label{Yoneda lemma for E-prederivators}
Let $\eA$ be an $\E$-prederivator, let $\A$ be a category, let $X\in\dA_0(\A)$, and let $F:\eA\rightarrow\eE^{\A\op}$ be an $\E$-morphism. We have a natural bijection:
\[
\EPDer(\eA,\eE^{\A\op})(\widetilde{\map}_{\dA}(X,-),F)\iso\E(\A\op\times\A)(\h_\A,FX)
\]
\end{corollary}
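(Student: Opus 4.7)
The plan is to observe that this statement is essentially an immediate corollary of the $\E$-category Yoneda lemma in \cref{Yoneda lemma for E-categories}, once we unwind the definition of the $2$-category $\EPDer$. By \cref{2-category of E-prederivators}, the $1$-cells and $2$-cells in $\EPDer$ between two $\E$-prederivators are precisely the $\E$-morphisms and $\E$-natural transformations between them as $\E$-categories. Thus, for $\E$-prederivators $\eA$ and $\eB$, the hom-category $\EPDer(\eA,\eB)$ coincides by definition with $\ECat(\eA,\eB)$.

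To apply this observation, the first step is to check that both $\eA$ and $\eE^{\A\op}$ are $\E$-prederivators. For $\eA$ this is a hypothesis, while for $\eE^{\A\op}$ it follows from \cref{E-modules induce E-prederivators}, applied to the closed $\E$-module $\E^{\A\op}$ (which is closed by \cref{Shift of an E-module is an E-module}, since $\E$ is closed symmetric monoidal and therefore a closed $\E$-module over itself, as in \cref{E is an E-module}). The representable $\E$-morphism $\widetilde{\map}_{\dA}(X,-):\eA\rightarrow \eE^{\A\op}$ is defined and functorial as an $\E$-morphism by \cref{mapping space is an E-morphism}, so together with the given $\E$-morphism $F$ we have two objects of $\EPDer(\eA,\eE^{\A\op})=\ECat(\eA,\eE^{\A\op})$.

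With these identifications in hand, the required bijection
\[
\EPDer(\eA,\eE^{\A\op})(\widetilde{\map}_{\dA}(X,-),F)\iso\E(\A\op\times\A)(\h_\A,FX)
\]
is literally the same statement as \cref{Yoneda lemma for E-categories}, and naturality in $X$ and $F$ is inherited verbatim from that result. There is no real obstacle here; the only thing to verify carefully is that no additional compatibility with the $\E$-prederivator structure (that is, with the pullback functors $u^*$ and the isomorphisms $\gamma^{u,v}$) needs to be imposed on either side of the bijection, which is clear since neither $\EPDer$-morphisms nor modifications carry data beyond their $\ECat$-counterparts.
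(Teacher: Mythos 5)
Your proof is correct and matches the paper's approach exactly: the paper simply remarks that the $\E$-category Yoneda lemma ``carries over immediately'' since $\EPDer$ has the same hom-categories as $\ECat$, and your proposal spells out precisely why that transfer is automatic (checking that $\eE^{\A\op}$ is an $\E$-prederivator and that no extra compatibility data is required on either side). No gaps.
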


We finish this section with an application of \cref{Yoneda lemma for E-prederivators}. The following theorem reduces the representability of an $\E$-morphism $F$ to the representability of its underlying $\E(\0)$-functor of~\cref{A([0]) is E([0])-enriched}:

\begin{theorem}\label{E-morphism is representable if underlying morphism is}
Let $\eA$ be an $\E$-prederivator. An $\E$-morphism $F:\eA\rightarrow\eE$ is representable if and only if the $\E(\0)$-functor
\[
F:\eA(\0)\rightarrow\eE(\0)
\]
is representable as an $\E(\0)$-functor.
\end{theorem}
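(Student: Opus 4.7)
The plan is as follows. The forward implication is immediate: given an $\E$-natural isomorphism $\alpha\colon\widetilde{\map}_{\dA}(X,-)\iso F$ for some $X\in\dA_0(\0)$, the restriction of $\alpha$ to the underlying $\E(\0)$-enriched categories of \cref{A([0]) is E([0])-enriched} is an $\E(\0)$-natural isomorphism witnessing representability of the $\E(\0)$-functor $F\colon\eA(\0)\to\eE(\0)$. For the converse, suppose we are given $X\in\dA_0(\0)$ and an $\E(\0)$-natural isomorphism $\alpha\colon\widetilde{\map}_{\dA}(X,-)\iso F$ between the induced $\E(\0)$-functors $\eA(\0)\to\eE(\0)$. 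The strategy is to lift $\alpha$ to an $\E$-natural transformation via the Yoneda lemma, and then verify it is componentwise invertible using \textbf{Der 2}.

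By the classical enriched Yoneda lemma for $\E(\0)$-enriched categories, the transformation $\alpha$ is determined by the single element $f:=\alpha_X\circ j\colon\mathbb{1}=\h_\0\to FX$ of $\E(\0)$. Applying the $\E$-prederivator Yoneda lemma of \cref{Yoneda lemma for E-prederivators} with $\A=\0$, under which $\eE^{\A\op}=\eE$ and $\E(\A\op\times\A)=\E(\0)$, this same element $f$ corresponds to a unique $\E$-natural transformation $\beta\colon\widetilde{\map}_{\dA}(X,-)\Rightarrow F$. Inspecting the explicit formula from \cref{Another description of Yoneda} at $\B=\0$ and comparing with the analogous formula in the enriched Yoneda lemma over $\E(\0)$, we find that the restriction of $\beta$ to $\eA(\0)$ coincides with $\alpha$; in particular, the components of $\beta$ at objects of $\dA_0(\0)$ are isomorphisms.

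It remains to show that $\beta_Y\colon\widetilde{\map}_{\dA}(X,Y)\to FY$ is an isomorphism in $\E(\B)$ for every category $\B$ and every $Y\in\dA_0(\B)$. By \textbf{Der 2} for $\E$, it suffices to check that $b^*\beta_Y$ is an isomorphism in $\E(\0)$ for every $b\in\B$. By \cref{E-natural maps induce modifications}, the $\E$-natural transformation $\beta$ induces a modification between the prederivator maps supplied by \cref{In an E-prederivator map(X -) is a prederivator map} and \cref{E-morphisms induce prederivator maps}, yielding a commutative square
\[
\begin{tikzcd}
b^*\widetilde{\map}_{\dA}(X,Y) \arrow[rr, "b^*\beta_Y"] \arrow[dd, "\gamma^{\id,b}" left] && b^*FY \arrow[dd, "\phi^b_Y" right] \\
\\
\widetilde{\map}_{\dA}(X,b^*Y) \arrow[rr, "\beta_{b^*Y}" below] && F(b^*Y)
\end{tikzcd}
\]
in $\E(\0)$, whose vertical arrows are isomorphisms by the $\E$-prederivator axioms for $\eA$ and by \cref{E-morphisms induce prederivator maps} applied to $F$. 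The bottom arrow is an isomorphism because $b^*Y\in\dA_0(\0)$ lies where $\beta$ restricts to $\alpha$, so $b^*\beta_Y$ is an isomorphism as required.

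The main obstacle is the identification step: one must establish that the $\E$-prederivator Yoneda bijection of \cref{Yoneda lemma for E-prederivators} and the classical enriched Yoneda bijection over $\E(\0)$ are compatible under restriction to $\eA(\0)$, so that the lifted transformation $\beta$ genuinely recovers $\alpha$ on the level-$\0$ objects. Once this bookkeeping is unwound from the explicit formulas, the remainder of the argument is a direct application of \textbf{Der 2} combined with the modification data packaged in \cref{E-natural maps induce modifications}.
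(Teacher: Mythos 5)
Your proposal is correct and follows essentially the same strategy as the paper's own proof: use the weak enriched Yoneda lemma to extract the element $f:\mathbb{1}\to FX$, lift it to an $\E$-natural transformation $\beta$ via the $\E$-category/$\E$-prederivator Yoneda lemma, check that $\beta$ restricts to the given $\E(\0)$-natural isomorphism on underlying objects, and then use \textbf{Der 2} to promote componentwise invertibility from level $\0$ to all levels. The only cosmetic difference is that you spell out the modification square reducing $b^*\beta_Y$ to $\beta_{b^*Y}$ inline, whereas the paper cites \cref{modifications are isomorphisms iff they are on underlying category}, which packages exactly that argument.
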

\begin{proof}
Let $F:\eA\rightarrow\eE$ be an $\E$-morphism. If $F$ is represented by an object $X\in\A_0(\0)$, then the $\E$-natural isomorphism
\begin{center}
\begin{tikzcd}
\eA\arrow[rrr, bend left=40, "\widetilde{\map}_{\dA}(X\text{,}-)" above,""{name=U, below}]\arrow[rrr, bend right=40, "F" below, ""{name=D}]
&&& \eE
\arrow[Rightarrow,from=U,to=D,shorten >=0.3cm,shorten <=0.3cm,"\iso" left=2]
\end{tikzcd}
\end{center}
induces an isomorphism between the corresponding $\E(\0)$-functors.

On the other hand, suppose the $\E(\0)$-functor $F:\eA(\0)\rightarrow\eE(\0)$ is representable. That is, there is an object $X\in\dA_0(\0)$ and an $\E(\0)$-natural isomorphism 
\begin{center}
\begin{tikzcd}
\eA(\0)\arrow[rr, bend left=40, "\widetilde{\map}_{\dA}(X\text{,}-)" above,""{name=U, below}]\arrow[rr, bend right=40, "F" below, ""{name=D}]
&& \eE(\0).
\arrow[Rightarrow,from=U,to=D,shorten >=0.3cm,shorten <=0.3cm,"\beta" left=2,"\iso" right=2]
\end{tikzcd}
\end{center}
By the (weak) Yoneda for enriched categories (see~\cite[Section 1.9]{Kelly82}), this map $\beta$ uniquely determines a map
\[
f:\mathbb{1}\rightarrow FX
\]
in $\E(\0)$. But, by \cref{Yoneda lemma for E-categories}, this map $f$ uniquely determines an $\E$-natural transformation
\begin{center}
\begin{tikzcd}
\eA\arrow[rrr, bend left=40, "\widetilde{\map}_{\dA}(X\text{,}-)" above,""{name=U, below}]\arrow[rrr, bend right=40, "F" below, ""{name=D}]
&&& \eE.
\arrow[Rightarrow,from=U,to=D,shorten >=0.3cm,shorten <=0.3cm,"\bar{\beta}" left=2]
\end{tikzcd}
\end{center}
Moreover, the $\E(\0)$-natural transformation induced by this $\E$-natural transformation is the original $\E(\0)$-natural transformation $\beta$. 

The $\E$-natural map $\bar{\beta}:\widetilde{\map}_{\dA}(X,-)\Rightarrow F$ is an isomorphism if and only if each component is an isomorphism; that is, if and only if the induced modification 
\begin{center}
\begin{tikzcd}
\dA\arrow[rrr, bend left=40, "\widetilde{\map}_{\dA}(X\text{,}-)" above,""{name=U, below}]\arrow[rrr, bend right=40, "F" below, ""{name=D}]
&&& \E
\arrow[Rightarrow,from=U,to=D,shorten >=0.3cm,shorten <=0.3cm]
\end{tikzcd}
\end{center}
is an isomorphism. Since $\E$ satisfies \textbf{Der 2}, this is the case if and only if the natural transformation on underlying categories
\begin{center}
\begin{tikzcd}
\dA(\0)\arrow[rr, bend left=40, "\widetilde{\map}_{\dA}(X\text{,}-)" above,""{name=U, below}]\arrow[rr, bend right=40, "F" below, ""{name=D}]
&& \E(\0)
\arrow[Rightarrow,from=U,to=D,shorten >=0.3cm,shorten <=0.3cm]
\end{tikzcd}
\end{center}
is an isomorphism, by \cref{modifications are isomorphisms iff they are on underlying category}. (Note that this step is where we need $\eA$ be an $\E$-prederivator, rather than a general $\E$-category.) This natural transformation is an isomorphism, since it underlies the original $\E(\0)$-natural isomorphism $\beta$.
\end{proof}

\begin{corollary}\label{Representability for closed E-modules}
Let $\D$ be a closed $\E$-module. A derivator map
\[
F:\D\rightarrow\E
\]
is representable in the sense of \cref{Representable map from a closed E-module} if and only if it is continuous and preserves cotensors, and the induced functor
\vspace{-3em}
\begin{center}
\begin{equation}\label{Underlying representable functor}
\begin{tikzcd}[column sep=small,baseline=(current  bounding  box.center)]
\D(\0)\arrow{rr}[above]{F} && \E(\0)\arrow{rrrr}[above]{\E(\0)(\mathbb{1},-)} &&&& \Set
\end{tikzcd}
\end{equation}
\end{center}
\vspace{-2em}
is representable.
\end{corollary}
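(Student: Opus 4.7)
The plan is to prove the forward direction by direct computation, and the reverse direction by combining the $\E$-category Yoneda lemma (\cref{Yoneda lemma for E-categories}) with the characterisation of modification isomorphisms from \cref{modifications are isomorphisms iff they are on underlying category}.

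For the forward direction, if $F \iso \widetilde{\map}_\D(X,-)$ for some $X \in \D(\0)$, then $F$ is continuous as the right adjoint to $-\otimes X$ from \cref{2-variable adjunction characterisation}, and it preserves cotensors by \cref{map(X -) preserves cotensors}. Applying \cref{E-modules are E-categories} to identify $\D(\0)(X,-) \iso \E(\0)(\mathbb{1}, \widetilde{\map}_\D(X,-))$ shows the composite (\ref{Underlying representable functor}) is naturally isomorphic to $\D(\0)(X,-)$ and hence representable.

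For the reverse direction, since $F$ is continuous and preserves cotensors, the dual of \cref{Cocontinuous E-module maps induce E-morphisms} lifts $F$ to an $\E$-morphism $F\colon \eD \to \eE$ between the $\E$-prederivators of \cref{E-modules induce E-prederivators}. Given a representing object $X \in \D(\0)$ with classifying map $f\colon \mathbb{1} \to FX$ in $\E(\0)$, \cref{Yoneda lemma for E-categories} applied with $\A = \0$ produces an $\E$-natural transformation $\bar\beta\colon \widetilde{\map}_\D(X,-) \Rightarrow F$ of $\E$-morphisms $\eD \to \eE$. This induces a modification of the underlying derivator maps by \cref{E-natural maps induce modifications} and \cref{E-morphisms induce prederivator maps}; by \cref{modifications are isomorphisms iff they are on underlying category} and \textbf{Der 2} for $\E$, showing that $\bar\beta$ is an isomorphism reduces to checking that each component $\bar\beta_Y\colon \widetilde{\map}_\D(X,Y) \to FY$ is an isomorphism in $\E(\0)$ for $Y \in \D(\0)$.

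To verify this last step, I would apply the ordinary Yoneda lemma in $\E(\0)$: for any $W \in \E(\0)$, chain
\begin{align*}
\E(\0)(W, FY) &\iso \E(\0)(\mathbb{1}, F(Y \vartriangleleft W)) \iso \D(\0)(X, Y \vartriangleleft W)\\
&\iso \D(\0)(X \otimes W, Y) \iso \E(\0)(W, \widetilde{\map}_\D(X, Y)),
\end{align*}
using in order the $\otimes \dashv \vartriangleleft$ adjunction in $\E$, cotensor preservation by $F$, the Set-representability hypothesis, the $\otimes \dashv \vartriangleleft$ adjunction in $\D$, and the $\otimes \dashv \widetilde{\map}_\D$ adjunction. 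The main obstacle will be the final coherence verification: showing that this composite isomorphism, natural in $W$, coincides with postcomposition by $\bar\beta_Y$, rather than differing by some automorphism of $\widetilde{\map}_\D(X,Y)$. I expect to dispatch this by tracing the construction of $\bar\beta$ from $f$ via \cref{Another description of Yoneda} against the explicit definitions of the adjunction isomorphisms in each step, appealing to the coherence results of \cref{Section The cancelling tensor product} for the interaction of tensor, cotensor, and mapping objects. Once this identification is made, the Yoneda lemma in $\E(\0)$ forces $\bar\beta_Y$ to be an isomorphism, completing the proof.
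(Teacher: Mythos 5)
Your overall strategy is sound, and the forward direction matches the paper's. For the reverse direction, however, you take a genuinely different route. The paper first uses the hypothesis that $F$ preserves cotensors to invoke Kelly's Theorem 4.85, which upgrades representability of the ordinary functor (\ref{Underlying representable functor}) to representability of $F\colon\eD(\0)\to\eE(\0)$ as an $\E(\0)$-functor, and then applies \cref{E-morphism is representable if underlying morphism is} as a black box. You instead bypass both of those and argue directly from the classifying map $f\colon\mathbb{1}\to FX$: you construct $\bar\beta$ by the $\E$-category Yoneda lemma, reduce to checking each $\bar\beta_Y$ for $Y\in\D(\0)$ via \textbf{Der~2}, and propose to verify that by a chain of adjunction bijections. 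The chain you write is correct and does furnish some isomorphism $FY\iso\widetilde{\map}_{\D}(X,Y)$ natural in $Y$, but the step you flag as the ``main obstacle'' --- showing this iso is realised by $\bar\beta_Y$ --- is precisely the coherence content of Kelly's Theorem 4.85. So your route essentially inlines both \cref{E-morphism is representable if underlying morphism is} and a re-proof of the relevant fragment of Kelly 4.85; it is self-contained and avoids an external citation, but it shifts the real work into the deferred coherence verification. That verification is not a formality: without it, one knows $\widetilde{\map}_\D(X,Y)$ and $FY$ are abstractly isomorphic, but not that $\bar\beta_Y$ is the witnessing map, and a different isomorphism would not suffice to conclude $\bar\beta$ is invertible. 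If you want to keep your argument, you should either carry out the coherence check explicitly (tracing \cref{Another description of Yoneda} through the adjunctions, as you sketch), or replace it with the paper's division of labour through Kelly 4.85, which packages exactly this verification.
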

\begin{proof}
As in \cref{Representability for perfectly generated triangulated derivator}, the forward implication is immediate. For the reverse implication, suppose $F:\D\rightarrow\E$ is a continuous, cotensor-preserving map. Note that the cotensor product on the derivator $\D$ induces cotensors on the $\E(\0)$-category $\eD(\0)$, and these are preserved by the $\E(\0)$-functor $F:\eD(\0)\rightarrow\eE(\0)$.

Consider the (unenriched) composite (\ref{Underlying representable functor}). Since $F:\eD(\0)\rightarrow\eE(\0)$ preserves cotensors, it follows from~\cite[Theorem 4.85]{Kelly82} that the $\E(\0)$-functor $F:\eD(\0)\rightarrow\eE(\0)$ is representable if and only if this unenriched functor (\ref{Underlying representable functor}) is representable.

By \cref{Cocontinuous E-module maps induce E-morphisms}, $F$ induces an $\E$-morphism $F:\eD\rightarrow\eE$ on the associated $\E$-prederivators. If (\ref{Underlying representable functor}) is representable, then \cref{E-morphism is representable if underlying morphism is} implies that this $\E$-morphism is representable. Thus, there is an object $X\in\D(\0)$, and an $\E$-natural isomorphism
\begin{center}
\begin{tikzcd}
\eD\arrow[rrr, bend left=35, "\widetilde{\map}_{\D}(X\text{,}-)" above,""{name=U, below}]\arrow[rrr, bend right=35, "F" below, ""{name=D}]
&&& \eE.
\arrow[Rightarrow,from=U,to=D,shorten >=0.3cm,shorten <=0.3cm,"\iso" left=2]
\end{tikzcd}
\end{center}
This $\E$-natural transformation induces the desired isomorphism between the induced derivator maps.
\end{proof}

\cref{Representability for closed E-modules} allows us to improve slightly on \cref{Representability for perfectly generated triangulated derivator}, in the case of maps of the form $F:\D\op\rightarrow\E$. Specifically, in the following theorem, we only require that the underlying category $\D(\0)$ satisfies Brown representability, rather than asking that $\D(\C)$ satisfies Brown representability, for every category $\C$.

\begin{corollary}\label{Representability for triangulated E-module II}
Let $\D$ be a closed $\E$-module. Suppose that $\E$ and $\D$ are triangulated, and that $\D(\0)$ satisfies Brown representability. Then a derivator map
\[
F:\D\op\rightarrow\E
\]
is representable in the sense of \cref{Representable map from a closed E-module} if and only if it is continuous and preserves cotensors.
\end{corollary}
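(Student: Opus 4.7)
The forward implication is immediate by \cref{mapping space in the opposite closed E-module} and \cref{map(X -) preserves cotensors}: any $\widetilde{\map}_{\D}(-,X):\D\op\rightarrow\E$ is continuous and preserves cotensors.

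For the reverse implication, the plan is to mimic the strategy of \cref{Representability for closed E-modules}, but applied to the map $F:\D\op\rightarrow\E$ out of the closed $\E$-module $\D\op$ of \cref{Opposite of a closed E-module is an E-module}. More precisely, the dual of \cref{Cocontinuous E-module maps induce E-morphisms} gives an induced $\E$-morphism $F:\eD\op\rightarrow\eE$, and by \cref{E-morphism is representable if underlying morphism is} it suffices to show the underlying $\E(\0)$-functor $F:\eD(\0)\op\rightarrow\eE(\0)$ is representable as an $\E(\0)$-functor. Since $F$ preserves cotensors, the underlying functor preserves the cotensor $\E(\0)$-enrichment, so by \cite[Theorem 4.85]{Kelly82} enriched representability is equivalent to representability of the underlying $\Set$-valued functor
\[
H := \E(\0)(\mathbb{1},F(-)):\D(\0)\op\rightarrow\Set.
\]
The problem is thereby reduced to proving that $H$ is representable.

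The key observation is that $H$ lifts through $\Ab$ and satisfies exactly the hypotheses of Brown representability for $\D(\0)$. Indeed, because $F:\D\op\rightarrow\E$ is continuous it preserves terminal objects and homotopy pullbacks, so by \cref{Triangulated derivators induce triangulated categories} the underlying functor $F:\D(\0)\op\rightarrow\E(\0)$ has a canonical exact structure between the triangulated categories $\D(\0)\op$ and $\E(\0)$. Composing with the cohomological functor $\E(\0)(\mathbb{1},-):\E(\0)\rightarrow\Ab$ shows $H$ is cohomological. Moreover, continuity of $F$ forces $F:\D(\0)\op\rightarrow\E(\0)$ to preserve products, hence to send coproducts in $\D(\0)$ to products in $\E(\0)$; since $\E(\0)(\mathbb{1},-)$ also preserves products, $H$ converts coproducts in $\D(\0)$ into products in $\Ab$. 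Brown representability for $\D(\0)$ then produces $X\in\D(\0)$ with $H\iso\D(\0)(-,X)$ in $\Ab$, and in particular in $\Set$.

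The bulk of the work is thus bookkeeping rather than any novel calculation: the main subtlety I anticipate is ensuring that the various translations commute correctly, i.e.\ that continuity and cotensor preservation of the derivator map $F:\D\op\rightarrow\E$ really do yield (i) exactness of the underlying functor in the sense of \cref{Triangulated derivators induce triangulated categories} (applied in its ``terminal objects and pullbacks'' form, on the opposite derivator) and (ii) the cotensor-preservation hypothesis required to invoke \cite[Theorem 4.85]{Kelly82} on underlying $\E(\0)$-categories. Once these compatibilities are in place, combining \cref{E-morphism is representable if underlying morphism is}, Kelly's theorem, \cref{Perfectly generated triangulated categories satisfy representability} applied to $\D(\0)$, and the standard fact that representability of a cohomological functor in $\Ab$ and $\Set$ agree, assembles into the required isomorphism $F\iso\widetilde{\map}_\D(-,X)$.
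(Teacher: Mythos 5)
Your proposal is correct and takes essentially the same approach as the paper: the paper's proof also forms the composite $\E(\0)(\mathbb{1},F(-)):\D(\0)\op\rightarrow\Ab$, verifies it is a product-preserving cohomological functor, applies Brown representability, and then invokes \cref{Representability for closed E-modules} to finish. You have simply unpacked the citation of \cref{Representability for closed E-modules} into its constituent steps (the dual of \cref{Cocontinuous E-module maps induce E-morphisms}, \cref{E-morphism is representable if underlying morphism is}, and Kelly's Theorem 4.85), which is a presentational difference rather than a different argument.
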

\begin{proof}
Suppose $F:\D\op\rightarrow\E$ is a continuous, cotensor-preserving map, and consider the composite below:
\begin{center}
\begin{tikzcd}[column sep=small]
\D(\0)\op\arrow{rr}[above]{F} && \E(\0)\arrow{rrrr}[above]{\E(\0)(\mathbb{1},-)} &&&& \Ab
\end{tikzcd}
\end{center}
Since $F:\D(\0)\op\rightarrow\E(\0)$ is an exact map of triangulated categories, this composite is a cohomological functor. Moreover, it takes coproducts in $\D(\0)$ to products in $\Ab$. Thus, since $\D(\0)$ satisfies Brown representability, this composite is representable. Thus, the result follows by \cref{Representability for closed E-modules}.
\end{proof}

\section{$\E$-Derivators}\label{Section E-derivators}

In this section, we introduce $\E$-derivators and show that any closed $\E$-module gives rise to an $\E$-derivator. As a first step, we define $\E$-semiderivators in \cref{E-semiderivator definition}, followed by $\E$-derivators in \cref{(Left) E-derivator definition}. Just as derivators are, in particular, semiderivators that admit all homotopy limits and colimits, $\E$-derivators are $\E$-semiderivators that admit all weighted homotopy limits and colimits. However, in contrast to derivators, there are no further axioms we need to impose; in particular, we do not need any analogue of \textbf{Der 4}. Nonetheless, in \cref{(Left) E-derivators induce (left) derivators}, we prove that, for any $\E$-derivator $\eA$, the induced prederivator $\dA$ is a derivator. On the other hand, in \cref{Closed E-modules induce E-derivators}, we prove that the $\E$-prederivator associated to any closed $\E$-module is an $\E$-derivator. 

We begin this section with the definitions of weighted homotopy limits and colimits. These are studied in~\cite[Section 4.5]{GR19} and~\cite{GS17} in the context of $\E$-modules; if we apply \cref{Weighted colimits definition} in the $\E$-category associated to a closed $\E$-module, we recover these notions. However, in contrast to closed $\E$-modules, where all weighted homotopy limits and colimits always exist, in general $\E$-categories we may study particular weighted homotopy limits and colimits, in settings where all may not exist. In this way, it is preferable to work in the context of $\E$-categories and $\E$-prederivators, rather than restricting to closed $\E$-modules.

\begin{definition}\label{Weighted colimits definition}
Let $\eA$ be an $\E$-category, let $\A$ and $\B$ be categories, and let $X\in\dA_0(\A)$ and $W\in\E(\A\op\times\B)$. Consider the $\E$-morphism below:
\begin{center}
\begin{tikzcd}[column sep=small]
\eA\arrow{rrrrr}{\widetilde{\map}_{\dA}(X,-)} &&&&& \eE^{\A\op}\arrow{rrrrr}{\widetilde{\map}_{\E^{\A\op}}(W,-)} &&&&& \eE^{\B\op}
\end{tikzcd}
\end{center} 
If this map is representable, we call the representing object the \textbf{homotopy colimit of $X$ weighted by $W$}, and denote it by $W\otimes_\A X\in\dA_0(\B)$. Thus, $W\otimes_\A X$ is characterised by isomorphisms
\[
\widetilde{\map}_{\dA}(W\otimes_\A X,Z)\iso\widetilde{\map}_{\E^{\A\op}}(W,\widetilde{\map}_{\dA}(X,Z)),
\]
$\E$-natural in $Z\in\dA_0(\C)$.

Dually, given $Y\in\dA_0(\B)$ and $W\in\E(\A\op\times\B)$, the \textbf{homotopy limit of $Y$ weighted by $W$}, if it exists, is an object $Y\vartriangleleft_\B W\in\dA_0(\A)$ representing the $\E$-morphism below:
\begin{center}
\begin{tikzcd}[column sep=small]
\eA\op\arrow{rrrrr}{\widetilde{\map}_{\dA}(-,Y)} &&&&& \eE^{\B}\arrow{rrrrr}{\widetilde{\map}_{\E^{\B}}(W,-)} &&&&& \eE^{\A}
\end{tikzcd}
\end{center} 
\end{definition}


Let $\eA$ and $\eB$ be $\E$-categories and let $F:\eA\rightarrow\eB$ be an $\E$-morphism. Given $X\in\dA_0(\A)$ and $W\in\E(\A\op\times\B)$, suppose that the weighted homotopy colimits $W\otimes_\A X\in\dA_0(\B)$ and $W\otimes_\A FX\in\dB_0(\B)$ both exist, and consider the composite below, for any category $\C$, and any $Z\in\dA_0(\C)$:
\begin{center}
\begin{tikzcd}[column sep=small]
\widetilde{\map}_{\dA}(W\otimes_\A X,Z)\arrow[rr,"\iso"]
&& \widetilde{\map}_{\E^{\A\op}}(W,\widetilde{\map}_{\dA}(X,Z))\arrow[rd,"\widetilde{\map}_{\E^{\A\op}}(W\text{,}F)" above right,bend left=13] \\ &&&\widetilde{\map}_{\E^{\A\op}}(W,\widetilde{\map}_{\dB}(FX,FZ))\arrow[d,"\iso" right]\\
&&& \widetilde{\map}_{\dB}(W\otimes_\A FX,FZ)
\end{tikzcd}
\end{center}
By~\cref{F:map(X Y) -> map(FX FY) is E-natural}, this map is $\E$-natural in $Z$. Thus, using~\cref{Yoneda lemma for E-categories}, it determines a map
\[
\phi:\h_\B\rightarrow\widetilde{\map}_{\dB}(W\otimes_\A FX,F(W\otimes_\A X))
\] 
in $\dB(\B)$, unique such that the diagram below commutes, for any category $\C$ and any object $Z\in\dA_0(\C)$:
\begin{center}
\begin{tikzcd}[column sep=small]
\widetilde{\map}_{\E^{\A\op}}(W,\widetilde{\map}_{\dA}(X,Z))\arrow[dd,"\iso" left]\arrow[rrrrr,"\widetilde{\map}_{\E^{\A\op}}(W\text{,}F)" above]
&&&&& \widetilde{\map}_{\E^{\A\op}}(W,\widetilde{\map}_{\dB}(FX,FZ))\arrow[dddd,"\iso" right]\\\\
\widetilde{\map}_{\dA}(W\otimes_\A X,Z)\arrow[dd,"F" left]
\\\\
\widetilde{\map}_{\dB}(F(W\otimes_\A X),FZ)\arrow[rrrrr,"\widetilde{\map}_{\dB}(\phi\text{,}FZ)" below]&&&&& \widetilde{\map}_{\dB}(W\otimes_\A FX,FZ)
\end{tikzcd}
\end{center}
If this canonical map $\phi:W\otimes_\A FX\rightarrow F(W\otimes_\A X)$ is an isomorphism in $\dB(\B)$, then we say that $F$ \textbf{preserves} the weighted homotopy colimit.

\begin{example}\label{Pullbacks are weighted colimits}
Let $\eA$ be an $\E$-prederivator and let $u:\A\rightarrow\B$ be a functor. For any $X\in\dA_0(\B)$, and any $Z\in\dA_0(\C)$, consider the isomorphism below:
\begingroup
\addtolength{\jot}{0.7em}
\begin{align*}
\widetilde{\map}_{\dA}(u^*X,Z) &\iso (u\op\times\C)^*\widetilde{\map}_{\dA}(X,Z)\\
&\iso (u\op\times\C)^*\widetilde{\map}_{\E^{\B\op}}(\h_\B,\widetilde{\map}_{\dA}(X,Z))\\
&\iso\widetilde{\map}_{\E^{\B\op}}((\B\op\times u)^*\h_\B,\widetilde{\map}_{\dA}(X,Z))
\end{align*}
\endgroup
In the composite above, the first and last maps are instances of $\gamma^{u,\id}$ for the $\E$-prederivators $\eA$ and $\eE^{\B\op}$. The second map is induced by the isomorphism $\varrho:\widetilde{\map}_{\E^{\B\op}}(\h_\B,-)\xrightarrow{\;\iso\;}\id$. In the proof of~\cref{Yoneda lemma for E-categories}, this map is shown to be $\E$-natural; thus, the whole composite above is $\E$-natural in $Z\in\dA_0(\C)$. 

Therefore, in any $\E$-prederivator $\eA$ and for any $u:\A\rightarrow\B$, the object $u^*X\in\dA_0(\A)$ is the homotopy colimit of $X\in\dA_0(\B)$ weighted by $(\B\op\times u)^*\h_\B\in\E(\B\op\times\A)$. Moreover, given a second $\E$-prederivator $\eB$ and an $\E$-morphism $F:\eA\rightarrow\eB$, the canonical map
\[
\phi:(\B\op\times u)^*\h_\B\otimes_\B FX\rightarrow F((\B\op\times u)^*\h_\B\otimes_\B X)
\]
is the isomorphism $\phi^u_X$ of~\cref{E-morphisms induce prederivator maps}. Thus, any $\E$-morphism between $\E$-prederivators preserves weighted homotopy colimits of this form.
\end{example}

\begin{lemma}\label{Left adjoints preserve weighted colimits}
Let $\eA$ and $\eB$ be $\E$-categories, and suppose we have an adjunction:
\begin{center}
\begin{tikzcd}
\eA\arrow[rr, bend left=40, "F" above,""{name=U, below}]\arrow[rr,leftarrow, bend right=40, "G" below, ""{name=D}]
&& \;\eB
\arrow[phantom,from=U,to=D,"\bot"]
\end{tikzcd}
\end{center}
Let $X\in\dA_0(\A)$, let $W\in\E(\A\op\times\B)$, and suppose the weighted homotopy colimit $W\otimes_\A X\in\dA_0(\B)$ exists. Then $F(W\otimes_\A X)\in\dB_0(\B)$ is the homotopy colimit of $FX$ weighted by $W$. In particular, $F$ preserves any weighted homotopy colimit that exists in $\eA$.
\end{lemma}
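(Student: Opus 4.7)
The plan is to exhibit an $\E$-natural isomorphism witnessing that $F(W \otimes_\A X)$ represents the composite $\E$-morphism
\[
\eB \xrightarrow{\;\;\widetilde{\map}_{\dB}(FX,-)\;\;} \eE^{\A\op} \xrightarrow{\;\;\widetilde{\map}_{\E^{\A\op}}(W,-)\;\;} \eE^{\B\op},
\]
and then invoke \cref{Yoneda lemma for E-categories} (or really its representability consequence \cref{E-category yoneda embedding is fully faithful}) to identify this with the defining property of the weighted homotopy colimit. Concretely, for any category $\C$ and any $Z \in \dB_0(\C)$, I would construct the isomorphism as the composite
\begin{center}
\begin{tikzcd}[column sep=small]
\widetilde{\map}_{\dB}(F(W \otimes_\A X), Z) \arrow[r,"\iso"] & \widetilde{\map}_{\dA}(W \otimes_\A X, GZ) \arrow[d,"\iso" right] \\
& \widetilde{\map}_{\E^{\A\op}}(W, \widetilde{\map}_{\dA}(X, GZ)) \arrow[d,"\iso" right] \\
& \widetilde{\map}_{\E^{\A\op}}(W, \widetilde{\map}_{\dB}(FX, Z))
\end{tikzcd}
\end{center}
where the first isomorphism is the adjunction isomorphism $\Omega_{W\otimes_\A X, Z}$ of \cref{Characterising adjunctions in E-Cat}, the second is the defining property of $W \otimes_\A X$ applied at $GZ \in \dA_0(\C)$, and the third is obtained by applying the $\E$-morphism $\widetilde{\map}_{\E^{\A\op}}(W,-):\eE^{\A\op}\rightarrow\eE^{\B\op}$ to the adjunction isomorphism $\Omega_{X,Z}:\widetilde{\map}_{\dB}(FX,Z) \iso \widetilde{\map}_{\dA}(X,GZ)$.

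Next I would verify $\E$-naturality in $Z\in\dB_0(\C)$. Each of the three constituent isomorphisms is $\E$-natural in $Z$: the first by the $\E$-naturality in the second variable of the adjunction in \cref{Characterising adjunctions in E-Cat}, the second by precomposing the $\E$-naturality in $Z\in\dA_0(\C)$ of the defining isomorphism for $W \otimes_\A X$ with the $\E$-morphism $G:\eB\rightarrow\eA$ (using that $G$ is $\E$-functorial in the sense of \cref{F:map(X Y) -> map(FX FY) is E-natural}), and the third by postcomposition of the $\E$-natural transformation $\Omega_{X,-}$ with the $\E$-morphism $\widetilde{\map}_{\E^{\A\op}}(W,-)$ of \cref{mapping space is an E-morphism}. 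Vertical composition of $\E$-natural transformations yields the required $\E$-naturality of the full composite.

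Having produced an $\E$-natural isomorphism
\[
\widetilde{\map}_{\dB}(F(W \otimes_\A X), -) \iso \widetilde{\map}_{\E^{\A\op}}(W, \widetilde{\map}_{\dB}(FX, -)),
\]
this is precisely the defining universal property of the weighted homotopy colimit from \cref{Weighted colimits definition}, so $F(W \otimes_\A X)$ is a representing object, as desired. The ``in particular'' clause is then immediate: if $W \otimes_\A FX$ also exists, then it is isomorphic to $F(W \otimes_\A X)$ via the canonical comparison map, since both represent the same $\E$-morphism; and one should check (by chasing through the definition of the comparison map $\phi$ given after \cref{Weighted colimits definition}) that this isomorphism is exactly $\phi$, so $F$ preserves the weighted homotopy colimit.

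I do not expect any real obstacle here, but the step requiring most care is checking $\E$-naturality of the middle isomorphism, since it involves substituting $GZ$ for the variable $Z\in\dA_0(\C)$ in the defining isomorphism and then transporting the $\E$-naturality in $Z\in\dA_0(\C)$ along $G:\eB\rightarrow\eA$; this is essentially whiskering an $\E$-natural transformation by an $\E$-morphism, and is handled uniformly by the description of horizontal composition in $\ECat$ from \cref{2-category of E-categories}.
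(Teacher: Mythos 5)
Your proof is correct and follows essentially the same route as the paper's: both exhibit $F(W\otimes_\A X)$ as a representing object via the three-step chain of $\E$-natural isomorphisms through $\widetilde{\map}_{\dA}(W\otimes_\A X,GZ)$ and $\widetilde{\map}_{\E^{\A\op}}(W,\widetilde{\map}_{\dA}(X,GZ))$, using the $\E$-category adjunction and the defining property of $W\otimes_\A X$. The paper simply records the chain without the explicit $\E$-naturality checks you spell out; the only cosmetic slip in yours is that the third arrow of your display needs $\Omega_{X,Z}^{-1}$ rather than $\Omega_{X,Z}$, which is immaterial since both are isomorphisms.
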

\begin{proof}
Given $X\in\dA_0(\A)$ and $W\in\E(\A\op\times\B)$, we have the following string of isomorphisms, $\E$-natural in $Z\in\dB_0(\C)$:
\begingroup
\addtolength{\jot}{0.5em}
\begin{align*}
\widetilde{\map}_{\dB}(F(W\otimes_\A X),Z) &\iso\widetilde{\map}_{\dA}(W\otimes_\A X,GZ)\\
&\iso\widetilde{\map}_{\E^{\A\op}}(W,\widetilde{\map}_{\dA}(X,GZ))\\
&\iso\widetilde{\map}_{\E^{\A\op}}(W,\widetilde{\map}_{\dB}(FX,Z))
\end{align*}
\endgroup
Thus, $F(W\otimes_\A X)\in\dB_0(\B)$ has the defining property of $W\otimes_\A FX$, and it follows that $F$ preserves the weighted homotopy colimit.
\end{proof}

We will now begin to work towards the definition of $\E$-derivator. One property we desire of an $\E$-derivator is that the prederivator it induces should be a derivator. We observe below that this is already partway satisfied, for any $\E$-prederivator:

\begin{proposition}\label{E-prederivators satisfy Der 2}
For any $\E$-prederivator $\eA$, the induced prederivator $\dA$ satisfies \textbf{Der 2}.
\end{proposition}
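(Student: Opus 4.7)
The forward implication is routine: if $f$ is an isomorphism in $\dA(\A)$ then, since $a^*:\dA(\A)\to\dA(\0)$ is a functor by \cref{The functor u* from an E-prederivator}, $a^*f$ is an isomorphism in $\dA(\0)$ for each $a\in\A$.

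For the reverse implication, suppose that $a^*f$ is an isomorphism in $\dA(\0)$ for every $a\in\A$. My plan is to reduce the question to \textbf{Der 2} for the derivator $\E$ via the $\E$-category Yoneda lemma. By the fully faithful Yoneda embedding of \cref{E-category yoneda embedding is fully faithful}, $f$ is an isomorphism in $\dA(\A)$ if and only if the $\E$-natural transformation $\widetilde{\map}_{\dA}(f,-):\widetilde{\map}_{\dA}(Y,-)\Rightarrow\widetilde{\map}_{\dA}(X,-)$ is an isomorphism, and, by the observation recorded in \cref{2-category of E-categories}, this is equivalent to each component $\widetilde{\map}_{\dA}(f,Z):\widetilde{\map}_{\dA}(Y,Z)\to\widetilde{\map}_{\dA}(X,Z)$ being an isomorphism in $\E(\A\op\times\B)$ for every category $\B$ and every $Z\in\dA_0(\B)$. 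I would then apply \textbf{Der 2} for $\E$ to this map: it is an isomorphism if and only if the pullback $(a\op\times b)^*\widetilde{\map}_{\dA}(f,Z)$ is an isomorphism in $\E(\0)$ for every $a\in\A$ and $b\in\B$. Finally, using the prederivator map structures of $\widetilde{\map}_{\dA}(-,Z):\dA\op\to\E^\B$ and $\widetilde{\map}_{\dA}(a^*X,-):\dA\to\E$ supplied by \cref{In an E-prederivator map(X -) is a prederivator map}, one obtains a natural identification $(a\op\times b)^*\widetilde{\map}_{\dA}(f,Z)\iso\widetilde{\map}_{\dA}(a^*f,b^*Z)$ in $\E(\0)$; since $a^*f$ is an isomorphism in $\dA(\0)$ by hypothesis, the functor $\widetilde{\map}_{\dA}(-,b^*Z):\dA(\0)\op\to\E(\0)$ sends it to an isomorphism, which completes the argument.

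The only technical point to be careful about is the identification of $(a\op\times b)^*\widetilde{\map}_{\dA}(f,Z)$ with $\widetilde{\map}_{\dA}(a^*f,b^*Z)$ at the level of the morphism $f$ rather than just on objects; however, this follows formally from the axioms of \cref{E-prederivator definition} together with the definition of $u^*f$ in \cref{The functor u* from an E-prederivator}, and is precisely what it means for $\widetilde{\map}_{\dA}(-,-)$ to underlie a prederivator map as in \cref{In an E-prederivator map(X -) is a prederivator map}.
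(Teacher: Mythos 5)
Your proposal is correct and uses the same essential ingredients as the paper's proof: the fully faithful $\E$-category Yoneda embedding of \cref{E-category yoneda embedding is fully faithful} to test $f$ against mapping objects, followed by \textbf{Der 2} for $\E$ and the prederivator-map structure of the mapping objects to reduce to the hypothesis on the $f_a$'s. The only real deviation is that you pull back in both variables (landing in $\E(\0)$ and invoking \textbf{Der 2} for $\E$ itself), whereas the paper fixes $Z\in\dA_0(\C)$, uses the covariant map $\widetilde{\map}_{\dA}(Z,f)$, and applies \textbf{Der 2} for the shifted derivator $\E^{\C\op}$, pulling back only along $(\C\op\times a)^*$. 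That single-variable pullback means the paper needs only the pseudonaturality of the structure isomorphism in one variable (via the prederivator map $\widetilde{\map}_{\dA}(Z,-)$ from \cref{In an E-prederivator map(X -) is a prederivator map}), while your version needs the compatibility of $\gamma^{a,b}$ with $f$ in both slots simultaneously; you correctly flag this as the one technical point, and it does go through — it is exactly a combination of the commutative square $(\ref{Diagram defining u*})$ defining $u^*f$ with the $\E$-naturality of $\gamma^{\id,b}$ from Axiom~2 of \cref{E-prederivator definition} — but it is marginally more to verify than the paper's route, and there is no gain in generality to pay for it.
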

\begin{proof}
Let $\A$ be a category, and let $f:\h_\A\rightarrow\widetilde{\map}_{\dA}(X,Y)$ be a map in $\dA(\A)$. We need to show that $f$ is an isomorphism if and only if, for every object $a\in\A$, the map $f_a:X_a\rightarrow Y_a$ is an isomorphism.

By~\cref{E-category yoneda embedding is fully faithful}, $f$ is an isomorphism if and only if the map
\[
\widetilde{\map}_{\dA}(Z,f):\widetilde{\map}_{\dA}(Z,X)\rightarrow\widetilde{\map}_{\dA}(Z,Y)
\]
is an isomorphism in $\E(\C\op\times\A)$, for every category $\C$ and every $Z\in\dA_0(\C)$.

For a fixed $Z\in\dA_0(\C)$, \textbf{Der 2} for the derivator $\E^{\C\op}$ implies that this map is an isomorphism if and only if, for every $a\in\A$, the map
\[
(\C\op\times a)^*\widetilde{\map}_{\dA}(Z,f):(\C\op\times a)^*\widetilde{\map}_{\dA}(Z,X)\rightarrow(\C\op\times a)^*\widetilde{\map}_{\dA}(Z,Y)
\]
is an isomorphism in $\E(\C\op)$.

For a fixed $a\in\A$, this map is an isomorphism if and only if 
\[
\widetilde{\map}_{\dA}(Z,f_a):\widetilde{\map}_{\dA}(Z,X_a)\rightarrow\widetilde{\map}_{\dA}(Z,Y_a)
\]
is an isomorphism, using the prederivator map structure for $\widetilde{\map}_{\dA}(Z,-)$.

Thus, $f$ is an isomorphism if and only if, for every $a\in\A$, every category $\C$, and every $Z\in\dA_0(\C)$, the map $\widetilde{\map}_{\dA}(Z,f_a):\widetilde{\map}_{\dA}(Z,X_a)\rightarrow\widetilde{\map}_{\dA}(Z,Y_a)$ is an isomorphism. By~\cref{E-category yoneda embedding is fully faithful}, this is true if and only if each map $f_a:X_a\rightarrow Y_a$ is an isomorphism in $\dA(\0)$.
\end{proof}

\begin{definition}\label{E-semiderivator definition}
Let $\eA$ be an $\E$-prederivator. We call $\eA$ an \textbf{$\E$-semiderivator} if the induced prederivator $\dA$ is a semiderivator; that is, $\dA$ satisfies \textbf{Der 1} and \textbf{Der 2}. Note that, by~\cref{E-prederivators satisfy Der 2}, this is the case if and only if $\dA$ satisfies \textbf{Der 1}.
\end{definition}

\begin{definition}\label{(Left) E-derivator definition}
Let $\eA$ be an $\E$-semiderivator. We say that $\eA$ is a \textbf{left $\E$-derivator} if $\eA$ admits all weighted homotopy colimits. Dually, we say that $\eA$ is a \textbf{right $\E$-derivator} if $\eA$ admits all weighted homotopy limits. We call $\eA$ an \textbf{$\E$-derivator} if it is both a left and right $\E$-derivator.
\end{definition}

Suppose $\eA$ is a left $\E$-derivator, let $\A$ be a category, and let $X\in\dA_0(\A)$. For any object $W\in\E(\A\op\times\B)$, we have a family of isomorphisms
\[
\widetilde{\map}_{\dA}(W\otimes_\A X,Y)\iso\widetilde{\map}_{\E^{\A\op}}(W,\widetilde{\map}_{\dA}(X,Y)),
\]
$\E$-natural in $Y\in\dA_0(\C)$. By~\cref{Defining adjoints representably}, the weighted homotopy colimits organise into an $\E$-morphism $-\otimes_\A X:\eE^{\A\op}\rightarrow\eA$, the left adjoint in an adjunction:
\begin{center}
\begin{tikzcd}
\eE^{\A\op}\arrow[rr, bend left=40, "-\;\otimes_\A X" above,""{name=U, below}]\arrow[rr,leftarrow, bend right=40, "\widetilde{\map}_{\dA}(X\text{,}-)" below, ""{name=D}]
&& \;\eA
\arrow[phantom,from=U,to=D,"\bot"]
\end{tikzcd}
\end{center}
Thus, an $\E$-semiderivator $\eA$ is a left $\E$-derivator if and only if, for any category $\A$ and any $X\in\dA_0(\A)$, the $\E$-morphism 
\[
\widetilde{\map}_{\dA}(X,-):\eA\rightarrow\eE^{\A\op}
\]
has a left adjoint. Dually, an $\E$-semiderivator $\eA$ is a right $\E$-derivator if and only if, for any category $\B$ and any $Y\in\dA_0(\B)$, the $\E$-morphism 
\[
\widetilde{\map}_{\dA}(-,Y):\eA\op\rightarrow\eE^{\B}
\]
has a left adjoint.

\begin{theorem}\label{Closed E-modules induce E-derivators}
Let $\D$ be a closed $\E$-module. The associated $\E$-prederivator $\eD$ is an $\E$-derivator.
\end{theorem}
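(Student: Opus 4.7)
The plan is to verify the conditions of \cref{(Left) E-derivator definition}: show that $\eD$ is an $\E$-semiderivator, and that it admits all weighted homotopy limits and colimits. The semiderivator condition is essentially free: by \cref{E-modules induce E-prederivators}, $\eD$ is an $\E$-prederivator whose induced prederivator is the original derivator $\D$, which in particular satisfies \textbf{Der 1}, so $\eD$ is an $\E$-semiderivator by \cref{E-semiderivator definition}.

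The main content is the left $\E$-derivator condition. Fix a category $\A$ and an object $X \in \D(\A)$. By \cref{2-variable adjunction characterisation} applied to the closed $\E$-action on $\D$, we have a derivator adjunction
\begin{center}
\begin{tikzcd}
\E^{\A\op}\arrow[rr, bend left=40, "-\;\otimes_\A X" above,""{name=U, below}]\arrow[rr,leftarrow, bend right=40, "\widetilde{\map}_\D(X\text{,}-)" below, ""{name=D}]
&& \;\D
\arrow[phantom,from=U,to=D,"\bot"]
\end{tikzcd}
\end{center}
between closed $\E$-modules. By \cref{map(X -) preserves cotensors}, the left adjoint $-\otimes_\A X$ preserves tensors, so \cref{E-module adjunctions induce E-category adjunctions} lifts this to an $\E$-category adjunction between $\eE^{\A\op}$ and $\eD$. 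Invoking \cref{Characterising adjunctions in E-Cat}, we obtain $\E$-natural isomorphisms
\[
\widetilde{\map}_\D(W \otimes_\A X, Z) \iso \widetilde{\map}_{\eE^{\A\op}}(W, \widetilde{\map}_\D(X,Z))
\]
in $Z \in \D_0(\C)$ and $W \in \E(\A\op \times \B)$. This is precisely the universal property defining $W \otimes_\A X$ as the homotopy colimit of $X$ weighted by $W$ (\cref{Weighted colimits definition}), so $\eD$ admits all weighted homotopy colimits.

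For the right $\E$-derivator condition, the cleanest approach is to dualise. By \cref{Opposite of a closed E-module is an E-module}, the opposite derivator $\D\op$ is itself a closed $\E$-module, so the argument above applied to $\D\op$ shows that $\eD\op$ (which is the $\E$-category associated to this closed $\E$-module) is a left $\E$-derivator. For any $Y \in \D(\B)$, regarded as an object of $\dD\op_0(\B\op)$, we obtain a left adjoint to $\widetilde{\map}_{\D\op}(Y,-) : \eD\op \rightarrow \eE^{\B}$; combined with the identification $\widetilde{\map}_{\D\op}(Y,-) \iso \widetilde{\map}_\D(-,Y)$ from \cref{mapping space in the opposite closed E-module}, this yields the left adjoint to $\widetilde{\map}_\D(-,Y): \eD\op \rightarrow \eE^{\B}$ required for weighted homotopy limits. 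The main bookkeeping obstacle is keeping the opposites straight in this final dualisation step, in particular checking that the weighted homotopy limit produced by regarding $Y \in \D(\B)$ as an object of $\eD\op$ indeed satisfies the universal property stated in \cref{Weighted colimits definition} for the original $\E$-category $\eD$; this amounts to tracking how \cref{mapping space in the opposite closed E-module} interacts with the symmetry $\sigma^*$ in the definition of $\eE^\B$ versus $\eE^{\B\op}$.
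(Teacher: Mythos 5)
Your proof follows essentially the same route as the paper: the weighted-colimit half is exactly the paper's argument (obtain the derivator adjunction $-\otimes_\A X \dashv \widetilde{\map}_\D(X,-)$, note $-\otimes_\A X$ preserves tensors via \cref{map(X -) preserves cotensors}, lift to an $\E$-category adjunction via \cref{E-module adjunctions induce E-category adjunctions}, and read off the universal property via \cref{Characterising adjunctions in E-Cat}). The paper dispatches the weighted-limit half with a single ``Similarly,'' which tacitly runs the dual argument through $\D\op$; you make this explicit. The ``bookkeeping obstacle'' you flag is real and is also left implicit in the paper: one must check that the $\E$-category associated to the closed $\E$-module $\D\op$ (via \cref{E-modules are E-categories}) coincides, up to the canonical symmetry isomorphisms, with the opposite $\E$-category $(\eD)\op$ of \cref{Opposite E-category}. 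These have the same objects, and comparing mapping objects reduces to \cref{mapping space in the opposite closed E-module} together with the $\sigma^*$ bookkeeping from \cref{Symmetry for the cancelling tensor}; it is routine but nowhere spelled out, so flagging it rather than waving past it is the right call. Overall the proposal is correct and matches the paper's proof.
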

\begin{proof}
Since $\D$ is a derivator, the associated $\E$-prederivator $\eD$ is an $\E$-semiderivator. We need to show that $\eD$ admits all weighted homotopy limits and colimits. 

Let $\A$ be a category and let $X\in\D(\A)$. By~\cref{E-module adjunctions induce E-category adjunctions}, the adjunction 
\begin{center}
\begin{tikzcd}
\E^{\A\op}\arrow[rr, bend left=40, "-\;\otimes_\A X" above,""{name=U, below}]\arrow[rr,leftarrow, bend right=40, "\widetilde{\map}_{\D}(X\text{,}-)" below, ""{name=D}]
&& \;\D
\arrow[phantom,from=U,to=D,"\bot"]
\end{tikzcd}
\end{center}
induces an adjunction between the corresponding $\E$-categories. Thus, $\eD$ admits all weighted homotopy colimits. Similarly, $\eD$ admits all weighted homotopy limits.
\end{proof}

The proof of~\cref{Closed E-modules induce E-derivators} justifies our notation for weighted homotopy limits and colimits: if $\D$ is a closed $\E$-module, given $X\in\D(\A)$ and $W\in\E(\A\op\times\B)$, the weighted homotopy colimit $W\otimes_\A X\in\D(\B)$ is the image of $W$ under the functor $-\otimes_\A X:\E(\A\op\times\B)\rightarrow\D(\B)$.

\begin{remark}
Let $F:\D_1\rightarrow\D_2$ be a cocontinuous $\E$-module map between closed $\E$-modules. Using the description of weighted homotopy colimits given in the proof of \cref{Closed E-modules induce E-derivators}, it follows easily that the associated $\E$-morphism $F:\eD_1\rightarrow\eD_2$ preserves all weighted homotopy colimits.
\end{remark}

\begin{example}\label{Compact objects give an E-prederivator}
We will now give an example of an enriched prederivator that admits some, but not all, weighted homotopy colimits. 

Let $\D$ be a triangulated derivator. By \cref{Any triangulated derivator is a closed Spt module}, $\D$ is a closed $\dHo(\Spt)$-module, and so we can consider the associated $\dHo(\Spt)$-derivator $\eD$. Let $\eD^c$ be the maximal sub-$\dHo(\Spt)$-prederivator of $\D$ on the compact objects of $\D(\0)$, in the sense of \cref{Maximal sub-E-prederivator}. Thus, for any category $\A$, the set $\D^{^c}_0(\A)$ consists of the objects $X\in\D(\A)$ such that $X_a\in\D(\0)$ is compact for every $a\in\A$.

We claim that an object $X\in\D(\A)$ lies in $\D^{^c}_0(\A)$ if and only if the derivator map
\[
\widetilde{\map}_{\D}(X,-):\D\rightarrow\dHo(\Spt^{\A\op})
\]
is cocontinuous. By \cite[Proposition 7.3]{Groth16} and \cite[Theorem 7.13]{PS16}, this map is cocontinuous if and only if the functor on underlying categories
\[
\widetilde{\map}_{\D}(X,-):\D(\0)\rightarrow\Ho(\Spt^{\A\op})
\]
preserves coproducts. By \cite[Lemma 1.34]{GM11}, this functor preserves coproducts if and only if its left adjoint
\[
-\wedge_\A X:\Ho(\Spt^{\A\op})\rightarrow\D(\0)
\]
takes the set of compact generators in $\Ho(\Spt^{\A\op})$ to compact objects. By \cref{Perfectly generated triangulated derivator}, 
$\{a_!\mathds{S} \;|\; a\in\A\op\}$ is a set of compact generators for $\Ho(\Spt^{\A\op})$, where $\mathds{S}\in\Ho(\Spt)$ is the sphere spectrum. The isomorphism (\ref{Diagram Left Kan extensions and delta}), applied to the map $a:\0\rightarrow\A\op$, gives an isomorphism
\[
a_!\mathds{S}\iso(\A\op\times a)^*\partial_\A\mathds{S}=(\A\op\times a)^*\h_\A,
\]
where, on the right hand side, we think of $a$ as an object of $\A$ rather than $\A\op$. Applying $-\wedge_\A X$, we have
\[
((\A\op\times a)^*\h_\A)\wedge_\A X\iso a^*(\h_\A\wedge_\A X)\iso X_a.
\]
This proves our claim.

Using this, suppose we have $X\in\D^{^c}_0(\A)$ and $W\in\Ho(\Spt^{\A\op\times\B})$, and consider the composite
\begin{center}
\begin{tikzcd}[column sep=small]
\eD\arrow{rrrrr}{\widetilde{\map}_{\dA}(X,-)} &&&&& \underline{\dHo}(\Spt^{\A\op})\arrow{rrrrrrr}{\widetilde{\map}_{\dHo(\Spt^{\A\op})}(W,-)} &&&&&&& \underline{\dHo}(\Spt^{\B\op}).
\end{tikzcd}
\end{center} 
This composite is represented by $W\wedge_\A X\in\D(\B)$; thus, if the induced derivator map is cocontinuous, then $W\wedge_\A X\in\D^{^c}_0(\B)$, and it follows that this object is the weighted homotopy colimit in $\eD^c$ as well as $\eD$. In particular, this is the case if 
\begin{center}
\begin{tikzcd}
\widetilde{\map}_{\dHo(\Spt^{\A\op})}(W,-):\dHo(\Spt^{\A\op})\arrow{r} &\dHo(\Spt^{\B\op})
\end{tikzcd}
\end{center} 
is cocontinuous; that is, if $(\A\op\times b)^*W\in\Ho(\Spt^{\A\op})$ is compact for each $b\in\B$. Thus, $\eD^c$ has homotopy colimits weighted by any pointwise compact object $W$. 

However, in general, $\eD^c$ does not admit all weighted homotopy colimits. This follows from the following theorem, since the induced prederivator $\D^c$ in general does not admit all coproducts, so in particular it is not a derivator.
\end{example}

\begin{theorem}\label{(Left) E-derivators induce (left) derivators}
If $\eA$ is a left $\E$-derivator, then the induced prederivator $\dA$ is a left derivator. Similarly, if $\eA$ is a right $\E$-derivator, then $\dA$ is a right derivator, and if $\eA$ is an $\E$-derivator, then $\dA$ is a derivator.
\end{theorem}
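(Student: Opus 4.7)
The plan is as follows. Since every $\E$-derivator is, by definition, an $\E$-semi\-derivator, the induced prederivator $\dA$ already satisfies $\mathbf{Der\ 1}$ (by the definition of $\E$-semiderivator) and $\mathbf{Der\ 2}$ (by \cref{E-prederivators satisfy Der 2}). Thus the entire theorem reduces to verifying $\mathbf{Der\ 3}$ and $\mathbf{Der\ 4}$ for $\dA$, in the left/right/two-sided form as appropriate. The two halves are dual, so I will concentrate on the left case: for any functor $u\colon\A\to\B$, I need to construct a left adjoint $u_!\colon\dA(\A)\to\dA(\B)$ to $u^*$, and then verify the comma-square formula of $\mathbf{Der\ 4}$.

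For $\mathbf{Der\ 3}$, the key idea is that pullbacks in an $\E$-prederivator are themselves weighted homotopy colimits (\cref{Pullbacks are weighted colimits}), and once we admit \emph{all} weighted homotopy colimits, we can represent $u_!$ by choosing the correct weight. For $Y\in\dA_0(\A)$, I will set
\[
u_!Y \;:=\; (\A\op\times u)_!\h_\A \,\otimes_\A\, Y \;\in\; \dA_0(\B),
\]
where $(\A\op\times u)_!\h_\A\in\E(\A\op\times\B)$ is the left Kan extension of the identity profunctor in $\E$, which exists since $\E$ is a derivator. The adjunction $u_!\dashv u^*$ will follow from the chain of natural bijections
\begin{align*}
\dA(\B)(u_!Y, X)
&\iso \E(\B\op\times\B)\bigl(\h_\B,\,\widetilde{\map}_{\E^{\A\op}}((\A\op\times u)_!\h_\A,\widetilde{\map}_{\dA}(Y,X))\bigr)\\
&\iso \E(\A\op\times\B)\bigl((\A\op\times u)_!\h_\A,\,\widetilde{\map}_{\dA}(Y,X)\bigr)\\
&\iso \E(\A\op\times\A)\bigl(\h_\A,\,(\A\op\times u)^*\widetilde{\map}_{\dA}(Y,X)\bigr)\\
&\iso \E(\A\op\times\A)\bigl(\h_\A,\,\widetilde{\map}_{\dA}(Y,u^*X)\bigr)
\;=\; \dA(\A)(Y,u^*X),
\end{align*}
where the four isomorphisms use respectively the defining universal property of the weighted colimit $(\A\op\times u)_!\h_\A\otimes_\A Y$, the unit isomorphism $\h_\B\otimes_\B(-)\iso(-)$ of \cref{Coherence for the cancelling version of the closed E-action}, the derivator adjunction $(\A\op\times u)_!\dashv(\A\op\times u)^*$ in $\E$, and the $\E$-prederivator structure isomorphism $\gamma^{\id,u}$. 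Naturality in $X$ (and in $Y$) will be routine consequences of the fact that each ingredient is itself natural.

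For $\mathbf{Der\ 4}$, I need to check that for any $b\in\B$ the canonical comparison $\hocolim(\mathrm{pr}^*Y)\to(u_!Y)_b$ is an isomorphism. Starting from the explicit formula for $u_!Y$, I will compute $b^*u_!Y$ by first commuting $b^*$ past the weighted colimit, using the fact that the $\E$-morphism $-\otimes_\A Y\colon\eE^{\A\op}\to\eA$ (obtained from the right adjoint $\widetilde{\map}_{\dA}(Y,-)$ via the $\E$-derivator hypothesis) induces a prederivator map by \cref{E-morphisms induce prederivator maps}, whose structure isomorphism $\phi^b$ gives
\[
b^*\bigl((\A\op\times u)_!\h_\A \otimes_\A Y\bigr)\;\iso\;(\A\op\times b)^*(\A\op\times u)_!\h_\A \otimes_\A Y.
\]
Next, the comma square of $\mathbf{Der\ 4}$ is homotopy exact in $\E$; crossing with $\A\op$ preserves homotopy exactness, so in $\E$ we have
\[
(\A\op\times b)^*(\A\op\times u)_!\h_\A \;\iso\; (\A\op\times\p)_!(\A\op\times\mathrm{pr})^*\h_\A.
\]
Meanwhile, $\mathrm{pr}^*Y\iso(\A\op\times\mathrm{pr})^*\h_\A\otimes_\A Y$ by \cref{Pullbacks are weighted colimits}, and the same Yoneda-style chain as in $\mathbf{Der\ 3}$, with $\p\colon\A\downarrow b\to\0$ in place of $u$, shows that the homotopy colimit $\p_!(\mathrm{pr}^*Y)$ (which exists because of $\mathbf{Der\ 3}$ applied to $\p$) is represented by $(\A\op\times\p)_!(\A\op\times\mathrm{pr})^*\h_\A\otimes_\A Y$. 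Identifying the three expressions yields the comma-square isomorphism.

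The main technical obstacle I foresee is the careful juggling of the many structure isomorphisms: verifying that the chain in $\mathbf{Der\ 3}$ is genuinely natural in $X$, that the isomorphism $\phi^b$ used in the $\mathbf{Der\ 4}$ step has the correct naturality in $Y$, and that the comma-square identification is compatible with the canonical comparison map produced by the two-cell $\mathrm{pr}^*\eta$ of \cref{D-exact squares}, rather than some different automorphism of the same object. All of these will ultimately rest on the coherence established in \cref{Section The cancelling tensor product} and \cref{Section The maps otimes u and h u}, together with the respect of derivator maps for exact squares (\cref{Derivator maps respect homotopy exact squares}), but assembling them into one coherent diagram is where the bookkeeping becomes delicate. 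The right/derivator variants follow by passing to $\eA\op$, which carries a natural $\E$-prederivator structure whose induced prederivator is $\dA\op$.
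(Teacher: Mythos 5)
Your argument for \textbf{Der 3} is essentially the paper's: the weight $(\A\op\times u)_!\h_\A$ is canonically isomorphic to $(u\op\times\B)^*\h_\B$ (this is \cref{Section The maps otimes u and h u}, isomorphism (\ref{Diagram Left Kan extensions and delta})), and your chain of natural bijections is the unenriched shadow of the adjunction the paper builds directly at the level of $\E$-morphisms, using explicit units and counits that satisfy the triangle identities. Both are correct; working at the hom-set level, as you do, gives an adjunction between the induced categories $\dA(\A)$ and $\dA(\B)$ with a little less bookkeeping, at the cost of needing a separate naturality argument that the paper gets for free from the $\E$-category adjunction.

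For \textbf{Der 4} you take a genuinely different route. The paper passes to the Yoneda level: for an arbitrary homotopy exact square $\kappa$, it applies $\widetilde{\map}_{\dA}(-,Z)$ to the canonical comparison map, unwinds the definitions of $\eta$, $\epsilon$ and $\kappa^*$, and observes that the result is (isomorphic to) the Beck--Chevalley comparison for the same square shifted into $\E$, which is an isomorphism by assumption. That argument never needs to rewrite the Kan extension explicitly as a weighted colimit, and in fact proves a stronger statement than Der\,4 alone. Your route instead commutes $b^*$ past the weighted colimit via $\phi^b$ and rewrites both sides as weighted colimits of $Y$ with matching weights. This is correct, but there is a real gap in how you present the identification of $\p_!(\mathrm{pr}^*Y)$ with $(\A\op\times\p)_!(\A\op\times\mathrm{pr})^*\h_\A\otimes_\A Y$: the ``Yoneda-style chain'' you cite only yields $\p_!(\mathrm{pr}^*Y)\iso ((\A\downarrow b)\op\times\p)_!\h_{\A\downarrow b}\otimes_{\A\downarrow b}\mathrm{pr}^*Y$. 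To get from there to the expression you want you need two more ingredients: first, $-\otimes_\A Y:\eE^{\A\op}\to\eA$ is a left $\E$-adjoint and hence preserves weighted colimits (\cref{Left adjoints preserve weighted colimits}), which supplies the ``associativity'' $W_1\otimes_{\A\downarrow b}(W_2\otimes_\A Y)\iso(W_1\otimes_{\A\downarrow b}W_2)\otimes_\A Y$; second, you must apply the Der\,3 formula inside $\eE^{\A\op}$ itself (which is already known to be an $\E$-derivator by \cref{Closed E-modules induce E-derivators} and \cref{Functoriality of weighted homotopy colimits}) to recognise $((\A\downarrow b)\op\times\p)_!\h_{\A\downarrow b}\otimes_{\A\downarrow b}(\A\op\times\mathrm{pr})^*\h_\A$ as $(\A\op\times\p)_!(\A\op\times\mathrm{pr})^*\h_\A$. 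With these two steps made explicit, and with the verification (which you flag but do not carry out) that this chain of isomorphisms agrees with the canonical Beck--Chevalley map rather than merely identifying the two objects abstractly, your approach closes. The paper's Yoneda-level argument avoids both issues entirely, which is why it is the cleaner choice.
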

\begin{proof}
We will show that, given a left $\E$-derivator $\eA$, the induced prederivator $\dA$ is a left derivator. The corresponding result for right $\E$-derivators is dual, and the combination of these two results proves the corresponding result for $\E$-derivators.

Let $\eA$ be a left $\E$-derivator. By definition, $\dA$ is a semiderivator, so we need only show that $\dA$ admits all homotopy left Kan extensions, and prove the relevant part of \textbf{Der 4}.

We will start by showing that $\dA$ admits homotopy left Kan extensions. Let $u:\A\rightarrow\B$ be a functor and consider the object $(u\op\times\B)^*\h_\B\in\E(\A\op\times\B)$. For any object $X\in\dA(\A)$, denote the homotopy colimit of $X$ weighted by this object as follows:
\[
u_! X=(u\op\times\B)^*\h_\B\otimes_\A X\in\dA(\B)
\]
For any category $\C$, and any object $Z\in\A_0(\C)$, consider the isomorphisms below:
\begingroup
\addtolength{\jot}{0.7em}
\begin{align*}
\widetilde{\map}_{\dA}(u_!X,Z) &=\widetilde{\map}_{\dA}((u\op\times\B)^*\h_\B\otimes_\A X,Z)\\
&\iso\widetilde{\map}_{\E^{\A\op}}((u\op\times\B)^*\h_\B,\widetilde{\map}_{\dA}(X,Z))\\
&\iso\widetilde{\map}_{\E^{\B\op}}(\h_\B,(u\op\times\C)_*\widetilde{\map}_{\dA}(X,Z))\\
&\iso (u\op\times\C)_*\widetilde{\map}_{\dA}(X,Z)
\end{align*}
\endgroup
The first isomorphism follows from the definition of the weighted homotopy colimit. The second is associated to the following adjunction, obtained using~\cref{E-module adjunctions induce E-category adjunctions}:
\begin{center}
\begin{tikzcd}
\eE^{\B\op}\arrow[rr, bend left=40, "(u\op)^*" above,""{name=U, below}]\arrow[rr,leftarrow, bend right=40, "(u\op)_*" below, ""{name=D}]
&& \;\eE^{\A\op}
\arrow[phantom,from=U,to=D,"\bot"]
\end{tikzcd}
\end{center}
The final isomorphism $\varrho:\widetilde{\map}_{\E^{\B\op}}(\h_\B,-)\xrightarrow{\;\iso\;}\id$ is shown to be $\E$-natural in the proof of~\cref{Yoneda lemma for E-categories}. It follows that the entire composite is $\E$-natural in $Z\in\dA_0(\C)$. Denote this isomorphism by
\[
\varpi^u:(u\op\times\C)_*\widetilde{\map}_{\dA}(X,Z)\xrightarrow{\;\;\iso\;\;}\widetilde{\map}_{\dA}(u_!X,Z).
\]

We will now show that we can extend $u_!$ to a functor; the construction is similar to the definition of $u^*$ in~\cref{The functor u* from an E-prederivator}. Given a map $f:\h_\A\rightarrow\widetilde{\map}_{\dA}(X,Y)$ in $\dA(\A)$, we define $u_!f:\h_\B\rightarrow\widetilde{\map}_{\dA}(u_!X,u_!Y)$, using~\cref{E-category yoneda embedding is fully faithful}, to be the unique map in $\dA(\B)$ that makes the diagram below commute, for any $Z\in\dA_0(\C)$:
\begin{center}
\begin{tikzcd}
(u\op\times\C)_*\widetilde{\map}_{\dA}(Y,Z)\arrow[dd,"\varpi^u" left]\arrow[rrrr,"(u\op\times\C)_*\widetilde{\map}_{\dA}(f\text{,}Z)" above]
&&&& (u\op\times\C)_*\widetilde{\map}_{\dA}(X,Z)\arrow[dd,"\varpi^u" right]\\\\
\widetilde{\map}_{\dA}(u_!Y,Z)\arrow[rrrr,"\widetilde{\map}_{\dA}(u_!f\text{,}Z)" below]&&&& \widetilde{\map}_{\dA}(u_!X,Z)
\end{tikzcd}
\end{center}
As in~\cref{The functor u* from an E-prederivator}, it is easy to check that this construction is functorial. It remains to show that it is indeed a left adjoint to $u^*$. We will do this directly, by providing the unit and counit of the adjunction, and showing that they satisfy the triangle identities. 

Let $X\in\dA(\A)$. Using~\cref{E-category yoneda embedding is fully faithful}, we define $\eta_X:X\rightarrow u^*u_!X$ to be the unique map that makes the diagram below commute, for any category $\C$ and any $Z\in\dA_0(\C)$: 

\begin{center}
\begin{tikzcd}[row sep=small]
\widetilde{\map}_{\dA}(X,Z)\arrow[leftarrow,rrrdddddd,"\widetilde{\map}_{\dA}(\eta_X\text{,}Z)" below left, bend right]\arrow[leftarrow,rrr,"\epsilon_\mathsmaller{{\widetilde{\map}_{\dA}(X\text{,}Z)}}"]
&&& (u\op\times\C)^*(u\op\times\C)_*\widetilde{\map}_{\dA}(X,Z)\arrow[ddd,"(u\op\times\C)^*\varpi^u"] \\\\\\ &&&(u\op\times\C)^*\widetilde{\map}_{\dA}(u_!X,Z)\arrow[ddd,"\gamma^{u,\id}" right]\\\\\\
&&& \widetilde{\map}_{\dA}(u^*u_!X,Z)
\end{tikzcd}
\end{center}
To see that these maps form a natural transformation, let $f:\h_\A\rightarrow\widetilde{\map}_{\dA}(X,Y)$ be a map in $\dA(\A)$. We need to check that we have $\eta_Y\circ f=u^*u_!(f)\circ\eta_X$. Equivalently, for any $\C$ and any $Z\in\dA(\C)$, we need to show that this diagram commutes:
\begin{center}
\begin{tikzcd}
\widetilde{\map}_{\dA}(u^*u_!Y,Z)\arrow[dd,"\widetilde{\map}_{\dA}(\eta_Y\text{,}Z)" left]\arrow[rrr,"\widetilde{\map}_{\dA}(u^*u_!(f)\text{,}Z)" above]
&&& \widetilde{\map}_{\dA}(u^*u_!X,Z)\arrow[dd,"\widetilde{\map}_{\dA}(\eta_X\text{,}Z)" right]\\\\
\widetilde{\map}_{\dA}(Y,Z)\arrow[rrr,"\widetilde{\map}_{\dA}(f\text{,}Z)" below]&&& \widetilde{\map}_{\dA}(X,Z)
\end{tikzcd}
\end{center}
Using the definitions of $u_!$ and $\eta$ above, and the definition of $u^*$ in~\cref{The functor u* from an E-prederivator}, the commutativity of this diagram follows from the naturality of $\epsilon$.

Similarly, for any $W\in\dA(\B)$, define $\epsilon_W:u_!u^*W\rightarrow W$ to be the unique map that makes the diagram below commute, for any category $\C$ and any $Z\in\dA_0(\C)$: 
\begin{center}
\begin{tikzcd}[row sep=small]
\widetilde{\map}_{\dA}(W,Z)\arrow[rrrdddddd,"\widetilde{\map}_{\dA}(\epsilon_W\text{,}Z)" below left, bend right]\arrow[rrr,"\eta_\mathsmaller{{\widetilde{\map}_{\dA}(W\text{,}Z)}}"]
&&& (u\op\times\C)_*(u\op\times\C)^*\widetilde{\map}_{\dA}(W,Z)\arrow[ddd,"(u\op\times\C)_*\gamma^{u,\id}"] \\\\\\ &&&(u\op\times\C)_*\widetilde{\map}_{\dA}(u^*W,Z)\arrow[ddd,"\varpi^u" right]\\\\\\
&&& \widetilde{\map}_{\dA}(u_!u^*W,Z)
\end{tikzcd}
\end{center}
By a similar argument to the one above, these maps are natural in $W\in\dA(\B)$.

It remains to verify the triangle identities for $\eta$ and $\epsilon$. For the first identity, given $X\in\dA(\A)$, we need to check that we have $\epsilon_{u_!X}\circ u_!(\eta_X)=\id_{u_!X}$. Equivalently, for any $\C$ and any $Z\in\dA(\C)$, we need to show that the composite below is equal to the identity:
\begin{center}
\begin{tikzcd}
\widetilde{\map}_{\dA}(u_!X,Z)\arrow[rrr,"\widetilde{\map}_{\dA}(\epsilon_{u_!X}\text{,}Z)" above]
&&& \widetilde{\map}_{\dA}(u_!u^*u_!X,Z)\arrow[rrr,"\widetilde{\map}_{\dA}(u_!(\eta_X)\text{,}Z)" above]
&&& \widetilde{\map}_{\dA}(u_!X,Z)
\end{tikzcd}
\end{center}
Using the definitions of $u_!$, $\epsilon$ and $\eta$ above, and the definition of $u^*$ in~\cref{The functor u* from an E-prederivator}, this follows from the triangle identity for the adjunction
\begin{center}
\begin{tikzcd}
\E(\B\op\times\C)\arrow[rr, bend left=20, "(u\op\times\C)^*" above,""{name=U, below}]\arrow[rr,leftarrow, bend right=20, "(u\op\times\C)_*" below, ""{name=D}]
&& \;\E(\A\op\times\C).
\arrow[phantom,from=U,to=D,"\bot"]
\end{tikzcd}
\end{center}
The other triangle identity follows similarly. 

Thus, the induced prederivator $\dA$ admits all homotopy left Kan extensions. It remains to check the relevant part of \textbf{Der 4}. To do this, suppose we have a homotopy exact square:
\begin{center}
\begin{tikzcd}
 \A \arrow[rr,"u"]\arrow[dd,"v" left] && 
 \B \arrow[Rightarrow,ddll,"\kappa" above left,shorten >=0.9cm,shorten <=0.9cm, shift left] \arrow[dd,"w"] \\
 &\\
\J\arrow[rr,"z" below] && 
 \K
\end{tikzcd}
\end{center}
We will show that the canonical natural transformation 
\begin{center}
\begin{tikzcd}
\dA(\J)\arrow[ddrr,bend right,equal,""{name=L,above right}]\arrow[rr,leftarrow,"v_!" above] && \dA(\A)\arrow[Rightarrow,to=L,"\epsilon" above left,shorten >=0.5cm,shorten <=0.7cm] \arrow[leftarrow,rr,"u^*"]\arrow[leftarrow,dd,"v^*" left] && 
 \dA(\B)\arrow[ddrr,bend left,equal,""{name=R,below left}] \arrow[Rightarrow,ddll,"\kappa^*" above=3,shorten >=0.9cm,shorten <=0.8cm, shift left] \arrow[leftarrow,dd,"w^*"]\\\\
 
&&\dA(\J)\arrow[leftarrow,rr,"z^*" below] && 
 \dA(\K)\arrow[Leftarrow,to=R,"\eta" above left,shorten >=0.5cm,shorten <=0.7cm] \arrow[rr,leftarrow,"w_!" below]
 && \dA(\B) 
\end{tikzcd}
\end{center}
is an isomorphism. Applying this to the relevant comma square in~\cref{Derivator definition} gives us the result.

Thus, for any object $Y\in\dA(\B)$, we must show that the composite
\begin{center}
\begin{tikzcd}[column sep=25]
v_!u^*Y\arrow[rr,"v_!u^*(\eta_Y)"]
&& v_!u^*w^*w_!Y\arrow[rr,"v_!\kappa^*_{w_!Y}" above]
&& v_!v^*z^*w_!Y\arrow[rr,"\epsilon_{z^*w_!Y}" above]
&& z^*w_!Y
\end{tikzcd}
\end{center}
is an isomorphism in $\dA(\J)$. By~\cref{E-category yoneda embedding is fully faithful}, this map is an isomorphism if and only if, for any category $\C$ and any $Z\in\dA_0(\C)$, the composite
\begin{center}
\begin{tikzcd}[row sep=25]
\widetilde{\map}_{\dA}(z^*w_!Y,Z)\arrow[rrr,"\widetilde{\map}_{\dA}(\epsilon_{z^*w_!Y}\text{,}Z)"]
&&& \widetilde{\map}_{\dA}(v_!v^*z^*w_!Y,Z)\arrow[d,"\widetilde{\map}_{\dA}(v_!\kappa^*_{w_!Y}\text{,}Z)" right]\\
&&& \widetilde{\map}_{\dA}(v_!u^*w^*w_!Y,Z)\arrow[d,"\widetilde{\map}_{\dA}(v_!u^*(\eta_Y)\text{,}Z)" right]\\
&&& \widetilde{\map}_{\dA}(v_!u^*Y,Z)
\end{tikzcd}
\end{center}
is an isomorphism in $\E(\J\op\times\C)$. But, using the definitions of $\epsilon$ and $\eta$ above, and the definition of $\kappa^*$ in~\cref{Natural transformation omega* from an E-prederivator}, this composite is isomorphic to the component at $\widetilde{\map}_{\dA}(Y,Z)\in\E(\B\op\times\C)$ of the natural transformation below:
\begin{center}
\begin{tikzcd}
\E(\J\op\times\C)\arrow[ddrr,bend right=25,equal,""{name=L,above right}]\arrow[rr,leftarrow,"(v\op\times\C)_*" above] && \E(\A\op\times\C)\arrow[Leftarrow,to=L,"\eta" left=9,shorten >=0.3cm,shorten <=1cm, shift right=3] \arrow[leftarrow,rr,"(u\op\times\C)^*"]\arrow[leftarrow,dd,"(v\op\times\C)^*" left] && 
 \E(\B\op\times\C)\arrow[ddrr,bend left=25,equal,""{name=R,below left}] \arrow[Leftarrow,ddll,"(\kappa\op\times\C)^*" above left,shorten >=1cm,shorten <=1cm, shift left] \arrow[leftarrow,dd,"(w\op\times\C)^*"]\\\\
 
&&\E(\J\op\times\C)\arrow[leftarrow,rr,"(z\op\times\C)^*" below] && 
 \E(\K\op\times\C)\arrow[Rightarrow,to=R,"\epsilon" above=4,shorten >=0.4cm,shorten <=1cm,shift right=3] \arrow[rr,leftarrow,"(w\op\times\C)_*" below]
 && \E(\B\op\times\C)
\end{tikzcd}
\end{center}
Since the square $\kappa$ is homotopy exact, this natural transformation is an isomorphism. Thus, the original composite is also an isomorphism. 
\end{proof}

In light of~\cref{(Left) E-derivators induce (left) derivators}, if $\eA$ is an $\E$-derivator, we will refer to the induced prederivator $\dA$ as the \textbf{induced derivator}.

\begin{remark}\label{Functoriality of weighted homotopy colimits}
Let $\eA$ be a left $\E$-derivator. Given a category $\A$ and an object $X\in\dA(\A)$, the $\E$-morphism $-\otimes_\A X:\eE^{\A\op}\rightarrow\eA$ induces functors
\[
-\otimes_\A X:\E(\A\op\times\B)\rightarrow\dA(\B)
\]
for any category $\B$.

On the other hand, given an object $W\in\E(\A\op\times\B)$, the weighted homotopy colimits induce a functor:
\[
W\otimes_\A -:\dA(\A)\rightarrow\dA(\B)
\]
Given a map $f:\h_\A\rightarrow\widetilde{\map}_{\dA}(X,Y)$ in $\dA(\A)$, its image $W\otimes_\A f$ in $\dA(\B)$ is the unique map making the diagram below commute, for any category $\C$ and any $Z\in\dA_0(\C)$:
\begin{center}
\begin{tikzcd}
\widetilde{\map}_{\E^{\A\op}}(W,\widetilde{\map}_{\dA}(Y,Z))\arrow[dd,"\iso" left]\arrow[rrrr,"\widetilde{\map}_{\E^{\A\op}}(W\text{,}\widetilde{\map}_{\dA}(f\text{,}Z))" above]
&&&& \widetilde{\map}_{\E^{\A\op}}(W,\widetilde{\map}_{\dA}(X,Z))\arrow[dd,"\iso" right]\\\\
\widetilde{\map}_{\dA}(W\otimes_\A Y,Z)\arrow[rrrr,"\widetilde{\map}_{\dA}(W\otimes_\A f\text{,}Z)" below]&&&& \widetilde{\map}_{\dA}(W\otimes_\A X,Z)
\end{tikzcd}
\end{center}
Together, the functors above induce a two-variable functor:
\[
-\otimes_\A -:\E(\A\op\times\B)\times\dA(\A)\rightarrow\dA(\B)
\]

Let $u:\A\rightarrow\B$ be a functor. From the descriptions of $u^*$ and $u_!$ in \cref{Pullbacks are weighted colimits} and \cref{(Left) E-derivators induce (left) derivators}, we can check that we have natural isomorphisms:
\begin{align*}
u^*\iso (\B\op\times u)^*\h_\B\otimes_\B - &:\dA(\B)\rightarrow\dA(\A)\\
u_!\iso (u\op\times\B)^*\h_\B\otimes_\A - &:\dA(\A)\rightarrow\dA(\B)
\end{align*}
Using the isomorphism $(\A\op\times u)_!\h_\A\iso(u\op\times\B)^*\h_\B$ from \cref{Section The maps otimes u and h u}, we can also describe $u_!$ as follows:
\[
u_!\iso (\A\op\times u)_!\h_\A\otimes_\A - :\dA(\A)\rightarrow\dA(\B)
\]
In this way, we recover the formulas for $\E$-modules, given in~\cite{GS17} and~\cite{GR19}.
\end{remark}


  \newpage
  \phantomsection
  \addcontentsline{toc}{chapter}{Bibliography}
\bibliography{Thesis}{}
\bibliographystyle{plain}

\end{document}